\begin{document}
\title{Generalized blow-up of corners and fiber products}
\author{Chris Kottke}
\address{Department of Mathematics\\Brown University}
\email{ckottke@math.brown.edu}
\author{Richard B. Melrose}
\address{Department of Mathematics\\Massachusetts Institute of Technology}
\email{rbm@math.mit.edu}
\subjclass[2010]{Primary 57R99; Secondary 14B05}
\keywords{manifold with corners, blow-up, generalized blow-up, fiber product, monoidal complex}
\thanks{The second author was supported in part by NSF grant DMS-1005944.}
\date{\today}
\begin{abstract}
Real blow-up, including inhomogeneous versions, of boundary faces of a
manifold (with corners) is an important tool for resolving singularities,
degeneracies and competing notions of homogeneity. These constructions are
shown to be particular cases of \emph{generalized boundary blow-up} in which
a new manifold and blow-down map are constructed from, and conversely
determine, combinatorial data at the boundary faces in the form of a refinement of 
the \emph{basic monoidal complex} of the manifold. This data specifies
which notion of homogeneity is realized at each of the boundary
hypersurfaces in the blown-up space.

As an application of this theory, the existence of fiber products is
examined for the natural smooth maps in this context, the
b-maps. Transversality of the b-differentials is shown to ensure that the
set-theoretic fiber product of two maps is a \emph{binomial
  variety}. Properties of these (extrinsically defined) spaces, which
generalize manifolds but have mild singularities at the boundary, are
investigated and a condition on the basic monoidal complex is found under
which the variety has a smooth structure. Applied to b-maps this additional
condition with transversality leads to a universal fiber product in the
context of manifolds with corners. Under the transversality condition alone
the fiber product is resolvable to a smooth manifold by generalized blow-up
and then has a weaker form of the universal mapping property requiring
blow-up of the domain.
\end{abstract}
\maketitle

% Uncomment to suppress proofs from output
%\suppressproofs
% Each section in its own file 
%lasteqno Resrbm@  8
\section*{Introduction}\label{S:introduction}
{\renewcommand{\theequation}{I.\arabic{equation}} % Begin local scope for overriding equation numbering

Real blow-up of a submanifold introduces a new manifold (always here
implicitly meaning `with corners') in which the submanifold in question is
replaced by one or more boundary hypersurfaces in a prescribed manner. Here
we introduce the notion of the \emph{generalized boundary blow-up} of a
manifold with corners, $Y,$ which is a new manifold $X$ along with a smooth
and proper `blow-down' map $\beta :X\to Y$.  The latter is by definition a
b-map, restricting to a diffeomorphism on the interiors, with bijective
b-differential. It includes standard radial blow-up of boundary faces,
iterated boundary blow-up, and inhomogeneous blow-up as special cases.

It is shown here that such blow-down maps are, up to diffeomorphism,
characterized in an essentially algebraic manner by a `monoidal complex', a
consistent choice of certain combinatorial data at each face of $Y$.  One
application of the constructive part of this result is to fiber products of
maps. Working in the category of smooth manifolds and b-maps, it is shown
that under the natural condition of b-transversality the fiber product of
two maps can be decomposed and smoothed by generalized blow-up and this resolved fiber
product has a weakened form of the universal factoring property for fiber
products in which generalized blow-up of the domain may be required.

Although this paper is concerned with geometric and algebraic questions, one of
the contexts in which blow-up appears is analytic. Most analytic problems on a
manifold with corners do not have solutions `within the smooth category'. For
instance, harmonic forms with absolute or relative boundary conditions are
not typically smooth up to corners. Rather they lie in conormal spaces which
themselves are only `resolved' to polyhomogeneity by blow-up. Here
polyhomogeneity is to be thought of as the natural extension of smoothness, in
which $\CI(Y)$ is extended to include functions which have non-integral
(including possibly negative) powers in their `Taylor series' at the boundary.
The selection of the \emph{correct} resolution, in the sense of blow-up, of
spaces and their (fiber) products, on which the kernels of operators are
defined, is one of the prime motivations for the discussion here.

A manifold (meaning from now on a manifold with corners) $Y$ is a topological
manifold with boundary which is locally diffeomorphic to the model space
$[0,\infty)^k\times \R^{n-k}$ with its sheaf of smooth functions.  The set of
(connected) boundary faces of $Y$ is denoted $\cM(Y)$, with $\cM_l(Y)$ denoting
those faces of codimension $l \in \set{0,\ldots,k}$.  We require that boundary
hypersurfaces of a manifold be embedded, which means that the functions
vanishing on each such hypersurface form a principal ideal $\cI_H \subset
C^\infty(Y)$, $H \in \cM_1(Y).$  A smooth map $f : X \to Y$ is an (interior)
boundary map or `b-map' as defined in \cite{melrosedifferential,
melrose1992calculus} provided it pulls back each of these principal ideals to a
product
\begin{equation}
	f^\ast \cI_H= \prod_{G \in \cM_1(Y)} \cI_G^{\alpha(H,G)}, \quad \alpha(\cdot,\cdot) \in \Zp
	\label{E:intro_b_map}
\end{equation}
of similar ideals in $C^\infty(X)$.

For each face $F \in \cM_l(X)$ the freely generated monoid
\[
	\sigma_F = \bigoplus_{H \in \cM_1(X), F \subseteq H} \N e_H
\]
along with the inclusions $i_{GF} : \sigma_G \hookrightarrow \sigma_F$ for $F \subset G$
constitute what we call the `basic monoidal complex' of $X$:
\[
	\cP_X = \set{(\sigma_F,\ i_{GF}) \;;\; F\subseteq G \in \cM(X)},
\]
and a b-map $f : X \to Y$ defines a morphism of the complexes
\begin{equation}
\begin{gathered}
	f_\natural : \cP_X \to \cP_Y, \\
	\sigma_F \stackrel {f_\natural} \to \sigma_{f_{\#}(F)}.
\end{gathered}
	\label{E:intro_morphism}
\end{equation}
which is fundamental to our discussion. Here $f_\# : \cM(X) \to \cM(Y)$ maps
each boundary face of $X$ to the boundary face of largest codimension in $Y$
which contains its image, and the coefficients of $f_\natural : \sigma_F \to
\sigma_{f_\#(F)}$ are the relevant exponents $\alpha(\cdot,\cdot) \in \Zp$
occuring in  \eqref{E:intro_b_map}.

In general, the monoidal complexes and their morphisms capture only limited
combinatorial information regarding the boundary faces of $X$ and $Y$. However
in certain special cases, such as when $X$ is the blow-up of a boundary face
$F$ in $Y$, this information is enough to completely specify, up to
diffeomorphism, the domain $X = [Y; F]$ and the map $\beta : X \to Y$ in terms
of the range space $Y.$ In such a case the morphism \eqref{E:intro_morphism} 
forms what is called below a `smooth refinement' of $\cP_Y$, and on the analytical
side $\beta$ satisfies additional properties, namely
\begin{gather}
\beta : X \setminus \pa X \to Y \setminus \pa Y\text{ is a diffeomorphism, and}
\label{E:intro_blow_down_diffeo}\\
%\text{\eqref{E:intro_b_map} holds and}\\
\bd \beta_\ast : \bT_pX \to \bT_{\beta(p)} Y \ \text{is an isomorphism for
  all }p\in X,
\label{E:intro_blow_down_bdiffl}
\end{gather}
where $\bd \beta_\ast$ is the differential acting on the b-tangent
bundles.

Our first main result consists of two parts: a generalization of this
`boundary blow-up' with respect to arbitrary smooth refinements of $\cP_Y$, and
an analytical characterization of such a maps.

\begin{thma} 
To any smooth refinement $\cR \to \cP_Y$ there corresponds a manifold denoted
$X = [Y; \cR]$, unique up to diffeomorphism, with $\cP_X = \cR$ and a unique b-map $f:
X = [Y;\cR] \to Y$ satisfying \eqref{E:intro_blow_down_diffeo} and
\eqref{E:intro_blow_down_bdiffl}. Conversely, any smooth proper b-map 
satisfying \eqref{E:intro_blow_down_diffeo} and \eqref{E:intro_blow_down_bdiffl}
determines a smooth refinement $\cP_X \to \cP_Y.$
\end{thma}

The manifold $[Y; \cR]$ is referred to as the `generalized blow-up' of $Y$ by
the refinement $\cR$, and the lifting problem for b-maps under generalized
blow-ups of the domain and/or range is resolved by examining the corresponding
lifting of the monoidal complex morphisms.

Our second main result is an application of this theory to fiber products.
Recall that a (smooth) fiber product of two maps $f_i : X_i \to Y$, $i = 1,2$
does not generally exist even in the category of smooth manifolds {\em without}
boundary. There is however a sufficient condition for existence, namely that
$f_1$ and $f_2$ be {\em transversal}, meaning that if $f_1(p_1) = f_2(p_2) = q
\in Y$ then
\begin{equation}
	(f_1)_\ast (T_{p_1} X_1) + (f_2)_\ast (T_{p_2} X_2) = T_q Y,
	\label{E:intro_transversality}
\end{equation}
for in this case the set-theoretic fiber product
\begin{equation}
\begin{gathered}
X_1 \times_Y X_2 = \set{(p_1,p_2) \;;\; f_1(p_1) = f_2(p_2)} \subset X_1 \times X_2, \\
h_i(p_1,p_2) = p_i,
\end{gathered}
	\label{E:intro_set_fib_prod}
\end{equation}
is a smooth manifold and the maps afforded by the universal property of fiber
products are smooth.

The natural analog, `b-transversality', of \eqref{E:intro_transversality} in the setting of
manifolds with corners is the requirement that
\begin{equation}
	\bd(f_1)_\ast (\bT_{p_1} X_1) + \bd(f_2)_\ast (\bT_{p_2} X_2) = \bT_q Y.
\label{E:intro_b_transversality}\end{equation}
Under this condition, \eqref{E:intro_set_fib_prod} is not necessarily a
manifold, but can be decomposed as a union of what are here called `interior
binomial subvarieties'. These are objects generalizing manifolds with corners,
with smooth interiors but mild singularities at the boundary and boundary
faces which are of the same type. They can be resolved, by generalized
boundary blow-up, to manifolds with corners.

As for a manifold, there is a natural monoidal complex $\cP_D$ defined over the
boundary faces of an interior binomial subvariety $D \subset X,$ the difference
being that the monoids may not be freely generated, i.e. may not be smooth.
If the monoidal complex is smooth, then $D$ has a natural structure of a smooth
manifold although this may not be induced from $X,$ in that $D$ may not be
embedded.  Even if the complex is not smooth, there is a smooth manifold $[D; 
\cR] \to D$ corresponding to every smooth refinement $\cR \to \cP_D.$

In the case of fiber products, the monoids in the monoidal complex over $X_1
\times_Y X_2$ are of the form
\begin{equation}
	\sigma_{F_1} \times_{\sigma_G} \sigma_{F_2}, \quad F_i \in \cM(X_i),
	\label{E:intro_fib_prod_monoids}
\end{equation}
where $G = (f_1)_\#(F_1) \cap (f_2)_\#(F_2) \in \cM(Y).$

\begin{thmb}
If $f_i : X_i \to Y$ are b-maps which satisfy \eqref{E:intro_b_transversality},
and if each of the monoids \eqref{E:intro_fib_prod_monoids} is freely
generated, then there exists a universal fiber product of $f_1$ and $f_2$ in
the category of manifolds with corners, which is given by a decomposition of
\eqref{E:intro_set_fib_prod} into a finite union of sets with natural smooth
structures.
\end{thmb}
\noindent This result extends a theorem of Joyce \cite{joyce2009manifolds}
on fiber products of more restricted maps.

In the general case, when the monoids are not necessarily smooth, there is
no such universal object and it is in general necessary to blow up to get
smoothness and also to factor maps.

\begin{thmc} 
For every smooth refinement $\cR$ of the complex $\cP_{X_1\times_Y X_2},$ there
is a manifold $[X_1 \times_Y X_2\;;\; \cR]$ with b-maps to $X_i$ commuting with
the $f_i :X_i \to Y$ and such that if $g_i :Z \to X_i$, $i = 1,2$ are b-maps
commuting with the $f_i$ then there exists a generalized blow-up $[Z;\cS]
\to Z$ and a unique map $g : [Z; \cS] \to [X_1\times_Y X_2 \;;\; \cR]$
giving a commutative diagram
\[
\begin{tikzpicture}[->,>=to,auto]
\matrix (m) [matrix of math nodes, column sep=1cm, row sep=1cm, text depth=0.25ex]
{ [Z; \cS] & & \\ Z & {[X_1\times_Y X_2;\cR]} & X_2 \\ & X_1 & Y. \\};
\path (m-1-1) edge [dashed] node {$g$} (m-2-2);
\path (m-1-1) edge [dashed] node {$\beta_{\cS}$} (m-2-1);
\path (m-2-2) edge node {$h_2$} (m-2-3);
\path (m-2-2) edge node {$h_1$} (m-3-2);
\path (m-3-2) edge node {$f_1$} (m-3-3);
\path (m-2-3) edge node {$f_2$} (m-3-3);
\path (m-2-1) edge [bend left] node  {$g_2$} (m-2-3);
\path (m-2-1) edge [bend right] node  {$g_1$} (m-3-2);
\end{tikzpicture}
\]
\end{thmc}

There is a close relationship between the material discussed here and the
theory of logarithmic structures in algebraic geometry.  The monoidal complexes
we describe are related to `fans' as defined by Kato \cite{kato1994toric},
which essentially are to (toric) monoids what schemes are to rings.  Our
generalized blow-up is then related to a result of Kato, which produces a
resolution of a logarithmic scheme with `toric singularities' (also called a
`log-smooth scheme') from a subdivision of the fan associated to the scheme.
Our manifolds with corners and binomial subvarieties can be viewed as log
smooth spaces, albeit with a stronger differentiable structure than is usually
considered in algebraic geometry. There is however an important freedom
afforded by working over $\R_+$ (as is the case for manifolds with corners and
boundary blow-up) rather than $\R$ or $\C$ which is that affine charts may be
glued together by maps involving radicals and still result in smooth objects
(as opposed to quite singular objects like stacks in the more conventional
toric settings). We make heavy use of this fact and note that many of the
smooth maps we produce would not even be birational in an algebraic setting
over $\C.$

In Section \ref{S:bnotation} we briefly recall the theory of manifolds with
corners and b-maps, and establish some notation.  Section \ref{S:monoids}
contains a discussion of monoids (here always meaning what are often called
`toric monoids') and their refinements, which is enough to establish the local
version of generalized blow-up in Section \ref{S:local}.  In Sections
\ref{S:complexes} and \ref{S:fib_prod_complexes} we discuss the theory of
monoidal complexes and refinements of these.  In Section \ref{S:global} we
complete one half of Theorem A, showing the existence of a generalized blow-up
of $X$ given a smooth refinement of $\cP_X$, along with the lifting results for
b-maps, before exhibiting ordinary boundary blow-up and its iterated version as
special cases in Section \ref{S:blowup}.  Section \ref{S:characterization}
completes Theorem A, giving the characterization of generalized blow-down maps
by the properties \eqref{E:intro_blow_down_diffeo} and
\eqref{E:intro_blow_down_bdiffl}.  In Sections \ref{S:binvars} and
\ref{S:binvarres} we discuss the theory of interior binomial varieties and
their resolution, finally applying this to fiber products in Section
\ref{S:fiber}.

\begin{ack}
The authors would like to thank Dan Abramovich, Pierre Albin, William Gillam,
Daniel Grieser, Samouil Molcho and Michael Singer for helpful discussions
during the preparation of this manuscript, and to thank the referee for many insightful
comments. We should also like to point out the important role in the origins of
this work played by Umut Varolgunes and Jonathan Zhu who investigated the
resolution of sums of positive monomials in the model, Euclidean, case in an
`Undergraduate Research Opportunity' at MIT.
\end{ack}

}% End local scope for overriding equation numbering
%\paperbody
\section{Manifolds with corners} \label{S:bnotation}

In this section, we fix notation used for manifolds with corners and b-maps.
For background, see \cite{melrosedifferential} and \cite{melrose1992calculus}.  Set
\[
	\R_+ = [0,\infty)\ \text{and}\ \Zp = \set{0,1,2,\ldots}.
\]
The model manifold with corners is a product
\[
	\bbR^{n,k}=\R^k_+ \times \R^{n-k},
\]
for $k \in \set{0,\ldots, n},$ on which the smooth functions, forming the ring
$\CI(\bbR^{n,k})=\CI(\bbR^n)\big|_{\bbR^{n,k}},$ are taken to be those
obtained by restriction from the smooth functions on $\bbR^n.$

In general, a {\em manifold with corners} $X$ is a (paracompact, Hausdorff)
topological manifold with boundary, $X,$ with a ring of smooth functions,
$\CI(X)$ with respect to which it is everywhere locally diffeomorphic
to one of these model spaces. Thus, $X$ has a covering by coordinate
patches with homeomorphisms to open subsets of the local model spaces such
that $u\in\CI(X)$ if and only if its image in each coordinate system is
smooth on the model. The pull-backs of the functions on the model spaces
give local coordinate systems.

Each point $p \in X$ necessarily has a well-defined (boundary) {\em
  codimension} given by the number of independent non-negative coordinate
functions vanishing at $p$ in such a coordinate system. A boundary face of
codimension $k$ is the closure of one of the components of the set of
points of codimension $k;$ the set of such faces is denoted $\cM_k(X).$
In particular $\cM_1(X)$ consists of {\em boundary hypersurfaces} and
$\cM_0(X) = X$ (or the set of components of $X$ if it is not connected). We
set $\cM(X) = \bigcup_k \cM_k(X),$ this is a partially ordered set under inclusion.

As part of the definition of a manifold with corners, we require that all
boundary  hypersurfaces $H \in \cM_1(X)$ be embedded; this is equivalent to
insisting that the ideal of smooth functions vanishing on $H,$ $\cI_H \subset
\CI(X),$ is principal. A non-negative generator $\rho _H\in\CI(X)$ of this
ideal is a \emph{defining function,} so $\cI_H=\rho _H\cdot\CI(X).$ It follows
that each boundary face of $X$ is itself a manifold with corners. While it
would be possible to drop this requirement of embedded hypersurfaces and still
retain many of the results in this paper, manifolds violating this property
tend to be very ill behaved from an analytical standpoint, and often the first
step when encountering such a space is to resolve it via boundary blow-up to a
space {\em with} embedded hypersurfaces. In light of this, the simplification
in the combinatorial description of boundary faces which results from the
stronger requirement is a worthy trade-off.

The diffeomorphisms of $X,$ homeomorphisms which map $\CI(X)$ to itself,
must preserve the stratification by boundary codimension, and the
infinitesimal diffeomorphisms correspond to the Lie subalgebra 
\begin{equation}
	\cV_\mathrm{b}(X) \subset \cV(X) 
	\label{E:b_vector_fields}
\end{equation}
of the usual algebra of real smooth vector fields, consisting of those vector fields
which are tangent to each of the boundary faces of $X.$ This subalgebra is
the space of global smooth sections of the {\em b-tangent bundle}, $\bT X
\longrightarrow X.$ Locally, whenever $(x,y) : U \to \R^k_+\times \R^{n-k}$
are coordinates centered at $p,$
\[
\bT_p X = \sspan_\R\set{x_1\pa_{x_1},\ldots,x_k\pa_{x_k},\pa_{y_{k+1}},\ldots,\pa_{y_n}}.
\]

From \eqref{E:b_vector_fields}, there is a natural evaluation map
\begin{equation}
	\bT X\longrightarrow T X
	\label{E:bT_to_T}
\end{equation}
which is an isomorphism over the interior of $X$ but not over the boundary.
Over interior points $p \in \mathring F \in \cM_k(X)$ (and extending to all of
$F$ by continuity), the kernel of \eqref{E:bT_to_T} defines the {\em b-normal
bundle}:
\[
	\bN F\longrightarrow F,\ \bN_pF =
        \sspan_\R\set{x_1\pa_{x_1},\ldots,x_k\pa_{k_k}},\ \text{locally.}
\]
In fact, up to reordering, the sections $x_i\pa_{x_i}$, $i = 1,\ldots,k$
of $\bN F$ are well-defined independent of coordinates since any other set
$(x'_1,\ldots,x'_k)$ must have the form (reordering if necessary) $x'_i =
a_i(x,y)x_i$ with $a_i > 0$, so
\[
	x'_i\pa_{x'_i} = x_i\pa_{x_i} + \cO(x)\cV_\mathrm{b}(X),
\]
and hence $x'_i\pa_{x'_i} \equiv x_i\pa_{x_i}$ at $F.$

Thus, $\bN F \to F$ is canonically trivial, with a well-defined lattice
\begin{equation}
	(\bN F)_\Z = \sspan_\Z\set{x_1\pa_{x_1},\ldots,x_k\pa_{x_k}}
	\label{E:bN_lattice}
\end{equation}
and cone of {\em inward-pointing} vectors
\[
	\bN_+ F = \sspan_{\R_+}\set{x_1\pa_{x_1},\ldots,x_k\pa_{x_k}}.
\]
These play a fundamental role in the discussion here.

A {\em b-map} $f : X \to Y$ between manifolds with corners is a map which
is smooth, meaning $f^\ast\CI(Y)\subset \CI(X),$ and is such that $f$ pulls
back each (principal) ideal $\cI_H$ to either a product of powers of 
such ideals on $X$,
\begin{equation}
	f^\ast\cI_H =  \prod_{G \in \cM_1(X)} \cI^{\alpha(G,H)}_G,\ \alpha(G,H) \in \Zp
\label{E:f_on_ideals}\end{equation}
to the zero ideal, $f^\ast \cI_H  = 0$ or to  $\CI(X).$ If the second case
does not occur we say $f$ is an {\em interior} b-map. Otherwise $f :
X\longrightarrow F$ is an interior b-map for some $F \in \cM(Y).$ We shall mostly 
be concerned with interior b-maps in this paper. In practical terms \eqref{E:f_on_ideals}
means that $f$ has the local form
\[
\begin{gathered}
	f : \R^n_+ \times \R^k \ni (x,y) \mapsto (x',y') \in \R^{n'}_+ \times \R^{k'}\\
	(x'_1,\ldots,x'_{n'}) = \pns{a_1(x,y)\prod_i x_i^{\alpha(i,1)}, \ldots, a_{n'}(x,y)\prod_i x^{\alpha(i,n')}} \\
	y' = b(x,y),
\end{gathered}
\]
where the $a_i(x,y)$ are strictly positive smooth functions and $\alpha(i,j) =
\alpha(G,H)$ where $G$ and $H$ are the boundary hypersurfaces for which $x_i$
and $x'_j$ are locally defining functions, respectively.

For an interior b-map the differential $f_\ast: T X\longrightarrow T Y$
extends by continuity from the interior to the {\em b-differential}
\[
	\bd f_\ast : \bT_p X \to \bT_{f(p)} Y,\ \forall\ p\in X.
\]	
We denote by $f_{\#}:\cM(X)\longrightarrow \cM(Y)$ the map which assigns to
each boundary face of $X$ the boundary face of $Y$ of largest
codimension which contains it. Then the differential restricts to a map
\begin{equation}
	\bd f_\ast : \bN F \to \bN f_{\#}(F)
	\label{E:bf_on_bN}
\end{equation}
which is integral with respect to the lattices \eqref{E:bN_lattice}. Indeed, 
\eqref{E:bf_on_bN} is given by a matrix with integer entries coming
from the exponents $\alpha(G,H)$ in \eqref{E:f_on_ideals}, and $\bd f_\ast$ 
maps inward-pointing vectors to inward-pointing vectors since these entries
are non-negative.

It is convenient to use multi-index notation for certain maps in
coordinates.  If $t = (t_1,\ldots,t_n)$ and $x = (x_1,\ldots,x_k)$ are
local coordinates on spaces $U$ and $V$ respectively, and $\mu \in
\Mat(n\times k, \R)$ is a matrix, the map $(t_1,\ldots,t_n) \mapsto (x_1,\ldots,x_n)$ where 
\begin{equation*}
x_i = \prod_j t_j^{\mu_{ji}}\text{ is denoted } t \mapsto t^\mu = x.
\end{equation*}
With this convention on the order of the indices, $\pns{t^\mu}^\nu = t^{\mu\nu}.$

\section{Monoids}\label{S:monoids}

In general terms a monoid is a set which is closed under an associative binary
operation (usually required to be commutative and have an identity element), so
in essence a group without inverses or a ring without addition. Here we
restrict attention to monoids of the special type usually known as \emph{toric
monoids}, which for our purposes may be characterized as follows.

\begin{defn}
A {\em toric monoid} is a set $\sigma$, closed under a binary operation of
addition, which can be expressed as the intersection 
\[
	\sigma = N_\sigma \cap C
\]
of a finitely generated integral lattice $N_\sigma$ and a proper convex
polyhedral cone $C$ in the vector space $N_\sigma^\R = N_\sigma \otimes_\Z \R$
which is integral with respect to $N_\sigma$, and for which $N_\sigma^\R =
\sspan_{\R}\sigma$. The cone, which is determined from the monoid by $C =
\sspan_{\R_+} \sigma \subset N_\sigma^\R$, will be called the {\em support} of
$\sigma$ and denoted by
\[
	\supp(\sigma) = C \subset N_\sigma^\R.
\]
The minimal integral generators $\set{v_1,\ldots,v_k} \subset N_\sigma$ of the
cone $\supp(\sigma)$ (which do not necessarily generate $\sigma$ as a monoid)
will be called the {\em extremals} of $\sigma$; by assumption no $v_i$ is a
combination of the others with non-negative coefficients but every point of
$\supp(\sigma)$ is a non-negative linear combination of the $v_i.$
\label{D:monoid}
\end{defn}

\noindent Toric monoids may be characterized equivalently (though more
abstractly) as follows \cite{ogus2006lectures}. For $\sigma$ a commutative,
finitely generated monoid with identity, there is a canonical abelian group
$\sigma^{\mathrm{gp}}$ which is universal with respect to monoid homomorphisms
from $\sigma$ to groups. Then $\sigma$ is a toric monoid provided it is {\em
sharp}, meaning $\sigma$ has no invertible elements besides $0$; {\em
integral}, meaning that cancellation holds: if $x + y = z + y \in \sigma$ then
$x = z \in \sigma$ (equivalently the map from $\sigma$ to
$\sigma^{\mathrm{gp}}$ is injective); {\em saturated}, meaning that if $n\,v
\in \sigma$ for some $n > 0$ and $v \in \sigma^{\mathrm{gp}}$, then $v \in
\sigma$; and  {\em torsion free}, meaning that $\sigma^{\mathrm{gp}}$ is
torsion free. Indeed under these conditions $N_\sigma := \sigma^{\mathrm{gp}}$
is a lattice, $C = \sspan_{\R_+} \sigma
\subset N_\sigma \otimes_{\Z} \R$ is a proper convex integral cone and $\sigma
\equiv N_\sigma \cap C$.

For the remainder of the paper, {\em monoid} will always mean {\em toric monoid}.

The {\em dimension} $\dim(\sigma)$ is the dimension of $N_\sigma^\R$;
equivalently, $\dim(\sigma)$ is the maximum number of linearly independent
extremals. A monoid $\sigma$ is said to be {\em simplicial} if the
extremals $\set{v_1,\ldots,v_n}$ are independent, in which case
$\dim(\sigma) = n$ and $\supp(\sigma)$ is a cone over the $n-1$ simplex
defined by $\set{v_1,\ldots,v_n}.$ A simplicial monoid is further said to
be {\em smooth} if it is generated as a monoid by its extremals. A smooth
monoid is therefore freely generated and isomorphic to $\Zp^n$. We will
use the notation $\sigma = \Zp\pair{v_1,\ldots,v_n}$ to denote a smooth
monoid freely generated by independent vectors $v_1,\ldots,v_n$.

One monoid $\sigma'$ in $N$ is a {\em submonoid} of another $\sigma$ if
$\sigma'\subset\sigma$ so the generators of $\sigma'$ are non-negative integral
combinations of the generators of $\sigma.$  

A monoid $\tau$ is a {\em face} of $\sigma$, written $\tau \leq \sigma$, if
$\tau$ is a submonoid such that whenever $v,w \in \sigma$ and $v + w \in \tau$, then
$v,w \in \tau$.  Equivalently, $\tau$ is the largest submonoid contained in a
face (in the sense of cones) of $\supp(\sigma)$, which is in turn equivalent to
the existence of a functional $u \in N_\sigma^\ast$ such that
\[
\begin{aligned}
	\pair{u,v} &= 0\quad \text{for $v \in \tau$,} \\
	\pair{u,v} &> 0\quad \text{for $v \in \sigma \setminus \tau$.}
\end{aligned}
\]
In particular, the trivial monoid $\set{0}$ with no generators is a face of
every monoid.

%\begin{ex}[A non-simplicial monoid]
%In $\R^4$ with its standard basis, consider the monoid $\sigma$ generated by
%the vectors (which are also its extremals)
%\begin{align*}
	%v_1 &= (1,0,1,0) & v_2 &= (1,0,0,1) \\ v_3 &=(0,1,1,0) & v_4 &=(0,1,0,1).
%\end{align*}
%It is 3-dimensional (since any three of the generators can be seen to be
%independent), with four 2-dimensional faces generated by the pairs $\set{v_1,v_2}$,
%$\set{v_2,v_3}$, $\set{v_3,v_4}$, and $\set{v_1,v_4}$, four 1-dimensional faces generated
%by each of the $v_i$, and the 0-dimensional face $\set{0}.$ It is not
%simplicial.
%\label{X:nonsimplicial}
%\end{ex}

%\begin{ex}[A non-smooth simplicial monoid]
%In $\R^2$, let $\sigma$ be the monoid generated by $(2,0)$, $(1,1)$ and
%$(0,2)$.  (See Figure~\ref{F:nonsmooth}.) It is simplicial, with the support
%equal to the positive quadrant, and its extremals are $(2,0)$ and $(0,2)$.  It
%is not smooth since it contains elements ($(1,1)$, for example) which are not
%positive integral combinations of its extremals.
%\label{X:nonsmooth}
%\end{ex}

A monoid homomorphism $\phi : \sigma' \to \sigma$ is just an addition-preserving map.
We will also denote by $\phi$ the induced linear maps $\phi : N_{\sigma'} \to N_\sigma$,
$\phi : N^\R_{\sigma'} \to N^\R_{\sigma}$ and $\phi : \supp(\sigma') \to \supp(\sigma).$
Note that unless $\phi$ is injective, the image of
$\sigma'$ in $\sigma$ need not be a monoid in the sense we have defined, since
it may not be saturated (consider the map $\Zp^2 \to \Zp$ sending $(1,0)$ to
$2$ and $(0,1)$ to $3$, for instance). The inverse image of a face of $\sigma$
is always a face of $\sigma'.$

\begin{defn} Given a monoid $\sigma$, a {\em refinement} of $\sigma$ is a collection $R$
of submonoids of $\sigma$, such that
\begin{enumerate}[{\normalfont (i)}]
\item \label{I:monref_1} if $\sigma' \in R$ and $\tau' \leq \sigma'$, then $\tau' \in R$,
\item \label{I:monref_2} for any two monoids $\sigma'_1,$ $\sigma'_2 \in R$, $\sigma'_1 \cap
	\sigma'_2$ must be a face of both $\sigma'_1$ and $\sigma'_2$, and
\item \label{I:monref_3} $\bigcup_i \supp(\sigma_i) = \supp(\sigma)$ (as viewed in the vector space $N_\sigma^\R$ using the natural inclusions $N_{\sigma_i}^\R \to N_\sigma^\R$).
\end{enumerate}
We say $R$ is a {\em simplicial} (resp.\ {\em smooth}) refinement if 
each $\sigma_i \in R$ is simplicial (resp.\ smooth).
The set of faces of $\sigma$ (including $\sigma$ itself) form the {\em trivial
refinement} of $\sigma.$
\label{D:monoid_refinement}
\end{defn}

\noindent These conditions imply that the cones $\supp(\sigma_i)$ form a {\em
fan} --- in the classical sense used in the theory of toric varieties
\cite{fulton1993introduction} --- whose support is equal to that of $\sigma$, though
it is important to note that this notion of refinement differs from the standard
one in toric geometry in that the monoids need not all be defined with respect
to a single lattice. In particular the `Fulton-style' fan refinement underlying
a smooth refinement need not be smooth.

A refinement of $\sigma$ is a special case of a {\em monoidal complex},
discussed in Section~\ref{S:complexes}.

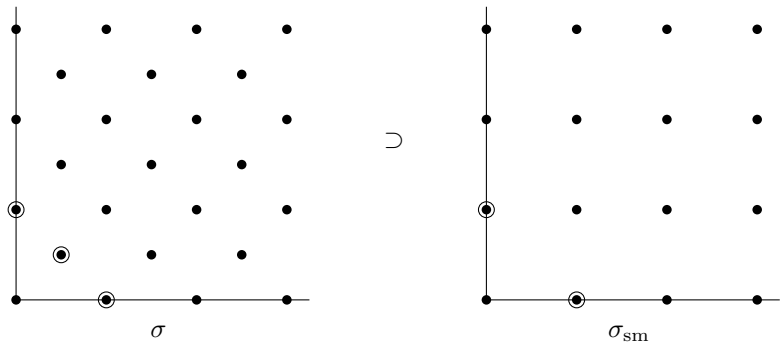
\begin{figure}[tb]
%\begin{centered}
\begin{tikzpicture}[->,>=to,auto]
\matrix (m) [matrix of nodes, column sep=2cm]
{ 
 \begin{tikzpicture}[scale=0.6]
 \draw[-] (0,0) -- (6.5,0);
 \draw[-] (0,0) -- (0,6.5);
 \foreach \x in {0,2,4,6} {
	 \foreach \y in {0,2,4,6} {
		 \fill (\x,\y) circle (3pt);
		 }
	 }
 \foreach \x in {1,3,5} {
	 \foreach \y in {1,3,5} {
		 \fill (\x,\y) circle (3pt);
		 }
	 }
 \draw (2,0) circle (5pt);
 \draw (0,2) circle (5pt);
 \draw (1,1) circle (5pt);
 \end{tikzpicture}
& 
 \begin{tikzpicture}[scale=0.6]
 \draw[-] (0,0) -- (6.5,0);
 \draw[-] (0,0) -- (0,6.5);
 \foreach \x in {0,2,4,6} {
	 \foreach \y in {0,2,4,6} {
		 \fill (\x,\y) circle (3pt);
		 }
	 }
 \draw (2,0) circle (5pt);
 \draw (0,2) circle (5pt);
 \end{tikzpicture}
\\
$\sigma$ & $\sigma_{\mathrm{sm}}$
\\};
%\path (m-1-1) edge [bend left=10] (m-1-2);
\path (m-1-1) to node {$\supset$} (m-1-2);
\end{tikzpicture}

%\end{centered}
\caption{A non-smooth, simplicial monoid $\sigma$, along with its smoothing 
$\sigma_{\mathrm{sm}}$.  Generators of each monoid are highlighted.}
\label{F:nonsmooth}
\end{figure}

\begin{ex} A simplicial monoid, $\sigma,$ with extremals $\set{v_1,\ldots,v_n},$
has a canonical smooth refinement consisting of the faces of the smooth 
monoid 
\[
	\sigma_{\mathrm{sm}} := \Zp\pair{v_1,\ldots,v_n}.
\]
These are of the form $\Zp\pair{w_1,\ldots,w_k}$ for all subsets 
$\set{w_1,\ldots,w_k} \subseteq \set{v_1,\ldots,v_n}$; this refinement will
be called the {\em smoothing} of $\sigma.$  See Figure~\ref{F:nonsmooth}.
\label{X:smoothing}
\end{ex}
This example illustrates the point that while $\supp(\sigma)$ is covered by
supports of the $\tau \in R$, $\sigma$ need not be the union of the $\tau \in
R$ as a monoid.  A simpler example of this phenomenon is the refinement of
$\Zp$ given by the submonoid $2 \Zp = \set{0, 2, 4, \ldots}$. 

It is important to observe that this smoothing operation is rather nonstandard;
in the theory of complex toric varieties for instance, it may generate a map of
varieties which is not even birational. However in the category of manifolds
with corners (where `$\R_+$-points' are considered and radicals are
consequently well-behaved) it becomes very useful and will be fundamental to
our results below.

\begin{lem}
If $R$ is a refinement of $\sigma$ and $\tau \leq \sigma$ is a face then
\begin{equation}
	R(\tau) = \set{\sigma' \in R \;;\; \sigma' \subset \tau}
	\label{E:localization_of_refinement}
\end{equation}
is a refinement of $\tau.$
\label{L:refinements_and_faces}
\end{lem}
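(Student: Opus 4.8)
The plan is to verify conditions (i), (ii), (iii) of Definition~\ref{D:monoid_refinement} for the collection $R(\tau)$, regarded as a collection of submonoids of $\tau$. First I would note that every $\sigma' \in R(\tau)$ is by hypothesis a submonoid of $\sigma$ contained in the subset $\tau \subseteq \sigma$, and since $\tau$ is itself a submonoid, $\sigma'$ is a submonoid of $\tau$; so $R(\tau)$ is at least a collection of submonoids of $\tau$.

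For condition (i), suppose $\sigma' \in R(\tau)$ and $\tau' \leq \sigma'$ is a face. Since $R$ is a refinement of $\sigma$, condition (i) for $R$ gives $\tau' \in R$. It remains to see $\tau' \subset \tau$, which is immediate from $\tau' \subset \sigma' \subset \tau$. Hence $\tau' \in R(\tau)$. For condition (ii), take $\sigma'_1, \sigma'_2 \in R(\tau)$. Condition (ii) for $R$ says $\sigma'_1 \cap \sigma'_2$ is a face of each $\sigma'_i$, and this is exactly what is required; membership $\sigma'_1 \cap \sigma'_2 \subset \tau$ is clear.

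The one point requiring genuine argument is condition (iii): that $\bigcup_{\sigma' \in R(\tau)} \supp(\sigma') = \supp(\tau)$. The inclusion $\subseteq$ is clear since each such $\sigma'$ lies in $\tau$. For $\supseteq$, let $v \in \supp(\tau) \subset \supp(\sigma)$. By condition (iii) for $R$, there is some $\sigma' \in R$ with $v \in \supp(\sigma')$; the issue is that a priori $\sigma'$ need not be contained in $\tau$. Here is where I would use the characterization of a face of $\sigma$ by a supporting functional $u \in N_\sigma^\ast$, with $\pair{u, \cdot} = 0$ on $\tau$ and $\pair{u, \cdot} > 0$ on $\sigma \setminus \tau$; by continuity and linearity $\pair{u, \cdot} \geq 0$ on $\supp(\sigma)$ and its zero set on $\supp(\sigma)$ is exactly $\supp(\tau)$. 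Now $\supp(\sigma')$ is a subcone of $\supp(\sigma)$ containing $v$; I claim that the face $\tau' = \set{w \in \sigma' : \pair{u, w} = 0}$ of $\sigma'$ (it is a face because $u \geq 0$ on $\sigma'$, being the restriction of $u \geq 0$ on $\supp(\sigma)$) already contains $v$. Indeed $\pair{u, v} = 0$ since $v \in \supp(\tau)$, and $\supp(\tau')$ is the intersection of $\supp(\sigma')$ with the hyperplane $\set{\pair{u, \cdot} = 0}$; thus $v \in \supp(\tau')$. Finally $\tau' \subset \tau$ because $\tau'$ consists of elements of $\sigma$ on which $u$ vanishes, and $\tau = \set{w \in \sigma : \pair{u, w} = 0}$. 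By condition (i) for $R$, $\tau' \in R$, and since $\tau' \subset \tau$ we get $\tau' \in R(\tau)$ with $v \in \supp(\tau')$. This proves (iii) and hence the lemma.

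I expect the main obstacle to be exactly the point just addressed: one cannot naively intersect each $\sigma' \in R$ with $\tau$, because the intersection of a refinement element with $\tau$ need not itself be an element of $R$ (it is by condition (ii) a face of $\sigma'$, but one must still recognize it as such and invoke condition (i)). The clean way around this is to fix a single supporting functional $u$ for the face $\tau$ and use it simultaneously to cut down each relevant $\sigma'$ to a face lying inside $\tau$; the compatibility of $u \geq 0$ on all of $\supp(\sigma)$ with its vanishing precisely on $\supp(\tau)$ is what makes the covering property descend. Everything else is a direct unwinding of definitions.
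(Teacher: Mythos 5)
Your proof is correct and follows essentially the same route as the paper: conditions (i) and (ii) are immediate, and for (iii) one takes $v\in\supp(\tau)$, finds $\sigma'\in R$ with $v\in\supp(\sigma')$, and cuts $\sigma'$ down to the face $\sigma'\cap\supp(\tau)$, which lies in $R(\tau)$ and still contains $v$ in its support. The paper simply asserts that $\supp(\sigma')\cap\supp(\tau)$ is a face of $\supp(\sigma')$, whereas you justify this (and the containment of the resulting face in $\tau$) explicitly via the supporting functional $u$ for $\tau$ — a welcome elaboration, not a different argument.
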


\begin{proof}
Clearly $R(\tau)$ satisfies the first two properties of a refinement, since
if $\sigma' \in R(\tau)$ and $\tau' \leq \sigma'$, then $\tau' \subset
\tau$ and hence $\tau' \in R(\tau)$.  For the third property, choose any $v
\in \supp(\tau)$.  Since also $v \in \supp(\sigma)$, there must be a $\sigma'
\in R$ for which $v \in \supp(\sigma')$.  By the support condition,
$\supp(\sigma') \cap \supp(\tau)$ is a face of $\supp(\sigma')$, which
corresponds to a monoid $\tau' \leq \sigma'$ such that $\supp(\tau') \subset
\supp(\tau)$.
\end{proof}

A particularly important example of a refinement is the operation of {\em star
subdivision}, which is well-known in toric geometry
\cite{fulton1993introduction} and its generalizations, though the definition
below is not standard. (In the classical toric theory, subdivision of a smooth
fan need not be smooth, for instance.)

\begin{prop} If $\sigma$ is a monoid and $0\not= v\in \sigma$ the collection,
$S(\sigma,v),$ of monoids consisting of 
\begin{enumerate}[{\normalfont (i)}]
\item all faces $\tau \leq \sigma$ such that $v \notin \tau$, and
\item monoids $\tau + \Zp v := \sspan_{\Zp} \tau \cup \set{v}$ where $\tau \leq
	\sigma$ and $v \notin \tau$
\end{enumerate}
is a refinement of $\sigma.$ If all proper faces of $\sigma$ are smooth,
then $S(\sigma,v)$ is a smooth refinement.
\label{P:star_subdivision}
\end{prop}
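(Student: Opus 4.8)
The plan is to verify the three defining conditions of a refinement (Definition \ref{D:monoid_refinement}) for the collection $S(\sigma, v)$, and then treat the smoothness claim separately. The geometric picture is the standard one: we are subdividing $\supp(\sigma)$ by inserting the ray $\R_+ v$ and coning it together with every face of $\supp(\sigma)$ not already containing $v$. Condition (i), closure under taking faces, is the main thing requiring care. A face of a monoid of type (i) is again a face of $\sigma$ not containing $v$, hence of type (i). For a monoid $\tau + \Zp v$ of type (ii), I would argue that its faces are exactly: the faces of $\tau$ (all of type (i)), the monoids $\tau' + \Zp v$ for $\tau' \leq \tau$ (all of type (ii)), and $\Zp v$ itself ($= \{0\} + \Zp v$, type (ii)). This uses the functional characterization of faces from Section \ref{S:monoids}: a face of $\tau + \Zp v$ is cut out by some $u$ nonnegative on $\tau$ and on $v$; splitting on whether $\pair{u,v} = 0$ recovers the two families. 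One should check $v \notin \tau'$ for the relevant subfaces $\tau' \leq \tau$, which follows since $\tau' \leq \tau \leq \sigma$ and $v \notin \tau \supseteq \tau'$ when $\tau$ is a proper face — and $v \notin \tau$ is part of the hypothesis defining type (ii) monoids anyway.

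For condition (ii), I would take two monoids $\sigma_1', \sigma_2' \in S(\sigma,v)$ and check that $\sigma_1' \cap \sigma_2'$ is a common face. There are three cases by type. If both are type (i) faces $\tau_1, \tau_2 \leq \sigma$, their intersection is a face of $\sigma$ (intersection of faces of a cone is a face), not containing $v$, and is a face of each $\tau_i$; it lies in $S(\sigma,v)$ by condition (i). If one is type (i), say $\tau_1$, and the other is $\tau_2 + \Zp v$, then since $v \notin \tau_1$ the intersection is $\tau_1 \cap (\tau_2 + \Zp v) = \tau_1 \cap \tau_2$ at the level of supports — here I would use that a point of $\supp(\tau_1)$ lying in $\supp(\tau_2 + \Zp v) = \supp(\tau_2) + \R_+ v$ must actually lie in $\supp(\tau_1 \cap \tau_2)$, because $\tau_1$ is a face of $\sigma$ and $v$ points out of it. If both are type (ii), $(\tau_1 + \Zp v) \cap (\tau_2 + \Zp v)$ has support $(\supp \tau_1 + \R_+ v) \cap (\supp \tau_2 + \R_+ v) = (\supp \tau_1 \cap \supp \tau_2) + \R_+ v$, which is $\supp(\tau_{12} + \Zp v)$ where $\tau_{12} = \tau_1 \cap \tau_2 \leq \sigma$, again a member of the collection and a common face of both. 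Condition (iii), that the supports cover $\supp(\sigma)$, is the easiest: any $w \in \supp(\sigma)$ lies in some minimal face $\tau \leq \sigma$ in whose relative interior it sits; if $v \notin \tau$ then $w \in \supp(\tau + \Zp v)$, and if $v \in \tau$ one passes to a face of $\tau + $ (a face not containing $v$) — more simply, the cones $\supp(\tau) + \R_+ v$ over all $\tau \leq \sigma$ with $v \notin \tau$ visibly tile $\supp(\sigma)$ since every ray of $\supp(\sigma)$ either avoids $v$'s face or is handled by coning with $v$.

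For the final sentence — if all proper faces of $\sigma$ are smooth then $S(\sigma,v)$ is smooth — I would note that the type (i) monoids are proper faces of $\sigma$ (since $v \in \sigma \setminus \{0\}$ forces any $\tau$ with $v \notin \tau$ to be proper), hence smooth by hypothesis. For a type (ii) monoid $\tau + \Zp v$ with $\tau$ a proper face: $\tau$ is smooth, so $\tau = \Zp\pair{w_1, \ldots, w_k}$ for its extremals, which are part of a lattice basis of $N_\sigma$; I must show $\{w_1, \ldots, w_k, v\}$ is again independent and that $\tau + \Zp v$ is freely generated by them, i.e. equals $\Zp\pair{w_1,\ldots,w_k,v}$. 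Independence holds because $v \notin \supp(\tau)$ (as $v \notin \tau$ and $\tau$ is a face, so $v \notin \supp(\tau)$), so $v$ is not in the span of the $w_i$. Freeness is the point where I expect the real work: by definition $\tau + \Zp v = \sspan_{\Zp}(\{w_1,\ldots,w_k\} \cup \{v\})$, and one must check this monoid is saturated and that the obvious surjection $\Zp^{k+1} \to \tau + \Zp v$ is injective. I would do this by exhibiting a functional or a retraction: projecting $N_\sigma$ along $v$ onto $N_\tau$ shows the $v$-coordinate of any element of $\tau + \Zp v$ is a well-defined nonnegative integer, peeling it off reduces to $\tau$ being smooth. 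This last verification — that adjoining the single generator $v$ to a smooth proper face yields a smooth (not merely simplicial) monoid — is the main obstacle, since it is exactly the place where working over $\R_+$ (rather than the classical toric setting, where star subdivision of a smooth cone need not be smooth) is being exploited, and it relies essentially on $\tau$ being a \emph{proper} face so that $v$ genuinely lies outside its support.
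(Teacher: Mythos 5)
Your proposal is correct and follows essentially the same route as the paper's proof: the same three-case analysis of intersections of monoids in $S(\sigma,v)$, the same covering of $\supp(\sigma)$ by the cones $\supp(\tau+\Zp v)$, and the same smoothness argument via independence of $\set{w_1,\ldots,w_k,v}$ together with the observation that $\sigma$ itself is excluded from the collection (you additionally spell out closure under faces, which the paper leaves implicit). One small repair: $v\notin\supp(\tau)$ does not by itself place $v$ outside the \emph{linear} span of the $w_i$; for that, use the supporting functional $u$ of the face $\tau$, which gives $\pair{u,w_i}=0$ but $\pair{u,v}>0$, after which linear independence immediately yields free generation (so the saturation/injectivity verification you anticipate as the main obstacle is automatic).
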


\begin{proof} 
For two monoids $\tau_1$, $\tau_2$ of the first type, the intersection $\tau_1
\cap \tau_2$ is a face of $\sigma$ not containing $v$, hence a monoid of the
first type.  For two monoids $\tau_1 + \Zp v$ and $\tau_2 + \Zp v$ of the second type,
the intersection is a monoid $(\tau_1 \cap \tau_2) + \Zp  v$ of the second type, and
for one of each type, the intersection $(\tau_1 + \Zp  v) \cap \tau_2 = \tau_1 \cap
\tau_2$ is of the first type.  The support condition follows from the fact that
any $w \in \supp(\sigma)$ lies in some cone of the form $\supp(\tau + \Zp v)$, with
$\tau \leq \sigma$.

If $\tau = \Zp \pair{v_1,\ldots,v_k}$ is smooth and $v \notin \tau$, then $\tau
+ \Zp  v$ has independent generators $\set{v_1,\ldots,v_k,v}$, so $\tau + \Zp 
v = \Zp \pair{v_1,\ldots,v_k,v}$ is smooth. Since $\sigma$ itself is not in
$S(\sigma,v)$ smoothness follows from the smoothness of all the proper
faces of $\sigma.$
\end{proof}

In the context of generalized blow-up, star subdivision is the operation which
realizes the ordinary blow-up of a boundary face of a manifold.  There is a
similar construction, ``planar refinement,'' which we
discuss next although it is not used until Section~\ref{S:binvarres}, in which
$v$ is replaced by the intersection of $\sigma$ with a subspace.

First, as a matter of notation, suppose $\tau_i \subset \sigma$, $i = 1,2$ are
{\em full submonoids} in the sense that each 
\begin{equation}
	\tau_i = \sigma \cap \supp(\tau_i).  
	\label{E:full_submonoid}
\end{equation}
Then we define the {\em join} of $\tau_1$ and $\tau_2$ to be the full submonoid
\begin{equation}
	\tau_1 \ast \tau_2 := \sigma \cap \big(\supp(\tau_1) + \supp(\tau_2)\big)
\label{E:join}\end{equation}
where $\supp(\tau_1) + \supp(\tau_2) = \sspan_{\R_+}\pns{\tau_1 \cup \tau_2}$ is
the convex hull of the supports of the $\tau_i$.

Let $\sigma$ be a smooth monoid, and $\mu \subset \sigma$ be a submonoid
which is obtained by intersection with an integral subspace of $N_\sigma^\R$:
\begin{equation}
	\mu = \sigma \cap M, \quad M \subset N_\sigma^\R \ \text{integral.}
	\label{E:mu_intersection}
\end{equation}
Note that $\mu$ need not be simplicial, though it is a full submonoid 
in the sense of \eqref{E:full_submonoid}. Let $\Lambda$ consist of the maximal faces
of $\sigma$ meeting $\mu$ trivially:
\begin{equation}
	\Lambda = \set{\tau \leq \sigma \;;\; \tau \cap \mu = \set{0},\
	\text{but for all}\ \tau' > \tau,\ \tau' \cap \mu \neq \set{0}}.
\label{E:maximal_faces}
\end{equation}

\begin{lem} 
If $\sigma$ is a smooth monoid, $M$ is an integral subspace  and $\mu$ and
$\Lambda$ are defined by \eqref{E:mu_intersection} and \eqref{E:maximal_faces}
then for any $\tau_1 \neq \tau_2 \in\Lambda,$ there exists a functional $u\in
N^*_{\sigma}$ such that $\pair{u,\tau_1} \geq 0,$ $\pair{u,\tau_2} \leq 0,$
$\pair{u,\mu} = 0$ and if $v$ is a generator of either $\tau_1$ or $\tau_2$
then $v\in\tau_1\cap\tau_2$ if and only if $\pair{u,v} = 0.$
\label{L:planar_refinement_separation}
\end{lem}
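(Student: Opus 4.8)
The plan is to work in the quotient $N^\R_\sigma / M$, where the images of $\tau_1$ and $\tau_2$ become cones meeting only at the origin, and then to pull back a separating functional. First I would set $W = N^\R_\sigma / M$ with projection $\pi : N^\R_\sigma \to W$, and consider $C_i = \pi(\supp(\tau_i))$ for $i = 1,2$. The key observation is that since $\tau_i \cap \mu = \set{0}$ and $\mu = \sigma \cap M$, the kernel of $\pi$ restricted to $\supp(\tau_i)$ is trivial: if $v \in \supp(\tau_i) \cap M$ then, $\sigma$ being smooth and $\tau_i$ a face, $v$ lies in $\supp(\tau_i \cap M) = \supp(\tau_i \cap \mu) = \set 0$. (Here one uses that for a face $\tau$ of a monoid, $\tau \cap M = \sigma \cap \supp(\tau) \cap M$ meets $M$ in exactly the lattice points of $\supp(\tau)\cap M$; smoothness of $\sigma$ makes $\tau$ smooth so this is clean.) Hence $\pi$ maps each $\supp(\tau_i)$ isomorphically onto the polyhedral cone $C_i \subset W$.

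Next I would show $C_1 \cap C_2 = \set 0$. Suppose $w \in C_1 \cap C_2$, $w \neq 0$; lifting, there are $v_1 \in \supp(\tau_1)$, $v_2 \in \supp(\tau_2)$ with $v_1 - v_2 \in M$. I want to derive a contradiction with the maximality in \eqref{E:maximal_faces}. Since $\tau_1, \tau_2$ are faces of the smooth monoid $\sigma$ and the join $\tau_1 \ast \tau_2 = \sigma \cap (\supp(\tau_1) + \supp(\tau_2))$ is again a face (the smallest face containing both), the element $v_1 + v_2$ lies in $\supp(\tau_1 \ast \tau_2)$; if $\tau_1 \ast \tau_2$ were strictly larger than both $\tau_1$ and $\tau_2$ it would meet $\mu$ nontrivially by maximality, but in fact I claim $v_1 - v_2 \in M$ together with $v_1 \in \supp(\tau_1)$, $v_2 \in \supp(\tau_2)$ already forces an element of $(\tau_1\ast\tau_2)\cap\mu$ that is nonzero, contradicting that neither $\tau_i$ can be enlarged inside the trivial-intersection locus — I would make this precise by checking that $v_1 + v_2$ (or a lattice multiple) lies in $\mu$ when $v_1 - v_2\in M$ together with suitable position of the $\tau_i$, handling separately the cases $\tau_1 \subsetneq \tau_1\ast\tau_2$ and $\tau_1 = \tau_1 \ast \tau_2$ (the latter meaning $\tau_2 \leq \tau_1$).

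Once $C_1 \cap C_2 = \set 0$ with both $C_i$ proper polyhedral cones in $W$, a standard separation argument produces $\bar u \in W^\ast$ with $\pair{\bar u, C_1} \geq 0$, $\pair{\bar u, C_2} \leq 0$; pulling back via $\pi^\ast$ gives $u = \pi^\ast \bar u \in N^\ast_\sigma$ with $\pair{u,\tau_1}\geq 0$, $\pair{u,\tau_2}\leq 0$ and $\pair{u,\mu}=0$ automatically (since $\pair{u,\cdot}$ factors through $\pi$, which kills $M \supseteq \mu^{\mathrm{gp}}$). The last requirement — that a generator $v$ of $\tau_1$ or $\tau_2$ satisfies $\pair{u,v}=0$ iff $v \in \tau_1\cap\tau_2$ — requires choosing $\bar u$ generically on the hyperplane it defines: $\set{\bar u = 0}$ should contain exactly the face $C_1 \cap C_2$ of each $C_i$. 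Since $C_1,C_2$ are finitely generated and $C_1\cap C_2$ is a common face, the set of valid separating functionals is open and dense in an appropriate sense, so a generic choice works; I would argue this by noting that for each generator $v \notin \tau_1\cap\tau_2$ the condition $\pair{u,v}\neq 0$ is a nonempty open condition on the (finite-dimensional) space of separating functionals, and intersecting finitely many such conditions leaves a valid choice.

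\emph{Main obstacle.} The crux is the middle step — proving $C_1 \cap C_2 = \set 0$ — which is exactly where the hypothesis that the $\tau_i$ are \emph{maximal} faces meeting $\mu$ trivially must be used; without maximality the lemma is false. Translating "a common point in the projected cones" into "an enlarged face hitting $\mu$" via the join construction \eqref{E:join}, and correctly handling the degenerate cases where one $\tau_i$ contains the other, is the part that needs genuine care rather than routine verification.
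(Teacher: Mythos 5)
There is a genuine gap at the central step of your plan. The claim that $C_1\cap C_2=\set{0}$ in $W=N^\R_\sigma/M$ is false precisely in the cases the lemma is designed to handle, namely when $\tau_1\cap\tau_2\neq\set{0}$. Since $\supp(\tau_i)\cap M=\set{0}$ (as you correctly establish), the projection $\pi$ is injective on each $\supp(\tau_i)$, so every nonzero $v\in\tau_1\cap\tau_2$ gives a nonzero point $\pi(v)\in C_1\cap C_2$. A concrete instance: $\sigma=\Zp^3$, $M=\R\,(1,1,1)$, $\tau_1=\Zp\pair{e_1,e_2}$ and $\tau_2=\Zp\pair{e_1,e_3}$ are distinct elements of $\Lambda$ with $\tau_1\cap\tau_2=\Zp\, e_1\neq\set{0}$. (The final clause of the lemma, which permits $\pair{u,v}=0$ exactly on generators lying in $\tau_1\cap\tau_2$, only has content in this situation, and the paper's proof explicitly notes that $\tau_1\cap\tau_2\neq\set{0}$ forces $\codim(M)>1$.) Your attempted contradiction is also logically reversed: producing a nonzero element of $(\tau_1\ast\tau_2)\cap\mu$ for the strictly larger face $\tau_1\ast\tau_2$ is exactly what maximality of the $\tau_i$ predicts; it contradicts nothing. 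Finally, the genericity argument at the end silently assumes that $C_1\cap C_2$ is a common face of the polyhedral cones $C_1$ and $C_2$. Two polyhedral cones can intersect in a set which is a face of neither, and proving that here the intersection equals $\pi\big(\supp(\tau_1\cap\tau_2)\big)$ and is cut out by a supporting hyperplane of each is essentially the entire content of the lemma, not a routine afterthought.

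The paper's proof proceeds by a different mechanism: induction on $\codim(M)$. In the base case $M$ is a hyperplane; its defining functional is definite on each $\tau_i$, with opposite signs (equal signs would force $\tau_1=\tau_2$ by maximality), and then necessarily $\tau_1\cap\tau_2=\set{0}$. In the inductive step one chooses a common generator $w\in\tau_1\cap\tau_2$, passes to the smooth submonoid $\sigma'$ generated by the remaining generators and the enlarged subspace $M'=(M+\R\,w)\cap N_{\sigma'}$ (dropping the codimension by one), verifies that the $\tau_i\cap\sigma'$ remain maximal faces meeting $\mu'$ trivially, and extends the inductively obtained functional by setting $u(w)=0$. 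If you wish to retain your quotient picture, the correct space to project to is $N^\R_\sigma/\big(M+\sspan(\tau_1\cap\tau_2)\big)$, but showing that the projected cones then meet only at the origin requires an argument of essentially the same inductive character, so the quotient does not buy you a shortcut.
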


\begin{proof} 
If $M$ has full dimension the result is trivial, so we may assume that $M$ is a
proper subspace. If $M$ is a hyperplane, then $u$ is a defining functional,
$u^\perp = M,$ and is determined up to non-vanishing constant. Since
$\tau_i\cap M=\{0\},$ $u$ is definite on each and cannot have the same sign on
both or they would be equal by maximality. Thus, either $\pair{u,\tau_1} > 0,$
$\pair{u, \tau_2} < 0$ or with signs reversed. In this case their intersection
is trivial and the result clearly holds.  Conversely if $\tau_1 \cap \tau_2
\neq \set{0},$ then $\codim(M) > 1.$

In the general case we proceed by induction on $\codim(M)$. % over the dimension of $\sigma.$
Choose a generator $w \in \tau_1\cap \tau_2,$ and consider the smooth monoid
$\sigma' < \sigma$ generated by all the generators of $\sigma$ except $w$, with
$\tau'_i := \tau_i \cap \sigma'$.  Consider the subspace $M' = (M+\R\,w)\cap
N_{\sigma'} \subset N_{\sigma'}$ and set $\mu'=M'\cap \sigma'.$  Since $w\notin
M,$ $M'$ has the same dimension as $M,$ hence the codimension of $M'$ in
$N_{\sigma'}$ is one less than the codimension of $M$ in $N_\sigma$. 

Now, the $\tau'_i$ are maximal faces of $\sigma'$ not meeting $\mu'.$
To see this first note that $\tau'_i\cap M'=\set{0}$ since the generators of $\mu$ can be
written $e_l=e'_l+c_lw$ where the $e'_l$ are generators for $\mu'$ and $c_l\geq
0.$ Thus, if there was a point $p\in\tau'_i\cap\mu',$ necessarily of the form
$p=\sum_{l}d_le'_l$ with the $d_l\geq 0$, then there would be a point
$p+\sum\limits_{l}d_lc_lw\in\tau_i\cap\mu.$  Similarly, the $\tau'_i$ are
maximal since if $\tau' > \tau'_i$ is a face of $\sigma'$ which meets $\mu'$
trivially then $\tau'+\Zp w > \tau_i$ meets $\mu$ trivially, contradicting the
maximality of $\tau_i$. 

Thus by induction on the codimension of $M'$ in $N_{\sigma'}$ there exists a
functional $u'\in N^\ast_{\sigma'}$ with the desired separating property for
the $\tau'_i$ and $M'.$  Extending $u'$ to $u$ by requiring $u(w)=0$ gives a
functional in $N^\ast_{\sigma}$ with the desired properties and completes the
inductive step.  
\end{proof}

Now set $\mu \ast \Lambda = \set{\mu \ast \tau\;;\; \tau \in \Lambda},$
defined by \eqref{E:join}, and consider 
\begin{equation}
	S(\sigma,\mu) = \set{\gamma \leq \sigma'\;;\; \sigma' \in \mu \ast \Lambda}.
	\label{E:planar_refinement}
\end{equation}

\begin{prop}[Planar refinement] 
If $\sigma$ is smooth monoid and $\mu = \sigma \cap M$ is the intersection with
an integral subspace $M \subset N_\sigma^\R$ then
\eqref{E:planar_refinement} gives a refinement of $\sigma$ containing
$\mu;$ this operation commutes with faces in the sense that for any face $\tau
\leq \sigma$, $S(\tau, \tau \cap \mu) = \big(S(\sigma,\mu)\big)\pns{\tau}$ with
notation as in \eqref{E:localization_of_refinement}.
\label{P:planar_refinement} 
\end{prop}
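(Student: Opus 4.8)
The plan is to verify, one axiom at a time, that $S(\sigma,\mu)$ is a refinement of $\sigma$ in the sense of Definition~\ref{D:monoid_refinement}, then to check that $\mu$ itself lies in it, and finally to establish compatibility with passage to faces. Two elementary facts will be used throughout. First, $\supp(\mu)=\supp(\sigma)\cap M$: the inclusion $\subseteq$ is immediate, and conversely $\supp(\sigma)\cap M$ is a rational polyhedral cone whose integral generators lie in $\sigma\cap M=\mu$. Second, for a face $\tau\leq\sigma$ one has $\tau\cap\mu=\set{0}$ if and only if $\supp(\tau)\cap M=\set{0}$, since a nonzero rational point of $\supp(\tau)\cap M$ rescales to a nonzero element of $\tau\cap\mu$. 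Recall also that $\mu$ and each face $\tau\leq\sigma$ are full submonoids, so the joins $\mu\ast\tau$ of \eqref{E:join} are defined.

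Axiom (i) holds by construction, $S(\sigma,\mu)$ being defined as the set of all faces of the $\mu\ast\tau$, $\tau\in\Lambda$. Since a face of a face of $\mu\ast\tau$ is again a face of it, a standard reduction brings axiom (ii) down to showing that $(\mu\ast\tau_1)\cap(\mu\ast\tau_2)$ is a common face of $\mu\ast\tau_1$ and $\mu\ast\tau_2$ whenever $\tau_1\neq\tau_2\in\Lambda$. Here I apply Lemma~\ref{L:planar_refinement_separation} to obtain $u\in N^\ast_\sigma$ with $\pair{u,\tau_1}\geq 0$, $\pair{u,\tau_2}\leq 0$, $\pair{u,\mu}=0$, and $\pair{u,v}=0$ exactly when $v\in\tau_1\cap\tau_2$, for $v$ a generator of either $\tau_i$. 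As $\supp(\mu\ast\tau_i)=\supp(\mu)+\supp(\tau_i)$ and $\sigma$ is smooth, any point of $\mu\ast\tau_1$ annihilated by $u$ has its $\tau_1$-component supported on generators lying in $\tau_1\cap\tau_2$, whence $\set{u=0}\cap(\mu\ast\tau_1)=\mu\ast(\tau_1\cap\tau_2)=\set{u=0}\cap(\mu\ast\tau_2)$; since $u\geq 0$ on the first monoid and $u\leq 0$ on the second, this common monoid is exactly their intersection and is a face of each.

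The substantive point is axiom (iii): every $w\in\supp(\sigma)$ must lie in $\supp(\mu)+\supp(\tau)$ for some $\tau\in\Lambda$. Among the faces $\tau'\leq\sigma$ admitting a decomposition $w=m+c$ with $m\in\supp(\mu)$, $c\in\supp(\tau')$ --- a nonempty collection, since $\tau'=\sigma$, $m=0$ works --- choose one of minimal dimension; then $c$ necessarily lies in the relative interior of $\supp(\tau')$, else $c$, and hence $w$, would lie in $\supp(\mu)+\supp(\tau'')$ for a proper face $\tau''<\tau'$, contradicting minimality. I claim $\supp(\tau')\cap M=\set{0}$. If not, pick any $0\neq x\in\supp(\tau')\cap M$; then $x\in\supp(\sigma)\cap M=\supp(\mu)$, so $m+\epsilon x\in\supp(\mu)$ for all $\epsilon\geq 0$, while $c-\epsilon x$ stays in $\supp(\tau')$ for small $\epsilon>0$ because $c$ is interior. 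Increasing $\epsilon$ to the first value $\epsilon^\ast$ at which $c-\epsilon x$ leaves the relative interior --- finite, since $\supp(\tau')$ is pointed and $x\neq 0$ --- writes $w=(m+\epsilon^\ast x)+(c-\epsilon^\ast x)$ with the second summand in a proper face of $\supp(\tau')$ (or zero), again contradicting minimality. Hence $\supp(\tau')\cap M=\set{0}$, so $\tau'$ is contained in some maximal such face $\tau\in\Lambda$, and $w\in\supp(\mu)+\supp(\tau')\subseteq\supp(\mu)+\supp(\tau)$.

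For $\mu\in S(\sigma,\mu)$ I show $\mu$ is a face of $\mu\ast\tau$ for every $\tau\in\Lambda$: as $\supp(\tau)$ is a pointed polyhedral cone meeting $M$ only at $0$, its image in $N^\R_\sigma/M$ is again a pointed polyhedral cone and carries a functional positive away from the origin; pulling this back through the quotient yields $u\in N^\ast_\sigma$ vanishing on $M\supseteq\supp(\mu)$ and positive on $\supp(\tau)\setminus\set{0}$, which exhibits $\supp(\mu)$ as a face of $\supp(\mu)+\supp(\tau)$, hence $\mu$ as a face of $\mu\ast\tau$. Finally, for compatibility with faces, fix $\tau\leq\sigma$: it is smooth, $\tau\cap\mu=\tau\cap\bigl(M\cap N^\R_\tau\bigr)$ is again an instance of the construction, and, using a defining functional for the face $\tau$ together with smoothness of $\sigma$, one checks the identity $(\mu\ast\tau')\cap\tau=(\tau\cap\mu)\ast(\tau'\cap\tau)$ for every face $\tau'\leq\sigma$ (join on the right taken inside $\tau$), as well as the fact that the maximal faces of $\tau$ meeting $\tau\cap\mu$ trivially are exactly the maximal members of $\set{\tau'\cap\tau\;;\;\tau'\in\Lambda}$. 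Feeding these into the face-reduction used above gives both inclusions between $S(\tau,\tau\cap\mu)$ and $\big(S(\sigma,\mu)\big)\pns{\tau}$, completing the proof. I expect the deformation argument for axiom (iii) to be the main obstacle; the bookkeeping behind the last identity is routine once it is set up.
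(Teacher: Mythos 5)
Your proof is correct and follows the same overall architecture as the paper's: verify the three axioms of Definition~\ref{D:monoid_refinement}, exhibit $\mu$ as a face of each $\mu\ast\tau$ via a functional vanishing on $M$ and positive on $\tau\setminus\set{0}$, and treat the intersection property by exactly the same device, namely Lemma~\ref{L:planar_refinement_separation} applied to show $(\mu\ast\tau_1)\cap(\mu\ast\tau_2)=\mu\ast(\tau_1\cap\tau_2)$ is a common face cut out by the separating functional (this is \eqref{E:planar_intersection} in the paper). The one place you genuinely diverge is the covering property \eqref{E:planar_cover}: the paper argues iteratively, projecting $v$ from a point $m_1\in\mu$ onto a face $\tau_1$ of $\sigma$ and repeating on faces of strictly decreasing dimension until a face disjoint from $\mu$ is reached, whereas you fix a decomposition $w=m+c$ over a face $\tau'$ of minimal dimension and run an exchange argument, sliding mass $\epsilon x$ (for $0\neq x\in\supp(\tau')\cap M$) from $c$ to $m$ until $c-\epsilon x$ hits a proper face, contradicting minimality. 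Both are descending-dimension arguments and both are sound; yours has the mild advantage of isolating the invariant ($\supp(\tau')\cap M=\set{0}$ for a minimal $\tau'$) cleanly, at the cost of needing the preliminary identification $\supp(\mu)=\supp(\sigma)\cap M$, which you justify correctly using integrality of $M$. Finally, you sketch the face-compatibility identity $S(\tau,\tau\cap\mu)=\big(S(\sigma,\mu)\big)(\tau)$ via $(\mu\ast\tau')\cap\tau=(\tau\cap\mu)\ast(\tau'\cap\tau)$ and the matching of maximal faces; the paper's proof in fact omits this verification entirely, so your (admittedly terse) treatment is a genuine addition rather than a gap relative to the source.
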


\begin{proof} 
If $M = N_\sigma^\R$, then $\mu = \sigma$, $\Lambda = \set{0}$ and
$\cS(\sigma,\mu) = \mu \ast \set{0}$ consists of $\sigma$ and its faces, which
is the trivial refinement of $\sigma.$

First we show that the supports of $S(\sigma,\mu)$ cover $\supp(\sigma),$ in fact 
\begin{equation}
	\sigma =\bigcup_{\tau\in\Lambda }\mu\ast\tau,
	\label{E:planar_cover}
\end{equation}
since all the monoids are full as above.  Certainly $\mu\subset\tau\ast\mu,$ so
consider $v \in \sigma\setminus\mu$ and choose any $m_1\in \mu.$ The ray from
$m_1$ through $v$ meets some face of $\sigma$, call it $\tau_1$, so $v$ is a
positive combination of $m_1 \in \mu$ and $t_1\in\tau_1.$ If
$\tau_1\cap\mu=\set{0}$ then $\tau_1\subset\tau\in\Lambda$ and we are finished.
On the other hand, if $\tau_1$ meets $\mu$, then $t_1$ is a positive
combination of some $m_2 \in \mu$ and $t_2\in\tau_2$ with $\tau_2$ a proper
face of $\tau_1.$ Continuing this way eventually shows that $v\in\mu\ast\tau_k$
for some $\tau_k$ which does not meet $\mu,$ since the dimension decreases and
$v\notin\mu.$ Thus $v\in\mu\ast\tau$ for some $\tau\in\Lambda$ with $\tau \geq
\tau_k$ and \eqref{E:planar_cover} follows.

Next consider the intersection of two elements in the union
\eqref{E:planar_cover}. We will show that
\begin{equation}
	(\mu\ast\tau_1)\cap(\mu\ast\tau_2)=\mu\ast(\tau_1\cap\tau_2),\ \tau_i\in\Lambda.
	\label{E:planar_intersection}
\end{equation}
Certainly the right side is contained in the left.
Lemma~\ref{L:planar_refinement_separation} applies and gives a functional
$u.$ If $v$ is in the intersection it follows that $u(v)\ge0$ and $u(v)\le0$ so
$u(v)=0$ and from the last property of $u$ this implies
$v\in\mu\ast(\tau_1\cap\tau_2).$  Since $u$ is a supporting hyperplane for both
$\mu\ast\tau_i$, it follows that $\mu\ast(\tau_1\cap\tau_2)$ is a boundary face
of each of these full submonoids. This in turn implies that the intersection of
any two elements of $S(\sigma ,\mu)$ is a common boundary face of both and
hence an element of $S(\sigma,\mu).$ Indeed, such an intersection must be
contained in two $\mu\ast\tau_i$ with $\tau_i\in\Lambda.$ If they are the same,
the conclusion is immediate, and if not then they are both contained in the common
boundary face \eqref{E:planar_intersection} and again the result follows.

That $\mu$ is a boundary face of each $\mu\ast\tau,$ $\tau\in\Lambda$
follows from existence of a functional $u\in N^*_{\sigma}$ such that $u>0$
on the generators of $\tau$ and $u(M)=0.$
\end{proof}

For any $v \in \sigma$, if $\Zp  v$ is a full submonoid, then $S(\sigma,\Zp  v)$ is
just the ordinary star subdivision of $\sigma$ along $v$.  Note however, that
in the case of star subdivision we can relax the condition that $\sigma$ be
smooth, as well as the condition that $\Zp  v$ be full.

\section{Generalized blow-up of $\R^n_+$}\label{S:local}
%lasteqno lbu@  5

In preparation for the global case of generalized boundary blow-up of a
manifold with corners treated in Section~\ref{S:global}, we first discuss the
`local case' of generalized blow-up of the model space $\R^n_+.$ Associated
with this basic space is the `basic monoid' which is freely generated by
the boundary hypersurfaces of $\R^n_+$, with an equivalent realization
as the non-negative integral points in $\bN\set{0}$, the b-normal to the maximum
codimension boundary face $\set{0}$. We show how to construct a blown-up space
mapping surjectively onto $\R^n_+$ corresponding to any smooth refinement of
this basic monoid. The functoriality of this operation with respect to
diffeomorphisms and b-maps is then considered.

Consider $\R^n_+$ with coordinates $(x_1,\ldots,x_n)$ and let 
$H_i = \set{x_i = 0} \in \cM_1(\R^n_+)$ denote the boundary hypersurfaces.
The {\em basic monoid of $\R^n_+$} is the smooth monoid freely generated
by the $H_i$:
\[
	\sigma_{\R^n_+} = \Zp\pair{H_1,\cdots, H_n}
\]
There is a natural embedding of $\sigma_{\R^n_+}$ into the vector space 
\[
	\bN \set{0} = \sspan_\R\set{x_1\pa_{x_1},\ldots,x_n\pa_{x_n}}.
\]
Indeed, as pointed out in Section \ref{S:bnotation}, the vectors 
$\pns{x_1\pa_{x_1},\ldots,x_n\pa_{x_n}}$ are invariantly defined up to
reordering with respect to diffeomorphisms (indeed, any
diffeomorphism of $\R^n_+$ to itself must take $0$ to $0$), so $\sigma_{\R^n_+}$
may be identified with the inward pointing lattice points
\[
	\sigma_{\R^n_+} \cong \Zp\pair{x_1\pa_{x_1},\ldots,x_n\pa_{x_n}}\ \text{in}\ \bN\set{0}.
\]
We will identify $\sigma_{\R^n_+}$ with this image in $\bN\set{0}$ from now on.

To any smooth refinement $R$ of
$\sigma_{\R^n_+}$ we proceed to associate a {\em generalized blow-up} of
$\R^n_+,$ which will be denoted, with its blow-down map
\[
	[\R^n_+; R] \stackrel{\beta}{\to} \R^n_+.
\]
The blow-down map $\beta : [\R^n_+; R] \to \R^n_+$ is an interior b-map
from this new manifold with corners, which is proper, surjective and
restricts to a diffeomorphism on the interior of its domain to the interior
of $\bbR^n_+.$ We construct $[\R^n_+; R]$ and $\beta$ from explicit
coordinate patches and transition diffeomorphisms.

For brevity we will say that a monoid $\sigma \in R$ of maximal dimension $n$
is \emph{maximal}. To any such maximal monoid $\sigma =
\Zp \pair{v_1,\ldots,v_n},$ we associate a copy of the model space $U_\sigma
=\bbR^n_+$ with coordinates $t = (t_1,\ldots,t_n),$ where the order of the
coordinates is associated to the order of the vectors.  Let $\nu \in\GL(n,\Q)$
be the matrix whose rows are the coordinates of the vectors $v_i$, so that
$\nu$ has entries $\nu_{ij}$, where 
\[
	v_i=\sum\limits_{j=1}^n\nu_{ij}x_j\pa_{x_j},
\]
(using the realization of $\sigma_{\R^n_+}$ in $\bN \set{0}$).  Since $R$ is a
refinement of $\sigma_{\R^n_+}$, $\nu$ has non-negative integral entries:
\[
	\nu \in \Mat(n\times n, \Zp ) \cap \GL(n,\Q).
\]
Then consider the smooth map
\[
	\beta_{\sigma} : U_\sigma \ni t \mapsto t^\nu = x \in \bbR^n_+ 
\]
where the image space is the fixed model manifold, and we use the 
notational convention established in Section \ref{S:bnotation}. This is an interior
b-map (which will be the local coordinate version of the blow-down map), which is a diffeomorphism
of the interiors of its domain and range, under which 
\[
\begin{gathered}
	\bd (\beta_{\sigma})_* : \bN \set{0} \subset \bT_{0} U_\sigma 
	\to \bN \set{0} \subset \bT_0 \R^n_+ \\
	t_i\pa_{t_i} \mapsto \sum\limits_{j}\nu_{ij}x_j\pa_{x_j} \cong v_i 
\end{gathered}
\]
In addition $\bd \beta_\sigma : \bT_p U_\sigma \to \bT_{\beta_\sigma(p)}\R^n_+$
is an isomorphism for all $p.$

To each face $\tau \leq \sigma$ given by a collection of generators,  $\tau =
\Zp \pair{v_{i}}_{i \in I},$ $I \subset \set{1,\ldots,n}$ we associate the
(relatively) open set 
\begin{equation}
	U_{\sigma,\tau} = 
	\set{(t_1,\ldots,t_n) \in U_\sigma \;;\; 
	t_j \neq 0 \text{ if } j \notin I} \subset U_\sigma.
	\label{E:relopen_subchart}
\end{equation}
Thus, $U_{\sigma,\tau} \cong \bbR^k_+\times (0,\infty)^{n-k} \subset \R^n_+,$
where the coordinates allowed to take the value $0$ are the $t_{i_1},$
$\ldots,$ $t_{i_k},$ corresponding to those generators of $\sigma$ which are
also generators of $\tau.$

\begin{prop}[Generalized blow-up of $\R^n_+$] 
For any two maximal monoids $\sigma_1$ and $\sigma_2$ in a smooth refinement
$R$ of $\sigma_{\R^n_+},$ with common face $\tau=\sigma_1\cap\sigma_2,$ the diffeomorphism of
the interiors of the $U_{\sigma_i}$ given by
$\beta_{\sigma_2}^{-1}\beta_{\sigma_1}$ extends by 
continuity to a diffeomorphism  $\chi_{12}:U_{\sigma_1,\tau}\longrightarrow
U_{\sigma_2,\tau}$ and the quotient space
\begin{equation}
	[\R^n_+;R]=\Big(\bigsqcup U_{\sigma}\Big)\big/\{\chi_{*}\}
\label{E:localblow_quotient}
\end{equation}
is a manifold with corners, with coordinate charts the $U_{\sigma};$ it is
equipped with a well-defined blow-down map $\beta:[\R^n_+; R]\to\R^n_+$ given
by $\beta_{\sigma}$ on each $U_{\sigma}.$
\label{P:localblow} 
\end{prop}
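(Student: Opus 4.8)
The plan is to verify the three assertions in turn: (1) that $\beta_{\sigma_2}^{-1}\beta_{\sigma_1}$ extends to a diffeomorphism $\chi_{12}$ between the subcharts $U_{\sigma_1,\tau}$ and $U_{\sigma_2,\tau}$; (2) that the quotient \eqref{E:localblow_quotient} is a manifold with corners with the $U_\sigma$ as charts; and (3) that $\beta$ is well-defined. The key computation is the first one, and it is essentially a monoid-theoretic statement dressed up in coordinates. Write $\sigma_1 = \Zp\pair{v_1,\ldots,v_n}$ and $\sigma_2 = \Zp\pair{w_1,\ldots,w_n}$ with coordinate matrices $\nu_1,\nu_2 \in \Mat(n\times n,\Zp)\cap\GL(n,\Q)$. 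On the interiors, $\beta_{\sigma_2}^{-1}\beta_{\sigma_1}$ is $t \mapsto t^{\nu_1 \nu_2^{-1}}$, using the composition rule $(t^\mu)^\nu = t^{\mu\nu}$ and that $\beta_{\sigma_2}^{-1}$ is $x \mapsto x^{\nu_2^{-1}}$ on interiors. So everything comes down to showing that the rational matrix $\mu := \nu_1 \nu_2^{-1}$, which \emph{a priori} has entries of mixed sign, in fact acts as a diffeomorphism $U_{\sigma_1,\tau} \to U_{\sigma_2,\tau}$.

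The mechanism is this. Reorder so that $\tau = \Zp\pair{v_{i}}_{i\in I}$ is cut out in $U_{\sigma_1}$ by $\{t_i = 0, i \in I\}$ and likewise in $U_{\sigma_2}$; since $\tau$ is a common face, its extremals are a common subset of the generators of both $\sigma_1$ and $\sigma_2$, and I would arrange the coordinate orderings so these correspond to the same index set $I$. The claim is that for $j \in I$ the $j$-th component of $t^\mu$ is $t_j$ times a product of the $t_i$, $i \in I$, with non-negative integer exponents, times a factor in the remaining $t_l$, $l \notin I$, which are non-vanishing on $U_{\sigma_1,\tau}$; and for $j \notin I$ the $j$-th component is non-vanishing on $U_{\sigma_1,\tau}$. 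This is exactly the assertion that $\mu$ maps $\supp(\tau)$ into $\supp(\tau)$ preserving the lattice, with the complementary directions going to the interior — which follows because both $\sigma_1 \cap \supp(\tau)$ and $\sigma_2 \cap \supp(\tau)$ equal $\tau$ (they are the same face), so the change of basis between the $v$'s and $w$'s restricts to an integral isomorphism on the sublattice spanned by $\tau$'s extremals and respects the cone. One also checks the entries of $\mu$ restricted to the $l \notin I$ block may be negative, but only in positions where the corresponding $t_l$ is invertible on the subchart, so $t^\mu$ is still smooth there (and similarly $t^{\mu^{-1}}$, with $\mu^{-1} = \nu_2\nu_1^{-1}$, on $U_{\sigma_2,\tau}$). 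Hence $\chi_{12} = \chi_{21}^{-1}$ is a diffeomorphism. I expect this bookkeeping — carefully separating the $I$ and complementary blocks of $\mu$ and tracking signs — to be the main obstacle, though it is conceptually routine once the right coordinates are fixed.

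For (2), I would check the cocycle condition $\chi_{13} = \chi_{23}\circ\chi_{12}$ on overlaps, which is immediate from $\mu$ being built from matrix products: on the common refinement of the three subcharts the relevant matrices multiply associatively. The transition maps being diffeomorphisms of open subsets of $\R^n_+$ then gives the quotient a manifold-with-corners structure with charts $U_\sigma$; Hausdorffness follows because each $\chi_{ij}$ is a \emph{closed} as well as open condition compatible across the glued pieces, and the subcharts $U_{\sigma_i,\tau}$ are exactly the loci where the gluing takes place — two points in different $U_\sigma$ not identified by any $\chi$ can be separated because the non-vanishing conditions defining the $U_{\sigma,\tau}$ are open. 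Finally for (3), $\beta$ is well-defined because by construction $\beta_{\sigma_2}\circ\chi_{12} = \beta_{\sigma_1}$ on $U_{\sigma_1,\tau}$ (that is literally how $\chi_{12}$ was defined on the interior, and it extends by continuity), so the local pieces $\beta_\sigma$ agree on overlaps and patch to a global map $[\R^n_+;R] \to \R^n_+$; that it is an interior b-map, proper, surjective, and an interior diffeomorphism follows from the corresponding properties of each $\beta_\sigma$ already noted before the Proposition, together with the fact that the supports of the $\sigma \in R$ cover $\supp(\sigma_{\R^n_+})$, which gives surjectivity and properness.
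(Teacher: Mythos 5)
Your treatment of the transition maps, the cocycle condition, and the well-definedness of $\beta$ follows essentially the same route as the paper: the matrix $\mu=\nu_1\nu_2^{-1}$ has the block form $\begin{pmatrix}\id & 0\\ b & c\end{pmatrix}$ because the generators of $\tau$ are common to both monoids, so the possibly negative or non-integral exponents multiply only the coordinates $t_l$, $l\notin I$, which are strictly positive on $U_{\sigma_1,\tau}$, and the inverse has the same form. That part is sound.

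The genuine gap is the Hausdorffness of the quotient. Your argument --- that two unidentified points ``can be separated because the non-vanishing conditions defining the $U_{\sigma,\tau}$ are open'' --- is exactly the reasoning that would also ``prove'' that the line with two origins is Hausdorff: gluing two charts along an open subset by a diffeomorphism generically produces a non-Hausdorff quotient, and openness of the gluing locus is irrelevant. The problematic case is a pair of points $p_i\in U_{\sigma_i}\setminus U_{\sigma_i,\tau}$, $i=1,2$ (e.g.\ the two ``origins'' of the respective charts); these are not identified, but one must exhibit disjoint \emph{saturated} neighborhoods, equivalently show that the graph of $\chi_{12}$ is closed in $U_{\sigma_1}\times U_{\sigma_2}$. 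The paper does this with a real input you never use: since $\tau=\sigma_1\cap\sigma_2$ is a common face, there is a separating functional $u\in\bN^\ast\{0\}$ with $\pair{u,v_i}=0$ for the common generators, $\pair{u,v_i}>0$ and $\pair{u,v_i'}<0$ for the remaining generators of $\sigma_1$ and $\sigma_2$ respectively; the monomial $t^w$ (with $w$ the coordinates of $u$ in the dual basis for $\sigma_1$) is continuous on all of $U_{\sigma_1}$, transforms under $\chi_{12}$ to $(t')^{w'}$ with $w'\le 0$, and the sets $\{t^w<\epsilon\}$ and $\{(t')^{-w'}<\epsilon\}$ are then disjoint in the quotient. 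Without an argument of this kind (which is where the hypothesis that the intersection of any two monoids in $R$ is a \emph{face} of each actually enters the topology), the claim that $[\R^n_+;R]$ is a manifold with corners is not established.
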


\begin{proof} 
The space $U_{\sigma},$ with its local blow-down map $\beta_{\sigma},$ really
depends on the choice of the order of the generators implicit in the definition
of the map. However, the coordinates $t_j$ are each naturally associated to the
corresponding generator and change of order of the generators corresponds to
the same reordering of the coordinates in $U_{\sigma}.$ Thus we can freely
reorder the generators.

For two maximal monoids, $\sigma_1$ and $\sigma_2,$ $\tau=\sigma_1\cap\sigma_2$
is generated by the common generators of $\sigma_1$ and $\sigma_2$. For
convenience of notation we can assume that these are the first $k$ generators:
\[
\begin{gathered}
\tau = \Zp \pair{v_1,\ldots,v_k}=\sigma_1\cap\sigma_2,\\
\sigma_1 = \Zp \pair{v_1,\ldots,v_k,v_{k+1}, \ldots,v_n},\
\sigma_2 = \Zp \pair{v_1,\ldots,v_k,v'_{k+1},\ldots,v'_n}.
\end{gathered}
\]
Since the $\sigma_i$ are smooth, their generators form bases so for $i > k$,
\[
v_{i} = \sum_{j=1}^k b_{ij} v_j + \sum_{j={k+1}}^n c_{ij} v'_j,\ b_{ij},\ 
c_{ij} \in \Q,
\]
where the coefficients are rational since both bases are integral with respect
to $\set{x_1\pa_{x_1},\ldots,x_n\pa_{x_n}}$.

Now, the map $\chi_{12} = \beta_{\sigma_2}^{-1}\beta_{\sigma_1}$ between the 
interiors of the two cones is given by $t \mapsto t^\mu = t'$, 
where 
\[
	\mu = \begin{pmatrix} \id & 0\\b & c\end{pmatrix}\quad [b]_{ij} = b_{ij},\ [c]_{ij} = c_{ij}.
\]
In other words, 
\[
\begin{gathered}
\chi_{12}\pns{t_1,\ldots,t_n}=\pns{t'_1,\ldots,t'_n}\in\bbR^n_+=U_{\sigma_2},\ \text{where}\\
t'_i=t_i\prod_{j>k}t_j^{b_{ji}},\ i\le k,\quad
t'_{i}=\prod_{j>k}t_j^{c_{ji}},\ i>k.
\end{gathered}
\]
Since, in $U_{\sigma_1,\tau}$, $t_j>0$ for $j>k,$ $\chi_{12}$ extends smoothly 
from the interior to $U_{\sigma_1,\tau}.$ Since the inverse, $\chi_{21}(t')=(t')^{\nu^{-1}},$ has
a similar form, $\chi_{12}$ is a diffeomorphism onto $U_{\sigma_2,\tau}.$

Next we show that the quotient \eqref{E:localblow_quotient} is Hausdorff.
Consider points $p_i\in U_{\sigma_i}$, $i = 1,2$ which are not
identified by $\chi_{12}.$ If $p_1\in U_{\sigma_1,\tau}$ then its image lies
in $U_{\sigma_2,\tau}$ and a sufficiently small neighborhood of $p_2$ does
not contain $\chi_{12}(p_1)$ in its closure, so the points have
neighborhoods in the $U_{\sigma_i}$ with no $\chi_{12}$ related
points. Using the inverse the same is true if $p_2\in U_{\sigma_2,\tau}$ so
we may assume both are in the complements of the respective
$U_{\sigma_i,\tau}.$ 

Since $\sigma_1\cap \sigma_2 = \tau,$ there is a separating
hyperplane, $u^\perp$ for $u \in \bN^\ast\set{0}$ so that
\[
\begin{gathered}
	\pair{u,v_i}  = 0, \ i = 1,\ldots, k\\
	\pair{u,v_i}  > 0,\ \pair{u,v'_i}  < 0, \  i = k+1,\ldots, n.
\end{gathered}
\]
Let $w$ and $w'$ be the coordinates of $u$ with respect to the bases dual to
the generators of $\sigma_1$ and $\sigma_2$, respectively.  Thus $w_i = w'_i =
\pair{u,v_i} =0$ for $i = 1,\ldots,k$ and $w_i = \pair{u,v_i} > 0$, $w'_i =
\pair{u,v'_i} < 0$ for $i > k$.

Then $\chi_{12}^\ast (t')^{w'} = t^w$, and $p_1$ and $p_2$ can be separated by
the explicit open sets $\big\{ t^w < \epsilon\big\} \subset U_{\sigma_1}$ and
$\big\{ (t')^{-w'}\big\} \subset U_{\sigma_2}$ since
\[
	\chi_{12}\pns{\set{t^w < \epsilon} \cap U_{\sigma_1,\tau}}
	= \big\{(t')^{w'} < \epsilon\} \cap U_{\sigma_2,\tau}
	= \big\{(t')^{-w'} > 1/\epsilon\} \cap U_{\sigma_2,\tau},
\]
so these sets do not meet in the quotient by $\chi_{12}$.

Thus $[\R^n_+; R]$ defined by the quotient \eqref{E:localblow_quotient}  has the
structure of a Hausdorff, paracompact smooth manifold with corners arising
from the covering by the coordinate charts which are the images in $[\R^n_+,R]$
of the $U_{\sigma}.$ Smooth functions on $[\R^n_+;R]$ are those which are
smooth in each of the coordinate patches and the blow-down map 
\[
	\beta:[\R^n_+;R]\to \R^n_+
\]
is a well defined smooth b-map since it is such on each coordinate patch
and these maps form, by construction, a commutative diagram with the
transition maps.
\end{proof}

\noindent From the point of view of algebraic geometry, the chart $U_\sigma$
consists of the `$\R_+$-points' of the dual monoid $\widehat \sigma$, given by
$\Hom_{\mon}\pns{\widehat \sigma, \R_+}$ (where $\R_+$ is considered as a
multiplicative monoid), though our construction equips this set with a much
stronger topology and smooth structure. In this sense $[\R^n_+; R]$ is related
to the ``singular manifold with corners'' \cite{fulton1993introduction} of a
toric variety, though our setup is more closely related to toroidal embeddings
\cite{kempf1973toroidal} and logarithmic geometry \cite{kato1994toric},
\cite{ogus2006lectures}. It is important to point out however that the gluings
between the affine charts $U_\sigma$ above are extremely ill-behaved over $\R$
or $\C$ (as opposed to $\R_+$ here) owing to the presence of radicals (i.e.
fractional powers of the coordinates), so what results in a smooth manifold
with corners above results in a much more singular object (like a stack) in the
conventional algebraic setting. This is the ultimate reason for our nonstandard
definition of monoid refinement in the previous section and for the greater
flexibility of our methods.

Next we observe that the boundary faces of $[\R^n_+;R]$ are in bijection with
the monoids in the refinement $R.$ If $\tau \in R$ and $\sigma \geq \tau$ is a
maximal monoid of which it is a face then $\tau$ defines the boundary face
$F_{\sigma,\tau}\subset U_{\sigma}=\R^n_+$ which is the closure of
$U_{\sigma,\tau}\setminus(0,\infty)^n,$ the part of the boundary where the
coordinates corresponding to the generators of $\tau$ vanish (and the other
coordinates may or may not vanish).

\begin{prop}
\label{P:local_faces_cones} 
The manifold $[\R^n_+; R],$ determined by a smooth resolution of
$\sigma_{\R^n_+},$ has interior diffeomorphic to $(0,\infty)^n$ and its
boundary faces are in 1-1 correspondence with the monoids $\tau \in R$ where the
face $F_\tau$ corresponding to $\tau$ is the quotient in
\eqref{E:localblow_quotient} of the union of the $F_{\sigma,\tau}\subset
U_{\sigma}$ for the maximal monoids $\sigma$ with $\tau \leq \sigma;$ this
correspondence satisfies $\codim(F_\tau) = \dim(\tau)$ and is
inclusion-reversing:
\[
	F_{\tau'} \subseteq F_\tau \iff \tau \leq \tau'.
\]
\end{prop}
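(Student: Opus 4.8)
The plan is to prove Proposition~\ref{P:local_faces_cones} by carefully tracking how the boundary structure of each chart $U_\sigma$ behaves under the gluing maps $\chi_{12}$, and then exhibiting the claimed bijection explicitly. First I would recall that in a maximal chart $U_\sigma = \R^n_+$ with coordinates $t = (t_1,\ldots,t_n)$ indexed by the generators $v_1,\ldots,v_n$ of $\sigma$, the boundary faces are exactly the sets where some subset of the $t_i$ vanish, i.e. they are in bijection with subsets $I \subseteq \{1,\ldots,n\}$, equivalently with the faces $\tau = \Zp\langle v_i \rangle_{i \in I} \leq \sigma$. The interior is $(0,\infty)^n$, and since $\beta_\sigma$ restricts to a diffeomorphism of interiors and the $\chi_{12}$ are identified with $\beta_{\sigma_2}^{-1}\beta_{\sigma_1}$ there, all the interiors glue to a single copy of $(0,\infty)^n$, giving the first claim.

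Next I would show that a face $F_{\sigma_1,\tau}$ in one chart is identified under gluing with a face in another chart precisely when $\tau$ is a common face. The key computational input is the explicit form of $\chi_{12}$ from Proposition~\ref{P:localblow}: with $\tau = \sigma_1 \cap \sigma_2 = \Zp\langle v_1,\ldots,v_k\rangle$ and the remaining generators split off, $\chi_{12}$ sends $t \mapsto t'$ with $t'_i = t_i \prod_{j>k} t_j^{b_{ji}}$ for $i \le k$ and $t'_i = \prod_{j>k} t_j^{c_{ji}}$ for $i > k$, and is defined on $U_{\sigma_1,\tau}$ where $t_j > 0$ for $j > k$. On this set, $t'_i = 0 \iff t_i = 0$ for $i \le k$ and $t'_i > 0$ for $i > k$, so $\chi_{12}$ maps the locus $\{t_i = 0, i \in I\}$ (for $I \subseteq \{1,\ldots,k\}$) diffeomorphically onto $\{t'_i = 0, i \in I\}$; these loci are exactly the $F_{\sigma_1,\sigma'}$ and $F_{\sigma_2,\sigma'}$ for faces $\sigma' \le \tau$. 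Thus a face $F_{\sigma,\tau}$ of a maximal chart survives in the quotient as a face $F_\tau$ depending only on $\tau \in R$, and by condition~\eqref{I:monref_1} every $\tau \in R$ lies in some maximal $\sigma$ (after possibly enlarging — here I should note that every monoid in $R$ is a face of a maximal one, which follows from simpliciality/smoothness of $R$ together with the support condition~\eqref{I:monref_3}; alternatively this can be argued directly from the fact that $\supp(\sigma_{\R^n_+})$ is $n$-dimensional). Conversely distinct $\tau_1 \neq \tau_2 \in R$ give distinct $F_{\tau_1} \neq F_{\tau_2}$: if they were equal, a point in the common face would lie in charts $U_{\sigma_1}, U_{\sigma_2}$ with the gluing identifying the $\tau_i$-loci, forcing $\tau_1$ and $\tau_2$ to have the same image in $\bN\{0}$, hence $\tau_1 = \tau_2$ since the generators are recovered as the images of the $t_i \partial_{t_i}$ under $\bd\beta_*$.

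Then I would verify the codimension statement: $F_\tau$ in the chart $U_\sigma$ is cut out by the vanishing of $\dim(\tau)$ of the coordinates $t_i$ (namely those indexed by generators of $\tau$, which number $\dim(\tau)$ since $\sigma$ and hence $\tau$ is smooth), so $\codim(F_\tau) = \dim(\tau)$. Finally, the inclusion-reversing property: within a single chart $U_\sigma$, $F_{\sigma,\tau'} \subseteq F_{\sigma,\tau}$ iff the vanishing set for $\tau'$ contains that for $\tau$, iff the generator-index set of $\tau$ is contained in that of $\tau'$, iff $\tau \leq \tau'$; and since faces are glued compatibly this passes to the quotient. The main obstacle I anticipate is the bookkeeping needed to make the gluing-compatibility of the face identifications rigorous — one must check that if $\tau$ is a common face of three or more maximal monoids the identifications are consistent (this follows from the cocycle property of the $\chi_{ij}$, already implicit in the construction of $[\R^n_+;R]$ as a quotient), and one must be slightly careful with the fact that $F_{\sigma,\tau}$ as defined is the \emph{closure} of a relatively open stratum, so that points of $F_{\sigma,\tau}$ where additional coordinates vanish also need to be tracked — but these extra vanishing coordinates correspond to faces $\tau'' \geq \tau$ and are handled by the same analysis applied to the larger faces.
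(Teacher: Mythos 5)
Your proposal is correct and follows essentially the same route as the paper's proof: identify the boundary faces of each chart $U_\sigma$ with the faces $\tau\leq\sigma$, check from the explicit form of $\chi_{12}$ that the transition maps carry $F_{\sigma_1,\tau}$ onto $F_{\sigma_2,\tau}$ exactly for common faces $\tau\leq\sigma_1\cap\sigma_2$, and read off the codimension and inclusion-reversal in a single chart. You in fact supply two details the paper leaves implicit --- that every $\tau\in R$ is a face of some maximal monoid, and that distinct monoids yield distinct faces --- and both are handled correctly.
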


\begin{proof} 
The boundary faces of each $U_\sigma=\R^n_+$ are, as follows from
\eqref{E:relopen_subchart}, precisely the $F_{\sigma ,\tau}$ for all faces
$\tau\leq \sigma,$ since these correspond to all subsets of the coordinate
functions.  Under the transition maps $\chi_{12}$ defined above and
corresponding to two monoids, $\sigma_1$ and $\sigma_2,$ of maximal dimension,
the interior of each $F_{\sigma_1,\tau}$ is mapped onto the interior of
$F_{\sigma_2,\tau},$ if $\tau\leq\sigma_1\cap\sigma_2.$ Otherwise if $\tau$ is
not contained in either $\sigma_1$ or $\sigma_2$ then $F_{\sigma_1,\tau}$ does
not meet the domain of definition of $\chi_{12}$ or does not meet the range.

Thus the interiors of the $F_{\sigma,\tau}$ are globally well-defined subsets
of $[\R^n_+;R].$ For each $\tau\in R,$ the closure in $[\R^n_+; R]$ of the
interior of $F_{\sigma,\tau}$ is the image of the union of the closures in the
$U_{\sigma}$ that it meets.  It is therefore everywhere locally a manifold with
corners, and hence is globally a manifold with corners. In particular each
boundary face is embedded in $[\R^n_+;R].$  Finally, the fact that $\tau \leq
\tau' \iff F_{\tau'}\subseteq F_\tau$ is evident in $U_{\sigma}$ for a maximal
monoid $\sigma \geq \tau' \geq \tau$.
\end{proof}

Now we establish a local version of a result we shall prove in more generality in 
Section \ref{S:global} about lifting b-maps to a generalized blow-up. Suppose
\[
	f : O \subset \R^m_+ \to \R^n_+
\]
is an interior b-map with domain an open set $O \subset \R^m_+$. Without loss of generality
we may assume that $0 \in O$ and that $f(0) = 0.$ Then $f$ has the explicit
coordinate expression
\[
	f : x \mapsto x' = a(x)\,x^\delta = \Bigg(a_1(x) \prod_{j=1}^m x_j^{\delta_{j1}}, \ldots,a_n(x) \prod_{j=1}^m x_j^{\delta_{jn}}\Bigg)
\]
where $\delta \in \Mat(m\times n, \Zp )$ and $0 < a_i(x) \in C^\infty(\R^m_+)$.
Because of the non-negativity and integrality of the $\delta_{ij}$, the
b-differential $\bd f_\ast : \bN_0\set{0}\to \bN_0 \set{0}$ (which is represented by
the matrix $\delta^\transpose$ with respect to the bases $\set{x_i\pa_{x_i}}$ and $\set{x'_i \pa_{x'_i}}$)
restricts to a monoid homomorphism
\[
	\bd f_\ast = \delta^\transpose : \sigma_{\R^m_+} \to \sigma_{\R^n_+}.
\]
(This is a preliminary version of the morphism of monoidal complexes associated
to a b-map in Section \ref{S:global}.)

\begin{prop}
If $f$ is an interior b-map as above such that $\bd f_\ast :
\sigma_{\R^m_+} \to \sigma_{\R^n_+}$ factors through a monoid homomorphism
$\phi : \sigma_{\R^m_+} \to \tau$ for some $\tau \in R$ where $R$ is a smooth
refinement of $\sigma_{\R^n_+}$ then there exists a unique b-map $f' : O \subset\R^m_+
\to [\R^n_+; R]$ such that
\[
\begin{tikzpicture}[->,>=to,auto]
\matrix (m) [matrix of math nodes, column sep=1cm, row sep=1cm,  text depth=0.25ex]
{  & {[\R^n_+; R]} \\ O\subset\R^m_+ & \R^n_+ \\};
\path (m-1-2) edge node  {$\beta$} (m-2-2); %right
\path (m-2-1) edge node {$f$} (m-2-2); %bot
\path (m-2-1) edge node {$f'$} (m-1-2); %updiag
\end{tikzpicture}
\]
commutes and the range of $f'$ is contained in the coordinate chart $U_\tau
\subset [\R^n_+; R].$ Moreover this construction is functorial in that for
a b-map $g : U \subset \R^l_+ \to O \subset\R^m_+,$ $\bd \pns{f \circ g}_\ast$ factors through
$\tau \in R$ if and only if $\bd f_\ast$ does, and then $(f \circ g)' = f' \circ g.$
\label{P:local_lifting}
\end{prop}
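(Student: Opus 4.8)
The plan is to reduce everything to an explicit computation in the chart $U_\tau$. The hypothesis is that $\bd f_\ast = \delta^\transpose : \sigma_{\R^m_+} \to \sigma_{\R^n_+}$ factors as $\sigma_{\R^m_+} \xrightarrow{\phi} \tau \hookrightarrow \sigma_{\R^n_+}$ for some $\tau \in R$. Let $\tau = \Zp\pair{v_1,\dots,v_k}$ sit as a face of a maximal $\sigma = \Zp\pair{v_1,\dots,v_n} \in R$, and recall the chart $U_\tau$ is identified with the relatively open subset $U_{\sigma,\tau} \subset U_\sigma = \R^n_+$ from \eqref{E:relopen_subchart}, on which $\beta = \beta_\sigma$ is given by $t \mapsto t^\nu$ where $\nu$ has rows $v_1,\dots,v_n$ (in the $\set{x_j\pa_{x_j}}$ basis). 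First I would write down the candidate $f'$ directly: since $f : x \mapsto a(x)\,x^\delta$ and we want $\beta \circ f' = f$, i.e. $(f')^\nu = a(x)\, x^\delta$ in the multi-index notation of Section~\ref{S:bnotation}, I set
\[
	f' : x \mapsto t,\qquad t = a(x)^{\nu^{-1}}\, x^{\delta \nu^{-1}},
\]
using $(t^\nu = x') \iff (t = (x')^{\nu^{-1}})$ and $\pns{t^\mu}^\lambda = t^{\mu\lambda}$. The content is then to check (i) this is a well-defined b-map into $\R^n_+$ with image actually landing in the subchart $U_{\sigma,\tau}$, (ii) it commutes with $\beta$, and (iii) it is the unique such lift.

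For (i), the key point is that the exponent matrix $\delta\nu^{-1}$ has \emph{non-negative} entries in the last $n-k$ columns (those indexed by generators $v_{k+1},\dots,v_n$ not in $\tau$), so that the corresponding coordinates $t_{k+1},\dots,t_n$ are given by genuine products of the $x_j$ times a positive smooth function — hence land in $(0,\infty)$ once one checks they are also bounded away from $0$ on a neighborhood of $0$, which they are precisely because $\bd f_\ast$ factors through $\tau$. The columns indexed by $v_1,\dots,v_k$ may have negative entries, but that is harmless: those $t_i$ coordinates are allowed to vanish in $U_{\sigma,\tau}$, and the factorization through $\tau$ is exactly the statement that the total exponent of each $x_j$ appearing in these coordinates is controlled so that $t$ stays in $\R^n_+$. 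Concretely, writing $\phi$ as a matrix expressing $\bd f_\ast(e_{H_j})$ in the generators $v_1,\dots,v_k$ of $\tau$, one gets that $\delta\nu^{-1}$ is precisely (the transpose of) $\phi$ padded by zeros in the last $n-k$ columns plus a matrix with support only in the first $k$ columns; since $\phi$ lands in $\tau$ its entries are in $\Zp$, and smoothness of $t_i = a(x)^{(\nu^{-1})_{\bullet i}} x^{(\delta\nu^{-1})_{\bullet i}}$ follows because the $a_j(x)$ are strictly positive smooth (so arbitrary rational powers of them are still strictly positive smooth). Then (ii) is immediate from the construction: $\beta_\sigma(f'(x)) = (f'(x))^\nu = \pns{a(x)^{\nu^{-1}} x^{\delta\nu^{-1}}}^\nu = a(x)\,x^\delta = f(x)$.

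For uniqueness, suppose $f''$ is another b-map with $\beta \circ f'' = f$. Since $f$ is interior and restricts to the identity-type behavior on interiors, and $\beta$ is a diffeomorphism on interiors by Proposition~\ref{P:localblow}, $f''$ and $f'$ agree on the dense interior $(0,\infty)^m \cap O$; being continuous (indeed smooth) they agree everywhere, and in particular $f''$ also has image in $U_\tau$ since $f'$ does and $U_\tau$ is the closure in $[\R^n_+;R]$ of the image of the interior. For functoriality, given $g : U \subset \R^l_+ \to O \subset \R^m_+$, note $\bd(f\circ g)_\ast = \bd f_\ast \circ \bd g_\ast$ and $\bd g_\ast$ maps $\sigma_{\R^l_+}$ onto a submonoid of $\sigma_{\R^m_+}$; hence $\bd(f\circ g)_\ast$ factors through $\tau$ iff $\bd f_\ast$ does (composing or restricting a factorization through $\tau$). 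When it does, $f' \circ g$ is a b-map $U \to [\R^n_+;R]$ satisfying $\beta \circ (f'\circ g) = f \circ g$ with image in $U_\tau$, so by the uniqueness just proved $(f\circ g)' = f' \circ g$.

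The main obstacle I anticipate is the bookkeeping in step (i): showing rigorously that $\delta\nu^{-1}$ has the stated block structure (integral, non-negative in the $\tau$-complementary columns) is exactly where the hypothesis "factors through $\tau$" must be used, and it requires carefully translating the abstract factorization $\phi : \sigma_{\R^m_+}\to\tau$ into the inequality constraints on the exponent matrix. Once that block structure is in hand, the smoothness of the positive-function powers $a_j(x)^r$ ($r\in\Q$) and the fact that the $t_{k+1},\dots,t_n$ coordinates are bounded below on a neighborhood of $0$ make everything else routine; the separation/Hausdorff issues are already handled by Proposition~\ref{P:localblow}, so they need not be revisited.
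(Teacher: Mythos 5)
Your construction is the paper's: the lift is the explicit map $f' : x \mapsto a(x)^{\nu^{-1}}\,x^{\delta\nu^{-1}}$, with $\delta\nu^{-1}$ identified as the matrix $\mu$ of the given factorization $\phi$ (so that $\delta = \mu\nu$), commutativity is the one-line computation $\big(a^{\nu^{-1}}x^{\mu}\big)^{\nu} = a\,x^{\mu\nu} = a\,x^{\delta}$, and uniqueness comes from $\beta$ being a diffeomorphism of interiors; where the paper verifies functoriality by a direct computation, you deduce it from uniqueness, which is a legitimate (if anything, cleaner) variant.

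One claim in your step (i) is backwards, however. You assert that the columns of $\delta\nu^{-1}$ indexed by the generators $v_1,\ldots,v_k$ of $\tau$ ``may have negative entries, but that is harmless'' because the corresponding $t_i$ are allowed to vanish on $U_{\sigma,\tau}$. A negative exponent would make $t_i$ blow up, not vanish, as the relevant $x_j \to 0$, and $f'$ would then fail to extend smoothly to the boundary; so it is precisely these columns that must be non-negative integers. The hypothesis delivers this in the opposite way from how you describe it: since $\phi$ takes values in the monoid $\tau$, its coefficients with respect to the free generators $v_1,\ldots,v_k$ lie in $\Zp$ and its coefficients on $v_{k+1},\ldots,v_n$ vanish identically; hence $\delta\nu^{-1}$ is non-negative integral in the first $k$ columns and \emph{zero} (not merely non-negative) in the last $n-k$, which is also exactly what keeps $t_{k+1},\ldots,t_n$ in $(0,\infty)$ without any separate boundedness-from-below argument. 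Your later ``concretely'' sentence states essentially this correct identification, and it is all that is needed, so the proof goes through; but the intervening reasoning should be excised. (A minor further point: $U_\tau$ is an open chart, not the closure in $[\R^n_+;R]$ of the image of the interior, but the claim that $f''$ lands in $U_\tau$ already follows from $f''=f'$, so nothing is lost.)
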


\begin{proof}
Taking a maximal dimension monoid containing $\tau$ if necessary, it suffices
to assume that $\dim(\tau) = n$.  Then let $\pns{U_\tau, t =
(t_1,\ldots,t_n)}$ be the coordinate chart associated to $\tau =
\Zp \pair{v_1,\ldots,v_n} \in R$, and let $\nu \in \GL(n,\Zp )$ denote the matrix
such that $v_i = \sum_j \nu_{ij} x'_j\pa_{x'_j}$ as in the construction of
generalized blow-up, so that the blow-down acts by $\beta : t \mapsto x =
t^\nu$ on $U_\tau$.  Alternatively, one can view $\nu^\transpose$ as the matrix
defining the monoid inclusion $\nu^\transpose : \tau \hookrightarrow
\sigma_{\R^n_+}$ with respect to the bases $\set{v_i}$ and
$\set{x'_i\pa_{x'_i}}$.

In a similar manner, let $\mu \in \Mat(m\times n, \Zp )$ denote the matrix whose transpose
represents the given homomorphism
\[
	\phi = \mu^\transpose : \sigma_{\R^m_+} \to \tau = \Zp \pair{v_1,\ldots,v_n}
\]
with respect to the bases $\set{x_i\pa_{x_i}}$ and $\set{v_i}$.  From the
assumption that $\bd f_\ast$ factors through $\phi$ it follows that
\[
	\delta^\transpose = \nu^\transpose \mu^\transpose = (\mu \nu)^\transpose.
\]
where $\delta$ is the matrix of coefficients of $\bd f_\ast$ as above.

Define $f' : \R^m_+ \to U_\tau$ in coordinates by 
\[
	f' : x \mapsto a(x)^{\nu^{-1}}x^{\mu} = t.
\]
Then observe that $\beta \circ f'$ has the form
\[
	\beta \circ f' : x \mapsto \big(a(x)^{\nu^{-1}}x^\mu\big)^\nu = a(x) x^{\mu\nu} = a(x) x^\delta,
\]
and is therefore equal to $f.$  The form of any $f' : \R^m_+ \to U_\tau$
such that $\beta \circ f' = f$ is determined in these local coordinates,
giving uniqueness.

To show functoriality, suppose that $g : \R^l_+ \to \R^m_+$ acts by $z
\mapsto b(z){z}^\kappa = x.$ We have the commutative diagram
\[
\begin{tikzpicture}[->,>=to,auto]
\matrix (m) [matrix of math nodes, column sep=1cm, row sep=1cm, text depth=0.25ex]
{& & \tau \\ \sigma_{\R^l_+} & \sigma_{\R^m_+} & \sigma_{\R^n_+}. \\};
\path (m-2-1) edge node {$\kappa^\transpose$} (m-2-2);
\path (m-2-2) edge node {$\delta^\transpose$} (m-2-3);
\path (m-2-2) edge node {$\mu^\transpose$} (m-1-3);
\path (m-1-3) edge node {$\nu^\transpose$} (m-2-3);
\end{tikzpicture}
\]
The composition $f \circ g$ acts by
\[
	f \circ g : z \mapsto a\big(b(z){z}^\kappa\big) b^\delta {z}^{\kappa\delta} =: c(z) {z}^{\kappa \delta}.
\]
Then on one hand $\pns{f \circ g}'$ has the form
\[
	\pns{f \circ g}' : z \mapsto c(z)^{\nu^{-1}} {z}^{\kappa \mu} = a\big(b(z){z}^{\kappa}\big)^{\nu^{-1}} b^{\delta \nu^{-1}} {z}^{\kappa \mu},
\]
while on the other hand $f' \circ g$ acts by
\[
	f' \circ g : z \mapsto a\big(b(z){z}^\kappa\big)^{\nu^{-1}} \pns{b {z}^{\kappa}}^{\mu},
\]
which is the same since $\delta \nu^{-1} = \mu$.
\end{proof}

This result has an important corollary which is fundamental to the global 
construction of generalized blow-up in Section \ref{S:global}.

\begin{cor}
Any diffeomorphism of open submanifolds of $\R^n_+,$ $f : O_1 \subset
\R^n_+ \stackrel{\cong}{\to} O_2 \subset \R^n_+$ which maps boundary hypersurfaces onto themselves 
lifts to a unique diffeomorphism,
\[
	f' : O'_1 \subset [\R^n_+; R] \to O'_2 \subset [\R^n_+; R]
\]
where $O'_i = \beta^{-1}(O_i)$, such that the diagram
\[
\begin{tikzpicture}[->,>=to,auto]
\matrix (m) [matrix of math nodes, column sep=1cm, text depth = 0.25ex, row sep=1cm]
{ O'_1 & O'_2 \\ O_1 & O_2 \\};
\path (m-1-1) edge node {$f'$}    (m-1-2);
\path (m-1-2) edge node {$\beta$} (m-2-2);
\path (m-2-1) edge node {$f$}     (m-2-2);
\path (m-1-1) edge node {$\beta$} (m-2-1);
\end{tikzpicture}
\]
commutes and then $f'$ also maps boundary hypersurfaces onto themselves.
\label{C:diffeos_lift}
\end{cor}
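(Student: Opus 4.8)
The plan is to construct $f'$ by applying the local lifting result, Proposition~\ref{P:local_lifting}, in each of the coordinate charts of $[\R^n_+;R]$ indexed by the maximal monoids of $R$, and then to patch these chart-wise lifts together — the patching being automatic because a lift of a given map through $\beta$ is uniquely determined on the (dense) interior.

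The only consequence of the hypothesis that I would use is that, since $f$ carries each $H_j\cap O_1$ onto $H_j\cap O_2$, it has local form $x\mapsto(a_1(x)x_1,\ldots,a_n(x)x_n)$ with $a_j>0$; equivalently its exponent matrix is the identity, so the induced monoid homomorphism $\bd f_\ast:\sigma_{\R^n_+}\to\sigma_{\R^n_+}$ is the identity and $\bd f_\ast(x_j\pa_{x_j})=x_j\pa_{x_j}$ at every boundary face. Now let $\sigma=\Zp\pair{v_1,\ldots,v_n}$ range over the maximal monoids of $R$, so that the charts $U_\sigma\cong\R^n_+$ cover $[\R^n_+;R]$, and consider on $U_\sigma\cap O_1'=U_\sigma\cap\beta^{-1}(O_1)$ the interior b-map $f\circ\beta_\sigma$. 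Its b-differential is $\bd f_\ast\circ\bd(\beta_\sigma)_\ast=\bd(\beta_\sigma)_\ast$, which at each corner of $U_\sigma$ factors, on the local b-normal, through the corresponding face of $\sigma$ — a monoid of $R$. So Proposition~\ref{P:local_lifting}, applied near each point of $U_\sigma\cap O_1'$ (enlarging the relevant face to $\sigma$ itself), produces local lifts valued in $U_\sigma$; these agree on interiors with $\beta^{-1}\circ f\circ\beta$ and hence patch to a unique smooth $g_\sigma:U_\sigma\cap O_1'\to U_\sigma\subset[\R^n_+;R]$ with $\beta\circ g_\sigma=f\circ\beta$.

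To produce $f'$, observe that on an overlap $U_{\sigma_1}\cap U_{\sigma_2}\cap O_1'$ both $g_{\sigma_1}$ and $g_{\sigma_2}$ are smooth lifts of $f\circ\beta$ through $\beta$; since $\beta$ restricts to a diffeomorphism of interiors and maps boundary to boundary, while $f$ (a diffeomorphism of open submanifolds of $\R^n_+$) carries interior to interior, both lifts equal $\beta^{-1}\circ f\circ\beta$ on the dense interior and so coincide. The same observation shows that any continuous lift of $f\circ\beta$ through $\beta$ is unique. Thus the $g_\sigma$ patch to a unique smooth $f':O_1'\to[\R^n_+;R]$ with $\beta\circ f'=f\circ\beta$, whence $f'(O_1')\subseteq\beta^{-1}(f(O_1))=O_2'$ and the square commutes. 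Applying the whole construction to $f^{-1}:O_2\to O_1$ yields $(f^{-1})':O_2'\to O_1'$, and $(f^{-1})'\circ f'$ and $f'\circ(f^{-1})'$ are continuous lifts of the two identity maps, hence are the identities; so $f'$ is a diffeomorphism onto $O_2'$.

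It remains to see that $f'$ maps each boundary hypersurface of $O_1'$ onto the corresponding one of $O_2'$. Being a diffeomorphism, $f'$ carries boundary hypersurfaces to boundary hypersurfaces, and by Proposition~\ref{P:local_faces_cones} these are the $F_\tau$ for the one-dimensional monoids $\tau=\Zp\pair{v}$ of $R$, with $\bd\beta_\ast$ sending the inward generator $e$ of $(\bN F_\tau)_\Z$ to the extremal $v$. If $f'$ sends a component of $F_\tau\cap O_1'$ into $F_{\tau'}\cap O_2'$, then $\bd f'_\ast$ sends $e$ to the inward generator $e'$ of $(\bN F_{\tau'})_\Z$; differentiating $\beta\circ f'=f\circ\beta$ and using $\bd f_\ast=\id$ gives $v'=(\bd\beta_\ast\circ\bd f'_\ast)(e)=(\bd f_\ast\circ\bd\beta_\ast)(e)=v$, so $\tau'=\tau$, and by symmetry $f'(F_\tau\cap O_1')=F_\tau\cap O_2'$. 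I expect the only genuinely delicate point to be the gluing of the chart-wise lifts; this is handled by the uniqueness-via-density argument above, which at the same time furnishes uniqueness of $f'$ and the invertibility step.
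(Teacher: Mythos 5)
Your proof is correct and follows essentially the same route as the paper's: apply Proposition~\ref{P:local_lifting} chart by chart on the $U_\sigma$ for maximal $\sigma\in R$ (noting that $\bd(f\circ\beta)_\ast$ factors through $\sigma$ itself), patch the local lifts by uniqueness, and obtain the inverse from $(f^{-1})'$. The only differences are cosmetic: you justify patching and invertibility by density of the interior where the paper invokes the uniqueness and functoriality clauses of Proposition~\ref{P:local_lifting}, and you additionally supply an explicit verification (omitted in the paper) that $f'$ carries each boundary hypersurface onto itself.
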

\begin{proof}
The lift $f'$ will be defined locally on coordinate patches, so let $\tau \in R$ be a maximal
monoid and consider the b-map
\[
	f \circ \beta : O_1' \cap U_\tau \subset \R^n_+ \to \R^n_+.
\]
It is evident that $\bd (f \circ \beta)_\ast$ sends $\tau$ (identified here
with $\sigma_{\R^n_+}$ where $\R^n_+ = U_\tau$) isomorphically to $\tau$ and so
it follows from Proposition~\ref{P:local_lifting} that there is a unique
\begin{equation}
	f_\tau' = (f \circ \beta)' : O_1' \cap U_\tau \to U_\tau 
	\label{E:f_tau_prime}
\end{equation}
such that $\beta \circ f_\tau' = f \circ \beta.$ Proceeding this way for each maximal
$\tau \in R$, we obtain b-maps \eqref{E:f_tau_prime} defined locally on the covering
$\set{O'_1 \cap U_\tau}$ of $O'_1$, and these maps patch together to give a map
$f' : O'_1 \to [\R^n_+; R]$ by uniqueness, with range contained in $O'_2$. 

Before proving that $f'$ is a diffeomorphism, consider the functoriality of
these lifts. Thus if $g : O_2 \subset \R^n_+ \to O_3\subset \R^n_+$ is another
such diffeomorphism, it follows by the uniqueness and functoriality results in
Proposition~\ref{P:local_lifting} that
\[
	(g\circ f)' = (g \circ f \circ \beta)' = (g \circ \beta \circ f')' = g' \circ f' : O'_1 \to O'_3.
\]
It is also evident from the proof of Proposition~\ref{P:local_lifting} that
$\id' = \id$ and it then follows that $f'$ is a diffeomorphism with inverse $(f^{-1})'.$
\end{proof}

The following is also a straightforward consequence of Proposition~\ref{P:local_lifting}.

\begin{cor} \label{C:boundary_fans}
If $R(\tau)$ is the refinement of $\tau = \Zp \pair{x_i\pa_{x_i}}_{i \in I}
\leq \sigma_{\R^n_+}$ obtained from a smooth refinement $R$ of $\sigma_{\R^n_+}$,
where $I \subset \set{1,\ldots,n}$, and $\dim(\tau) = \#I = k,$ there is
an injection
\begin{equation}
	[\R^k_+; R(\tau)]\times (0,\infty)^{n-k} \hookrightarrow [\R^n_+; R]
	\label{E:boundary_subfan_blowup_inclusion}
\end{equation}
giving a commutative diagram
\[
\begin{tikzpicture}[->,>=to,auto]
\matrix (m) [matrix of math nodes, column sep=1cm, row sep=1cm] { 
[\R^k_+; R(\tau)] \times (0,\infty)^{n-k} & {[\R^n_+; R] }
\\ \R^k_+ \times (0,\infty)^{n-k} & \R^n_+ \\ };
\draw[right hook->] (m-1-1) to (m-1-2); %top
\path (m-1-2) edge node {$\beta$} (m-2-2) ; %right
\draw[right hook->] (m-2-1) to (m-2-2); %bot
\path (m-1-1) edge node {$\beta$} (m-2-1) ; %left
\end{tikzpicture}
\]
where the bottom map is the inclusion 
\begin{equation}
	\R^k_+ \times (0,\infty)^{n-k} \cong \set{x_i \neq 0 \;;\; i \notin I} \subset \R^n_+.
	\label{E:boundary_subfan_inclusion}
\end{equation}
\end{cor}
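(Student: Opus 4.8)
The plan is to realize the inclusion \eqref{E:boundary_subfan_blowup_inclusion} chart-by-chart and check compatibility. First I would recall from Lemma~\ref{L:refinements_and_faces} that $R(\tau) = \set{\sigma' \in R \;;\; \sigma' \subset \tau}$ is genuinely a smooth refinement of $\tau$, so the left-hand space $[\R^k_+; R(\tau)]$ of \eqref{E:boundary_subfan_blowup_inclusion} is defined by Proposition~\ref{P:localblow}. Observe that $\tau = \Zp\pair{x_i\pa_{x_i}}_{i\in I}$ is identified with $\sigma_{\R^k_+}$ under the coordinate identification $\R^k_+ \cong \set{x_j \ne 0 \;;\; j \notin I} \subset \R^n_+$, and that the maximal monoids of $R(\tau)$ are exactly those $\sigma' \in R$ with $\sigma' \subset \tau$ and $\dim(\sigma') = k$. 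The key point is that for each maximal $\sigma' \in R(\tau)$ there is at least one maximal $\sigma \in R$ (of dimension $n$) with $\sigma \geq \sigma'$, obtained by adjoining to the generators of $\sigma'$ any $n-k$ of the extremals $x_j\pa_{x_j}$, $j \notin I$ (these are in $R$ since they are faces of $\sigma_{\R^n_+}$, hence lie in some maximal monoid of $R$ by the support condition, but in fact one can simply take the join/extension since the $x_j \pa_{x_j}$ for $j \notin I$ are extremals of $\sigma_{\R^n_+}$ not met by $\supp(\tau)$).

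Next I would construct the map on charts. Fix a maximal $\sigma' = \Zp\pair{v_1,\ldots,v_k} \in R(\tau)$ and a chosen maximal extension $\sigma = \Zp\pair{v_1,\ldots,v_k, x_{j_{k+1}}\pa_{x_{j_{k+1}}}, \ldots, x_{j_n}\pa_{x_{j_n}}} \in R$. The chart $U_{\sigma'} = \R^k_+$ with coordinates $(t_1,\ldots,t_k)$ embeds into $U_\sigma = \R^n_+$ with coordinates $(t_1,\ldots,t_k, t_{k+1},\ldots,t_n)$ via $(t_1,\ldots,t_k) \mapsto (t_1,\ldots,t_k, x_{j_{k+1}},\ldots,x_{j_n})$, i.e.\ the last $n-k$ coordinates of $U_\sigma$ are literally the $x_j$, $j \notin I$. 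Taking the product with $(0,\infty)^{n-k}$ (the factor carrying the $x_j$, $j \notin I$) this gives $U_{\sigma'} \times (0,\infty)^{n-k} \hookrightarrow U_\sigma$. One then checks that $\beta_\sigma$ restricted to this image is precisely $\beta_{\sigma'} \times \id$ composed with the inclusion \eqref{E:boundary_subfan_inclusion}: this is immediate from the block form of the matrix $\nu$ for $\sigma$, which has the matrix $\nu'$ for $\sigma'$ in the upper-left $k\times k$ block, the identity in the lower-right, and zeros elsewhere, because the extra generators $x_j\pa_{x_j}$ have trivial $x_i$-components for $i \in I$ and the $v_l$ have trivial $x_j$-components for $j \notin I$. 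This yields the commutative square at the chart level.

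Finally I would patch. To see the chart maps glue to a well-defined injection of $[\R^k_+; R(\tau)] \times (0,\infty)^{n-k}$ into $[\R^n_+; R]$, I would invoke uniqueness from Proposition~\ref{P:local_lifting}: for two maximal $\sigma'_1, \sigma'_2 \in R(\tau)$ with extensions $\sigma_1, \sigma_2 \in R$, the two candidate maps $U_{\sigma'_1,\rho} \times (0,\infty)^{n-k} \to [\R^n_+;R]$ obtained by going through $\sigma_1$ versus first applying the transition $\chi_{12}$ of $R(\tau)$ and then going through $\sigma_2$ both cover the same map to $\R^n_+$ (namely $\beta_{\sigma'} \times \id$ followed by \eqref{E:boundary_subfan_inclusion}), hence agree by the uniqueness clause. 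Injectivity follows since $\beta$ is a diffeomorphism on interiors and the interior of $[\R^k_+;R(\tau)] \times (0,\infty)^{n-k}$ maps to $(0,\infty)^n \cong \beta^{-1}\big((0,\infty)^k \times (0,\infty)^{n-k}\big)$, together with the fact established in Proposition~\ref{P:local_faces_cones} that boundary faces correspond bijectively to monoids (so no collapsing occurs on the boundary either); alternatively, injectivity on the closure follows from continuity and density of the interior. The main obstacle I anticipate is bookkeeping: making sure that an arbitrary maximal $\sigma' \in R(\tau)$ does admit a maximal extension in $R$ and that different choices of extension give the same map into $[\R^n_+;R]$ — but both are handled by the support/covering condition (iii) of Definition~\ref{D:monoid_refinement} and the uniqueness in Proposition~\ref{P:local_lifting} respectively, so the argument reduces to linear algebra of block matrices once the combinatorial setup is pinned down.
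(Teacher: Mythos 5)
Your overall strategy — lift the composite $[\R^k_+;R(\tau)]\times(0,\infty)^{n-k}\to\R^k_+\times(0,\infty)^{n-k}\hookrightarrow\R^n_+$ chart by chart using the local lifting result and glue by uniqueness — is exactly the paper's argument, which simply observes that on each chart the induced monoid morphism sends $\sigma'\in R(\tau)$ to the same monoid viewed in $R$, so the map factors uniquely and injectively through $[\R^n_+;R]$ by Proposition~\ref{P:local_lifting_wfactors}. However, your explicit chart-level construction contains a genuine error. You claim that a maximal $\sigma'=\Zp\pair{v_1,\ldots,v_k}\in R(\tau)$ can be extended to a maximal monoid of $R$ by adjoining the coordinate extremals $x_j\pa_{x_j}$, $j\notin I$, and that consequently the matrix $\nu$ for that extension has block form $\begin{pmatrix}\nu'&0\\0&\id\end{pmatrix}$. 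Neither assertion holds for a general smooth refinement $R$: the faces of $\sigma_{\R^n_+}$ need not belong to $R$, and the monoid $\Zp\pair{v_1,\ldots,v_k,\,x_j\pa_{x_j}:j\notin I}$ need not either. For instance, take $n=2$, $I=\set{1}$, and $R=S(\sigma_{\R^2_+},v)$ the star subdivision at $v=x_1\pa_{x_1}+x_2\pa_{x_2}$: the unique maximal monoid of $R$ containing $\sigma'=\Zp\pair{x_1\pa_{x_1}}$ is $\Zp\pair{x_1\pa_{x_1},v}$, with $\nu=\begin{pmatrix}1&0\\1&1\end{pmatrix}$, so the lower-left block is nonzero, the last coordinate of $U_\sigma$ is not $x_2$, and the embedding is $(t'_1,x_2)\mapsto(t'_1/x_2,\,x_2)$ rather than a product map. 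Your linear-algebra step therefore fails for any extension other than the coordinate one you posit.

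The good news is that this step is dispensable: what you actually need — and what you already invoke for the patching — is Proposition~\ref{P:local_lifting} (in the form of Proposition~\ref{P:local_lifting_wfactors}). Applied directly to the composite $\iota\circ(\beta_{\sigma'}\times\id)$ on the chart $U_{\sigma'}\times(0,\infty)^{n-k}$, whose monoid morphism tautologically factors through $\sigma'\in R$, it produces the lift into any maximal chart $U_\sigma$ with $\sigma\geq\sigma'$ (existence of such a $\sigma$ does follow from the support condition, as you say), together with the uniqueness needed to glue and to see independence of the choice of $\sigma$. One smaller caveat: your alternative injectivity argument ``by continuity and density of the interior'' is not valid as stated — a continuous map injective on a dense open set need not be injective on the closure — so you should rely on your first argument, matching interiors of boundary faces via the monoid correspondence of Proposition~\ref{P:local_faces_cones}.
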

\begin{proof}
The proof is similar to the previous one: the composition of the blow down and
inclusion from $[\R^k_+; R(\tau)]\times (0,\infty)^{n-k}$ to $\R^n_+$,
considered locally on charts in the domain, sends monoids in $R(\tau)$ to the
same monoids in $R$, and so factors uniquely (and injectively) through
$[\R^n_+; R]$.
\end{proof}

Finally we note the following versions of Proposition~\ref{P:local_lifting} and
Corollary~\ref{C:diffeos_lift} involving additional factors without boundary.
The proofs differ only in notation, where the additional factors are simply
carried along.

\begin{prop}
If $f : O \subset \R^m_+ \times \R^{m'} \to \R^n_+ \times \R^{n'}$ is an interior b-map 
from an open set containing $0$ such that $\bd f_\ast :
\sigma_{\R^m_+} \to \sigma_{\R^n_+}$ factors through a monoid homomorphism
$\phi : \sigma_{\R^m_+} \to \tau$ for some $\tau \in R$ where $R$ is a smooth
refinement of $\sigma_{\R^n_+}$, then there exists a unique b-map $f'$ lifting $f$
such that
\[
\begin{tikzpicture}[->,>=to,auto]
\matrix (m) [matrix of math nodes, column sep=1cm, row sep=1cm,  text depth=0.25ex]
{  & {[\R^n_+; R]\times \R^{n'}} \\ O\subset\R^m_+\times \R^{m'} & \R^n_+\times \R^{n'} \\};
\path (m-1-2) edge node  {$\beta$} (m-2-2); %right
\path (m-2-1) edge node {$f$} (m-2-2); %bot
\path (m-2-1) edge node {$f'$} (m-1-2); %updiag
\end{tikzpicture}
\]
commutes, and these lifts are functorial.
\label{P:local_lifting_wfactors}
\end{prop}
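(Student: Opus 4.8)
The plan is to follow the proof of Proposition~\ref{P:local_lifting} essentially verbatim, observing that the boundaryless factors $\R^{m'}$ and $\R^{n'}$ carry no combinatorial information and are simply transported along. First I would reduce to the case $\dim(\tau) = n$ by replacing $\tau$ with a maximal monoid of $R$ containing it, exactly as before; the lift produced then automatically has range in the subchart $U_\tau \times \R^{n'}$.

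Next I would record the local form of $f$. Writing $(x,y) \in \R^m_+ \times \R^{m'}$ for the source and $(x',y') \in \R^n_+ \times \R^{n'}$ for the target, an interior b-map has the shape
\[
	f : (x,y) \mapsto \big(a(x,y)\,x^\delta,\ b(x,y)\big),
\]
with $\delta \in \Mat(m\times n, \Zp)$, $0 < a_i \in \CI$, and $b$ smooth. Here $\delta$ involves only the $\R^m_+$-exponents, and it is exactly the matrix representing $\bd f_\ast = \delta^\transpose : \sigma_{\R^m_+} \to \sigma_{\R^n_+}$, since $\sigma_{\R^m_+}$ is the free monoid on the boundary hypersurfaces $\set{x_i = 0}$ of $\R^m_+ \times \R^{m'}$, which are unaffected by the $y$-variables. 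Letting $\nu \in \GL(n,\Zp)$ be the matrix of the inclusion $\tau \hookrightarrow \sigma_{\R^n_+}$ (so $\beta : t \mapsto t^\nu = x'$ on $U_\tau$) and $\mu \in \Mat(m\times n,\Zp)$ the matrix of $\phi : \sigma_{\R^m_+} \to \tau$, the factorization hypothesis gives $\delta^\transpose = (\mu\nu)^\transpose$ as in the earlier proof.

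Then I would define the lift by
\[
	f' : (x,y) \mapsto \big(a(x,y)^{\nu^{-1}} x^\mu,\ b(x,y)\big) \in U_\tau \times \R^{n'},
\]
and verify $(\beta \times \id)\circ f' = f$ using $(t^\mu)^\nu = t^{\mu\nu}$ and $\mu\nu = \delta$, which is the identical computation to that in Proposition~\ref{P:local_lifting}; uniqueness follows because the relation $(\beta\times\id)\circ f' = f$ pins down all $n+n'$ components of $f'$ in these coordinates. Functoriality under a further b-map $g : U \subset \R^l_+\times\R^{l'} \to O$ is checked by the same bookkeeping, now simply composing the additional Euclidean coordinates through the smooth parts.

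The only point requiring a moment's care — rather than being a genuine obstacle — is to note that passing from $\R^n_+$ to $[\R^n_+; R]\times \R^{n'}$ with blow-down $\beta\times\id$ modifies only the $\R^n_+$ factor, that the monoids $\sigma_{\R^m_+}$ and $\sigma_{\R^n_+}$ are computed from the boundary hypersurfaces alone and hence are insensitive to the added factors, and that the positive smooth prefactors $a_i$ and the $\R^{n'}$-valued component $b$ may legitimately depend on all source coordinates. Granting these observations, the proof of Proposition~\ref{P:local_lifting} applies word for word. The same remark, applied to Corollary~\ref{C:diffeos_lift}, yields the corresponding lifting of diffeomorphisms of open sets in $\R^n_+\times\R^{n'}$ preserving boundary hypersurfaces.
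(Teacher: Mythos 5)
Your proposal is correct and matches the paper's intent exactly: the paper itself states that the proof "differs only in notation" from Proposition~\ref{P:local_lifting}, with the boundaryless factors simply carried along, which is precisely what you do (and you supply the explicit formula $f'(x,y) = (a(x,y)^{\nu^{-1}}x^{\mu},\,b(x,y))$ that the paper leaves implicit). No gaps.
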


\begin{cor}
Any diffeomorphism of open submanifolds $f : O_1 \subset \R^n_+\times \R^{n'}
\stackrel{\cong}{\to} O_2 \subset \R^n_+\times \R^{n'}$ which maps boundary
hypersurfaces onto themselves lifts to a unique diffeomorphism
\[
	f' : O'_1 \subset [\R^n_+; R]\times \R^{n'} \to O'_2 \subset [\R^n_+; R]\times \R^{n'},
\]
where $O'_i = (\beta\times \id)^{-1}(O_i)$, and then $(\beta\times \id)\circ f' = f
\circ (\beta \times \id)$.
\label{C:diffeos_lift_wfactors}
\end{cor}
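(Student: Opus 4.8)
The plan is to reduce Corollary~\ref{C:diffeos_lift_wfactors} to the already-established Corollary~\ref{C:diffeos_lift} by the same ``carry the extra factors along'' principle invoked in the statement. First I would observe that the generalized blow-up $[\R^n_+; R]\times\R^{n'}$ is built from coordinate charts $U_\tau\times\R^{n'}$, with transition maps $\chi_{12}\times\id$, and that the blow-down map is $\beta\times\id$. Thus a diffeomorphism $f:O_1\to O_2$ with the hypothesized properties can, in local coordinates near any point, be written in the form $(x,y)\mapsto (a(x,y)x^{\delta}, b(x,y))$ where, since $f$ is a diffeomorphism mapping each boundary hypersurface onto itself, the exponent matrix $\delta$ is a permutation (in fact, since hypersurfaces go to themselves and not merely to hypersurfaces, the identity up to the reordering forced by the chart) and the $a_i$ are strictly positive. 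In particular $\bd f_\ast$ sends $\sigma_{\R^n_+}$ isomorphically to itself.

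Next I would apply Proposition~\ref{P:local_lifting_wfactors} in the same way Corollary~\ref{C:diffeos_lift} applies Proposition~\ref{P:local_lifting}: for each maximal $\tau\in R$, form the b-map $f\circ(\beta\times\id): (\beta\times\id)^{-1}(O_1)\cap(U_\tau\times\R^{n'})\to \R^n_+\times\R^{n'}$, note that its b-differential sends the copy of $\sigma_{\R^n_+}$ attached to $U_\tau$ (namely $\tau$) isomorphically onto $\tau$, and conclude from Proposition~\ref{P:local_lifting_wfactors} that there is a unique lift $f'_\tau$ with $(\beta\times\id)\circ f'_\tau = f\circ(\beta\times\id)$ and range in $U_\tau\times\R^{n'}$. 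Running over all maximal $\tau$ gives b-maps on the covering $\{(\beta\times\id)^{-1}(O_1)\cap(U_\tau\times\R^{n'})\}$ of $O'_1$, and by the uniqueness clause of the proposition these agree on overlaps, hence patch to a global b-map $f':O'_1\to [\R^n_+;R]\times\R^{n'}$ with $(\beta\times\id)\circ f' = f\circ(\beta\times\id)$; since $f$ maps $O_1$ onto $O_2$ and the diagram commutes, the range of $f'$ is exactly $O'_2$.

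Finally, to see $f'$ is a diffeomorphism I would invoke the functoriality half of Proposition~\ref{P:local_lifting_wfactors} exactly as in the proof of Corollary~\ref{C:diffeos_lift}: if $g$ is another such diffeomorphism (on open subsets of $\R^n_+\times\R^{n'}$ mapping hypersurfaces onto themselves) then $(g\circ f)' = (g\circ f\circ(\beta\times\id))' = (g\circ(\beta\times\id)\circ f')' = g'\circ f'$, and $\id' = \id$ by inspection of the coordinate formula in the proposition; applying this with $g = f^{-1}$ shows $f'$ has smooth inverse $(f^{-1})'$. That $f'$ again maps boundary hypersurfaces onto themselves follows because, under $\beta\times\id$, the boundary hypersurfaces of $[\R^n_+;R]\times\R^{n'}$ correspond (by Proposition~\ref{P:local_faces_cones}, with the trivial factor appended) to the one-dimensional monoids of $R$ together with the hypersurfaces of the $\R^{n'}$ factor, and $f'$ commutes with $\beta\times\id$ while $f$ preserves the stratification — more directly, the commuting square forces $f'$ to preserve codimension and hence to carry hypersurfaces to hypersurfaces, and the preservation of each individual hypersurface is read off the local coordinate form. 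The only point requiring any care — and it is the place I would be most careful — is checking that the local lifts genuinely patch, i.e.\ that when two charts $U_{\tau_1}\times\R^{n'}$ and $U_{\tau_2}\times\R^{n'}$ overlap, the two uniquely-determined lifts restrict to the same map on the overlap; but this is precisely guaranteed by the uniqueness assertion in Proposition~\ref{P:local_lifting_wfactors} together with the fact that the overlap of the charts in $[\R^n_+;R]$ is, by construction, the image of $U_{\sigma_1,\tau}\cong U_{\sigma_2,\tau}$ for $\tau = \sigma_1\cap\sigma_2$, on which the blow-down maps agree after the transition diffeomorphism $\chi_{12}$.
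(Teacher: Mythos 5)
Your proposal is correct and follows essentially the same route as the paper: the paper proves Corollary~\ref{C:diffeos_lift} by locally lifting $f\circ\beta$ via Proposition~\ref{P:local_lifting} on the charts $U_\tau$, patching by uniqueness, and deducing invertibility from functoriality ($(g\circ f)'=g'\circ f'$ and $\id'=\id$), and then states that Corollary~\ref{C:diffeos_lift_wfactors} is proved identically with the $\R^{n'}$ factors carried along, which is exactly what you do.
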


\section{Monoidal complexes}\label{S:complexes}

In Section~\ref{S:global} we show how to associate a natural monoid to each
boundary face of a manifold with corners, with the compatibility condition that
whenever $F \subseteq G$, the monoid associated to $G$ is a face of the one
associated to $F$, i.e. that face relations persist in an inclusion-reversing
sense.  We have already seen such an example, which is the association between
monoids in $R$ and faces in $[\R^n_+; R]$, where $R$ is a refinement of
$\sigma_{\R^n_+}.$ The notion of a refinement (where all the monoids reside in
the same vector space) is not sufficiently flexible to capture the ways in
which boundary faces may be related for general $X$ so we introduce the concept
of a {\em monoidal complex}; this should be thought of by analogy to a
simplicial or CW complex. It is a generalization of the structure of a
refinement in which the monoids are still ``attached together nicely along
faces'', but in which the monoids reside in separate vector spaces and there is
no `base' monoid (as when $R$ is a refinement ``of $\sigma$'').  The contents
of this section are all that is needed for the discussion of the generalized
blow-up of a manifold in Section~\ref{S:global}, apart from
Theorem~\ref{T:blowing_up_domain} which uses the fiber product of monoidal 
complexes discussed in the next section.

Consider a finite partially ordered set (poset) by $A = (A,\leq).$ It is convenient
to think of $A$ as a {\em category}, with objects the elements $a \in A$
and morphisms given by the order relation: $a \smallto b \in \Hom(a,b) \iff
a \leq b.$

\begin{defn} 
Let $(A,\leq)$ be a poset. A {\em monoidal complex} $\cQ$ over $A$ is a
covariant functor from $A$ to the category of monoids, where all morphisms are
isomorphisms onto faces, so $\cQ$ consists of a monoid $\sigma_a$ for each $a
\in A,$ along with isomorphisms called {\em face maps}
\begin{equation}
	i_{a b} : \sigma_a \stackrel{\cong}{\to} \tau \leq \sigma_b\ \text{whenever}\  a \leq b,
	\label{E:face_maps}
\end{equation}
for some $\tau \leq \sigma_b.$ This relation will be denoted
\begin{equation}
	\sigma_a \leq \sigma_b,\ \text{for}\ a \leq b.
	\label{E:complex_order}
\end{equation}

We say $\cQ$ is {\em complete}, respectively {\em reduced}, if for every $b
\in A$ and every face $\tau \leq \sigma_b$, there exists at least one (resp.\
at most one) $a \in A$ such that $a \leq b$ and $i_{ab} : \sigma_a \cong \tau$.
\label{D:monoidal_complex}
\end{defn}

\begin{lem}
If $\cQ$ is complete and reduced, then the posets $(A,\leq)$ and
$\Gamma_{\cQ} = \set{\tau \leq \sigma \;;\; \sigma \in \cQ} \big / i_\ast$
with the order \eqref{E:complex_order} are isomorphic.
\label{L:complete_complex_directed}
\end{lem}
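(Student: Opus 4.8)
The plan is to exhibit mutually inverse order-preserving maps between $(A,\leq)$ and $\Gamma_{\cQ}$. First I would define a map $\Phi : A \to \Gamma_{\cQ}$ by sending $a \mapsto [\sigma_a]$, the class of $\sigma_a$ itself (viewed as a face of $\sigma_a$, via the identity, which is the face map $i_{aa}$). To see this is well-defined into $\Gamma_\cQ / i_\ast$, note that $\sigma_a$ appears as a face of $\sigma_a$ in the family $\set{\tau \le \sigma \;;\; \sigma \in \cQ}$. For the reverse direction, given a class $[\tau]$ with $\tau \le \sigma_b$ for some $b \in A$, completeness supplies some $a \le b$ with $i_{ab} : \sigma_a \xrightarrow{\cong} \tau$, and reducedness guarantees this $a$ is unique for the given $b$; I would then set $\Psi([\tau]) = a$. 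The first real task is to check $\Psi$ is well-defined on equivalence classes, i.e.\ independent both of the representative $\tau$ in its $i_\ast$-class and of the choice of $\sigma_b$ containing it: if $\tau \le \sigma_b$ and $\tau'' \le \sigma_{b'}$ are identified under the generated equivalence by a zig-zag of face maps, one chases the diagram using functoriality (composites of face maps are face maps, $i_{ab} = i_{a'b}\circ i_{aa'}$ for $a\le a'\le b$) together with reducedness applied at each vertex of the zig-zag to conclude the associated indices agree.

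Next I would verify $\Psi \circ \Phi = \id_A$ and $\Phi \circ \Psi = \id_{\Gamma_\cQ}$. The first is immediate: $\Phi(a) = [\sigma_a]$ with $\sigma_a \le \sigma_a$ via $i_{aa}$, so by reducedness (with $b = a$) the unique index is $a$, giving $\Psi(\Phi(a)) = a$. For the second, given $[\tau]$ with $\tau \le \sigma_b$ and $\Psi([\tau]) = a$ where $i_{ab}:\sigma_a \cong \tau$, we have $\Phi(a) = [\sigma_a]$, and the isomorphism $i_{ab}$ is exactly a face map witnessing $\sigma_a \sim \tau$ under $i_\ast$, so $[\sigma_a] = [\tau]$.

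Finally I would check both maps respect the orders. If $a \le b$ in $A$, then $i_{ab}:\sigma_a \cong \tau \le \sigma_b$ exhibits $[\sigma_a] \le [\sigma_b]$ in the order \eqref{E:complex_order} on $\Gamma_\cQ$, so $\Phi$ is monotone. Conversely if $[\sigma_a] \le [\sigma_b]$, then after replacing by $i_\ast$-equivalent representatives there is a face map $\sigma_a \cong \tau \le \sigma_b$; since $\cQ$ is a functor on the poset $A$ with morphisms exactly the order relations, such a face map arises only from some $a' \le b$ with $\sigma_{a'} = \sigma_a$ up to the equivalence, and reducedness forces $a' = a$, whence $a \le b$. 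I expect the main obstacle to be the well-definedness of $\Psi$ on $i_\ast$-classes spanning several distinct monoids $\sigma_b$, $\sigma_{b'}$ — handling an arbitrary zig-zag of face maps requires carefully combining the functoriality of $\cQ$ (to compose and factor face maps along chains in $A$) with reducedness at each stage, rather than a single direct comparison; everything else is a routine diagram chase.
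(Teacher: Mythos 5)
Your proposal is correct and follows essentially the same route as the paper, which simply observes that $a \mapsto [\sigma_a]$ intertwines the orders, is injective by reducedness, and is surjective by completeness. Your version merely elaborates this by writing down the explicit inverse $\Psi$ and spelling out the zig-zag/well-definedness check that the paper leaves implicit.
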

\begin{proof}
Consider the composition of the map $A \ni a \mapsto \sigma_a \in \cQ$ with
the quotient by the face maps \eqref{E:face_maps}. This map $A \to \Gamma_{\cQ}$ 
intertwines the orders by \eqref{E:complex_order}; it must be
injective by the assumption that $\cQ$ is reduced, and surjective by the
assumption that it is complete.
\end{proof}
\noindent 
From this it follows that for a complete, reduced monoidal complex, $A$ is
entirely determined by the set of $\sigma \in \cQ$ and the face relations
\eqref{E:complex_order}.  Nevertheless, it is often convenient to be able to refer
explicitly to an indexing set $A$, as in the case $(A,\leq) =
(\cM(X),\leq)$ below, which we will most often encounter.

There are evidently some obstructions for a general poset to index
a complete, reduced monoidal complex, but we shall not concern ourselves 
with them here; all complete reduced monoidal complexes will arise naturally.

From now on, {\em monoidal complex} will mean {\em complete and reduced
monoidal complex}.

\begin{ex} 
The faces of a single monoid $\sigma$ form a monoidal
complex over the poset $\set{\tau \;;\; \tau \leq \sigma}$.

More generally, any refinement $R$ of $\sigma$ forms a monoidal complex over
the set $R$ with the order coming from the face relations.
\end{ex}

For an example which is not a refinement, consider the following.
\begin{ex}
Let $\sigma_0 = \set{0}$, $\sigma_a = \Zp (1,0) \subset \R^2$,
$\sigma_b = \Zp (0,1) \subset \R^2$, and let $\sigma_c$ and $\sigma_d$
be two distinct copies of the monoid $\Zp \pair{(1,0),(0,1)} \subset \R^2$.  Let the
face maps be the obvious ones in the diagram
\[
\begin{tikzpicture}[->,>=to,auto]
\matrix (m) [matrix of math nodes, column sep=0.5cm, row sep=0.25cm]
{ &\sigma_a & \sigma_c \\ \sigma_0 & & \\ & \sigma_b & \sigma_d. \\ };
\draw (m-1-2) to (m-1-3);
\draw (m-3-2) to (m-3-3);
\draw (m-2-1) to (m-1-2);
\draw (m-2-1) to (m-3-2);
\draw (m-1-2) to (m-3-3);
\draw (m-3-2) to (m-1-3);
\end{tikzpicture}
\]
This complex cannot be realized as a refinement, since otherwise $\sigma_c$ and
$\sigma_d$ would have to be identified; here they remain distinct. 
The poset underlying this complex is the same as the one given by the
boundary faces (with the order of reverse inclusion) of the 2-dimensional manifold with corners
pictured in Fig.~\ref{F:football}.
\end{ex}
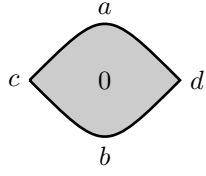
\begin{figure}[tb]
\[
\begin{tikzpicture}[fill opacity=0.2, line width=1pt]
\filldraw (0,0) node[left,opacity=1] {$c$} .. controls (1,1) .. node[midway,opacity=1,above] {$a$} (2,0) 
	node[right,opacity=1] {$d$} .. controls (1,-1) .. node[midway,opacity=1,below] {$b$} (0,0);
\node[fill opacity=1] at (1,0)  {$0$};
\end{tikzpicture}
\]
\caption{A manifold whose boundary face relations do not index a monoid refinement.}
\label{F:football}
\end{figure}

\begin{defn}
A {\em morphism} of monoidal complexes $\phi : \cQ_A \to \cQ_B$ consists of
a map of posets $\phi_\# : (A,\leq) \to (B,\leq)$ and monoid homomorphisms
$\phi_{ab} : \cQ_A \ni \sigma_a \to \sigma_b \in \cQ_B$ for all $a \in A$,
where $b = \phi_\#(a) \in B$.  These are required to commute with the homomorphisms 
$i_{aa'}$ for $a \leq a'$, so that
\[
\begin{tikzpicture}[->,>=to,auto]
\matrix (m) [matrix of math nodes, column sep=1cm, row sep=1cm]
{ \sigma_a & \sigma_b \\ \sigma_{a'} & \sigma_{b'} \\};
\path (m-1-1) edge node {$\phi_{ab}$} (m-1-2);
\path (m-1-2) edge node {$i_{bb'}$} (m-2-2);
\path (m-1-2) edge node {$i_{bb'}$} (m-2-2);
\path (m-2-1) edge node {$\phi_{a'b'}$} (m-2-2);
\path (m-1-1) edge node {$i_{aa'}$} (m-2-1);

%\draw (m-1-1) to (m-1-2) node [midway] {$\phi_{ab}$}; %top
%\draw (m-1-2) to (m-2-2) node [midway] {$i_{bb'}$}; %right
%\draw (m-1-2) to (m-2-2) node [midway] {$i_{bb'}$}; %right
%\draw (m-2-1) to (m-2-2) node [midway] {$\phi_{a'b'}$}; %bot
%\draw (m-1-1) to (m-2-1) node [midway] {$i_{aa'}$}; %left
\end{tikzpicture}
\]
commutes, where $a \leq a'$, $\phi_\#(a) = b$, $\phi_\#(a') = b'$, and thus $b
\leq b'$.  We say $\phi$ is {\em injective} if all the morphisms $\phi_{ab}$
are injective, though we do not necessarily require that $\phi_\# : A \to
B$ be injective.
\label{D:morphism_of_monoidal_complex}
\end{defn}

An elementary example of a morphism is the inclusion of a subcomplex:

\begin{defn} Let $\cQ$ be a monoidal complex over $(A,\leq)$.  A {\em
    monoidal subcomplex} of $\cQ$  
is a complex $\cQ_0$ obtained by restricting $\cQ$ to a subset $A_0 \subset A$:
\[
	\cQ_0 = \set{\sigma_a \;;\; a \in A_0}
\]
such that $\cQ_0$ is complete and reduced. There is then an injective morphism
\[
	\cQ_0 \longhookrightarrow \cQ
\]
consisting of the identity homomorphisms over $A_0 \subset A$.
\label{D:monoidal_subcomplex}
\end{defn}

In Section~\ref{S:global}, we will show that a b-map $f : X \to Y$ gives rise to a
morphism $f_\natural : \cP_X \to \cP_Y$ of the basic monoidal complexes associated to
$X$ and $Y$.

Another important class of morphisms consists of refinements.  
\begin{defn} Let
$\cQ$ be a monoidal complex.  A {\em refinement} of $\cQ$ is a morphism $\phi :
\cR \to \cQ$ all of whose homomorphisms are injective and for all $\sigma \in
\cQ$, 
\begin{enumerate}[{\normalfont (i)}] 
\item $\bigcup_{\tau \in \phi_\#^{-1}(\sigma)} \phi\big(\supp(\tau)\big) =
\supp(\sigma)$, and
\label{I:ref_one} 
\item for $\tau_1,\tau_2 \in \phi_\#^{-1}(\sigma)$, $\relint\big[\phi\big(\supp(\tau_1)\big)\big] \cap
\relint\big[\phi\big(\supp(\tau_2)\big)\big] = \emptyset$ unless $\tau_1 =
\tau_2$, 
\label{I:ref_two} 
\end{enumerate} 
where the {\em relative interior} of a cone $C =
\sspan_{\R_+}\set{v_1,\ldots,v_k}$ is the set $\relint[C] =
\sspan_{(0,\infty)}\set{v_1,\ldots,v_k}$.
\label{D:refinement_of_monoidal_complex} 
\end{defn} 
\noindent Condition \eqref{I:ref_one} requires that the support of each monoid in
$\cQ$ is covered by the supports of monoids in $\cR$, and condition
\eqref{I:ref_two} demands that these supports only intersect along mutual
faces. These, along with our assumption that $\cR$ is complete and reduced,
are analogous to the conditions in Definition~\ref{D:monoid_refinement} for the
refinement of a monoid.  Figure \ref{F:refinement} depicts a refinement.

\begin{figure}[tb]
%\begin{centered}
\begin{tikzpicture}
\matrix (m) [matrix of nodes, anchor=base, column sep=2cm, row sep=0.5cm]
{ 
 \begin{tikzpicture}[scale=0.55]
 \draw (0,1) -- (0,5.5);
 \draw (0,1) -- (4.5,5.5);
 %\foreach \y in {0,2,4} {
% \foreach \x in {0,...,\y} {
%	 \fill (0,1)+(\x,\y) circle (3pt);
%	 }
% }
 \fill (0,1) circle (3pt);
 \fill (0,3) circle (3pt);
 \fill (0,5) circle (3pt);
 \fill (2,3) circle (3pt);
 \fill (2,5) circle (3pt);
 \fill (4,5) circle (3pt);

 \draw (1,0) -- (5.5,0);
 \draw (1,0) -- (5.5,4.5);
 \foreach \x in {0,2,...,4} {
	 \foreach \y in {0,...,\x} {
		 \fill (1,0)+(\x,\y) circle (3pt);
		 }
	 }
  \draw (2.5,3.9) -- (2.7,3.3);
  \draw (2.4,3.8) -- (2.6,3.2);

  \draw (3.7,2.3) -- (3.5,2.9);
  \draw (3.6,2.2) -- (3.4,2.8);

 \end{tikzpicture}
&
 \begin{tikzpicture}[scale=0.6]
 \draw (0,0) -- (0,4.5);
 \draw (0,0) -- (4.5,0);
 \draw[dashed,opacity=0.5] (0,0) -- (4.5,4.5);
 \foreach \x in {0,1,...,4} {
	 \foreach \y in {0,1,...,4} {
		 \fill (\x,\y) circle (2pt);
		 }
	 }
 %\foreach \y in {0,2,4} {
	 %\foreach \x in {0,...,\y} {
		 %\fill (\x,\y) circle (3pt);
		 %}
 \fill (0,0) circle (3pt);
 \fill (0,2) circle (3pt);
 \fill (0,4) circle (3pt);
 \fill (2,2) circle (3pt);
 \fill (2,4) circle (3pt);
 \fill (4,4) circle (3pt);
	 %}
 \foreach \x in {0,2,...,4} {
	 \foreach \y in {0,...,\x} {
		 \fill (\x,\y) circle (3pt);
		 }
	 }
  
  \end{tikzpicture}
\\
$\cR$ & $\cQ$
\\};
\path (m-1-1) edge[->,>=to,bend left=10] (m-1-2);
\end{tikzpicture}
%\end{centered}
\caption{A refinement of monoidal complexes.  Only maximal dimension monoids are 
pictured.  In this example, $\cR \smallto \cQ$ is injective, though $\cR$ is not 
smooth, and the individual monoid homomorphisms do not cover their targets.}
\label{F:refinement}
\end{figure}
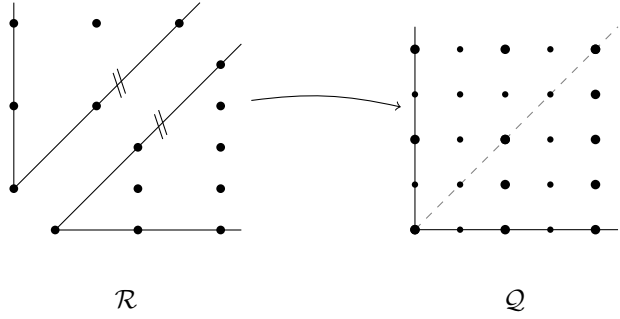

\begin{prop} If $\phi : \cR \to \cQ$ is a refinement then for each $\sigma \in
\cQ$, the collection 
\begin{equation}
	\cR(\sigma) := \set{\phi(\sigma') \;;\; \sigma' \in
		\phi_\#^{-1}(\tau),\ \tau \leq \sigma}
	\label{E:refinement_set}
\end{equation}
is a refinement of $\sigma.$ Conversely, let $\set{R\pns{\sigma} \;;\;
  \sigma \in \cQ}$ be a collection of refinements which are compatible in
that whenever $\tau \leq \sigma$, the refinement $R(\tau)$ is identical to
the induced refinement $\big(R(\sigma)\big)(\tau)$ as in
Lemma~\ref{L:refinements_and_faces}.  Then the quotient $\cR = \bigcup
R(\sigma) \big / i_\ast$ of the set of all monoids in the collection by the
face maps $i_\ast$ of $\cQ$ forms a refinement $\phi : \cR \to \cQ$ of
monoidal complexes.
\label{P:refinement_of_monoidal_complex}
\end{prop}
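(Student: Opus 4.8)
The plan is to verify, for both directions, the conditions of Definition~\ref{D:monoid_refinement} and Definition~\ref{D:refinement_of_monoidal_complex} by transporting the monoid-level statements through the face maps of $\cQ$. A useful preliminary, proved exactly as Lemma~\ref{L:refinements_and_faces}, is that localization commutes with complex-refinement: if $\phi : \cR \to \cQ$ is a refinement and $\tau \leq \sigma$ in $\cQ$, then $\cR(\tau) = \big(\cR(\sigma)\big)(\tau)$ as sets of submonoids of $\tau$, in the notation of \eqref{E:localization_of_refinement}; the only point to check is that $\phi$ commutes with face maps, which is built into Definition~\ref{D:morphism_of_monoidal_complex}.

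For the first half, fix $\sigma \in \cQ$ and consider $\cR(\sigma)$ as in \eqref{E:refinement_set}. Since each homomorphism of $\phi$ is injective, every element of $\cR(\sigma)$ is a toric submonoid of $\sigma$, obtained as the image of some $\sigma' \in \cR$ under $\phi$ followed by the relevant face inclusion into $\sigma$, so it remains to check the three conditions of Definition~\ref{D:monoid_refinement}. Closure under faces, \eqref{I:monref_1}, follows from completeness and reducedness of $\cR$ together with compatibility of $\phi$ with face maps: a face of $\phi(\sigma')$ is the image of a face $\delta' \leq \sigma'$, which, $\cR$ being complete and reduced, is realized by a monoid of $\cR$ lying over a face of $\sigma$, hence lies in $\cR(\sigma)$. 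The covering condition \eqref{I:monref_3} is immediate from Definition~\ref{D:refinement_of_monoidal_complex}\eqref{I:ref_one} applied to $\sigma$, since $\cR(\sigma)$ already contains every $\phi(\tau')$ with $\tau' \in \phi_\#^{-1}(\sigma)$. The content is \eqref{I:monref_2}, that $\gamma_1 \cap \gamma_2$ is a face of each $\gamma_i \in \cR(\sigma)$. Here I would show that the supports $\set{\supp\gamma \;;\; \gamma \in \cR(\sigma)}$ form a fan in $N_\sigma^\R$: using the convexity fact that a cone whose relative interior meets the relative interior of a face of a larger cone containing it is itself contained in that face, one reduces, for two $\gamma_1,\gamma_2$ with overlapping relative interiors, to the case of two monoids supported in a single face $\rho \leq \sigma$, and there invokes Definition~\ref{D:refinement_of_monoidal_complex}\eqref{I:ref_two} (via the localization identity for $\cR(\rho)$) to conclude their supports coincide; combined with \eqref{I:monref_1} and \eqref{I:monref_3} this makes $\set{\supp\gamma}$ a fan, which is exactly \eqref{I:monref_2}.

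For the converse, let $\set{R(\sigma)}_{\sigma \in \cQ}$ be a compatible family. First I would make the quotient precise: $\cR$ is the set of equivalence classes of pairs $(\gamma,\sigma)$ with $\sigma \in \cQ$, $\gamma \in R(\sigma)$, under the relation generated by $(\gamma,\tau) \sim \big(i_{\tau\sigma}(\gamma),\sigma\big)$ for $\tau \leq \sigma$ in $\cQ$ --- which is meaningful precisely because $R(\tau) = \big(R(\sigma)\big)(\tau)$ --- with the order inherited from the face orders inside each $R(\sigma)$, and the indexing poset of the complex is $\cR$ itself. Completeness of $\cR$ comes from \eqref{I:monref_1} for each $R(\sigma)$, and reducedness reduces, after choosing a common representative $\sigma$ for the potentially identified face, to the triviality that two elements of the set $R(\sigma)$ equal as submonoids of $\sigma$ coincide. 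The morphism $\phi : \cR \to \cQ$ sends a class to the monoid of $\cQ$ indexed by the minimal face $\tau(\gamma) \leq \sigma$ with $\gamma \in R(\tau(\gamma))$ (well-defined on classes by compatibility, since faces of $\sigma$ below $\tau$ are exactly the faces of $\tau$), with $\phi_{ab}$ the inclusion $\gamma \hookrightarrow \tau(\gamma)$; that $\phi_\#$ is order-preserving and the squares of Definition~\ref{D:morphism_of_monoidal_complex} commute are then formal. Finally $\phi$ is a refinement: its homomorphisms are inclusions, hence injective; Definition~\ref{D:refinement_of_monoidal_complex}\eqref{I:ref_one} for $\sigma$ holds because the maximal-dimensional monoids of $R(\sigma)$ all lie over $\sigma$ (their supports span $N_\sigma^\R$) and their supports cover $\supp\sigma$ by \eqref{I:monref_3} for $R(\sigma)$; and Definition~\ref{D:refinement_of_monoidal_complex}\eqref{I:ref_two} holds because distinct monoids in $\phi_\#^{-1}(\sigma)$ are distinct submonoids of $\sigma$, so by \eqref{I:monref_2} for $R(\sigma)$ their intersection is a common proper face and their relative interiors are disjoint.

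The main obstacle is \eqref{I:monref_2} of Definition~\ref{D:monoid_refinement} in the first half --- passing from the ``disjoint relative interiors'' hypothesis that defines a complex-refinement to the ``intersections are faces'' formulation of a monoid-refinement --- since this is the only genuinely geometric point, requiring both the localization lemma for $\cR(\rho)$ and the convexity argument above, with some care in identifying the faces of $\sigma$ over which the relevant monoids lie. The remaining verifications, including well-definedness and reducedness of the quotient $\cR$ in the converse, are routine once the face maps of $\cQ$ are chased carefully.
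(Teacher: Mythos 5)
Your proof is correct and structurally parallel to the paper's: both directions come down to checking the conditions of Definitions~\ref{D:monoid_refinement} and~\ref{D:refinement_of_monoidal_complex}, and your quotient construction and choice of $\phi_\#$ in the converse are exactly the paper's (the paper disposes of that direction in two sentences where you spell out completeness, reducedness and the covering/disjointness conditions). The genuine divergence is in the forward direction, at condition \ref{D:monoid_refinement}.\eqref{I:monref_2}. The paper's device is to let $\tau\leq\sigma$ be the \emph{minimal} face with $\tau'_1\cap\tau'_2\subset\tau$; then each $\tau'_i\cap\tau$ is a face of $\tau'_i$ (preimages of faces are faces), hence in $\cR$, and lies in $\phi_\#^{-1}(\tau)$ by minimality, so Definition~\ref{D:refinement_of_monoidal_complex}\eqref{I:ref_two} applies within a single fiber, and closure under faces \eqref{I:monref_1} then drops out by taking $\tau'_1$ to be a face of $\tau'_2$. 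You instead prove \eqref{I:monref_1} separately from completeness and obtain \eqref{I:monref_2} by showing the supports form a fan, reducing two monoids with overlapping relative interiors to a common face $\rho$ by convexity. Both arguments ultimately lean on the same unexpanded geometric step --- that pairwise disjoint relative interiors within a fiber, together with closure under faces and covering, force intersections to be common faces --- which the paper asserts in a single sentence and which you at least isolate as the one genuinely geometric point. The paper's minimal-face localization is the more economical packaging: it names the candidate common face directly and sidesteps the need to check that the two monoids reduced to $\rho$ actually lie in $\phi_\#^{-1}(\rho)$ rather than merely having supports inside $\supp(\rho)$, a point your reduction leaves implicit. But your route is sound and reaches the same conclusion.
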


\begin{proof}
Let $\cR(\sigma)$ be the set in \eqref{E:refinement_set}.  Certainly 
\[
	\cR(\sigma) \supseteq \set{\phi(\sigma') \;;\; \sigma' \in \phi_\#^{-1}(\sigma)}
\]
and it follows that $\cR(\sigma)$ satisfies property
\ref{D:monoid_refinement}.\eqref{I:monref_3} for a refinement of $\sigma$ by
property \eqref{I:ref_one} in Definition~\ref{D:refinement_of_monoidal_complex}
above.

To see that $\cR(\sigma)$ satisfies the other two properties of a refinement,
let $\tau'_i \in \cR$, $i = 1,2$ with $\phi(\tau'_i) \in \cR(\sigma)$, and for
convenience of notation, identify these with their images in $\sigma$.  Let
$\tau \leq \sigma$ be the smallest face such that $\tau'_1 \cap \tau'_2 \subset
\tau$.  Since $\phi$ is a morphism it follows that each $\tau'_i \cap \tau$ is
a face of $\tau'_i$, hence also in $\cR$, and it must be that $\tau'_i \cap
\tau \in \phi_\#^{-1}(\tau)$ (or else there would be a smaller $\tau$), so it
follows that the face $\tau'_i \cap \tau \leq \tau$ is also in $\cR(\sigma)$.

By property \eqref{I:ref_two} above for $\tau'_i\cap \tau \in
\phi_\#^{-1}(\tau)$, it follows that $\tau'_1 \cap \tau'_2 = \pns{\tau'_1 \cap
\tau} \cap \pns{\tau'_2 \cap \tau}$ must be a face of each. To see that $\cR(\sigma)$
contains the faces of all its monoids, simply let $\tau'_1$ be the face in
question of $\tau'_2$; we conclude that $\tau'_1 = \tau'_1 \cap \tau'_2 \in
\cR(\sigma)$.  Thus $\cR(\sigma)$ is a refinement of $\sigma$.

For the converse, let $\cR$ be the set of all monoids in the refinements
$R(\sigma)$, $\sigma \in \cQ$, modulo the identification of monoids in
$(R(\sigma))(\tau)$ with those in  $R(\tau)$ for $\tau \leq \sigma$, as above.
That $\cR$ is a complex follows easily from the fact that each $R(\sigma)$ forms
a monoidal complex.

For notational clarity, denote the posets for $\cR$ and $\cQ$ by
$(A,\leq)$ and $(B,\leq)$, respectively.  We take $\phi_\# : A \to B$ to be the
map which sends $a$ to the smallest $b \in B$ such that $\sigma'_a \in \cR$
is in the refinement $R_{\sigma_b}$, and then $\phi_{ab} :
\sigma'_a \to \sigma_b$ is given by the inclusion $R_{\sigma_b} \ni
\sigma'_a \subset \sigma_b$ which is of course injective.

Finally, it follows directly from the fact that the $R(\sigma)$ are refinements
that $\phi : \cR \to \cQ$ satisfies the properties in
Definition~\ref{D:refinement_of_monoidal_complex}.
\end{proof}

The remainder of this section is devoted to specific algorithms for obtaining
and extending various refinements of complexes. 

A primary means of obtaining refinements is star subdivision; we extend
this to a monoidal complex. The proof of the following
follows directly from Propositions~\ref{P:refinement_of_monoidal_complex} and
\ref{P:star_subdivision}.

\begin{prop}[Star subdivision]
Let $\cQ$ be a monoidal complex, $\sigma_a \in \cQ$, and $v \in \sigma_a$
the monoids
\[
\cS(\cQ,v)(\sigma_b) = \begin{cases} S\big(\sigma_b,(i_{ab}v)\big) & \text{if $\sigma_a \leq \sigma_b$, and} \\ 
		\sigma_b & \text{otherwise}\end{cases}
\]
form a monoidal complex which refines $\cQ;$ if $\cQ$ is smooth, then $\cS(\cQ,v)$ is
smooth.
\label{P:star_subdivn_complex}
\end{prop}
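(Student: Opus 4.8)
The plan is to deduce the proposition from Propositions~\ref{P:refinement_of_monoidal_complex} and~\ref{P:star_subdivision}, as indicated. By the second part of Proposition~\ref{P:refinement_of_monoidal_complex} it is enough to exhibit, for each $\sigma_b\in\cQ$, a refinement $R(\sigma_b)$ of $\sigma_b$, and to check that this family is compatible with passage to faces. The candidate is $R(\sigma_b)=S(\sigma_b,i_{ab}v)$ when $\sigma_a\le\sigma_b$ and $R(\sigma_b)$ the trivial refinement of $\sigma_b$ otherwise; Proposition~\ref{P:star_subdivision} already tells us each of these is a refinement of $\sigma_b$ (here $v\ne 0$, and one should take $\sigma_a$ to be the smallest monoid of $\cQ$ containing $v$, equivalently $v\in\relint\supp(\sigma_a)$ --- the situation in all applications). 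So the work is the compatibility: for each face relation $\sigma_c\le\sigma_b$ in $\cQ$, with face map $i_{cb}:\sigma_c\xrightarrow{\cong}\tau'\le\sigma_b$, one needs $\bigl(R(\sigma_b)\bigr)(\tau')=i_{cb}\bigl(R(\sigma_c)\bigr)$, with notation as in Lemma~\ref{L:refinements_and_faces}.

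The computation that drives everything is that star subdivision restricts well to faces: for a face $\tau'\le\rho$ and $0\ne w\in\rho$, reading off the explicit description in Proposition~\ref{P:star_subdivision} gives $\bigl(S(\rho,w)\bigr)(\tau')=S(\tau',w)$ if $w\in\tau'$, and $\bigl(S(\rho,w)\bigr)(\tau')$ equal to the trivial refinement of $\tau'$ if $w\notin\tau'$ --- since a face of $\rho$ missing $w$ is contained in $\tau'$ exactly when it is a face of $\tau'$, while a monoid $\rho'+\Zp w$ is contained in $\tau'$ exactly when $\rho'\le\tau'$ and $w\in\tau'$.

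Granting this, the compatibility is a short case analysis. If $\sigma_a\not\le\sigma_b$ then $\sigma_a\not\le\sigma_c$ as well, both refinements are trivial, and a trivial refinement restricts to a trivial one. If $\sigma_a\le\sigma_b$, set $w=i_{ab}v$; because $v$ lies in the relative interior of $\supp(\sigma_a)$, the smallest face of $\sigma_b$ containing $w$ is $i_{ab}(\sigma_a)$, and since $\cQ$ is reduced this shows $w\in\tau'$ if and only if $\sigma_a\le\sigma_c$. In that case functoriality of the face maps gives $w=i_{cb}(i_{ac}v)$, so the restriction formula yields $\bigl(S(\sigma_b,w)\bigr)(\tau')=S(\tau',w)=i_{cb}\bigl(S(\sigma_c,i_{ac}v)\bigr)=i_{cb}\bigl(R(\sigma_c)\bigr)$; in the contrary case $w\notin\tau'$, so the restriction formula gives the trivial refinement of $\tau'$, which is $i_{cb}\bigl(R(\sigma_c)\bigr)$. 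Thus the family is compatible, and Proposition~\ref{P:refinement_of_monoidal_complex} assembles it into the refinement $\cS(\cQ,v)\to\cQ$ with $\cS(\cQ,v)(\sigma_b)=R(\sigma_b)$.

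For the last assertion, if $\cQ$ is smooth then every $\sigma_b\in\cQ$ is a smooth monoid and hence has all proper faces smooth, so the smoothness clause of Proposition~\ref{P:star_subdivision} makes each $S(\sigma_b,i_{ab}v)$ a smooth refinement, and therefore $\cS(\cQ,v)$ is smooth. The one genuinely substantive point in the argument --- and the only place where the precise hypothesis on $v$ enters --- is the identification of ``$w$ lies in the face $\tau'$'' with the order relation ``$\sigma_a\le\sigma_c$''; everything else is bookkeeping with the two cited propositions.
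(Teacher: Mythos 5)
Your proof is correct and follows exactly the route the paper intends: the paper offers no argument beyond the remark that the result "follows directly from Propositions~\ref{P:refinement_of_monoidal_complex} and \ref{P:star_subdivision}," and you have supplied precisely the missing compatibility check for the converse half of Proposition~\ref{P:refinement_of_monoidal_complex}. Your observation that one must implicitly take $v\neq 0$ with $\sigma_a$ the minimal monoid containing $v$ (i.e.\ $v\in\relint\supp(\sigma_a)$) is a genuine and correct reading of a hypothesis the statement leaves tacit, and it is exactly where the face-compatibility would otherwise fail.
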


Recall that in Section~\ref{S:monoids} an extension of star subdivision is
discussed, giving a refinement $S(\sigma,\mu)$ of a smooth monoid,
$\sigma,$ with respect a monoid $\mu$ given by the intersection of $\sigma$
with an integral subspace. We now extend this to monoidal complexes, which
will be of use in Section \ref{S:binvarres}.

\begin{prop}[Planar refinement]\label{P:planar_refinement_complex}
Let $\cP$ a smooth monoidal complex and suppose $i : \cQ \to
\cP$ is an injective morphism such that for all $\sigma \in \cP$,
$(i_\#)^{-1}(\sigma) \subset \cQ$ contains at most one monoid $\mu,$ and
$i(\mu) \subset \sigma$ is the intersection of $\sigma$ with an integral
subspace, then
\[
	\cS(\cP,\cQ)(\sigma) = S(\sigma,\mu), \quad \mu = (i_\#)^{-1}(\sigma)
\]
form a complex, containing $\cQ$ as a subcomplex, which refines $\cP.$
\end{prop}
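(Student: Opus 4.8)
The plan is to reduce the statement to the pointwise (single-monoid) planar refinement of Proposition~\ref{P:planar_refinement} together with the gluing machinery of Proposition~\ref{P:refinement_of_monoidal_complex}. First I would set up the collection $\set{R(\sigma) := S(\sigma,\mu_\sigma) \;;\; \sigma \in \cP}$, where $\mu_\sigma = (i_\#)^{-1}(\sigma)$ when this is nonempty and $\mu_\sigma = \set{0}$ otherwise (so that $S(\sigma,\set{0})$ is the trivial refinement of $\sigma$, as in the first paragraph of the proof of Proposition~\ref{P:planar_refinement}). By hypothesis $i(\mu_\sigma) \subset \sigma$ is the intersection with an integral subspace, so Proposition~\ref{P:planar_refinement} applies and each $R(\sigma)$ is a genuine refinement of $\sigma$.

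The key step is to verify the compatibility condition of Proposition~\ref{P:refinement_of_monoidal_complex}: whenever $\tau \leq \sigma$ in $\cP$, the induced refinement $\big(R(\sigma)\big)(\tau)$ agrees with $R(\tau)$. Here I would invoke the second assertion of Proposition~\ref{P:planar_refinement}, which states precisely that $S(\tau, \tau \cap \mu_\sigma) = \big(S(\sigma,\mu_\sigma)\big)(\tau)$ in the notation of \eqref{E:localization_of_refinement}. It remains to identify $\tau \cap \mu_\sigma$ with $\mu_\tau$. Since $\cP$ is reduced, the face $\tau \leq \sigma$ corresponds to a unique element of the poset of $\cP$, and since $i : \cQ \to \cP$ is an injective morphism, the preimage face $i_\#^{-1}(\tau)$ is exactly the face of $\mu_\sigma$ obtained by intersecting with $\tau$ (under the face inclusion $\tau \hookrightarrow \sigma$), i.e. $i(\mu_\tau) = i(\mu_\sigma) \cap \tau$; if $\tau$ has no preimage in $\cQ$ then $\mu_\sigma \cap \tau = \set{0} = \mu_\tau$ and both sides give the trivial refinement. (One uses here the hypothesis that over each $\sigma$ there is \emph{at most} one monoid in $\cQ$, so that $\mu_\tau$ is well-defined, and the fact that morphisms of monoidal complexes commute with face maps, Definition~\ref{D:morphism_of_monoidal_complex}.) With the compatibility in hand, Proposition~\ref{P:refinement_of_monoidal_complex} produces the quotient complex $\cS(\cP,\cQ) = \bigcup_\sigma R(\sigma) \big / i_\ast$ together with a refining morphism $\cS(\cP,\cQ) \to \cP$.

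Finally I would check that $\cQ$ sits inside $\cS(\cP,\cQ)$ as a subcomplex. For each $\sigma \in \cP$ admitting a preimage, $\mu_\sigma \in R(\sigma) = S(\sigma, \mu_\sigma)$ by the last sentence of Proposition~\ref{P:planar_refinement} (``a refinement of $\sigma$ containing $\mu$''), and the identification of $\mu_\tau$ with a face of $\mu_\sigma$ shown above is exactly the face map of $\cQ$; hence the $\mu_\sigma$, with these face relations, form a subset of the monoids of $\cS(\cP,\cQ)$ closed under passing to faces and matching the face maps of $\cQ$, which is the assertion that $\cQ \hookrightarrow \cS(\cP,\cQ)$ is a monoidal subcomplex in the sense of Definition~\ref{D:monoidal_subcomplex}. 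The main obstacle I anticipate is purely bookkeeping: tracking the identifications $i(\mu_\tau) = i(\mu_\sigma) \cap \tau$ carefully enough — in particular handling the faces $\tau$ with empty preimage uniformly with those that have one — so that the hypotheses of Proposition~\ref{P:refinement_of_monoidal_complex} are literally met; the geometric content is already entirely contained in Propositions~\ref{P:planar_refinement} and~\ref{P:refinement_of_monoidal_complex}.
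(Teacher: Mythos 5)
Your proof is correct and takes essentially the same route as the paper, whose entire argument is the single sentence ``This follows directly from Proposition~\ref{P:planar_refinement}, using the fact that $S(\tau,\mu\cap\tau)=S(\sigma,\mu)(\tau)$.'' You in fact supply strictly more detail than the paper does --- the gluing via Proposition~\ref{P:refinement_of_monoidal_complex} and the identification $i(\mu_\tau)=i(\mu_\sigma)\cap\tau$ (including the case of faces with empty preimage), a compatibility the paper's proof leaves entirely implicit.
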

\begin{proof}
This follows directly from Proposition \ref{P:planar_refinement}, using the fact
that $S(\tau, \mu \cap \tau) = S(\sigma, \mu)(\tau).$
\end{proof}

The smoothing of a simplicial monoid also extends to monoidal complexes.

\begin{prop}[Smoothing]
Let $\cQ$ be a simplicial complex.  Then the collection of monoid refinements
$\sigma_\mathrm{sm} \to \sigma$, where $\sigma_\mathrm{sm} = \Zp \pair{v_1,\ldots,v_n}$
is freely generated by the extremals of $\sigma$ as in Example \ref{X:smoothing},
forms a smooth refinement $\cQ_\mathrm{sm} \to \cQ$, in which the indexing posets
are the same.
\label{P:smoothing_complex}
\end{prop}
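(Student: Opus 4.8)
The plan is to verify the three conditions in Definition~\ref{D:refinement_of_monoidal_complex} for the candidate morphism $\phi:\cQ_\mathrm{sm}\to\cQ$, using the converse direction of Proposition~\ref{P:refinement_of_monoidal_complex} to assemble the pieces. For each $\sigma\in\cQ$, Example~\ref{X:smoothing} already supplies the local refinement $R(\sigma)$: the faces of $\sigma_\mathrm{sm}=\Zp\pair{v_1,\ldots,v_n}$, where $v_1,\ldots,v_n$ are the extremals of the simplicial monoid $\sigma$. So the only real work is to check that this family $\set{R(\sigma)}_{\sigma\in\cQ}$ is \emph{compatible} in the sense required by Proposition~\ref{P:refinement_of_monoidal_complex}, i.e.\ that for a face $\tau\leq\sigma$ the smoothing refinement $R(\tau)$ of $\tau$ coincides, under the face map $i_{\tau\sigma}$, with the induced refinement $\big(R(\sigma)\big)(\tau)$ of Lemma~\ref{L:refinements_and_faces}.

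First I would fix $\tau\leq\sigma$ and observe that, since $\tau$ is a face of the simplicial monoid $\sigma$, its extremals are precisely a subset $\set{v_i}_{i\in I}$ of the extremals of $\sigma$, with $I=\set{i\;;\;v_i\in\tau}$; this is the standard fact that the extremal rays of a face of a simplicial cone are among the extremal rays of the cone. Consequently $\tau_\mathrm{sm}=\Zp\pair{v_i}_{i\in I}$ is exactly the face $\Zp\pair{v_i}_{i\in I}\leq\Zp\pair{v_1,\ldots,v_n}=\sigma_\mathrm{sm}$ of the smoothing of $\sigma$. By definition the smoothing $R(\tau)$ consists of all faces $\Zp\pair{v_j}_{j\in J}$ for $J\subseteq I$, while $\big(R(\sigma)\big)(\tau)=\set{\sigma'\in R(\sigma)\;;\;\supp(\sigma')\subseteq\supp(\tau)}$ consists of exactly those faces $\Zp\pair{v_j}_{j\in J}$, $J\subseteq\set{1,\ldots,n}$, with $\sspan_{\R_+}\set{v_j}_{j\in J}\subseteq\sspan_{\R_+}\set{v_i}_{i\in I}$; since the $v_\bullet$ are linearly independent the latter containment holds iff $J\subseteq I$. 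Hence the two refinements of $\tau$ agree, which is the compatibility condition. Note this argument simultaneously shows that the indexing poset of each $R(\sigma)$ is the poset of subsets of the extremals of $\sigma$, and that the face maps of $\cQ$ restrict on these subsets exactly to inclusions of subsets, so after the quotient $\cQ_\mathrm{sm}=\bigcup R(\sigma)/i_\ast$ the indexing poset of $\cQ_\mathrm{sm}$ is canonically identified with that of $\cQ$ — the monoid sitting over $a\in A$ is just $(\sigma_a)_\mathrm{sm}$, and the face relations are unchanged.

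With compatibility in hand, Proposition~\ref{P:refinement_of_monoidal_complex} immediately produces a refinement $\phi:\cQ_\mathrm{sm}\to\cQ$ of monoidal complexes, and each $(\sigma_a)_\mathrm{sm}=\Zp\pair{v_1,\ldots,v_n}$ is smooth by construction, so $\cQ_\mathrm{sm}$ is a smooth complex; the identification of posets is recorded above. I expect the main (and essentially only) obstacle to be the bookkeeping in the compatibility check — specifically, making precise the claim that faces of a simplicial cone have their extremal rays among those of the ambient cone, and that the smoothing's face maps restrict to subset inclusions — but this is routine once one works in the realization $\sigma=N_\sigma\cap C$ with $C$ the simplicial cone on the linearly independent extremals. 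Everything else is a direct appeal to Proposition~\ref{P:refinement_of_monoidal_complex}, Lemma~\ref{L:refinements_and_faces}, and Example~\ref{X:smoothing}.
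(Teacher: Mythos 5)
Your proof is correct and follows essentially the same route as the paper, whose one-line argument is exactly the key fact you verify in detail: the faces of $\sigma_\mathrm{sm}$, namely $\Zp\pair{v_i}_{i\in I}$, are the smoothings of the corresponding faces of $\sigma$, which is precisely the compatibility condition needed to invoke Proposition~\ref{P:refinement_of_monoidal_complex}.
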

\begin{proof}
This follows directly from the fact that the faces of $\sigma_\mathrm{sm}$, namely 
the monoids $\Zp \pair{v_i}_{i \in I}$, $I \subset \set{1,\ldots,n}$ are the smoothings of the 
corresponding faces of $\sigma$.
\end{proof}
\section{Fiber products of complexes}\label{S:fib_prod_complexes}
Below we frequently need to address the problem of finding a refinement $\cR_1 \to \cQ_1$
of a complex which is compatible with a given refinement $\cR_2 \to \cQ_2$
with respect to a morphism $\cQ_1 \to \cQ_2$, meaning that 
\[
\begin{tikzpicture}[->,>=to,auto]
\matrix (m) [matrix of math nodes, column sep=1cm, row sep=1cm,  text depth=0.25ex]
{ \cR_1 & \cR_2 \\ \cQ_1 & \cQ_2 \\};
\path (m-1-1) edge  (m-1-2); %top
\path (m-1-2) edge (m-2-2); %right
\path (m-2-1) edge (m-2-2); %bot
\path (m-1-1) edge (m-2-1); %left
\end{tikzpicture}
\]
commutes. This section is therefore devoted to the basic constructions and
properties of fiber products in the category of monoidal complexes.
Apart from the proof of Theorem~\ref{T:blowing_up_domain}, the material here
is not used until Section~\ref{S:binvarres}.

First consider fiber products in the categories of posets and monoids.
Thus, let
$(A,\leq)$, $(B,\leq)$ and $(C,\leq)$ be posets, with order preserving
maps $(\phi_1)_\# : A \to C$ and $(\phi_2)_\# : B \to C$.

The product $A\times B$ is an poset with elements $(a,b) : a \in A, b \in
B$ and order given by 
\[
	(a_1,b_1) \leq (a_2,b_2) \iff a_1 \leq a_2\ \text{and}\ b_1 \leq b_2,
\]
and the fiber product is the subset 
\[
	A\times_C B = \set{(a,b)\in A\times B \;;\; (\phi_1)_\#(a) = (\phi_2)_\#(b) = c} \subset A\times B,
\]
with the order induced from $A\times B$, which is well-defined since the
$(\phi_i)_\#$ are order preserving. 

\begin{prop}[Fiber products of monoids]
If $\phi_i : \sigma_i \to \sigma$, $i = 1,2$ are (toric) monoid homomorphisms, then
there is a unique toric monoid fiber product $\sigma_1 \times_\sigma \sigma_2$, so with homomorphisms
to $\sigma_i$ such that 
\[
\begin{tikzpicture}[->,>=to,auto]
\matrix (m) [matrix of math nodes, text depth = 0.25ex, column sep=1cm, row sep=1cm] 
{ \sigma_1\times_\sigma\sigma_2 & \sigma_2 \\ \sigma_1 & \sigma \\};
\path (m-1-1) edge node {$\phi'_2$} (m-1-2);
\path (m-1-2) edge node {$\phi_2$} (m-2-2);
\path (m-2-1) edge node {$\phi_1$} (m-2-2);
\path (m-1-1) edge node {$\phi'_1$} (m-2-1);

%\draw (m-1-1) to (m-1-2) node [midway] {$\phi'_2$}; %top
%\draw (m-1-2) to (m-2-2) node [midway] {$\phi_2$}; %right
%\draw (m-2-1) to (m-2-2) node [midway] {$\phi_1$}; %bot
%\draw (m-1-1) to (m-2-1) node [midway] {$\phi'_1$}; %left
\end{tikzpicture}
\]
commutes, and which has the usual universal property.
%that any other monoid  $\tau$ 
%forming such a commutative square factors through a unique homomorphism
%to $\sigma_1 \times_\sigma \sigma_2$.

Its faces can be described as follows. Let $A_1$, $A_2$ and $A$ be the posets
whose elements are faces of $\sigma_1$, $\sigma_2$, and $\sigma$, respectively,
and define $(\phi_i)_\# : A_i \to A$ by taking $(\phi_i)_\#(\tau_i)$ to be the
smallest $\tau \in A$ such that $\phi_i(\tau_i)\subset \tau$.  Then faces of
$\sigma_1\times_\sigma \sigma_2$ are indexed by an ordered subset of $A_1
\times_A A_2$.
\label{P:fiber_products_of_monoids}
\end{prop}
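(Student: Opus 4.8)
The plan is to construct $\sigma_1 \times_\sigma \sigma_2$ explicitly as a toric monoid sitting inside the ordinary (set-theoretic, hence monoid) fiber product, verify the universal property, and then analyze its faces. First I would form the group-level fiber product $N := N_{\sigma_1} \times_{N_\sigma} N_{\sigma_2}$, the pullback in the category of finitely generated abelian groups; using the induced maps $\phi_i : N_{\sigma_i}^\R \to N_\sigma^\R$ one sets $N^\R = N_{\sigma_1}^\R \times_{N_\sigma^\R} N_{\sigma_2}^\R$. Inside $N^\R$ take the cone $C = \bigl(\supp(\sigma_1)\times\supp(\sigma_2)\bigr) \cap N^\R$, i.e. the set of pairs $(v_1,v_2)$ with $v_i \in \supp(\sigma_i)$ and $\phi_1(v_1)=\phi_2(v_2)$; this is a convex polyhedral cone, integral with respect to the lattice $N' := N \cap (\text{span of }C)$, and one defines $\sigma_1\times_\sigma\sigma_2 := N' \cap C$. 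I would check directly that this is $\set{(x_1,x_2) \in \sigma_1\times\sigma_2 \;;\; \phi_1(x_1)=\phi_2(x_2)}$: the intersection $\sigma_1 \times_\sigma \sigma_2 = (\sigma_1\times\sigma_2) \cap N^\R$ is immediate from $\sigma_i = N_{\sigma_i}\cap \supp(\sigma_i)$, and one must only confirm this set is \emph{saturated} in its own group and torsion-free, which follows because it is cut out of the toric monoid $\sigma_1\times\sigma_2$ by the linear (saturated) condition $\phi_1(x_1)=\phi_2(x_2)$. By the abstract characterization of toric monoids recalled after Definition~\ref{D:monoid} (sharp, integral, saturated, torsion-free), this is a toric monoid. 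Sharpness holds because an invertible element maps to invertible elements of the sharp monoids $\sigma_i$, hence is $0$.

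Next I would verify the universal property. Given a toric monoid $\pi$ with homomorphisms $\psi_i : \pi \to \sigma_i$ such that $\phi_1\psi_1 = \phi_2\psi_2$, the set-theoretic universal property of the fiber product of sets (or of commutative monoids) produces a unique map of monoids $\psi : \pi \to \sigma_1\times_\sigma\sigma_2$, $p \mapsto (\psi_1(p),\psi_2(p))$, since the target literally is a submonoid of $\sigma_1\times\sigma_2$ carved out by exactly the equation $\phi_1\psi_1=\phi_2\psi_2$ guarantees. That $\psi$ lands in our monoid and not merely in some larger group is automatic. Uniqueness is forced by the requirement that the projections recover $\psi_i$. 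This also gives uniqueness of $\sigma_1\times_\sigma\sigma_2$ up to canonical isomorphism.

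For the face description, I would use the cone-functional characterization of faces from Section~\ref{S:monoids}: a face of $\sigma_1\times_\sigma\sigma_2$ is $N'\cap F$ for a face $F$ of the cone $C$. Every face of $C$ is of the form $\bigl(F_1 \times F_2\bigr) \cap N^\R$ for faces $F_i \le \supp(\sigma_i)$, because a supporting functional $u$ on $N^\R$ lifts (not uniquely) to a pair $(u_1,u_2)$ with $u_i$ a functional on $N_{\sigma_i}^\R$ agreeing on the image of $N_\sigma^\R$, and $u\ge 0$ on $C$ can be arranged via $u_i \ge 0$ on $\supp(\sigma_i)$ after replacing $u_i$ by $u_i + \phi_i^\ast(\text{large multiple of a functional positive on }\sigma)$; the vanishing locus of $u$ then cuts out $F_1\times F_2$ intersected with $N^\R$. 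Thus each face of $\sigma_1\times_\sigma\sigma_2$ corresponds to a pair $(\tau_1,\tau_2)$ of faces $\tau_i \le \sigma_i$ (namely $\tau_i = \sigma_i \cap F_i$) such that $\phi_1(\tau_1)$ and $\phi_2(\tau_2)$ have the same smallest enclosing face $\tau \le \sigma$ — i.e. $(\phi_1)_\#(\tau_1) = (\phi_2)_\#(\tau_2)$ in $A$ — which is exactly a constraint of membership in $A_1\times_A A_2$. Not every such pair need occur (the cut-down face can be trivial or two distinct pairs can give the same face after intersecting with $N'$), so one gets an order-preserving identification of the faces with an ordered \emph{subset} of $A_1\times_A A_2$, with the order inherited from the product poset since face-inclusion is detected componentwise.

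The main obstacle I anticipate is the functional-lifting argument in the last paragraph: showing that every face of the fiber-product cone $C$ arises from a pair of faces of the $\supp(\sigma_i)$, because a priori a supporting hyperplane of $C$ in $N^\R$ need not extend to supporting hyperplanes of the $\supp(\sigma_i)$, only to functionals that are non-negative after a correction. Handling the correction cleanly — in particular making sure the correction does not enlarge the vanishing locus beyond $F_1\times F_2$ — is the delicate point; everything else is a routine unwinding of the toric-monoid definitions. The saturation/torsion-freeness checks for $\sigma_1\times_\sigma\sigma_2$ itself are straightforward once one observes the monoid is literally an intersection of the toric monoid $\sigma_1\times\sigma_2$ with a rational subspace.
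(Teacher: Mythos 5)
Your construction of $\sigma_1\times_\sigma\sigma_2$ and your verification of the universal property coincide with the paper's: both realize the fiber product as the intersection of the lattice $N_{\sigma_1}\times N_{\sigma_2}$ with the cone $C=\big(\supp(\sigma_1)\times\supp(\sigma_2)\big)\cap V$, where $V=N^\R_{\sigma_1}\times_{N^\R_\sigma}N^\R_{\sigma_2}$, and both deduce the universal property from the set-level one using additivity. The divergence, and the gap, is in the face analysis. The step you yourself flag as delicate is in fact broken as proposed. The extensions of a functional $u$ on $V$ to $N^\R_{\sigma_1}\times N^\R_{\sigma_2}$ differ exactly by elements of $V^\perp$, i.e.\ by pairs of the form $(\phi_1^\ast w,\,-\phi_2^\ast w)$ with $w\in (N^\R_\sigma)^\ast$. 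If you add $\phi_1^\ast(cw)$ to $u_1$ and $\phi_2^\ast(cw)$ to $u_2$ with the \emph{same} sign, the restriction to $V$ changes to $u+2c\,(w\circ\phi_1)$, so you are no longer cutting out the face you started with; if you use opposite signs so as to preserve the restriction, then a large positive correction on the $\sigma_1$ side forces a large negative correction on the $\sigma_2$ side, and you cannot arrange $u_i\ge 0$ on both $\supp(\sigma_i)$ by this device. The fact you want is true (for polyhedral cones $(D\cap V)^\vee=D^\vee+V^\perp$ with no closure needed), but it requires a genuine duality/Farkas argument --- or, much more simply, the paper's route, which avoids functionals entirely: given a face $\tau$ of $\sigma_1\times_\sigma\sigma_2$, take the \emph{smallest} face $\tau_1\times\tau_2$ of $\sigma_1\times\sigma_2$ containing it (equivalently, the unique face whose relative interior meets $\tau$) and verify directly that $\tau=\tau_1\times_\sigma\tau_2$.

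Two further points are asserted but not established in your sketch, and both are needed for the stated conclusion. First, that the assigned pair actually lies in $A_1\times_A A_2$, i.e.\ that $(\phi_1)_\#(\tau_1)=(\phi_2)_\#(\tau_2)$: this is \emph{not} automatic for an arbitrary pair $(F_1,F_2)$ with $\tau\subset(F_1\times F_2)\cap V$ (take $F_1$ larger than necessary and the identity fails), so one must use the minimal pair; the paper proves the identity for the minimal pair by noting that $(\phi_1)_\#(\tau_1)$ and $(\phi_2)_\#(\tau_2)$ share a face containing the image of $\tau$ and that pulling that face back would contradict minimality unless they are equal. Second, injectivity of the indexing: one must show the face is recovered from its pair, i.e.\ $\tau=\tau_1\times_\sigma\tau_2$ and not a proper face of it; the paper gets this because $\tau$ meets the relative interior of $\tau_1\times\tau_2$ while every proper face of $\tau_1\times_\sigma\tau_2$ lies in a proper face of $\tau_1\times\tau_2$. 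Without these two steps the identification of the faces with an ordered subset of $A_1\times_A A_2$ is not proved.
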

\noindent
Faces of $\sigma_1 \times_\sigma \sigma_2$ are not necessarily in bijection
with $A_1 \times_A A_2$, since there will generally be distinct pairs
$(\tau_1,\tau_2) \neq (\tau'_1,\tau'_2) \in A_1\times_A A_2$ (with $\tau'_i
\leq \tau_i$ and $\phi_\#(\tau_i) = \phi_\#(\tau'_i) = \tau$, $i = 1,2$) such
that $\tau_1 \times_\tau \tau_2 = \tau'_1 \times_\tau \tau'_2$.

\begin{proof}
$\sigma_1\times \sigma_2$ is a toric monoid in $N_{\sigma_1}\times
N_{\sigma_2}$, generated by $\set{(v_i,0), (0,w_j)}$ where
$\set{v_1,\ldots,v_n}$ generate $\sigma_1$ and $\set{w_1,\ldots,w_k}$ generate
$\sigma_2$, and the set
\[
	\sigma_1\times_\sigma\sigma_2 = \set{(v,w) \in \sigma_1\times \sigma_2
		\;;\; \phi_1(v) = \phi_2(w)}
\]
(which might be trivial) is evidently closed under addition.  It is the
intersection of $N_{\sigma_1}\times N_{\sigma_2}$ with the cone
\[
	\supp(\sigma_1) \times_{\supp(\sigma)}\supp(\sigma_2)=
	 \supp(\sigma_1)\times\supp(\sigma_2)
	 \cap N^\R_{\sigma_1} \times_{N_\sigma^\R} N_{\sigma_2}^\R,
\]
and hence is a toric monoid.

We will construct a map $I$ from faces of $\sigma_1\times_\sigma \sigma_2$ to
$A_1\times_A A_2$.  Let $\tau \leq \sigma_1\times_\sigma \sigma_2$ be a face.
Since $\sigma_1\times_\sigma \sigma_2 \subset \sigma_1\times \sigma_2$, there
is a smallest face of $\sigma_1\times\sigma_2$ containing $\tau$, which must be
of the form $\tau_1 \times \tau_2$ for some $\tau_i \leq \sigma_i$, $i = 1,2$.
Observe that $(\phi_1)_\#(\tau_1) = (\phi_2)_\#(\tau_2) \in A$: indeed,
$(\phi_1)_\#(\tau_1)$ and $(\phi_2)_\#(\tau_2)$ must have a common face
(containing the image of $\tau$), and then passing to the inverse images of
this face with respect to the $(\phi_i)_\#$ would give a smaller face of
$\sigma_1\times \sigma_2$ containing $\tau$ unless $(\phi_1)_\#(\tau_1) =
(\phi_2)_\#(\tau_2)$.  Thus $(\tau_1,\tau_2) \in A_1\times_A A_2$ and we set $I(\tau) =
(\tau_1,\tau_2)$. 

Next observe that $\tau \leq \tau_1 \times_\sigma \tau_2$; however $\tau$ meets the
interior of $\tau_1 \times \tau_2$ (otherwise $\tau_1 \times \tau_2$ would not
be minimal) and any proper face of $\tau_1 \times_\sigma \tau_2$ would have to
lie in a proper face of $\tau_1\times \tau_2$ (which follows from the fact that
$\tau_1\times_\sigma \tau_2$ is the intersection of the lattice
$N_{\tau_1}\times N_{\tau_2}$, the proper convex cone $C = \supp_{\tau_1}\times
\supp_{\tau_2}$ and the subspace $N^\R_{\tau_1} \times_{N^\R_\sigma}
N^\R_{\tau_2}$), so in fact $\tau = \tau_1 \times_\sigma \tau_2$. It follows
that $I$ is injective.

The universal property of $\sigma_1 \times_\sigma \sigma_2$ in the category of
monoids follows from the analogous property in the category of sets, since the
maps factoring through $\sigma_1 \times_\sigma \sigma_2$ are additive.
\end{proof}

%As a special case of the fiber product, observe that if $\phi_2 : \sigma_2 \to \sigma$ is injective,
%then $\sigma_1 \times_\sigma \sigma_2 = \phi_1^{-1}(\sigma_2)$, where we
%identify $\sigma_2$ with its image in $\sigma$.  In this case $\phi'_2$ is also injective
%since we can identify it with the inclusion $\phi_1^{-1}(\sigma_2) \subset \sigma_1$. 
%If both $\phi_1$ and $\phi_2$ are injective, then we can identify $\sigma_1 \times_\sigma \sigma_2$ with $\sigma_1
%\cap \sigma_2 \subset \sigma$, and both of the $\phi'_i$ are injective.

%Note that $\sigma_1\times\sigma_2$ is smooth if $\sigma_1$ and $\sigma_2$ are,
%but this is {\em not} true of $\sigma_1\times_\sigma \sigma_2$, as in the
%following example.
Note that $\sigma_1\times_\sigma \sigma_2$ need not be smooth (or even
simplicial) even if $\sigma_1$ and $\sigma_2$ are, as the example $\phi_1 =
\phi_2 : \Zp^2 \to \Zp$, $(m,n) \mapsto m + n$ shows.

%\begin{ex}
%Let $\sigma_1 = \sigma_2 = \Zp ^2$ and let $\phi_1 = \phi_2 : \Zp ^2 \to \Zp  = \sigma$ be
%the map $(m,n) \mapsto m + n$. 
%Then it is a simple exercise to verify that $\sigma_1 \times_{\sigma} \sigma_2$ 
%is the monoid of Example \ref{X:nonsimplicial}, which is not even simplicial.
%\label{X:nonsimplicial_fiber_product}
%\end{ex}

%Since the intersection of smooth monoids need not be smooth, it is similarly
%easy to come up with examples where the $\sigma_i$ are smooth and the $\phi_i$
%are even injective but $\sigma_1 \times_\sigma \sigma_2 = \sigma_1 \cap \sigma_2$ is
%not smooth.

\begin{prop}[Fiber product of monoidal complexes]
Products and fiber products exist in the category of monoidal complexes.
Thus if $\cQ_1$, $\cQ_2$ are monoidal complexes over $A$ and $B$,
respectively, and if $\phi_i : \cQ_i \to \cQ$ are morphisms to a complex
$\cQ$ over $C$, then there exist (complete and reduced) monoidal complexes $\cQ_1 \times \cQ_2$
over $A\times B$ and $\cQ_1 \times_\cQ \cQ_2$ over 
%an ordered subset of 
$A\times_C B$ with the requisite universal properties.
%morphisms $\phi'_i$ to $\cQ_1$, $\cQ_2$ such that
%\[
%\begin{tikzpicture}[->,>=to,auto]
%\matrix (m) [matrix of math nodes, column sep=1cm, row sep=1cm, text depth=0.25ex]
%{ \cQ_1 \times_\cQ \cQ_2 & \cQ_2 \\ \cQ_1 & \cQ \\};
%\draw (m-1-1) to (m-1-2) node [midway] {$\phi'_1$}; %top
%\draw (m-1-2) to (m-2-2) node [midway] {$\phi_2$}; %right
%\draw (m-2-1) to (m-2-2) node [midway] {$\phi_1$}; %bot
%\draw (m-1-1) to (m-2-1) node [midway] {$\phi'_2$}; %left
%\end{tikzpicture}
%\]
%commutes, and this is universal with respect to complexes forming a similar
%commutative square with the $\cQ_i$ and $\cQ$.
\label{P:fiber_products_complexes}
\end{prop}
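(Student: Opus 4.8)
The plan is to build both complexes cell by cell from the monoid products and fiber products of Proposition~\ref{P:fiber_products_of_monoids}, laid over the poset product $A\times B$ and the poset fiber product $A\times_C B$ introduced above, and then to deduce the universal properties by combining the universal properties already in hand for posets and for toric monoids. Since the trivial complex $\set{0}$ over a one-point poset is terminal, the product is the special case $\cQ_1\times_{\set{0}}\cQ_2$ of the fiber product, but it is cleaner to dispatch it first as a warm-up. In both cases the only real content is checking that the candidate collection of monoids and face maps forms a complete, reduced monoidal complex in the sense of Definition~\ref{D:monoidal_complex}.

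For the product I would set $\sigma_{(a,b)} := \sigma_a\times\sigma_b$ for $(a,b)\in A\times B$, with face maps $i_{(a',b'),(a,b)} := i_{a'a}\times i_{b'b}$ for $(a',b')\le(a,b)$; functoriality of these is immediate from that of the $i$'s in $\cQ_1$ and $\cQ_2$. A face of a product monoid is precisely a product $\tau_1\times\tau_2$ of faces of the factors, so, $\cQ_1$ and $\cQ_2$ being reduced, the faces of $\sigma_{(a,b)}$ are in bijection with the down-set $\set{(a',b')\le(a,b)}$ (and $\tau_1\times\tau_2=\sigma_a\times\sigma_b$ forces $\tau_i=\sigma_a,\sigma_b$, so no nontrivial face is the whole cell); this yields completeness and reducedness at once. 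The universal property then follows because $A\times B$ is the categorical product of posets, $\sigma_a\times\sigma_b$ that of monoids, and compatibility with face maps is inherited from the factors.

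For the fiber product I would first assemble the ``naive'' data: $\sigma_{(a,b)} := \sigma_a\times_{\sigma_c}\sigma_b$ over $(a,b)\in A\times_C B$ (a toric monoid by Proposition~\ref{P:fiber_products_of_monoids}, with $c=(\phi_1)_\#(a)=(\phi_2)_\#(b)$), and for $(a',b')\le(a,b)$ a face map obtained by restricting $i_{a'a}\times i_{b'b}$. Writing $c':=(\phi_1)_\#(a')=(\phi_2)_\#(b')$, one checks the identity $\sigma_{a'}\times_{\sigma_{c'}}\sigma_{b'}=(\sigma_{a'}\times\sigma_{b'})\cap(\sigma_a\times_{\sigma_c}\sigma_b)$ inside $\sigma_a\times\sigma_b$; since $\sigma_{a'}\times\sigma_{b'}$ is a face of $\sigma_a\times\sigma_b$ cut out by a functional and a submonoid meets a face in a face, the right-hand side is a face of $\sigma_a\times_{\sigma_c}\sigma_b$ and the restriction is an isomorphism onto it. This produces a ``pre-complex'', but it need \emph{not} be reduced: distinct $(a',b')\le(a,b)$ can yield isomorphic — even trivial — faces of $\sigma_{(a,b)}$ (already when $\phi_1,\phi_2$ embed two different lines through the interior of a common $\sigma_c$, whence $\sigma_a\times_{\sigma_c}\sigma_b=\set{0}$). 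So I would define $\cQ_1\times_\cQ\cQ_2$ to be the reduction of this pre-complex — the set of all faces of all the $\sigma_a\times_{\sigma_c}\sigma_b$ modulo the face-map identifications, which by the discussion around Lemma~\ref{L:complete_complex_directed} is a complete, reduced complex whose indexing poset $\Gamma$ is an order-preserving quotient of $A\times_C B$ (this quotient is what ``over $A\times_C B$'' should be read to mean). Completeness holds because the face description in Proposition~\ref{P:fiber_products_of_monoids} says every face of $\sigma_a\times_{\sigma_c}\sigma_b$ is itself of the form $\tau_1\times_{\tau_c}\tau_2$, hence already among the monoids of the collection. The universal property then reduces, exactly as for products, to those of $A\times_C B$ in posets and of $\sigma_1\times_\sigma\sigma_2$ in monoids: given $\psi_i\colon\cR\to\cQ_i$ with $\phi_1\psi_1=\phi_2\psi_2$, the poset map of $\psi$ factors uniquely through $A\times_C B$ and hence through $\Gamma$, and on each cell the homomorphisms $\psi_{1,r},\psi_{2,r}$ agree after composition into $\sigma_c$ and so factor uniquely through the monoid fiber product, compatibly with all face maps.

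I expect the main obstacle to be precisely the reducedness issue in the fiber-product case: pinning down which of the monoids $\sigma_{a'}\times_{\sigma_{c'}}\sigma_{b'}$, for $(a',b')$ in the down-set of $(a,b)$, occur as \emph{which} faces of $\sigma_a\times_{\sigma_c}\sigma_b$ — given that a fiber product of faces may be only a proper, or even trivial, submonoid and that distinct pairs can produce the same face — and correspondingly justifying the passage to the quotient poset $\Gamma$. This is exactly what Proposition~\ref{P:fiber_products_of_monoids} (its injective face-recording map $I$) is designed to control. A secondary, purely bookkeeping obstacle is checking that the face maps, a priori defined inside the ambient products $\sigma_a\times\sigma_b$, restrict consistently to the fiber-product subcones and compose functorially; this is handled by the identity $\sigma_{a'}\times_{\sigma_{c'}}\sigma_{b'}=(\sigma_{a'}\times\sigma_{b'})\cap(\sigma_a\times_{\sigma_c}\sigma_b)$ together with functoriality of the $i$'s in $\cQ_1$, $\cQ_2$ and $\cQ$.
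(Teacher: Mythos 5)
Your proposal is correct and follows essentially the same route as the paper: cells $\sigma_a\times\sigma_b$ and $\sigma_a\times_{\sigma_c}\sigma_b$ over the poset (fiber) product, with completeness and reducedness extracted from the face description in Proposition~\ref{P:fiber_products_of_monoids} and the universal properties inherited from posets and monoids. The only difference is bookkeeping: where you pass to a quotient poset $\Gamma$ to enforce reducedness, the paper keeps only the \emph{distinct} monoids $\sigma_a\times_{\sigma_c}\sigma_b$ and uses the uniqueness of the minimal pair $(a',b')$ recorded by the injective map $I$ — your treatment of this point is, if anything, the more explicit of the two.
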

\begin{proof}
To define $\cQ_1\times \cQ_2$, let $\sigma_{(a,b)} = \sigma_a \times \sigma_b$
for each $(a,b) \in A\times B$.  Likewise, define $\cQ_1\times_\cQ \cQ_2$ to
consist of the distinct $\sigma_{(a,b)} = \sigma_a \times_{\sigma_c}\sigma_b$
for $(a,b) \in A\times_C B$. The only issue is to show that these are complete
and reduced.

Consider first $\cQ_1\times \cQ_2$.  The faces of $\sigma_a \times
\sigma_b$ are the monoids $\sigma_{a'}\times \sigma_{b'}$ where $\sigma_{a'}
\leq \sigma_a$ and $\sigma_{b'}\leq \sigma_b$.  Since the $\cQ_i$ are
complete and reduced, these are in bijection with the elements
$(a',b') \in A\times B$ such that $(a',b') \leq (a,b)$.

Next, it follows from Proposition \ref{P:fiber_products_of_monoids}, that the
faces of $\sigma_a \times_{\sigma_c} \sigma_b$ are monoids of the form
$\sigma_{a'}\times_{\sigma_c} \sigma_{b'} =
\sigma_{a'}\times_{\sigma_{c'}}\sigma_{b'}$ where $(\phi_1)_\#(a') =
(\phi_2)_\#(b') = c' \leq c$, and for such a face $(a',b') \leq (a,b) \in A
\times_C B$ is unique, so $\cQ_1 \times_\cQ \cQ_2$ is complete and reduced.

The universal properties of $\cQ_1\times\cQ_2$ and $\cQ_1\times_\cQ \cQ_2$
follow from the corresponding universal properties of $A\times B$ and
$A\times_C B$ among posets, and of $\sigma_1\times \sigma_2$ and
$\sigma_1\times_\sigma \sigma_2$ among monoids.
\end{proof}

\begin{prop}[Refinements pull back]
If $\phi : \cR \to \cQ$ is a refinement, and $\psi : \cQ_1 \to \cQ$ is any morphism of complexes, then 
\[
	\phi' : \cQ_1 \times_\cQ \cR \to \cQ_1 \ \text{is a refinement.}
\]
In particular, the fiber product $\cR_1 \times_\cQ \cR_2$ of two refinements is
a mutual refinement of each.
\label{P:refinements_pull_back}
\end{prop}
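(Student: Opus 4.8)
The plan is to verify the two conditions in Definition~\ref{D:refinement_of_monoidal_complex} for the morphism $\phi' : \cQ_1 \times_\cQ \cR \to \cQ_1$ obtained by projection, working fiber-by-fiber over each $\sigma_a \in \cQ_1$. First I would recall from Proposition~\ref{P:fiber_products_complexes} that the monoids of $\cQ_1 \times_\cQ \cR$ lying over $\sigma_a$ are precisely the distinct $\sigma_a \times_{\sigma_c} \tau$ with $\tau \in \cR$, $\phi_\#(\tau) = c' \leq c = \psi_\#(a)$; the projection homomorphism is $(\phi')_{ab} = \pr_1 : \sigma_a \times_{\sigma_c} \tau \to \sigma_a$, which is injective since $\phi : \cR \to \cQ$ is a refinement and hence its homomorphisms are injective, so the fiber coordinate in $\tau$ is determined by the image under $\phi$ of the $\sigma_a$-coordinate. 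Injectivity of all the $(\phi')_{ab}$ is therefore immediate.

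Next, the support computation: I would use the identity, already appearing in the proof of Proposition~\ref{P:fiber_products_of_monoids}, that
\[
\supp(\sigma_a \times_{\sigma_c} \tau) = \big(\supp(\sigma_a) \times \supp(\tau)\big) \cap \big(N^\R_{\sigma_a} \times_{N^\R_{\sigma_c}} N^\R_\tau\big),
\]
and that $\pr_1$ of this cone is $\supp(\sigma_a) \cap \psi^{-1}\big(\phi(\supp(\tau))\big)$ inside $N^\R_{\sigma_a}$. Condition~\eqref{I:ref_one} then follows by taking the union over all $\tau \in \phi_\#^{-1}(\tau')$ for faces $\tau' \leq \sigma_c$: since $\bigcup_\tau \phi(\supp(\tau)) = \supp(\sigma_c)$ by the refinement property of $\phi$ downstairs, pulling back under $\psi$ and intersecting with $\supp(\sigma_a)$ recovers all of $\supp(\sigma_a)$ (every point of $\supp(\sigma_a)$ maps into $\supp(\sigma_c)$). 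For condition~\eqref{I:ref_two}, if two monoids $\sigma_a \times_{\sigma_c} \tau_1$ and $\sigma_a \times_{\sigma_c} \tau_2$ over $\sigma_a$ had relatively interior points of their projected supports in common, I would lift such a point to a common point $v \in \supp(\sigma_a)$, note that $\psi(v) \in \relint[\phi(\supp(\tau_1))] \cap \relint[\phi(\supp(\tau_2))]$ (the fiber coordinate being pinned down by $\psi(v)$ and membership in $\supp(\tau_i)$), and invoke \eqref{I:ref_two} for $\phi$ to conclude $\tau_1 = \tau_2$, whence the two fiber-product monoids coincide.

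The second assertion, that $\cR_1 \times_\cQ \cR_2$ refines each $\cR_i$, is then just the first applied twice: $\cR_2 \to \cQ$ is a refinement and $\cR_1 \to \cQ$ is in particular a morphism, so $\phi' : \cR_1 \times_\cQ \cR_2 \to \cR_1$ is a refinement, and symmetrically for the projection to $\cR_2$. The main subtlety I anticipate is bookkeeping around the fact (flagged after Proposition~\ref{P:fiber_products_of_monoids}) that distinct index pairs can give the same fiber-product monoid, so that the "distinct $\sigma_{(a,b)}$" in Proposition~\ref{P:fiber_products_complexes} must be handled carefully when checking that $\cR_1 \times_\cQ \cR_2$ is complete and reduced as a refinement of $\cR_1$; but since completeness and reducedness of the fiber product complex were already established in Proposition~\ref{P:fiber_products_complexes}, this reduces to the support conditions above, and no genuinely new difficulty arises.
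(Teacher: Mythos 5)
Your overall strategy coincides with the paper's: identify $\sigma_a\times_{\sigma_c}\tau$ with the preimage $\psi^{-1}(\tau)\subset\sigma_a$ (after identifying $\tau$ with its image in $\sigma_c$), deduce injectivity of $\phi'$ from injectivity of $\phi$, and obtain the covering condition by pulling back the covering of $\supp(\sigma_c)$. Those parts, and the deduction of the ``mutual refinement'' statement by applying the result to each projection, are correct and essentially identical to the paper's argument.

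The step that fails is the disjointness of relative interiors. From a point $v\in\relint[C_1]\cap\relint[C_2]$, where $C_i=\supp(\sigma_a)\cap\psi^{-1}\big(\phi(\supp(\tau_i))\big)$, you cannot conclude that $\psi(v)\in\relint\big[\phi(\supp(\tau_1))\big]\cap\relint\big[\phi(\supp(\tau_2))\big]$: the image of $\supp(\sigma_a)$ may lie entirely inside the common wall of the two cones. Concretely, let $\cQ$ be the single monoid $\Zp^2$, let $\cR$ be its star subdivision at $(1,1)$ with maximal monoids $\tau_1=\Zp\pair{(1,0),(1,1)}$ and $\tau_2=\Zp\pair{(0,1),(1,1)}$, and let $\psi:\Zp\to\Zp^2$ send $1\mapsto(1,1)$. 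Then $C_1=C_2=\R_+$ and $v=1$ lies in both relative interiors, yet $\psi(v)=(1,1)$ is on the boundary ray of each $\supp(\tau_i)$, so condition (ii) of Definition~\ref{D:refinement_of_monoidal_complex} for $\phi$ cannot be invoked, and indeed $\tau_1\neq\tau_2$ --- your argument would wrongly conclude $\tau_1=\tau_2$. (The proposition survives here because $\psi^{-1}(\tau_1)=\psi^{-1}(\tau_2)=\Zp$, so the two fiber-product monoids coincide; but your argument neither shows this nor rules out, in general, distinct fiber-product monoids whose supports have overlapping interiors.) The paper closes this gap by a different route: from $\psi^{-1}(\tau_1)\cap\psi^{-1}(\tau_2)=\psi^{-1}(\tau_1\cap\tau_2)$ and the fact that $\tau_1\cap\tau_2$ is a common face of $\tau_1$ and $\tau_2$, one pulls back a supporting functional $u$ for that face to see that $C_1\cap C_2$ is a common face of $C_1$ and $C_2$; a common face meeting both relative interiors forces $C_1=C_2$ and hence equality of the fiber-product monoids. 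Replacing your relative-interior step by this common-face argument repairs the proof.
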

\begin{proof}
Fix $\sigma \in \cQ$, and consider $\tau \in \cR(\sigma)$ and $\sigma_1 \in
\cQ_1$ such that $\psi : \sigma_1 \to \sigma$.  If we identify $\tau$ with its
image in $\sigma$, then as noted above, 
\[
	\phi' : \sigma_1 \times_\sigma \tau \cong \psi^{-1}(\tau) \hookrightarrow \sigma_1
\]
is identified with an inclusion and therefore injective.  From the identity
\[
	\psi^{-1}(\tau_1 \cap \tau_2) = \psi^{-1}(\tau_1) \cap \psi^{-1}(\tau_2)
\]
it then follows that $\sigma_1 \times_\sigma \cR(\sigma) = \set{\sigma_1
\times_\sigma \tau \;;\; \tau \in \cR(\sigma)}$ is a refinement of $\sigma_1$,
and by commutativity of $\phi$ and $\psi$ with the face maps that
$\cQ_1\times_\cQ \cR \to \cQ_1$ forms a refinement of complexes.  
\end{proof}

The complex $\cQ_1\times_\cQ \cQ_2$ need not be smooth even if $\cQ_1$ and
$\cQ_2$ are smooth, and even if they are refinements. For this and other
reasons, it is desirable to know that smooth refinements exist.  

Like the classical algorithms for obtaining smooth refinements in toric
geometry (\cite{fulton1993introduction}, \cite{de6complete}), the following
algorithm consists of two steps. The first step results in a simplicial
refinement and is similar in spirit to the usual algorithms except for the
novelty of subdividing monoids from largest dimension downwards rather than the
other way --- this strategy has the desirable feature of being functorial with
respect to inclusion of complexes. The second step is to produce a smooth
refinement from a simplicial one, and here we make use of the smoothing
operation of Proposition~\ref{P:smoothing_complex} which, as previously noted,
is not available in the conventional algebraeo-geometric settings.

\begin{thm}[Natural smooth refinement]
Let $\cQ$ be a monoidal complex.  Then there exists a {\em natural smooth refinement} $\ns(\cQ) \to \cQ$ with
the following properties.
\begin{enumerate}[{\normalfont (i)}]
\item If $\cQ$ is smooth then $\ns(\cQ) = \cQ$.
\item If $\cQ_0 \subset \cQ$ is a subcomplex, then the corresponding
  subcomplex of $\ns(\cQ)$ is the natural smooth refinement of $\cQ_0$, i.e.
\[
	\ns(\cQ_0) = \ns(\cQ)_0 := \set{\ns(\cQ)(\sigma) \;;\; \sigma \in \cQ_0}.
\]
\end{enumerate}
\label{T:smooth_refinement}
\end{thm}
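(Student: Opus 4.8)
The plan is to construct $\ns(\cQ)$ in two stages, following the two-step structure advertised just before the theorem: first produce a simplicial refinement $\cQ^{\mathrm{simp}} \to \cQ$, then apply the smoothing operation of Proposition~\ref{P:smoothing_complex} to obtain $\ns(\cQ) = (\cQ^{\mathrm{simp}})_{\mathrm{sm}} \to \cQ^{\mathrm{simp}} \to \cQ$. The composite of two refinements is again a refinement (this follows from the support and relative-interior conditions in Definition~\ref{D:refinement_of_monoidal_complex}, since these are transitive), so it suffices to carry out each step functorially with respect to subcomplexes.

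For the first step, the strategy is to subdivide the monoids of $\cQ$ in order of \emph{decreasing} dimension. Enumerate the (finitely many) non-simplicial monoids $\sigma \in \cQ$ so that $\dim \sigma_1 \geq \dim\sigma_2 \geq \cdots$. For the first such $\sigma$ of top dimension, all of its proper faces are already simplicial (a monoid of strictly smaller dimension that is not simplicial would have been handled, but at the top dimension there is nothing above; more precisely one inducts so that when $\sigma$ is treated, all $\sigma' > \sigma$ and all proper faces of $\sigma$ are simplicial), so by Proposition~\ref{P:star_subdivn_complex} applied with any $v$ lying in the relative interior of $\supp(\sigma)$ — e.g. the sum of the extremals — the star subdivision $\cS(\cQ, v)$ refines $\cQ$, replaces $\sigma$ by simplicial pieces $\tau + \Zp v$ (each $\tau$ a proper, hence simplicial, face of $\sigma$, joined to the single extra ray $\Zp v$, which is simplicial), and leaves all monoids not $\geq \sigma$ untouched. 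Crucially, since $v$ lies over $\sigma$ and $\cS(\cQ,v)$ modifies $\sigma_b$ only when $\sigma \leq \sigma_b$, monoids in a subcomplex not containing $\sigma$ are unaffected — this is the source of the functoriality in (ii). Iterating over the enumeration yields $\cQ^{\mathrm{simp}}$, and if $\cQ$ was already simplicial no subdivisions occur, giving part of (i).

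For the second step, $\cQ^{\mathrm{simp}}$ is a simplicial complex, so Proposition~\ref{P:smoothing_complex} directly gives a smooth refinement $(\cQ^{\mathrm{simp}})_{\mathrm{sm}} \to \cQ^{\mathrm{simp}}$ with the same indexing poset, obtained by freely regenerating each simplicial monoid by its extremals. If $\cQ$ is smooth then $\cQ^{\mathrm{simp}} = \cQ$ and each $\sigma_{\mathrm{sm}} = \sigma$, so $\ns(\cQ) = \cQ$, completing (i). For (ii): if $\cQ_0 \subset \cQ$ is a subcomplex, then because each star subdivision in step one leaves monoids outside the star of the subdivided monoid unchanged, the subcomplex of $\cQ^{\mathrm{simp}}$ lying over $\cQ_0$ is exactly $(\cQ_0)^{\mathrm{simp}}$ — here one must check that the enumeration and the choice of $v$ restrict compatibly, which is clear since $v$ depends only on $\sigma$ itself. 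Then smoothing is manifestly local to each monoid, so $\ns(\cQ)_0 = ((\cQ_0)^{\mathrm{simp}})_{\mathrm{sm}} = \ns(\cQ_0)$.

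The main obstacle I anticipate is making the induction in step one genuinely well-founded and checking that after subdividing $\sigma$ the \emph{proper faces} of the newly created monoids $\tau + \Zp v$ are all simplicial so that the next application of Proposition~\ref{P:star_subdivn_complex} is legitimate — in particular one must verify that star subdivision of a top-dimensional non-simplicial monoid does not create new non-simplicial monoids of the same or larger dimension elsewhere in the complex, and that the various local subdivisions $S(\sigma_b, i_{ab}v)$ for different maximal $\sigma_b \geq \sigma$ are compatible along shared faces (which is exactly what Proposition~\ref{P:refinement_of_monoidal_complex}, invoked inside Proposition~\ref{P:star_subdivn_complex}, guarantees). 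A secondary subtlety is that the resulting simplicial refinement's underlying Fulton-style fan need not be smooth — but this is precisely why the nonstandard smoothing operation is used in step two, and no further subdivision is needed.
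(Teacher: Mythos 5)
Your two-stage architecture --- simplicialize by iterated star subdivision, then apply the smoothing of Proposition~\ref{P:smoothing_complex} --- is exactly the paper's, and your handling of the second stage and of property (i) is fine. The gap is in the first stage, and it is precisely the obstacle you flag at the end without resolving. Your induction hypothesis (``when $\sigma$ is treated, all proper faces of $\sigma$ are simplicial'') cannot be maintained when you process monoids in order of \emph{decreasing} dimension: the proper faces of the top-dimensional non-simplicial $\sigma$ have smaller dimension and so have not yet been touched, and a non-simplicial cone can perfectly well have non-simplicial proper faces (e.g.\ the $4$-dimensional cone over a cube has six non-simplicial facets). So the parenthetical ``each $\tau$ a proper, hence simplicial, face of $\sigma$'' is unjustified. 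Worse, if $\tau\leq\sigma$ is a non-simplicial proper face with $v\notin\tau$, the star subdivision replaces $\sigma$ by, among other pieces, $\tau+\Zp v$, which is again non-simplicial and of dimension $\dim\tau+1$, possibly equal to $\dim\sigma$; in the cube example one non-simplicial $4$-dimensional monoid becomes six. Thus neither ``dimension of the worst monoid'' nor ``number of non-simplicial monoids of top dimension'' decreases, and your iteration has no termination argument.

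The paper's fix is to replace $\dim$ by a finer invariant: the \emph{non-simplicial dimension} $\nsdim(\sigma)$, the dimension of the largest face of $\sigma$ spanned by those extremals which are \emph{not} linearly independent of the remaining ones. The induction is downward on $k$ using the counts $M_j(\cQ)=\#\{\sigma\;;\;\nsdim(\sigma)=j\}$: one star-subdivides a \emph{fully} non-simplicial $\sigma$ (i.e.\ $\nsdim(\sigma)=\dim(\sigma)=k$) of maximal $\nsdim$ at $v=v_1+\cdots+v_n$. The key point your argument is missing is that $v$ is linearly independent of $\sspan(\tau)$ for every face $\tau$ with $v\notin\tau$, so $v$ contributes only to the ``simplicial part'' of $\tau+\Zp v$ and $\nsdim(\tau+\Zp v)\leq\nsdim(\tau)<k$ (the last inequality using that, under the inductive hypothesis, $\sigma$ is the unique fully non-simplicial dimension-$k$ face of any $\mu\ni v$). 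Hence $M_j$ does not increase for $j\geq k$ while $M_k$ strictly drops, which is what makes the process terminate. The paper also checks that subdivisions at this stage commute, which --- beyond your observation that $v$ depends only on $\sigma$ --- is what makes $\ns(\cQ)$ canonical and is needed for the restriction property (ii). Without something playing the role of $\nsdim$, your step one does not go through.
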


\begin{proof}
%We first construct a {\em simplicial} refinement $\ns'(\cQ)$ with the 
%analogous properties, and then take $\ns(\cQ) = \ns'(\cQ)_\mathrm{sm}$ to be the smoothing of $\ns'(\cQ)$, 
%as in Proposition~\ref{P:smoothing_complex}.

Consider a monoid $\sigma$ with extremals $V = \set{v_1,\ldots,v_n}$.  Let
$\Lambda$ be the set of those $v_i$ which are linearly independent from all of
the others, i.e.  such that $\sspan\set{v_i} \cap \sspan(V \setminus v_i) =
\set{0}.$ Then the monoid $\tau$ which is the largest face of $\sigma$ lying in
the span of $V \setminus \Lambda$ is uniquely determined, and is either
$\set{0}$ or non-simplicial.  It represents the `essential' non-simpliciality
of $\sigma$, since $\sigma$ consists of the join of $\tau$ and the smooth face
generated by $\Lambda$.

We define the {\em non-simplicial dimension} of $\sigma$ by
\[
	\nsdim(\sigma) = \dim(\tau) 
\]
so in particular, $\sigma$ is simplicial if and only if $\nsdim(\sigma) = 0$.
If $\nsdim(\sigma) = \dim(\sigma)$ (i.e.\ $\Lambda = \emptyset$, so $\sigma = \tau$), we 
call $\sigma$ {\em fully non-simplicial.}  

For $j \in \N$, let
\[
	M_j(\cQ) = \#\set{\sigma\;;\; \nsdim(\sigma) = j}.
\]
Since $\cQ$ has a finite number of monoids which are each finite dimensional,
each $M_j(\cQ)$ is finite, and $M_j(\cQ) = 0$ for all $j \geq N$, for some
$N$.

Proceeding by induction, assume that $M_j(\cQ) = 0$ for $j > k$. If there are
no fully non-simplicial monoids of dimension $k$, then $M_k(\cQ) = 0$ (indeed,
if $\nsdim(\sigma) = k$ then it has a fully non-simplicial face $\tau \leq
\sigma$ as above of dimension $k$), and the induction is complete.  Otherwise,
let $\sigma$ with extremals $\set{v_1,\ldots,v_n}$ be a fully non-simplicial
monoid of dimension $k$, and consider the star subdivision $\cS(\cQ,v)$, where
$v = v_1+ \cdots + v_n$.  
%Thus we remove all monoids $\mu$ in $\cQ$ containing
%$v$ and replace them with new monoids consisting of their faces $\tau \not \ni
%v$ joined to $v$.

If $v$ lies in a monoid $\mu \in \cQ$, then necessarily $\sigma \leq \mu$, and by the induction
hypothesis $k \geq \nsdim(\mu) \geq \nsdim(\sigma) = k$, so equality holds.
Moreover, $\sigma$ must be the {\em unique} fully non-simplicial face of $\mu$
with dimension $k$, so any face $\tau \leq \mu$ with $\nsdim(\tau) = k$ must
have $\sigma \leq \tau$.  
Thus if $v \notin \tau \leq \mu$, then $\sigma \not \leq \tau$
and therefore $\nsdim(\tau + \Zp v) < k$ since $v$ is
independent from the generators of $\tau$ and $\nsdim(\tau) < k$.  

Since all monoids in $\cS(\cQ,v)$ which are not in $\cQ$ are of the form $\tau
+ \Zp v$ where $v \notin \tau$ and $\tau \leq \mu \ni v$, it follows that
$M_j(\cS(\cQ,v)) \leq M_j(\cQ)$ for $j \geq k$.  On the other hand, $M_k$ must
actually decrease by at least one, since $\cQ$ contains a monoid of
non-simplicial dimension $k$ which is not in $\cS(\cQ,v)$, namely $\sigma$.
Thus
\[
	M_k(\cS(\cQ,v)) < M_k(\cQ)
\]
and since $M_k(\cQ) < \infty$, after a finite number of such subdivisions,
we obtain a complex $\cQ'$ for which $M_j(\cQ') = 0$
for $j \geq k$ and $M_j(\cQ') < \infty$ for $j < k$, completing
the inductive step.

Note that the inductive step does not depend on the order of the subdivisions.
Indeed, the result of subdividing $\sigma_1$ and $\sigma_2$ of dimension $k$ in
either order is the same, unless there is a monoid $\mu$ with $\sigma_1
\leq \mu$ and $\sigma_2 \leq \mu$.  But such a $\mu$ would then have
$\nsdim(\mu) > k$, contradicting the induction hypothesis. Upon completion
of the induction, we obtain a complex $\ns'(\cQ)$ such that 
$M_j\pns{\ns'(\cQ)}$ vanishes for all $j$, thus $\ns'(\cQ)$ is simplicial.
Since star subdivisions are refinements and refinements compose, $\ns'(\cQ) \to
\cQ$ is a simplicial refinement.

Finally set $\ns(\cQ) = \ns'(\cQ)_\mathrm{sm}$.  This involves a local operation
on each monoid, and does not depend on any choice of order of the monoids.  The
first property of $\ns(\cQ)$ is clear, since if $\cQ$ is smooth then
$M_j(\cQ) = 0$ for all $j$, so $\ns'(\cQ) = \cQ$ and $\cQ_\mathrm{sm} = \cQ$.  

The second property follows from the fact that either $\big(\cS(\cQ,v)\big)_0 =
\cQ_0$ if $v \in \sigma \notin \cQ_0$, or $\big(\cS(\cQ,v)\big)_0 = \cS(\cQ_0,v)$
otherwise, along with the fact that $\ns'(\cQ)$ depends only on the set of
fully non-simplicial monoids, independent of any choice of order.
\end{proof}

The following is an immediate corollary of
Proposition~\ref{P:fiber_products_complexes} and
Theorem~\ref{T:smooth_refinement}.

\begin{cor}
Let $\cR_i$ $i = 1,2$ be refinements of $\cQ$.  Then a mutual smooth
refinement exists, namely $\ns(\cR_1\times_{\cQ} \cR_2) \to \cQ$.
\label{C:mutual_smooth_refinement}
\end{cor}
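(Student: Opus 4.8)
The plan is to deduce the statement formally from Proposition~\ref{P:refinements_pull_back} and Theorem~\ref{T:smooth_refinement}, the only point requiring explicit comment being that the composite of two refinements of monoidal complexes is again a refinement. First I would form the fiber product $\cR_1 \times_{\cQ} \cR_2$, which exists by Proposition~\ref{P:fiber_products_complexes}; by Proposition~\ref{P:refinements_pull_back} it is a refinement of each $\cR_i$ via the projection morphisms. Applying Theorem~\ref{T:smooth_refinement} to $\cR_1\times_{\cQ}\cR_2$ then produces a smooth monoidal complex $\ns(\cR_1\times_{\cQ}\cR_2)$ together with a refinement $\ns(\cR_1\times_{\cQ}\cR_2) \to \cR_1\times_{\cQ}\cR_2$.

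It remains to compose. If $\phi : \cR' \to \cR$ and $\psi : \cR \to \cQ$ are refinements, then $\psi\circ\phi : \cR' \to \cQ$ is one as well: at each $\sigma \in \cQ$, one covers $\supp(\sigma)$ first by the supports of the monoids over $\sigma$ under $\psi$ and then each of those by the supports of monoids over it under $\phi$, giving condition~(i) of Definition~\ref{D:refinement_of_monoidal_complex}; condition~(ii) on relative interiors passes through the same two-stage decomposition, and the homomorphisms of $\psi\circ\phi$, being composites of injections, are injective. (This is exactly the ``refinements compose'' fact already used in the proof of Theorem~\ref{T:smooth_refinement}.) Composing $\ns(\cR_1\times_{\cQ}\cR_2) \to \cR_1\times_{\cQ}\cR_2$ with each projection $\cR_1\times_{\cQ}\cR_2 \to \cR_i$, and with the given refinements $\cR_i \to \cQ$, exhibits $\ns(\cR_1\times_{\cQ}\cR_2)$ as a smooth refinement of $\cR_1$, of $\cR_2$, and of $\cQ$, and the diagram
\[
\begin{tikzpicture}[->,>=to,auto]
\matrix (m) [matrix of math nodes, column sep=1cm, row sep=1cm, text depth=0.25ex]
{ \ns(\cR_1\times_{\cQ}\cR_2) & \cR_2 \\ \cR_1 & \cQ \\};
\path (m-1-1) edge (m-1-2);
\path (m-1-2) edge (m-2-2);
\path (m-2-1) edge (m-2-2);
\path (m-1-1) edge (m-2-1);
\end{tikzpicture}
\]
commutes, by the universal property of the fiber product together with the face-compatibility of the refinement $\ns(\cR_1\times_{\cQ}\cR_2) \to \cR_1\times_{\cQ}\cR_2$. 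Since this complex is smooth by Theorem~\ref{T:smooth_refinement}, it is the required mutual smooth refinement.

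I do not anticipate any genuine obstacle here: once Proposition~\ref{P:refinements_pull_back} (refinements pull back along arbitrary morphisms) and Theorem~\ref{T:smooth_refinement} (existence of the natural smooth refinement) are available, the argument is purely formal, and the composition lemma for refinements is routine and has already been invoked earlier in the paper.
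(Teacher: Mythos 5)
Your proposal is correct and follows exactly the route the paper intends: the paper gives no explicit proof, declaring the corollary immediate from Proposition~\ref{P:fiber_products_complexes} and Theorem~\ref{T:smooth_refinement}, and your argument simply supplies the implicit steps (Proposition~\ref{P:refinements_pull_back} for the fiber product being a mutual refinement, the natural smooth refinement, and the composability of refinements already invoked in the proof of Theorem~\ref{T:smooth_refinement}). No gaps.
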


Finally, we include here a lemma which will be of use in
Section~\ref{S:binvars}.
\begin{lem}
Let $\cQ_0 \subset \cQ$ be a monoidal subcomplex, and $\cR_0 \to \cQ_0$ a
refinement.  Then there exists a refinement $\cR \to \cQ$ which extends
$\cR_0$, meaning that it contains $\cR_0$ as a subcomplex.
If $\cR_0$ is smooth, then a smooth extension exists.
\label{L:extension_of_refinement}
\end{lem}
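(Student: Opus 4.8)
The plan is to extend $\cR_0\to\cQ_0$ one monoid at a time, working up the poset $(A,\leq)$ underlying $\cQ$ from the monoids of smallest dimension to those of largest. The key point is that the data of a refinement of a monoid $\sigma$ is, by Proposition~\ref{P:refinement_of_monoidal_complex}, precisely a compatible family of refinements $R(\tau)$ of the faces $\tau\leq\sigma$; so extending $\cR_0$ to one more monoid $\sigma\in\cQ\setminus\cQ_0$ amounts to choosing a refinement $R(\sigma)$ of $\sigma$ which restricts, on each proper face $\tau<\sigma$, to the refinement already constructed there. Thus the single step I must justify is: \emph{given a monoid $\sigma$ and a compatible refinement assigned to each proper face of $\sigma$, there exists a refinement of $\sigma$ inducing the given ones on the faces.}

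To carry out that step I would proceed as follows. Let $\rho$ denote the refinement of $\partial\sigma$ already assigned (i.e. the union, modulo face maps, of the $R(\tau)$ for $\tau<\sigma$, which is a fan supported on the boundary of $\supp\sigma$). I want to cone it off from an interior point: pick $0\neq v$ in the relative interior of $\sigma$ (for instance the sum of the extremals), and set
\[
	R(\sigma)=\rho\ \cup\ \set{\tau'+\Zp v\;;\;\tau'\in\rho}\ \cup\ \set{\text{faces of }\sigma\text{ not meeting }\partial\text{ properly}},
\]
i.e. the star-subdivision-type construction of Proposition~\ref{P:star_subdivision} but with the given boundary refinement $\rho$ in place of the faces of $\sigma$. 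One checks, exactly as in the proof of Proposition~\ref{P:star_subdivision}, that this is a refinement of $\sigma$: the supports $\supp(\tau'+\Zp v)$ cover $\supp\sigma$ since every ray from $v$ meets $\partial\supp\sigma$, hence lands in some $\supp\tau'$ with $\tau'\in\rho$; and intersections of two such cones are controlled by the corresponding intersections in $\rho$ together with the common vertex $v$. By construction $R(\sigma)$ restricted to a face $\tau<\sigma$ is $R(\tau)$, so the extended family remains compatible and Proposition~\ref{P:refinement_of_monoidal_complex} assembles it into a refinement $\cR\to\cQ$ of complexes containing $\cR_0$. Performing this for the monoids of $\cQ$ in order of increasing dimension (the order within a dimension being irrelevant, since two distinct monoids of the same dimension have no monoid of $\cQ$ properly above both that was not already treated) terminates after finitely many steps and produces the desired $\cR$.

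For the smooth case, if $\cR_0$ is smooth then each boundary refinement $\rho$ encountered is smooth, and the monoids $\tau'+\Zp v$ with $\tau'$ smooth and $v$ independent of $\tau'$ are again smooth by the computation in Proposition~\ref{P:star_subdivision}; the only possibly non-smooth monoid produced is $\sigma$ itself, which does not appear in $R(\sigma)$. Hence $\cR$ is simplicial and in fact smooth, so we may take $\cR$ itself; alternatively one can apply $\ns(\cdot)$ of Theorem~\ref{T:smooth_refinement}, whose property (ii) guarantees it does not disturb the already-smooth subcomplex $\cR_0$. The main obstacle is purely bookkeeping: verifying that the coning-off construction, when $v$ is chosen interior to $\sigma$ but the boundary data $\rho$ is an arbitrary (possibly non-smooth, non-simplicial) refinement of $\partial\sigma$ rather than just its faces, still satisfies conditions (ii) and (iii) of Definition~\ref{D:monoid_refinement} — this is the direct analogue of the argument in Proposition~\ref{P:star_subdivision} and in the support/intersection parts of Proposition~\ref{P:planar_refinement}, and requires no new ideas beyond keeping track of which faces of $\sigma$ meet $\rho$ nontrivially.
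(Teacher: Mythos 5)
Your proposal is correct and is essentially the paper's own argument: an induction on dimension in which each monoid of $\cQ\setminus\cQ_0$ whose boundary already carries a nontrivial refinement is coned off from the sum $v$ of its extremals, the new monoids $\tau'+\Zp v$ (for $\tau'$ in the inherited boundary refinement or an unrefined face) being exactly the paper's set $\Lambda_j$. The one small overstatement is your claim that the resulting $\cR$ is automatically smooth when $\cR_0$ is: monoids of $\cQ\setminus\cQ_0$ that are never coned off survive unchanged and need not be smooth, so the smooth case is correctly handled by your stated fallback of applying $\ns(\cdot)$ and invoking property (ii) of Theorem~\ref{T:smooth_refinement} --- which is precisely what the paper does.
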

\begin{proof}
Say a monoid $\tau \in \cQ$ is ``refined by $\cR_0$'' if $\tau \in
\cQ_0$ and $\cR_0(\tau) \to \tau$ is a nontrivial refinement (i.e.
$\cR_0(\tau) \neq \tau$).  We then say that $\sigma \in \cQ$ is
``damaged by $\cR_0$'' if some face $\tau \le \sigma$ is refined by
$\cR_0$, but $\sigma$ itself is not.  In particular, note that if $\sigma$
is damaged, then $\sigma \notin \cQ_0$, otherwise $\cR_0 \to \cQ_0$
would fail to be a refinement.

If there are no monoids which are damaged by $\cR_0$, then 
\[
	\cR := \cR_0 \cup \cQ \setminus\cQ_0 \to \cQ
\]
is an extension of $\cR_0$.

Let $d(\cQ, \cR_0)$ denote the minimum dimension of $\sigma \in \cQ$ such that
$\sigma$ is damaged by $\cR_0$.  We will produce a refinement
$\cR_1$ of a subcomplex $\cQ_1 \subset \cQ$ which extends $\cR_0$,
and for which $d(\cQ, \cR_1) > d(\cQ,\cR_0)$.  Proceeding by
induction, we eventually obtain a refinement $\cR_k$ of $\cQ_k \subset
\cQ$ extending $\cR_0$ which damages no monoids on $\cQ$, since
$d(\cQ,\cR_k)$ is bounded by the maximum dimension of a monoid in $\cQ$,
and then we can take $\cR = \cR_k \cup \cQ \setminus \cQ_k$ as
above.

For the induction, assume a refinement $\cR_{j-1} \to \cQ_{j-1} \subset \cQ$ is
given, which extends $\cR_0$.  Let $\cQ_j$ be the subcomplex of $\cQ$
consisting of $\cQ_{j-1}$ and all monoids of dimension $d(\cQ, \cR_{j-1})$
which are damaged by $\cR_{j-1}$ along with their faces.  Note that none of
their proper faces are damaged by definition of $d(\cQ,\cR_{j-1})$.  Let
$\Lambda_j$ be the set of all monoids of the form $\tau + \Zp  v$ where $v =
v_1 +\cdots v_n$ is the sum over extremals of a damaged monoid $\sigma$ in
$\cQ_j$, and either 
\begin{enumerate}[{\normalfont (i)}]
\item $\tau \leq \sigma$, such that $\tau \notin \cQ_{j-1}$, or
\item $\tau \in \cR_{j-1}(\tau')$ for some $\tau' \leq \sigma$.
\end{enumerate}
Then set
\[
	\cR_j = \cR_{j-1} \cup \Lambda_j.
\]
We claim $\cR_j$ refines $\cQ_j$, and $d(\cQ, \cR_j) > d(\cQ, \cR_{j-1})$.

Indeed, identifying monoids in $\cR_j$ with their images in monoids in
$\cQ$, the intersection of a monoid in $\cR_{j-1}$ and one in $\Lambda_j$
must be a face of each in $\cR_{j-1}$; the intersection of two monoids in
$\Lambda_j$ is a face of each in $\Lambda_j$; and it is clear that the support
of any $\sigma \in \cQ_j$ is covered by the supports of monoids in
$\cR_j$, so $\cR_j \to \cQ_j$ is a refinement.

For the second claim, suppose there was a monoid $\sigma \in \cQ$ with
$\dim(\sigma) \leq d(\cQ, \cR_{j-1})$ which was damaged by $\cR_j$.  As
noted above, $\sigma \notin \cQ_j$, which means that $\sigma$ is not damaged
by $\cR_{j-1}$ and therefore must be damaged by $\cR_j \setminus
\cR_{j-1} = \Lambda_j$.  In other words, $\sigma$ has a proper face $\tau$
which is non-trivially refined by $\Lambda_j$, but then $\tau \in \cQ_j$ and
$\dim(\sigma) > \dim(\tau) = d(\cQ, \cR_{j-1})$, a contradiction.

Finally, if $\cR_0$ is smooth, then we can replace $\cR$ by
$\ns(\cR)$ since $\pns{\ns(\cR)}_0 = \ns(\cR_0) = \cR_0$ by Theorem
\ref{T:smooth_refinement}.
\end{proof}

\section{Generalized blow-up of a manifold with corners}\label{S:global}

We first describe a functor $X \mapsto \cP_X$ which assigns to a manifold with
corners its `basic smooth monoidal complex' over the poset of the boundary faces of
$X$; and assigns to a b-map $f : X \to Y$ a morphism $f_\natural : \cP_X \to
\cP_Y.$  Next we show that for any smooth refinement $\cR \to \cP_X$, the local
construction of generalized blow-up in Section \ref{S:local} extends to
give a new manifold with corners $[X; \cR]$ with basic complex realizing $\cR.$

Let $X$ be a compact manifold with corners, and consider the set $\cM(X) =
\bigcup_{k=0}^n\cM_k(X)$ of its boundary faces.  It is partially ordered with
respect to inclusion, but here we will equip it with the {\em reverse order}
$(\cM(X),\leq)$, in which 
\begin{equation}
	(\cM(X), \leq) \ni G \leq F \iff G \supseteq F.
	\label{E:boundary_reverse_order}
\end{equation}

\begin{defn}[Basic monoidal complex]
The {\em basic monoidal complex of $X$}, denoted $\cP_X$, consists of the
smooth monoids
\[
	\sigma_F = \bigoplus_{\substack{G \in \cM_1(X)\\F \leq G}} \Zp e_G, \quad F \in \cM(X)
\]
freely generated by the boundary hypersurfaces containing a given face, with
the obvious morphisms
\[
	\sigma_{F'} \hookrightarrow \sigma_{F}, \quad F' \leq F \in \cM(X).
\]
It is smooth, complete and reduced, and indexed by the poset $(\cM(X),\leq)$.

If $f : X \to Y$ is a b-map
\begin{equation}
	f_\# : (\cM(X),\leq) \to (\cM(Y),\leq)
\label{E:f_hash}\end{equation}
is order preserving, and $f$ induces a morphism of monoidal complexes
\begin{equation}
	f_\natural : \cP_X \to \cP_Y
	\label{E:f_natural}
\end{equation}
where $f_\natural : \sigma_F \to \sigma_{f_\#(F)}$ is generated by the boundary exponents 
\eqref{E:f_on_ideals} of $f$:
\begin{equation}
\begin{gathered}
	f_\natural : \Zp\pair{e_{G_1},\ldots,e_{G_k}} \to \Zp \pair{e_{H_1},\ldots,e_{H_l}} \\
	e_{G_i} \mapsto \sum_j \alpha(G_i,H_j)e_{H_j}
	\label{E:f_natural_locally}
\end{gathered}
\end{equation}
\label{D:basic_monoidal_complex}
\end{defn}

The complex $\cP_X$ has a canonical embedding into global sections of the
b-normal bundles over the faces of $X$ as follows.  If $F \in \cM_k(X)$, recall
that $\bN F \to F$ is canonically trivialized by the frame
$\pns{x_1\pa_{x_1},\ldots,x_k\pa_{x_k}}$ where the $x_i$ are any boundary
defining functions for the hypersurfaces $G_i$ containing $F$.  These elements
are well-defined independent of the choices of the $x_i$, and they therefore
generate a smooth monoid which may be identified with $\sigma_F:$
\[
	\sigma_F \cong \Zp \pair{x_1\pa_{x_1},\ldots,x_k\pa_{x_k}} \subset \bN F, \quad F \in \cM(X).
\]
If $F \subseteq F'$, then $\bN F'$ is trivialized by a frame 
$\pns{x_{i}\pa_{x_i}}_{i \in I}$, $I \subset \set{1,\ldots,k}$, and
at any $p \in F$, there is a natural inclusion $\bN_p F' \subset \bN_p F$ which
induces over $F' \leq F$ the natural homomorphism
\[
	i_{F'F} : \sigma_{F'} \cong \Zp \pair{x_{i}\pa_{x_{i}}}_{i \in I}
	 \hookrightarrow \sigma_F \cong \Zp \pair{x_1\pa_{x_1},\ldots,x_k\pa_{x_k}}
\]
which is an isomorphism onto the corresponding face of $\sigma_F$ and agrees
with the previous definition. Furthermore, if $f : X
\to Y$ is a b-map then under this identification \eqref{E:f_natural_locally} is
given by the b-differential:
\[
	f_\natural = \bd f_\ast : \sigma_F \subset \bN F \to \sigma_{f_\#(F)} \subset \bN f_{\#}(F)
\]
since the latter is integral with respect to the bases  
$\big\{x_i\pa_{x_i}\big\}$ for $\bN F$ and
$\big\{x'_j \pa_{x'_j}\big\}$ for $\bN f_\#(F)$, and $\bd f_\ast$ intertwines
the inclusion $\bN F' \subseteq \bN F$ (generating the face map $i_{F'F}$) 
with the corresponding inclusion $\bN f_\#(F') \subseteq \bN f_\#(F).$

Hereafter we will identify $\cP_X$ with its image in the b-normal spaces of $X.$

\begin{thm}[Generalized blow-up of a manifold] 
If $X$ is a manifold with corners then any smooth refinement, $\cR$ of $\cP_X,$
defines a manifold $[X; \cR],$ the {\em generalized blow-up} of $X$ with
respect to $\cR,$ equipped with a {\em blow-down map} 
\[
	\beta : [X; \cR]\longrightarrow X,
\]
restricting to a diffeomorphism of the interiors and such that
$\beta_{\natural}:\cP_{[X; \cR]}\longrightarrow \cP_X$ factors through an
isomorphism 
\[
	\cP_{[X; \cR]} \stackrel{\cong}{\to} \cR
\]
of monoidal complexes.
\label{T:genblow} 
\end{thm}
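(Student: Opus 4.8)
The plan is to globalize the model construction of Section~\ref{S:local} by gluing the local blow-ups $[\R^k_+;R]\times\R^{n-k}$ over a boundary coordinate atlas of $X$, using the uniqueness and functoriality of the lifting construction in Proposition~\ref{P:local_lifting_wfactors} and Corollary~\ref{C:diffeos_lift_wfactors}. The heart of the argument is that every step reduces, via those uniqueness clauses, to a statement already established locally, so the work is organizational rather than computational.

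First I would localize the refinement. By Proposition~\ref{P:refinement_of_monoidal_complex}, the refinement $\cR\to\cP_X$ determines for each $F\in\cM(X)$ a smooth refinement $R_F:=\cR(\sigma_F)$ of the smooth monoid $\sigma_F$, and these are mutually compatible: $R_F=(R_{F'})(\sigma_F)$ whenever $F\subseteq F'$, by Lemma~\ref{L:refinements_and_faces}. In any boundary coordinate system $(x,y):U\to\R^k_+\times\R^{n-k}$ centered at a point of $\mathring F$, the identification $\sigma_F\cong\Zp\pair{x_1\pa_{x_1},\ldots,x_k\pa_{x_k}}=\sigma_{\R^k_+}$ turns $R_F$ into a smooth refinement of $\sigma_{\R^k_+}$, well-defined up to the reordering of generators permitted in Proposition~\ref{P:localblow}. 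To the chart $U$ I associate the model blow-up $[\R^k_+;R_F]\times\R^{n-k}$ with blow-down $\beta_U=\beta\times\id$; Corollary~\ref{C:boundary_fans} ensures this construction is compatible with passage to sub-coordinate-patches (smaller faces), so the assignment is coherent on the atlas.

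Next I would glue. If $U_1,U_2$ are two such patches, the transition diffeomorphism $\phi_{12}$ on $U_1\cap U_2$ carries boundary hypersurfaces to boundary hypersurfaces, so Corollary~\ref{C:diffeos_lift_wfactors} provides a \emph{unique} lift $\phi_{12}'$ of $\beta_{U_i}^{-1}(U_1\cap U_2)$ intertwining the blow-down maps. The functoriality built into Proposition~\ref{P:local_lifting_wfactors} (together with $\id'=\id$) gives the cocycle identity $(\phi_{12}\circ\phi_{23})'=\phi_{12}'\circ\phi_{23}'$ on triple overlaps, so the charts patch to a Hausdorff, paracompact manifold with corners $[X;\cR]$, and the $\beta_U$ patch to a smooth b-map $\beta:[X;\cR]\to X$. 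Restricting to interiors, each $\beta_U$ is a diffeomorphism onto $\mathring U$, hence $\beta$ restricts to a diffeomorphism of interiors.

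Finally I would identify the basic complex. By Proposition~\ref{P:local_faces_cones}, the boundary faces of $[\R^k_+;R_F]\times\R^{n-k}$ lying over $\mathring F$ are in inclusion-reversing bijection with the monoids of $R_F$, with codimension equal to dimension, and these faces are embedded; the lifted transition maps respect this correspondence by the same uniqueness used for gluing, so globally the faces of $[X;\cR]$ are embedded and in inclusion-reversing bijection with $\bigcup_F R_F\big/i_\ast=\cR$. Matching generators in a maximal chart $U_\sigma\cong\R^n_+$, the generator of $\sigma_{F_\tau}\in\cP_{[X;\cR]}$ attached to the coordinate hypersurface $\set{t_i=0}$ is $t_i\pa_{t_i}$, which $\bd\beta_\ast$ sends to the extremal $v_i$ of $\sigma\in\cR$ (the computation in Proposition~\ref{P:localblow}); this is precisely the monoid inclusion $\sigma\hookrightarrow\sigma_{F}$ given by $\cR\to\cP_X$. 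Assembling these identifications over all charts yields an isomorphism $\cP_{[X;\cR]}\stackrel{\cong}{\to}\cR$ of monoidal complexes compatible with face maps, through which $\beta_\natural=\bd\beta_\ast$ factors. The step I expect to be the main obstacle is this globalization — specifically, checking that the smooth structures and the blow-down maps agree on triple overlaps and that the boundary-face/monoid dictionary is consistent across charts — but since all of these are consequences of the uniqueness assertions in Section~\ref{S:local}, the obstacle is bookkeeping rather than a genuinely new argument.
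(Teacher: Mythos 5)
Your proposal is correct and follows essentially the same route as the paper's proof: blow up each chart of a good boundary coordinate atlas using the localized refinements $\cR(F)$, lift the transition maps by the uniqueness clause of Corollary~\ref{C:diffeos_lift_wfactors}, deduce the cocycle condition from functoriality, and read off $\cP_{[X;\cR]}\cong\cR$ from the local face/monoid dictionary of Proposition~\ref{P:local_faces_cones}. The only point treated more explicitly in the paper is the Hausdorff verification (separating points with equal $\beta$-image inside a single chart as in Proposition~\ref{P:localblow}), which you assert rather than check, but this is the bookkeeping you anticipated.
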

\noindent
In particular $\cM([X; \cR])$ is determined as an poset by $\cR,$
so $[X;\cR]$ has a unique boundary face $F_\tau$ for each $\tau \in
\cR,$ with $\codim(F_\tau) = \dim(\tau)$ and 
\[
	F_\sigma \subseteq F_\tau \iff \tau \leq \sigma.
\]

\begin{proof} 
As a manifold $X = \bigcup W_{i}$ has a locally finite open covering by
coordinate charts $\phi_i : W_i \cong V_i \subset \R^{k(i)}_+\times \R^{n -
k(i)}.$ It can further be arranged that the cover is `good' in the sense that
all intersections of the coordinate sets are contractible and that the origin
is in the image of each coordinate chart, so that the codimension $k(i)$ is
achieved. Then the image of each coordinate chart is actually diffeomorphic to
$\R^{k(i)}_+\times \R^{n - k(i)}$ so, by composing with such a diffeomorphism,
it can be assumed that each coordinate chart is surjective. The manifold is
then recovered, up to global diffeomorphism, by gluing
\[
X\cong \bigsqcup_i V_{i} / \sim
\]
where the equivalence relation is generated by the transition maps, $p \sim q$
if and only if $f_{ij}(p) = q$ for some $i$ and $j.$ Here, $f_{ij} = \phi_j
\circ \phi_i^{-1} : O_{ij} \to O_{ji}$ is a diffeomorphism on the sets
$O_{ij}= \phi_i(W_i \cap W_j),$ whenever $W_i \cap W_j \neq \emptyset$.

In essence the blown-up manifold is obtained by blowing up each coordinate
chart and showing that the transition maps lift to be smooth. 

For each $i,$ let $F_i \in \cM_{k(i)}(X)$ be the unique boundary face of
(maximal) codimension $k(i)$ such that $F_i \cap W_i \neq \emptyset$ and let $\cR(F_i)=
\cR(\sigma_{F_i})$ be the induced refinement of $\sigma_{F_i}$, interpreted as
a refinement of the basic monoid $\sigma_{F_i} \cong \sigma_{\R^n_+}$ of $\R^n_+$.
Whenever $W_i\cap W_j\neq \emptyset$, there is a unique smallest boundary
face $G_{ij}\in \cM(X)$ which meets $O_{ij}$ and contains both $F_{i}$ and
$F_{j}$, so that $\cR(F_i) = \pns{\cR(G_{ij})}(\sigma_{F_i}).$  Then set
\[
	V'_i = [\R^{k(i)}_+; \cR(F_i)] \times \bbR^{n-k(i)}
\]
and for each pair $(i,j)$, set
\[
	O'_{ij} = (\beta_{i}\times \id)^{-1}(O_{ij}) \subset [\R^{k(ij)}_+; \cR(G_{ij})]\times \R^{n - k(ij)}
	 \subset [\R^{k(i)}_+; \cR(F_i)] \times\bbR^{n-k(i)}.
\]
In light of Corollary~\ref{C:diffeos_lift_wfactors}, there are unique
diffeomorphisms
\[
	f'_{ij} : O'_{ij} \stackrel{\cong}{\to} O'_{ji}
\]
lifting $f_{ij}$, and we can therefore define
\[
	[X; \cR] = \bigsqcup_i V'_i / \sim
\]
where $p \sim q \iff f'_{ij}(p) = q$ for some pair $(i,j)$.  

Since they commute with the transition maps, $f'_{ij},$ the local blow-down maps
$\beta_i : V'_i \to V_i$ patch together to define the global blow-down map
\[
	\beta : [X; \cR] \to X.
\]

Clearly $[X;\cR]$ is paracompact. To verify that it is Hausdorff, let $p$
and $q$ be distinct points. If $\beta(p) \neq \beta(q)$, then they can be
separated by sets of the form $\beta^{-1}(O_i)$ where $O_i$ for $i = 1,2$
are open sets in $X$ separating $\beta(p)$ and $\beta(q).$  On the other
hand, if $\beta(p) = \beta(q)$, then $p$ and $q$ can be separated inside
some set $V'_i$, as in the proof of Proposition~\ref{P:localblow}.

For a fixed $F \in \cM(X)$, it follows from Proposition \ref{P:localblow} that
each $V'_i$ for which $F_i = F$ has boundary faces $(F_i)_\sigma$ in
correspondence with monoids $\sigma \in \cR(F)$.  These are connected in
$[X;\cR]$ for adjacent pairs such that $V_i \cap V_j \cap F \neq \emptyset$
since the diffeomorphisms $f'_{ij}$ preserve the identification of boundary
hypersurfaces with monoids in $\cR(F)$. Thus for each $\sigma \in \cR$ there is
a unique boundary face $F_\sigma \in \cM([X;\cR])$ given by the quotient of the
union of the local boundary faces $(F_i)_\sigma$, and hence the blow-down maps
give an identification
\[
	\cP_{[X; \cR]} \cong \cR. \qedhere
\]
\end{proof}

In fact the blow-up can be defined globally near each boundary face since
$F\in\cM_k(X)$ has a neighborhood in $X$ which is diffeomorphic to
$\R^{k}_+\times F$ and then the preimage of this open set in $[X;\cR]$
is diffeomorphic to $[\R^k_+;\cR(F)]\times F$ obtained by localizing the
resolution to boundary faces containing $F.$

If $f : X \to Y$ is a b-map and $\beta : [Y; \cR] \to Y$ is a generalized
blow-down map we say $f$ is {\em compatible with} $\beta$ if the morphism
$f_\natural : \cP_{X} \to \cP_{Y}$ factors through a morphism $\phi : \cP_X \to
\cR$:	
\begin{equation}
\begin{tikzpicture}[->,>=to,auto]
\matrix [matrix of math nodes, column sep=1cm, row sep=1cm, text depth = 0.25ex] 
{ & |(M_0)| {\cR} \\ |(M_1)| {\cP_X} & |(M_2)| {\cP_Y} \\ };
\path (M_1) edge node {$f_{\natural}$} (M_2) ;
\path (M_0) edge node {$\beta_{\natural}$} (M_2) ;
\path (M_1) edge node {$\phi$} (M_0) ;

%\draw (M_1) to (M_2) node [midway] {$f_{\natural}$};
%\draw (M_0) to (M_2) node [midway] {$\beta_{\natural}$};
%\draw (M_1) to (M_0) node [midway] {$\phi$};
\end{tikzpicture}
\label{E:compatible_morphism}
\end{equation}

\begin{thm}[Lifting b-maps]  
If $f:X\longrightarrow Y$ is an interior b-map compatible with a generalized blow-up in
the sense of \eqref{E:compatible_morphism} then $f$ lifts uniquely to a b-map
$f' : X \to [Y; \cR]$ such that
\[
\begin{tikzpicture}[->,>=to,auto]
\matrix [matrix of math nodes, column sep=1cm, row sep=1cm, text depth = 0.25ex] { & |(M_0)| {[Y; \cR]} \\ |(M_1)| X & |(M_2)| Y \\ };
\path (M_1) edge node {$f$} (M_2) ;
\path (M_0) edge node {$\beta$} (M_2) ;
\path (M_1) edge node {$f'$} (M_0) ;

%\draw (M_1) to (M_2) node [midway] {$f$};
%\draw (M_0) to (M_2) node [midway] {$\beta$};
%\draw (M_1) to (M_0) node [midway] {$f'$};
\end{tikzpicture}
\]
commutes, and such that $f'_\natural = \phi : \cP_X \to \cR \cong
\cP_{[Y; \cR]}$.
\label{T:lifting_b_maps}
\end{thm}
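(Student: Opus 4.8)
The plan is to reduce the global lifting statement to the local one, Proposition~\ref{P:local_lifting_wfactors}, using the compatible-cover construction from the proof of Theorem~\ref{T:genblow}. First I would fix good coordinate covers: $X = \bigcup W_i$ with charts $\phi_i : W_i \cong V_i \subset \R^{k(i)}_+ \times \R^{n-k(i)}$ and $Y = \bigcup \tW_a$ with charts $\psi_a : \tW_a \cong \tV_a \subset \R^{m(a)}_+ \times \R^{m'(a)}$, arranged (after refining the cover of $X$) so that each $W_i$ maps into a single chart $\tW_{a(i)}$ under $f$. On each such pair, $f$ has the local coordinate form $x \mapsto x' = c(x)\,x^\delta$ of Section~\ref{S:bnotation}, and the b-differential restricts to the monoid homomorphism $\delta^\transpose : \sigma_{\R^{k(i)}_+} \to \sigma_{\R^{m(a(i))}_+}$, which is the local avatar of $f_\natural : \sigma_{F_i} \to \sigma_{f_\#(F_i)}$. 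Since $\cR \to \cP_Y$ localizes, over the chart $\tW_{a(i)}$ it restricts to the smooth refinement $\cR(F_{a(i)})$ of $\sigma_{\R^{m(a(i))}_+}$, and the hypothesis that $f_\natural$ factors through $\phi : \cP_X \to \cR$ means precisely that $\delta^\transpose$ factors through a monoid homomorphism into some $\tau_i \in \cR(F_{a(i)})$ determined by $\phi$.

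Next I would apply Proposition~\ref{P:local_lifting_wfactors} on each chart to obtain a unique b-map $f'_i : V_i \to [\R^{m(a(i))}_+; \cR(F_{a(i)})] \times \R^{m'(a(i))}$ with range in the coordinate subchart $U_{\tau_i}$, commuting with $\beta$ and with $(f'_i)_\natural = \phi$ locally. The key point is then to check that these local lifts agree on overlaps $W_i \cap W_j$, so that they glue to a global $f' : X \to [Y;\cR]$. This is where the functoriality clauses are used: on $W_i \cap W_j$ the two lifts $f'_i$ and $f'_j$ are related by the transition diffeomorphisms $g_{ij}$ of $X$ (which are lifted trivially, as no blow-up is performed on $X$) and by the lifted transition diffeomorphisms of $Y$ supplied by Corollary~\ref{C:diffeos_lift_wfactors}; the functoriality statement in Proposition~\ref{P:local_lifting_wfactors}, together with the fact that $\phi$ is a well-defined morphism of complexes (so the target monoids $\tau_i$, $\tau_j$ are consistently identified along face maps), forces $f'_i = f'_j$ on the overlap after these identifications. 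Since $\beta \circ f'_i = f$ on each chart and $\beta$ is globally well-defined, $\beta \circ f' = f$; and since $(f'_i)_\natural = \phi$ on each chart and $\cP_{[Y;\cR]} \cong \cR$ by Theorem~\ref{T:genblow}, we get $f'_\natural = \phi$ globally. Uniqueness of $f'$ follows from the uniqueness in Proposition~\ref{P:local_lifting_wfactors} chart by chart, since any global lift restricts to a local one.

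The main obstacle I anticipate is the bookkeeping in the gluing step: one must verify that the identification $U_{\tau_i} \cong U_{\tau_j}$ induced on the overlap by the lifted transition map of $Y$ is exactly the one under which $f'_i$ and $f'_j$ become equal, i.e. that the combinatorial data ($\phi_\#$ on faces, and the monoid homomorphisms $\phi_{ab}$) is transported correctly by all the maps in sight. This is not deep but requires care to set up the indices so that Corollary~\ref{C:diffeos_lift_wfactors} applies with the right $\tau$'s; everything else is a routine application of the already-established local lifting and its functoriality. A minor additional point is that one should first shrink $W_i$ so that $f(W_i)$ lands in a single $\tW_a$, which only requires $Y$'s cover to be fine enough and does not affect the argument.
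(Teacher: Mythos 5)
Your proposal follows essentially the same route as the paper's proof: cover $X$ and $Y$ by compatible coordinate charts so that each chart of $X$ maps into one of $Y$, apply Proposition~\ref{P:local_lifting_wfactors} chart by chart, and glue the local lifts using their uniqueness and functoriality together with the lifted transition maps of Corollary~\ref{C:diffeos_lift_wfactors}. The argument is correct, and your extra attention to the overlap bookkeeping only makes explicit what the paper leaves to the reader.
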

\begin{proof}
Again, the construction is local. We consider a covering of $Y$ by
coordinate charts $V_i \cong \R^{k(i)}_+\times \R^{n - k(i)}$ 
and, refining if necessary, a covering of
$X$ by coordinate charts $W_j \cong \R^{l(j)}_+\times \R^{m - l(j)}$
such that for all $j$, $f(W_j) \subset V_i$ for some $i$.  
As in the previous proof, for each $i$ there is a maximal codimension face $F_i
\in \cM_{k(i)}(Y)$ such that $F_i \cap V_i \neq \emptyset$ and similarly for
each $j$ a face $G_j \in \cM_{l(j)}(X)$ such that $W_j \cap G_j \neq
\emptyset$.  Shrinking the coordinate charts if necessary, we can assume
without loss of generality that $f_\#(G_j) = F_i$.

Locally, $f$ has the form
\[
	f_j= f_{|W_j} : \R^{l(j)}_+ \times \R^{m-l(j)} \to \R^{k(i)}_+\times \R^{n-k(i)}
\]
which lifts by Proposition~\ref{P:local_lifting_wfactors} to a b-map
\[
	f'_j : W_j \cong \R^{l(j)}_+\times \R^{m-l(j)} \to V'_i \cong [\R^{k(i)}_+; \cR(F_i)]\times \R^{n-k(i)}.
\]
It follows from the functoriality of these local lifted maps that they are
compatible with the transition maps $g'_{ij} : O'_{ij} \to O'_{ji}$ used to
construct $[Y; \cR]$, and so the $f'_j$ patch together to form a b-map
\[
	f' : X \to [Y; \cR]
\]
as claimed.
\end{proof}

\begin{cor} 
If $\cR_i,\ i = 1,2$ are two refinements of $\cP_X,$ then $[X; \cR_1] \cong [X;
\cR_2]$ over $X$ if and only if $\cR_1 \cong \cR_2.$ as refinements of $\cP_X.$
\label{C:uniqueness_of_blowup}
\end{cor}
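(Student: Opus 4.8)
The plan is to derive this from the lifting theorem (Theorem~\ref{T:lifting_b_maps}), the identification $\cP_{[X;\cR]}\cong\cR$ of Theorem~\ref{T:genblow}, and functoriality of the assignment $f\mapsto f_\natural$. This last fact is immediate from Definition~\ref{D:basic_monoidal_complex}: under the identification of $\cP_X$ with the b-normal data one has $f_\natural=\bd f_\ast$, and b-differentials compose, so $(f\circ g)_\natural=f_\natural\circ g_\natural$ and $\id_\natural=\id$. Throughout I take ``$\cR_1\cong\cR_2$ as refinements of $\cP_X$'' to mean an isomorphism of monoidal complexes commuting with the structure morphisms $\iota_i:\cR_i\to\cP_X$.

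For the ``only if'' direction I would start from a diffeomorphism $g:[X;\cR_1]\to[X;\cR_2]$ with $\beta_2\circ g=\beta_1$. Since $g$ and $g^{-1}$ are interior b-maps, applying $(\cdot)_\natural$ and using $\cP_{[X;\cR_i]}\cong\cR_i$ produces a morphism $g_\natural:\cR_1\to\cR_2$ whose inverse, by functoriality, is $(g^{-1})_\natural$; hence $g_\natural$ is an isomorphism of monoidal complexes. Applying $(\cdot)_\natural$ to $\beta_2\circ g=\beta_1$ gives $(\beta_2)_\natural\circ g_\natural=(\beta_1)_\natural$, i.e. $g_\natural$ intertwines $\iota_1$ and $\iota_2$, so it is an isomorphism of refinements of $\cP_X$.

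For the ``if'' direction, given an isomorphism $\psi:\cR_1\to\cR_2$ of refinements, so $\iota_2\circ\psi=\iota_1$, I note that under $\cP_{[X;\cR_1]}\cong\cR_1$ the morphism $(\beta_1)_\natural$ is $\iota_1=\iota_2\circ\psi$, so the interior b-map $\beta_1:[X;\cR_1]\to X$ is compatible with $\beta_2:[X;\cR_2]\to X$ in the sense of \eqref{E:compatible_morphism}. Theorem~\ref{T:lifting_b_maps} then yields a unique b-map $\beta_1':[X;\cR_1]\to[X;\cR_2]$ with $\beta_2\circ\beta_1'=\beta_1$ and $(\beta_1')_\natural=\psi$; running the same argument with $\psi^{-1}$ gives $\beta_2':[X;\cR_2]\to[X;\cR_1]$ with $\beta_1\circ\beta_2'=\beta_2$ and $(\beta_2')_\natural=\psi^{-1}$. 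To see these are mutually inverse, consider $\beta_2'\circ\beta_1':[X;\cR_1]\to[X;\cR_1]$: it satisfies $\beta_1\circ(\beta_2'\circ\beta_1')=\beta_1$ and, by functoriality, $(\beta_2'\circ\beta_1')_\natural=\psi^{-1}\circ\psi=\id$, so it is a lift of $\beta_1$ along $\beta_1$ compatible with the identity morphism; the identity map of $[X;\cR_1]$ is another such lift, so the uniqueness clause of Theorem~\ref{T:lifting_b_maps} forces $\beta_2'\circ\beta_1'=\id$, and symmetrically $\beta_1'\circ\beta_2'=\id$. Hence $\beta_1'$ is a diffeomorphism over $X$.

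The main obstacle, such as it is, is functoriality of $X\mapsto\cP_X$: it is used both to conclude that a diffeomorphism induces an \emph{isomorphism} of complexes and to pin down the composite lifts via the uniqueness in Theorem~\ref{T:lifting_b_maps}. Given the identification $f_\natural=\bd f_\ast$ and the chain rule for b-differentials this is routine, but it is the hinge of the argument, and it is also where the precise meaning of ``isomorphism of refinements of $\cP_X$'' matters, since commuting with the structure morphisms is exactly what makes the compatibility hypothesis of Theorem~\ref{T:lifting_b_maps} available.
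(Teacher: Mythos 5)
Your proposal is correct and follows essentially the same route as the paper: the ``only if'' direction is exactly the paper's argument (a diffeomorphism over $X$ induces, via $\cP_{[X;\cR_i]}\cong\cR_i$ and functoriality of $(\cdot)_\natural$, an isomorphism of complexes intertwining the refinement morphisms), while your ``if'' direction simply fills in the details of the step the paper dismisses as clear, using Theorem~\ref{T:lifting_b_maps} and the uniqueness of lifts in the way the authors evidently intend.
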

\begin{proof}
The `if' direction is clear. For the converse, suppose $\phi : [X; \cR_1] \to
[X; \cR_2]$ is a diffeomorphism (which is necessarily a b-map) which
intertwines the blow-down maps to $X$. Since $P_{[X; \cR_i]} = \cR_i$ it
follows that $\phi_\# : \cR_1 \to \cR_2$ is an isomorphism of complexes which
intertwines the refinement morphisms $\cR_i \to \cP_X$.
\end{proof}

Even if $f : X \to Y$ is not necessarily compatible with the refinement giving the
blow-up $[Y; \cR]$, there are generalized blow-ups of $X$ through which $f$
does lift.  Indeed, from Theorem~\ref{T:lifting_b_maps} above a
generalized blow-up $[X; \cS]$ admits a map to $[Y; \cR]$ over $f : X \to
Y$ precisely when $f_\natural \circ \beta_\natural : \cS \to \cP_Y$ factors
through $\cR \to \cP_Y$.  Such a blow-up always exits.

\begin{thm}[Blowing up the domain]
Let $f : X \to Y$ be an interior b-map and $[Y; \cR] \to Y$ a generalized
blow-up.  Then there exists a generalized blow-up $[X; \cS] \to X$ and a map
$f' : [X; \cS] \to [Y; \cR]$ such that
\[
\begin{tikzpicture}[->,>=to,auto]
\matrix (m) [matrix of math nodes, column sep=1cm, row sep=1cm,  text depth=0.25ex]
{ {[X; \cS]} & {[Y; \cR]} \\ X & Y \\};
\path (m-1-1) edge node {$f'$} (m-1-2); %top
\path (m-1-2) edge node {$\beta_\cR$} (m-2-2); %right
\path (m-2-1) edge node {$f$} (m-2-2); %bot
\path (m-1-1) edge node {$\beta_\cS$} (m-2-1); %left
\end{tikzpicture}
\]
commutes.  
\label{T:blowing_up_domain}
\end{thm}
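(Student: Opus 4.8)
The plan is to reduce the statement to the lifting result, Theorem~\ref{T:lifting_b_maps}, by producing a smooth refinement $\cS$ of $\cP_X$ through which the relevant morphism of complexes factors. Recall that Theorem~\ref{T:lifting_b_maps} asserts that an interior b-map from a generalized blow-up $[X;\cS]$ to $Y$ lifts to $[Y;\cR]$ exactly when its induced morphism $\cP_{[X;\cS]}\to\cP_Y$ factors through $(\beta_\cR)_\natural:\cR\to\cP_Y$. Under the identification $\cP_{[X;\cS]}\cong\cS$ of Theorem~\ref{T:genblow}, the b-map $f\circ\beta_\cS:[X;\cS]\to Y$ induces on complexes the composite $f_\natural\circ(\beta_\cS)_\natural:\cS\to\cP_Y$, where $(\beta_\cS)_\natural:\cS\to\cP_X$ is just the refinement morphism. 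So it suffices to construct a smooth refinement $\cS\to\cP_X$ together with a morphism of complexes $\phi:\cS\to\cR$ satisfying $(\beta_\cR)_\natural\circ\phi=f_\natural\circ(\beta_\cS)_\natural$.

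First I would form the fiber product of complexes $\cP_X\times_{\cP_Y}\cR$, which exists and is complete and reduced by Proposition~\ref{P:fiber_products_complexes}. Since $\cR\to\cP_Y$ is a refinement and $f_\natural:\cP_X\to\cP_Y$ is a morphism, Proposition~\ref{P:refinements_pull_back} shows that the first projection $\pi_1:\cP_X\times_{\cP_Y}\cR\to\cP_X$ is a refinement; it need not be smooth, so I would pass to $\cS:=\ns\bigl(\cP_X\times_{\cP_Y}\cR\bigr)$, the natural smooth refinement furnished by Theorem~\ref{T:smooth_refinement}. Writing $r:\cS\to\cP_X\times_{\cP_Y}\cR$ for this refinement morphism, the composite $\pi_1\circ r:\cS\to\cP_X$ is a smooth refinement (refinements compose), so $[X;\cS]\to X$ is a well-defined generalized blow-up by Theorem~\ref{T:genblow}, with $(\beta_\cS)_\natural$ identified with $\pi_1\circ r$.

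Next I would exhibit $\phi$. Let $\pi_2:\cP_X\times_{\cP_Y}\cR\to\cR$ be the second projection and set $\phi:=\pi_2\circ r:\cS\to\cR$, a composition of complex morphisms. Using the defining commutativity of the fiber product, $f_\natural\circ\pi_1=(\beta_\cR)_\natural\circ\pi_2$, one computes
\[
(\beta_\cR)_\natural\circ\phi=(\beta_\cR)_\natural\circ\pi_2\circ r=f_\natural\circ\pi_1\circ r=f_\natural\circ(\beta_\cS)_\natural,
\]
which is exactly the required factorization. Hence the interior b-map $f\circ\beta_\cS:[X;\cS]\to Y$ is compatible, in the sense of \eqref{E:compatible_morphism}, with the generalized blow-up $[Y;\cR]\to Y$, and Theorem~\ref{T:lifting_b_maps} produces a (unique) b-map $f':[X;\cS]\to[Y;\cR]$ with $\beta_\cR\circ f'=f\circ\beta_\cS$ and $f'_\natural=\phi$. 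This is precisely the asserted commutative square.

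Essentially all of the content of this argument has been pushed into the machinery of Sections~\ref{S:complexes} and \ref{S:fib_prod_complexes}, so the only point demanding care is the verification that $\phi$ really lies over $\cP_Y$ correctly, i.e.\ that the triangle $(\beta_\cR)_\natural\circ\phi=(f\circ\beta_\cS)_\natural$ commutes once each complex is identified with its concrete model; this is where the universal property of $\cP_X\times_{\cP_Y}\cR$ and the compatibility of $r$ and $\pi_2$ with the face maps enter. Note that no smoothness of $\phi$ is needed — $\phi$ is generally not a refinement of $\cR$ — since Theorem~\ref{T:lifting_b_maps} requires only that $\phi$ be a morphism of monoidal complexes.
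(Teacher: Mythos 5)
Your proposal is correct and follows essentially the same route as the paper: the paper likewise forms $\cP_X\times_{\cP_Y}\cR$, notes via Proposition~\ref{P:refinements_pull_back} that it refines $\cP_X$, and passes to a smooth refinement such as $\ns(\cP_X\times_{\cP_Y}\cR)$ before applying the lifting theorem. Your explicit verification that the second projection supplies the required factorization through $\cR$ is a detail the paper leaves implicit, but the argument is the same.
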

\begin{proof}
First we consider $\cP_X \times_{\cP_Y} \cR \to \cP_X$, which is a refinement
by Proposition \ref{P:refinements_pull_back}.  If it is a smooth refinement,
then we take $\cS = \cP_X \times_{\cP_Y} \cR$ and we are done.  Note that in
this case $[X; \cS]$ is the unique ``minimal'' blow-up, meaning it is universal
among blow-ups of $X$ which lift $f$: any other blow-up which lifts $f$ must
factor through $[X; \cS]$ by the universality of fiber products of monoidal
complexes and Theorem~\ref{T:lifting_b_maps}.  In general however, $\cP_X
\times_{\cP_Y} \cR$ is not smooth.  We let $\cS$ be any smooth refinement of
$\cP_X \times_{\cP_Y} \cR$, for instance the natural smooth refinement
$\ns(\cP_X \times_{\cP_Y} \cR)$ of Theorem \ref{T:smooth_refinement}.  There
are many other choices, none of which is universal.
\end{proof}

Note that this includes Theorem~\ref{T:lifting_b_maps} as a special case,
as follows from the fact that $f_\natural : \cP_X \to \cP_Y$ is compatible
with $\cR \to \cP_Y$ if and only if $\cP_X \times_{\cP_Y} \cR \to \cP_X$ is
an isomorphism, which we leave as an exercise for the reader.

\section{Ordinary blow-up and examples}\label{S:blowup}

Recall the ordinary blow-up of $[X; F]$ of a boundary face $F \in \cM_k(X)$.  
As a set this is 
\[
	[X; F] = X \setminus F\ \cup\ SN_+ F
\]
where $SN_+ F \stackrel{\pi}{\to} F$ denotes the inward-pointing spherical normal bundle, 
and the blow-down map $\beta : [X; F] \to X$ is given by the identity on $X \setminus F$
and by $\pi$ on $SN_+ F$ (see Figure~\ref{F:blowup}).  The smooth structure on $[X; F]$ is generated by 
$\beta^\ast C^\infty(X)$ as well as the quotients $x_i/x_j$ (where they are finite)
of boundary defining functions for the boundary hypersurfaces $H_i$ such that
$F$ is a component of $\bigcap_i H_i.$

\begin{prop} The ordinary blow-up of $X$ at the boundary face $F$ is the
generalized blow-up corresponding to the star subdivision along the sum over
generators of $\sigma_F.$  That is, 
\[
	[X; F] \cong [X; \cS(\cP_X,v_F)],
\]
where $v_F = \sum_i x_i \pa_{x_i}$ and $\sigma_F =
\Zp \pair{x_i\pa_{x_i}}_{1 \leq i \leq \codim(F)}.$ 
\label{P:ordinary_blow_up}
\end{prop}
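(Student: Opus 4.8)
The plan is to reduce the statement to the model case of radial blow-up of the corner point in $\R^k_+$, with $k = \codim(F)$, and then to match coordinate charts on the nose.

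\medskip
\emph{Reduction to the local model.} Every boundary face $F \in \cM_k(X)$ has a neighborhood in $X$ diffeomorphic to $\R^k_+ \times F$, carrying $F$ to $\set{0} \times F$. Under this identification ordinary blow-up localizes, so that $[X;F]$ restricted over this neighborhood is $[\R^k_+;\set{0}]\times F$, while by the localization remark following Theorem~\ref{T:genblow} the preimage of this neighborhood in $[X; \cS(\cP_X,v_F)]$ is $[\R^k_+; \cR(F)]\times F$, where $\cR(F)$ is the refinement of $\sigma_F$ induced by $\cS(\cP_X,v_F)$. Since $\cS(\cP_X,v_F)(\sigma_F) = S(\sigma_F, v_F)$ by Proposition~\ref{P:star_subdivn_complex}, and the face maps identify $\sigma_F \cong \sigma_{\R^k_+}$ carrying $v_F$ to $v := \sum_{i=1}^k x_i\pa_{x_i}$, the refinement $\cR(F)$ is exactly the star subdivision $\cS(\sigma_{\R^k_+},v)$. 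On the complement, $\cS(\cP_X,v_F)$ modifies $\sigma_G$ only when $\sigma_F \leq \sigma_G$, i.e.\ $G \subseteq F$; hence $\beta$ restricts to a diffeomorphism over $X \setminus F$, matching $[X;F] = (X\setminus F) \cup SN_+F$. So it suffices to produce a diffeomorphism $[\R^k_+; \cS(\sigma_{\R^k_+},v)] \cong [\R^k_+;\set{0}]$ over $\R^k_+$; the global diffeomorphism is then obtained by gluing this (times $\id_F$) to the identity on $X\setminus F$, the two agreeing on the overlap because both restrict to the canonical identification of interiors.

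\medskip
\emph{Matching charts.} Writing $\sigma_{\R^k_+} = \Zp\pair{e_1,\ldots,e_k}$ with $e_i = x_i\pa_{x_i}$ and $v = e_1 + \cdots + e_k$, one checks that $v$ lies in a face $\Zp\pair{e_i}_{i\in I}$ only for $I = \set{1,\ldots,k}$; so by Proposition~\ref{P:star_subdivision} (whose smoothness hypothesis holds since all proper faces of $\sigma_{\R^k_+}$ are smooth) the maximal monoids of $\cS(\sigma_{\R^k_+},v)$ are the $k$ smooth monoids $\sigma_j = \Zp\pair{e_i}_{i\neq j} + \Zp v$, $j = 1,\ldots,k$. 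On the chart $U_{\sigma_j} = \R^k_+$ with coordinates $t = (t_1,\ldots,t_k)$, the generator $v$ assigned to $t_j$ and $e_m$ to $t_m$ for $m\neq j$, the blow-down map $\beta_{\sigma_j}: t \mapsto t^\nu$ of Section~\ref{S:local} works out to $x_j = t_j$ and $x_m = t_j t_m$ for $m\neq j$. This is precisely the $j$-th standard chart of the radial blow-up $[\R^k_+;\set{0}]$, with $t_j$ the radial coordinate and the $t_m = x_m/x_j$ ($m\neq j$) the quotients of boundary defining functions adjoined in the definition of ordinary blow-up. Likewise the transition map $\chi_{jj'}$ of Proposition~\ref{P:localblow}, being the continuous extension of $\beta_{\sigma_{j'}}^{-1}\circ\beta_{\sigma_j}$ from $(0,\infty)^k$, is defined on $U_{\sigma_j,\tau}$ with $\tau = \sigma_j\cap\sigma_{j'} = \Zp\pair{e_i}_{i\neq j,j'} + \Zp v$; the one generator of $\sigma_j$ absent from $\tau$ being $e_{j'}$, this domain is $\set{t_{j'}\neq 0}$, i.e.\ $\set{x_{j'}/x_j > 0}$, the overlap of the $j$-th and $j'$-th standard charts of $[\R^k_+;\set{0}]$; substituting the formulas for $\beta_{\sigma_j}$ gives $t'_{j'} = t_j t_{j'}$, $t'_j = 1/t_{j'}$, $t'_m = t_m/t_{j'}$ ($m\neq j,j'$), which is exactly the transition of radial blow-up. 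Hence the two spaces carry identical atlases with identical blow-down maps, so they are canonically diffeomorphic over $\R^k_+$; their smooth structures coincide since each is generated by $\beta^*C^\infty(\R^k_+)$ together with the functions $x_m/x_j$.

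\medskip
The main obstacle here is bookkeeping rather than any genuine difficulty: correctly reading off the maximal monoids of the star subdivision and the assignment of coordinates to their generators, and then checking that the explicit chart maps $\beta_{\sigma_j}$ and transition maps $\chi_{jj'}$ on the generalized-blow-up side agree term by term with the classical radial-blow-up charts. Once the coordinates are aligned this is a short computation, and the localization step is routine given the remark after Theorem~\ref{T:genblow}.
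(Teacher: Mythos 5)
Your proof is correct and follows essentially the same route as the paper's: both identify the $k$ maximal monoids $\Zp\pair{e_i}_{i\neq j}+\Zp v_F$ of the star subdivision and check that the associated chart maps $t\mapsto t^{\mu_j}$ (identity matrix with the $j$th row replaced by ones) reproduce exactly the standard radial blow-up coordinates $x_j,\ x_m/x_j$. The only differences are presentational — you localize via the collar neighborhood $\R^k_+\times F$ and verify the transition maps explicitly, while the paper works in an arbitrary coordinate chart and leaves the transition check implicit — and your computations all check out.
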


\begin{proof}
With the smooth structure above, the ``front face'' $SN_+ F \in \cM([X; F])$
is a boundary hypersurface which is fibered over $F$, whose fiber is a 
$(k-1)$-simplex where $k = \codim(F)$.  In fact, for any coordinate chart
$(x, x',y) : U \cong \R^{k}_+\times \R^{l-k}_+ \times \R^{n-l}$ in $X$ such that 
$F \cap U \cong \set{x_1 = \cdots = x_k = 0}$, there are $k$ coordinate charts $\wt U_i$, $i = 1,\ldots,k$
covering $\beta^{-1}(U)$ in $[X; F]$:
\[
	(t_i, x', y) : \wt U_i \cong \R^k_+ \times \R^{l-k}_+\times \R^{n-l}
\]
where 
\[
	t_{i,j} = \begin{cases} x_i & \text{if $i = j$}, \\ x_j/x_i & \text{otherwise.}\end{cases}
\]
Thus $\beta : \wt U_i \to U$ has the form
\[
	\beta : (t_i, x', y) \mapsto (t_i^{\mu_i}, x', y) = (x,x',y)
\]
where $\mu_i$ is the identity matrix with its $i$th row replaced by ones:
\[
	\mu_i = \begin{pmatrix} 1 & 0 & \hdotsfor{3} & 0 \\
			      0 & 1 & \hdotsfor{3} & 0 \\
			     \vdots && \ddots &&& \vdots \\
			      1 & \cdots & 1 & 1 & \cdots & 1 \\
			     \vdots & & & & \ddots & \vdots \\
			      0 & \hdotsfor{4} & 1\end{pmatrix}.
\]
Observe that $\mu_i^\transpose$ represents the monoid inclusion
\[
	\Zp \pair{x_1\pa_{x_1}, \ldots, x_{i-1}\pa_{x_{i-1}}, v_F, \ldots, x_k\pa_{x_k}}  \hookrightarrow
	\Zp \pair{x_1\pa_{x_1},\ldots,x_k\pa_{x_k}},
\]
where $v_F = x_1\pa_{x_1} + \cdots +x_k\pa_{x_k}$.  The collection of these
$\mu_i^\transpose$, $i=1,\ldots,k$ therefore give the monoid homomorphisms
from the maximal dimension $\tau \in \cS(\cP,v_F)\pns{\sigma_F}$ to $\sigma_F$.

All coordinate charts meeting $F$ are blown-up in this way, according to the
star subdivision $\cS(\cP_X,v_F)$, and this is precisely the construction of
the generalized blow-up $[X; \cS(\cP_X,v_F)] \to X.$
\end{proof}

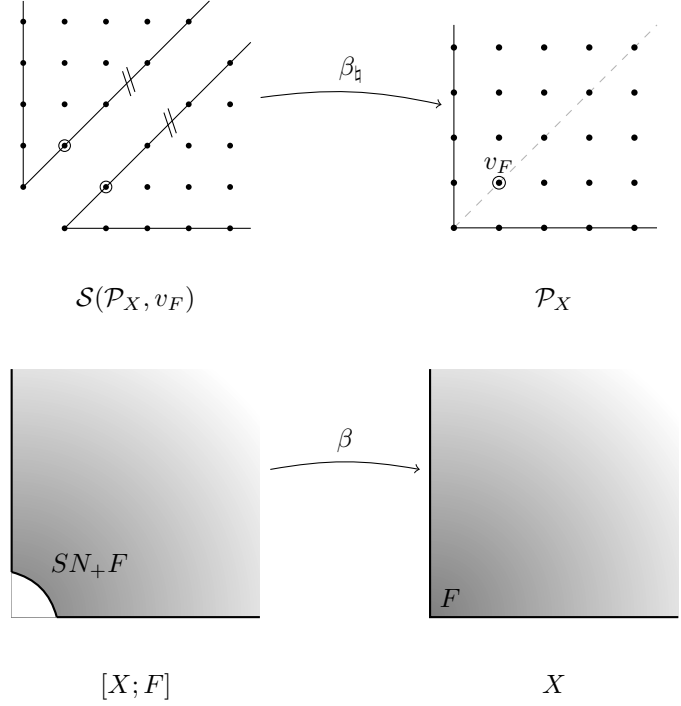
\begin{figure}[tb]
%\begin{centered}
\begin{tikzpicture}
\matrix (m) [matrix of nodes, anchor=base, column sep=2cm, row sep=0.5cm]
{ 
 \begin{tikzpicture}[scale=0.55]
 \draw[-] (0,1) -- (0,5.5);
 \draw[-] (0,1) -- (4.5,5.5);
 \foreach \y in {0,1,...,4} {
	 \foreach \x in {0,...,\y} {
		 \fill (0,1)+(\x,\y) circle (2pt);
		 }
	 }

 \draw[-] (1,0) -- (5.5,0);
 \draw[-] (1,0) -- (5.5,4.5);
 \foreach \x in {0,1,...,4} {
	 \foreach \y in {0,...,\x} {
		 \fill (1,0)+(\x,\y) circle (2pt);
		 }
	 }
  \draw (2.5,3.9) -- (2.7,3.3);
  \draw (2.4,3.8) -- (2.6,3.2);

  \draw (3.7,2.3) -- (3.5,2.9);
  \draw (3.6,2.2) -- (3.4,2.8);

  \draw (2,1) circle (4pt);
  \draw (1,2) circle (4pt);
  %\draw[<->] (2.2,3.8) to node[auto] {$\cong$} (3.8,2.2);
  %\path (1,4) node[above] {$\tau_1+ \Z_+ v_F$};
  %\path (4,1) node[above right] {$\tau_2 + \Z_+ v_F$};

 \end{tikzpicture}
&
 \begin{tikzpicture}[scale=0.6]
 \draw[-] (2,2) -- (2,6.5);
 \draw[-] (2,2) -- (6.5,2);
 \draw[-,dashed,opacity=0.3] (2,2) -- (6.5,6.5);
 \foreach \x in {0,1,...,4} {
	 \foreach \y in {0,1,...,4} {
		 \fill (2,2)+(\x,\y) circle (2pt);
		 }
	 }
  
  \draw (3,3) circle (4pt);
  %\path (3,3) node[above right] {$v_F$};
  %\path (5,5) node[above right] {$\sigma_F$};
  \path (3,3) node[above] {$v_F$};
  \end{tikzpicture}
\\
$\cS(\cP_X, v_F)$ & $\cP_X$
\\
 \begin{tikzpicture}[thick,scale=0.6]
  \begin{scope} \clip (0,0) rectangle (5.5,5.5);
      \shade[shading=radial] (-7,-7) rectangle (7,7);
  \end{scope}
  \fill[color=white] (1,0) to[bend right] (0,1) -- (0,0) --cycle;
  \draw (0,1) -- (0,5.5);
  \draw (1,0) -- (5.5,0);
  \draw (1,0) to[bend right] node[midway,auto,swap] {$SN_+F$} (0,1);
  \end{tikzpicture}
& 
 \begin{tikzpicture}[thick,scale=0.6]
  \begin{scope} \clip (0,0) rectangle (5.5,5.5);
      \shade[shading=radial] (-7,-7) rectangle (7,7);
  \end{scope}
  \draw (0,5.5) -- (0,0) -- (5.5,0);
  \path (0,0) node[above right] {$F$};
  \end{tikzpicture}
\\
$[X; F]$ & $X$
\\};
\path (m-1-1) edge[->,>=to,bend left=10] node[midway,auto] {$\beta_\natural$} (m-1-2);
\path (m-3-1) edge[->,>=to,bend left=10] node[midway,auto] {$\beta$} (m-3-2);
\end{tikzpicture}
%\end{centered}
\caption{The ordinary blow-up of $F$ in $X$, and the associated star subdivision.  
Only the monoids of maximal dimension are pictured.}
\label{F:blowup}
\end{figure}

The {\em inhomogeneous blow-up} of $F \in \cM(X)$ is similar to ordinary
blow-up but more general.  It consists again of the set $X \setminus F \cup
SN_+ F$, but the smooth structure is generated over $\beta^\ast C^\infty(X)$ by
quotients of the form $x_i^{1/n(i)}/x_j^{1/n(j)}$, where the $n(i) \in \Zp $
are consistently associated with the boundary hypersurfaces through $F$.

The proof of Proposition~\ref{P:ordinary_blow_up} can be modified in a
straightforward manner to give the following result.

\begin{prop}
Let $n : \set{H \in \cM_1(X)\;;\; F \subseteq H} \to \Zp $ be an assignment of
integer roots to the boundary hypersurfaces through $F$.  Then the
inhomogeneous blow-up of $F$ with respect to $n$ is realized by the generalized
blow-up by the weighted star subdivision $[X; \cS(\cP_X, v_{F,n})]$, where 
\[
	v_{F,n} = \sum_i n(i) x_i \pa_{x_i}
\]
is the corresponding weighted sum of the generators of $\sigma_F =
\Zp \pair{x_1\pa_{x_1},\ldots,x_n\pa_{x_n}}$.
\label{P:inhomogeneous_blow_up}
\end{prop}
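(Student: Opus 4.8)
The plan is to imitate the proof of Proposition~\ref{P:ordinary_blow_up} essentially line for line, with the matrices $\mu_i$ there (the identity with its $i$th row made all ones) replaced by the weighted matrices $\mu_{i,n}$: the identity matrix with its $i$th row replaced by $\pns{n(1),\ldots,n(k)}$, where $k=\codim(F)$ and, in a coordinate chart $(x,x',y):U\cong\R^k_+\times\R^{l-k}_+\times\R^{n-l}$ with $F\cap U=\set{x_1=\cdots=x_k=0}$, the symbol $n(j)$ abbreviates the root $n(H_j)$ assigned to the hypersurface $H_j=\set{x_j=0}$.

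First I would compute the weighted star subdivision. Localized to $\sigma_F=\Zp\pair{x_1\pa_{x_1},\ldots,x_k\pa_{x_k}}$, and supposing first that each $n(j)\geq 1$ so that $v_{F,n}$ lies in no proper face, the maximal monoids of $\cS(\cP_X,v_{F,n})\pns{\sigma_F}$ are
\[
	\tau_i=\Zp\pair{x_1\pa_{x_1},\ldots,x_{i-1}\pa_{x_{i-1}},\,v_{F,n},\,x_{i+1}\pa_{x_{i+1}},\ldots,x_k\pa_{x_k}},\qquad i=1,\ldots,k,
\]
each freely generated by $k$ independent vectors and hence smooth; so by Proposition~\ref{P:star_subdivision} the refinement $\cS(\cP_X,v_{F,n})$ is smooth, and Theorem~\ref{T:genblow} yields a manifold $[X;\cS(\cP_X,v_{F,n})]$ with a blow-down map to $X$. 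The one new feature relative to the ordinary case is that $N_{\tau_i}$ is now the index-$n(i)$ sublattice of $N_{\sigma_F}$ spanned by $\set{x_j\pa_{x_j}}_{j\neq i}\cup\set{v_{F,n}}$ (indeed $x_i\pa_{x_i}=\tfrac1{n(i)}(v_{F,n}-\sum_{j\neq i}n(j)x_j\pa_{x_j})$); this is just the combinatorial shadow of the radicals that will appear in the blow-down map, and does not disturb smoothness of the monoid.

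Next I would transcribe the local coordinate picture. Over $\beta^{-1}(U)$ the blow-up is covered by charts $(t_i,x',y):\widetilde U_i\cong\R^k_+\times\R^{l-k}_+\times\R^{n-l}$, $i=1,\ldots,k$, with $\mu_{i,n}^\transpose$ representing the monoid inclusion $\tau_i\hookrightarrow\sigma_F$, so that $\beta$ is locally $\beta:(t_i,x',y)\mapsto(t_i^{\mu_{i,n}},x',y)$, i.e.
\[
	x_i=t_{i,i}^{\,n(i)},\qquad x_j=t_{i,j}\,t_{i,i}^{\,n(j)}\quad(j\neq i).
\]
Inverting gives $t_{i,i}=x_i^{1/n(i)}$ and $t_{i,j}=x_j\,x_i^{-n(j)/n(i)}=\pns{x_j^{1/n(j)}/x_i^{1/n(i)}}^{n(j)}$ for $j\neq i$, so $\CI(\widetilde U_i)$ is generated over $\beta^\ast\CI(U)$ by integral powers of the quotients $x_j^{1/n(j)}/x_i^{1/n(i)}$ (taken where finite), and $\set{t_{i,i}=0}=\beta^{-1}(F)\cap\widetilde U_i$ is the fibre over $F\cap U$ of the inward spherical normal bundle, realized as the simplex in the $t_{i,j}$. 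I would then note, as in Corollary~\ref{C:diffeos_lift_wfactors} and the construction in Theorem~\ref{T:genblow}, that the transition maps among these charts (and among the charts over different $U$) are exactly the lifts of the identity, which coincide with the gluings used to build the inhomogeneous blow-up. Matching this with the description of the inhomogeneous blow-up recalled just above — the same underlying set $X\setminus F\cup SN_+F$, the same blow-down, and the smooth structure generated over $\beta^\ast\CI(X)$ by the weighted quotients — produces a diffeomorphism of the inhomogeneous blow-up of $F$ with $[X;\cS(\cP_X,v_{F,n})]$ over $X$, carrying $SN_+F$ to the boundary hypersurface corresponding to $\Zp v_{F,n}$. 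If some $n(j)=0$, then $v_{F,n}$ lies in the face of $\sigma_F$ spanned by the remaining generators, and the same argument identifies $[X;\cS(\cP_X,v_{F,n})]$ with the inhomogeneous blow-up of the corresponding larger boundary face.

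I expect the only real obstacle — the reason the modification is ``straightforward'' but not quite automatic — to be the verification that the ring generated over $\beta^\ast\CI(X)$ by the fractional quotients $x_j^{1/n(j)}/x_i^{1/n(i)}$ is precisely $\CI$ of the star-subdivision chart. This comes down to the observation that $\pns{x_j^{1/n(j)}/x_i^{1/n(i)}}^{n(j)}=t_{i,j}$ is a bona fide smooth coordinate on $\widetilde U_i$, together with the converse that $\CI(\widetilde U_i)$ is generated over $\beta^\ast\CI(U)$ by the $t_{i,j}$; this is exactly the point at which one uses the good behaviour of radicals over $\R_+$, as opposed to over $\R$ or $\C$, emphasized in the introduction and exploited in Section~\ref{S:local}.
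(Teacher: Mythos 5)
Your proposal is correct and follows exactly the route the paper intends: the paper gives no separate argument for this proposition beyond the remark that the proof of Proposition~\ref{P:ordinary_blow_up} ``can be modified in a straightforward manner,'' and your write-up carries out precisely that modification, with the correct weighted matrices $\mu_{i,n}$, the correct chart formulas $x_i=t_{i,i}^{n(i)}$, $x_j=t_{i,j}t_{i,i}^{n(j)}$, and the right identification of the front face. You also correctly isolate the one genuinely delicate point — that the smooth generators of the chart are the integral powers $t_{i,j}=\pns{x_j^{1/n(j)}/x_i^{1/n(i)}}^{n(j)}$ rather than the radical quotients themselves — which is exactly where the $\R_+$-versus-$\C$ flexibility enters.
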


Iterating either of these constructions, we find that iterated boundary blow-up
is also a special case of generalized blow-up.  Recall that the {\em lift} (or
{\em proper transform}) of a submanifold $Y \subset X$ under a blow-up $\beta :
[X; F] \to X$ is the set
\[
	\beta^\#(Y) = \begin{cases} \beta^{-1}(Y) & \text{if}\ Y \subseteq F, \\ 
	\clos\pns{\beta^{-1}\pns{Y \setminus F}}, & \text{otherwise.} \end{cases}
\]
The {\em iterated boundary blow-up} $[X; F_N,\ldots,F_1]$, $F_i \in \cM(X)$ is
defined by successive lifting:
\[
	[X; F_1, \ldots,F_N] = [\cdots[[X; F_1], \beta_1^\#(F_2)], \cdots, \beta_1^\# \circ \cdots \circ \beta_{N-1}^\#(F_N)]
\]
where $\beta_i : [X; F_1, \ldots, F_i] \to [X; F_1, \ldots,F_{i-1}]$, and can be
extended similarly to the case of inhomogeneous blow-up.

Observe that the lift of a boundary face $G \in \cM(X)$ to the blow-up $\beta :
[X; F]  \to X$ is again a boundary face of $[X; F]$.  In light of the
identification of boundary faces of a generalized blow-up with monoids in the
refinement, it follows directly that iterated boundary blow-up is realized by 
iterated star subdivisions.

\begin{cor} 
Let $F_1,\ldots,F_N \in \cM(X)$, and let $\cR_1, \ldots,\cR_N$ be the sequence
of refinements of $\cP_X$ obtained by iteratively defining $\cR_j =
\cS(\cR_{j-1},v_{\wt F_j}),$ where $\wt F_j = \beta_1^\# \circ \cdots \circ \beta_{j-1}^\#(F_j)$.
\[
	[X; F_N, \ldots, F_1] = [X; \cR_N].
\]
\label{C:iterated_boundary_blowup}
\end{cor}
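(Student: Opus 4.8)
The plan is to argue by induction on $N$, the base case $N=1$ being precisely Proposition~\ref{P:ordinary_blow_up} (with $\cR_1 = \cS(\cP_X, v_{F_1})$ and $\wt F_1 = F_1$). For the inductive step I assume $[X; F_{N-1},\ldots,F_1] \cong [X; \cR_{N-1}]$ over $X$, and write $Z = [X;\cR_{N-1}]$. Since star subdivision preserves smoothness (Proposition~\ref{P:star_subdivn_complex}) and a composite of refinements is a refinement, $\cR_{N-1}$ is a smooth refinement of $\cP_X$, so $Z$ is a manifold with corners and, by Theorem~\ref{T:genblow}, $\cP_Z \cong \cR_{N-1}$. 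Each intermediate blow-down in the construction of $Z$ from $X$ is an ordinary one, and the lift of a boundary face under an ordinary blow-up is again a boundary face; iterating this observation shows that $\wt F_N$, the lift of $F_N \in \cM(X)$ to $Z$, is a boundary face of $Z$, and by definition of iterated blow-up $[X; F_N,\ldots,F_1] = [Z; \wt F_N]$.

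Next I would run the face--monoid dictionary. Under the identification $\cP_Z \cong \cR_{N-1}$ the face $\wt F_N$ corresponds to a monoid $\tau_N \in \cR_{N-1}$, and under the canonical embedding of $\cP_Z$ into the b-normal bundles of $Z$ the vector $v_{\wt F_N} = \sum_i x_i\pa_{x_i}$ --- the sum over boundary defining functions for the hypersurfaces of $Z$ through $\wt F_N$ --- is exactly the sum over the extremal generators of $\tau_N$. Hence $v_{\wt F_N} \in \tau_N \subset \cR_{N-1}$ and, regarded as a refinement of $\cP_Z = \cR_{N-1}$, the complex $\cR_N = \cS(\cR_{N-1}, v_{\wt F_N})$ coincides with $\cS(\cP_Z, v_{\wt F_N})$. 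Applying Proposition~\ref{P:ordinary_blow_up} to $Z$ and its face $\wt F_N$ then gives $[Z; \wt F_N] \cong [Z; \cS(\cP_Z, v_{\wt F_N})] = [Z; \cR_N]$, an isomorphism over $Z$ and therefore over $X$.

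It remains to identify $[Z;\cR_N] = [[X;\cR_{N-1}];\cR_N]$ with $[X;\cR_N]$, i.e.\ to establish the iterability of generalized blow-up: if $\cR'$ is a smooth refinement of $\cR$ and $\cR$ a smooth refinement of $\cP_X$, then $\cR'$ is a smooth refinement of $\cP_X$ and $[[X;\cR];\cR'] \cong [X;\cR']$ over $X$. This is where I expect essentially all of the (modest) work to lie. One route is to read it off the local construction in the proof of Theorem~\ref{T:genblow}: both spaces are assembled from the coordinate charts $U_\sigma$ indexed by the maximal $\sigma \in \cR'$, glued by the same transition diffeomorphisms $\chi_*$. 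A more formal route is to observe that the blow-down $\beta_{\cR'} : [X;\cR'] \to X$ has $(\beta_{\cR'})_\natural : \cR' \to \cP_X$ factoring through $\cR = \cP_{[X;\cR]}$, so by Theorem~\ref{T:lifting_b_maps} it lifts to a map $[X;\cR'] \to [X;\cR]$; this lifted map is a generalized blow-down realizing $\cR'$ as a refinement of $\cP_{[X;\cR]}$, whence Corollary~\ref{C:uniqueness_of_blowup} identifies its domain with $[[X;\cR];\cR']$. Combining the three steps gives $[X; F_N,\ldots,F_1] \cong [X;\cR_N]$ and closes the induction. The inhomogeneous version is proved word for word the same way, with Proposition~\ref{P:inhomogeneous_blow_up} and the weighted vectors $v_{F,n}$ in place of Proposition~\ref{P:ordinary_blow_up} and $v_F$.
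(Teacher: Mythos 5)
Your proof is correct and is essentially the argument the paper intends: the paper presents the corollary as an immediate consequence of the observation that lifted boundary faces are again boundary faces together with the face--monoid dictionary of Theorem~\ref{T:genblow}, and your induction via Proposition~\ref{P:ordinary_blow_up} plus the iterability of generalized blow-up supplies exactly the missing details. One small citation note: the final identification $[[X;\cR];\cR']\cong[X;\cR']$ should be attributed to Theorem~\ref{T:characterization} applied to the lifted blow-down map (Corollary~\ref{C:uniqueness_of_blowup} only compares two spaces already known to be generalized blow-ups of the same base), but the substance of that step --- that the lift is a generalized blow-down realizing $\cR'$ as a refinement of $\cP_{[X;\cR]}$ --- is exactly right.
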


One might wonder at this point if there are generalized blow-ups of $X$ which
are not of the above type.  In fact we will prove in
Section~\ref{S:characterization} that generalized blow-ups are determined
uniquely up to diffeomorphism by their refinements.  It follows that any smooth
refinement of $\cP_X$ which cannot be obtained by iterated (possibly weighted)
star subdivision gives a generalized blow-up which is not of this classical
type.  Examples of these are easy to construct provided $X$ has corners with
codimension at least $3$.

\section{Characterization of generalized blow-down maps}\label{S:characterization}
In this section, which is independent from the remainder of the paper, we
complete our treatment of generalized blow-up, showing that blow-down maps from
a generalized blow-up of the target are characterized analytically among b-maps
in general by two properties. 

\begin{defn}
A {\em generalized blow-down map} between manifolds with corners, $\beta
:X\longrightarrow Y,$ is a proper b-map which is a diffeomorphism of the
interiors and which has b-differential, $\bd \beta_\ast: \bT_xX\longrightarrow
\bT_{\beta (x)}Y,$ an isomorphism for each $x\in X.$
\label{D:gen_blowdown}
\end{defn}

\noindent
We prove below that such a map induces a smooth refinement as its associated morphism
of monoidal complexes.

The range of a continuous proper map is closed, so it follows directly from
the definition that a generalized blow-down map is surjective. It is convenient,
and no restriction, to assume in the subsequent discussion that $X,$ and
hence $Y,$ is connected.  We continue to follow the convention for local coordinates
$(x,y)$, where $x_i \in \R_+$ are local boundary defining
functions and $y_i \in \R$ are tangential variables.

First we find a local normal form for a generalized blow-down map.

\begin{lem} Let $\beta:X\longrightarrow Y$ be a generalized
blow-down map between compact manifolds, $p\in G\setminus\pa
G$ a point in the interior of a boundary face $G\in\cM_{k}(X),$ and
$(x',y') = (x'_1,\ldots,x'_{k'},y'_1,\ldots,y'_{n-k'})$ local coordinates
near $q=f(p)\in F=\beta_\#(G)\in\cM_{k'}(Y).$ Then there exist local coordinates
$(x,y,z)$ near $p$, with the $z_i > 0$, such that, after perhaps
renumbering the $x'$ coordinates near $q$, $\beta$ has the local form
\begin{equation}
	\beta(x,z,y)=(x^{\nu_1},\dots,x^{\nu_{k}},z_1x^{\nu_{k+1}},\dots,
	z_{k'-k}x^{\nu_{k'}},y_{1},\dots,y_{n-k'})=  (x',y'),
	\label{E:blow_down_normal_form}
\end{equation}
where the first $k$ of the $\nu_i\in \Z^k$ are linearly independent.
\label{L:normal_form_genblowdown}
\end{lem}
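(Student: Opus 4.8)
The plan is to work in local coordinates and use the hypotheses on $\beta$ — smoothness (so that $\beta$ is a b-map of the given form), the interior-diffeomorphism condition, and the b-isomorphism condition — to successively normalize the coordinates. First I would write $\beta$ in its general b-map form near $p$: choosing boundary defining functions $x_1,\dots,x_k$ for the hypersurfaces through $G$ and tangential coordinates $y_{k+1},\dots,y_n$, and boundary defining functions $x'_1,\dots,x'_{k'}$ for the hypersurfaces through $F$ near $q$, the b-map property \eqref{E:f_on_ideals} gives
\[
	x'_j \circ \beta = a_j(x,y) \prod_{i=1}^k x_i^{\nu_{ji}}, \quad j = 1,\dots,k',
\]
with $a_j > 0$ smooth and $\nu_{ji} \in \Zp$, together with $y'_l \circ \beta = b_l(x,y)$. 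Since $\beta_\#(G) = F$ has codimension $k'$, the exponent matrix $(\nu_{ji})$ has the property that each column and each row is nonzero, and in fact (because $\beta_\# $ sends $G$ to the face of largest codimension containing its image) no $x'_j$ pulls back to a product missing all of the $x_i$.

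The key structural input is the b-differential condition \eqref{E:intro_blow_down_bdiffl}. Computing $\bd\beta_\ast$ at $p$ with respect to the frames $\{x_i\partial_{x_i}\}\cup\{\partial_{y_l}\}$ and $\{x'_j\partial_{x'_j}\}\cup\{\partial_{y'_l}\}$, the matrix is block lower-triangular with the $k'\times k$ block given by $(\nu_{ji})$ acting on the b-normal directions and the Jacobian of $(y'_l)$ in the tangential directions. For this to be an isomorphism for \emph{all} points of $X$ — in particular at $p$, which lies in the interior of a $k$-dimensional face — the b-normal block must have rank exactly $k$ and the tangential block must be invertible. Invertibility of the tangential block lets me use $y'_l \circ \beta$, $l = 1,\dots, n-k'$, as part of a coordinate system and absorb the remaining $z$-type tangential directions; after this change of coordinates we may assume $b_l(x,y) = y_l$ outright, accounting for the last $n-k'$ components of \eqref{E:blow_down_normal_form}. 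The rank-$k$ condition on $(\nu_{ji})$ means we can renumber the $x'_j$ so that the first $k$ rows $\nu_1,\dots,\nu_k \in \Z^k$ are linearly independent; this is exactly the linear independence assertion in the lemma.

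It then remains to remove the positive prefactors $a_j(x,y)$. For the first $k$ coordinates, since $\nu_1,\dots,\nu_k$ are linearly independent over $\Q$, the matrix $\nu = (\nu_{ji})_{1\le i,j\le k}$ is invertible over $\Q$, and I can rescale the $x_i$ by positive smooth functions — replacing $x_i$ by $x_i \prod_j a_j^{(\nu^{-1})_{ij}}$, which is again a valid boundary defining function since the exponents are rational and the $a_j$ are positive and smooth (here is where working over $\R_+$ rather than $\R$ is essential, so that fractional powers of positive smooth functions are smooth) — to arrange $a_1 = \dots = a_k = 1$. This gives $x'_j\circ\beta = x^{\nu_j}$ for $j\le k$. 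For the remaining coordinates $j = k+1,\dots,k'$, the condition that $p$ lies in the interior of $G$ (so only $x_1,\dots,x_k$ vanish, and $x'_{k+1},\dots,x'_{k'}$ do \emph{not} vanish at $p$) forces $x'_j\circ\beta$ to be nonvanishing near $p$ along the face, so I set $z_{j-k} = x'_j\circ\beta = a_j(x,y)x^{\nu_j}$, which is a positive smooth function near $p$ and can serve as one of the $z$-coordinates; absorbing the already-fixed $x$ and $y$ coordinates, the $(x,z,y)$ form a coordinate system and $\beta$ takes the asserted form \eqref{E:blow_down_normal_form}.

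I expect the main obstacle to be the bookkeeping around which directions are genuinely b-normal versus tangential, and verifying carefully that after the rescaling the new $x_i$ and $z_i$ together with the $y_i$ really do form a legitimate coordinate system on a neighborhood of $p$ — i.e. that the map $(x,z,y)$ has invertible differential at $p$. This reduces to the linear-algebra fact that the full differential of $\beta$ (not just the b-differential) is injective transverse to $G$, which follows by combining the linear independence of $\nu_1,\dots,\nu_k$ with the invertibility of the tangential Jacobian and the nonvanishing of the $z_{j-k}$; the interior-diffeomorphism hypothesis \eqref{E:intro_blow_down_diffeo} guarantees the map is a diffeomorphism off the boundary, pinning down that no further degeneracy hides in the coordinate count.
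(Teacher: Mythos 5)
Your overall strategy matches the paper's: write $\beta^\ast x'_j = a_j\,x^{\nu_j}$ with $a_j>0$, use bijectivity of $\bd\beta_\ast$ to conclude the exponent matrix has full rank $k$, renumber so that $\nu_1,\dots,\nu_k$ are independent, rescale the boundary defining functions by positive smooth factors (invertibility of the $k\times k$ block over $\Q$, and positivity, make the fractional powers harmless) to arrange $a_1=\dots=a_k=1$, and take $y_l=\beta^\ast y'_l$. All of that is essentially the paper's argument. However, your construction of the $z$-coordinates contains a genuine error: you set $z_{j-k}=x'_j\circ\beta=a_j(x,y)x^{\nu_j}$ for $j=k+1,\dots,k'$ and assert this is a positive smooth function near $p$. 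It is not. Since $F=\beta_\#(G)$ has codimension $k'$, \emph{every} $x'_j$, $j=1,\dots,k'$, vanishes at $q$, so every $\nu_j$ is a nonzero non-negative vector and every $x'_j\circ\beta$ vanishes on $G$, in particular at $p$; your parenthetical claim that $x'_{k+1},\dots,x'_{k'}$ do not vanish at $p$ is where this goes wrong. With your choice the asserted normal form would degenerate to $x'_j=z_{j-k}$ rather than $z_{j-k}x^{\nu_j}$, and concretely, for the blow-up of the corner of $\R^2_+$ in the chart $\beta(t_1,t_2)=(t_1,t_1t_2)$ with $G=\{t_1=0\}$ (so $k=1$, $k'=2$), your $z_1=t_1t_2$ has differential $t_2\,dt_1$ at $p$, parallel to $dx_1=dt_1$, so $(x_1,z_1)$ is not even a coordinate system.

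The correct choice is $z_{j-k}=a_j$, the positive smooth prefactor, and the point you then still need --- which your proposal never isolates --- is why the differentials $da_j$, $j=k+1,\dots,k'$, are linearly independent at $p$ and independent of the $dy_l$. This is exactly what the isomorphism condition on $\bd\beta_\ast$ supplies: the pulled-back logarithmic differentials $\bd\beta^\ast(dx'_j/x'_j)=\sum_i\nu_{ij}\,dx_i/x_i+da_j/a_j$, together with the $dy_l$, must form a basis of $\bT^\ast_pX$; the first $k$ of them already span the singular part generated by the $dx_i/x_i$, so the remaining $k'-k$ must contribute independent smooth parts $da_j/a_j$, hence independent $da_j$ since $a_j>0$. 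That is the step certifying that $(x,z,y)$ is a coordinate system; your appeal to "the nonvanishing of the $z_{j-k}$" and to the interior-diffeomorphism hypothesis does not substitute for it.
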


\begin{proof}
Let $\overline x_j$, $j = 1,\ldots,k$ be local boundary defining functions for
hypersurfaces through $p$.  Then, since $\beta$ is an interior b-map,
\begin{equation}
\beta^\ast(x'_i) = a_i\, \overline x^{\nu_i} = a_i\prod_{j = 1}^k \overline x_j^{\nu_{ji}},\;
0 < a_i \in \CI(X)
\label{E:beta_pullback}
\end{equation}
where $\nu_{ji} \in \Zp .$ Indeed, $\nu^\transpose \in \Mat(k'\times k, \Zp )$
is the matrix representing $\bd \beta_\ast :\bN G \smallto \bN F$ with respect to the bases
$\big\{\overline x_i\pa_{\overline x_i}\big\}$ and $\big\{x'_j\pa_{x'_j}\big\}.$

Since $\nu$ must have full rank (or else $\bd \beta_\ast$ could not 
be bijective), relabeling the $x'_i$ appropriately ensures that the $k\times
k$ matrix formed by the first $k$ entries in the $\nu_{ji}$ is
invertible and these give the vectors $\nu_i := (\nu_{1i},\ldots,\nu_{ki})$ 
for $i=1,\dots,k.$ 

Changing the $\overline x_j$ by positive smooth factors, $x_j = b_j\overline
x_j,$ $0 < b_j \in \CI(X),$ multiplies the coefficient functions $a_i$ in
\eqref{E:beta_pullback} by the monomials $b^{\nu_i}.$  The independence of
$\nu_i$, $i \leq k$ means that the $b_j$ can be chosen so each $a_i\equiv1$ for
$i\le k.$ This gives the first $k$ equations in
\eqref{E:blow_down_normal_form}.

The tangential coordinates $y'_i$ pull back under $\beta$ to be
smooth and independent at $p,$ so we take $y_i = \beta^\ast y'_i$ 
for $i = 1,\ldots,n-k'$, ensuring the last $n-k'$ equations in
\eqref{E:blow_down_normal_form}, without affecting the first $k.$

Finally then consider the pull-back of the last $k'-k$ boundary defining
functions. The logarithmic differentials, $\bd \beta^*(dx_i'/x'_i)$ must be
independent at $p,$ and be independent of the $dx_j/x_j,$ $j=1,\dots k,$
and $dy_l,$ $l=1,\dots,n-k'.$ In view of \eqref{E:beta_pullback} this means
precisely that the smooth differentials $da_i/a_i$ must be linearly
independent at $p$ for $i=k+1,\dots, k'$ and since the $a_i>0$ this in turn
is equivalent to the independence of the corresponding $da_i,$ so $z_i := a_{k-i}$,
$i = 1,\ldots,k-k'$, can be introduced as additional tangential coordinates giving
\eqref{E:blow_down_normal_form}.
\end{proof}

In fact, one can take the $x'_i$ to be globally defined boundary defining
functions on a neighborhood of $F \setminus \pa F$, and then it follows from the
proof that the $x_i$ can be taken to be global on a neighborhood of $G
\setminus \pa G$.  It follows similarly that the $y_i$ and $z_j$ are globally
defined on each fiber $\beta^{-1}(q') \cap G \setminus \pa G$ for $q'$ in a
neighborhood of $q$, and it follows that these fibers are contractible.  In
fact, more is true.

\begin{lem}
The functions $z_i$, $i = 1,\ldots,k'-k$ in \eqref{E:blow_down_normal_form} are
globally defined on the fibers $\beta^{-1}(q') \cap G \setminus \pa G$ for $q'$
in a neighborhood of $q,$ and the map
\begin{equation}
	z : \beta^{-1}(q) \cap G \setminus \pa G \to (0,\infty)^{k'-k}
	\label{E:blow_down_z_map}
\end{equation}
is surjective.
\label{L:blow_down_fiber_variables}
\end{lem}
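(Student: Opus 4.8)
The plan is to show that $z=(z_1,\dots,z_{k'-k})$ of \eqref{E:blow_down_z_map} is a proper local diffeomorphism of $\Phi_q:=\beta^{-1}(q)\cap(G\setminus\pa G)$ onto $(0,\infty)^{k'-k}$. Since the target is connected and simply connected, and $\Phi_q$ is nonempty and connected --- indeed contractible, as observed just before the lemma --- such a map is necessarily a covering map onto $(0,\infty)^{k'-k}$ and hence a diffeomorphism, in particular surjective. That the $z_\iota$ are globally defined on a neighborhood of $G\setminus\pa G$ (so in particular on the fibers $\beta^{-1}(q')\cap(G\setminus\pa G)$) was essentially established in the discussion after Lemma~\ref{L:normal_form_genblowdown}: once we fix global boundary defining functions $x'_1,\dots,x'_{k'}$ near $F\setminus\pa F$ and a subset $J\subset\set{1,\dots,k'}$ with $\set{\nu_j}_{j\in J}$ a basis of $\R^k$ (possible since the $k\times k'$ exponent matrix of $\bd\beta_\ast\colon\bN G\to\bN F$ has rank $k$), one has the intrinsic formula
\[
	z_\iota=\beta^\ast x'_\iota\Big/\prod_{j\in J}(\beta^\ast x'_j)^{c_{j\iota}},\qquad\text{where}\quad\nu_\iota=\sum_{j\in J}c_{j\iota}\nu_j,
\]
which is positive on the interior of $X$ and extends to a positive smooth function on a neighborhood of $G\setminus\pa G$; near any point of $\Phi_q$ the normal form \eqref{E:blow_down_normal_form} exhibits the $z_\iota$ as coordinates, so $z|_{\Phi_q}$ is a local diffeomorphism.

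For properness, suppose $p_m\in\Phi_q$ and $z(p_m)$ stays in a compact subset of $(0,\infty)^{k'-k}$. By compactness of $X$ a subsequence converges to some $p_\infty\in\beta^{-1}(q)$, and since $p_m$ lies in the closed face $G$, so does $p_\infty$; if $p_\infty\in G\setminus\pa G$ we are done. Otherwise $p_\infty$ lies in a face $G''\subsetneq G$ of codimension $k''>k$, and then $\beta_\#(G'')=F$ (as $\beta(p_\infty)=q\in F\setminus\pa F$), so $k''\le k'$ since $\bd\beta_\ast$, being an isomorphism, restricts to an injection $\bN G''\hookrightarrow\bN F$. Choose coordinates $(v_1,\dots,v_{k''},w,y)$ near $p_\infty$ as in Lemma~\ref{L:normal_form_genblowdown}, with $v_1,\dots,v_k$ defining functions for the hypersurfaces through $G$. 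Writing $\beta^\ast x'_i=\tilde a_i\,v^{\gamma_i}$ with $\tilde a_i>0$ smooth and $\gamma_i$ the boundary exponents, the formula above becomes $z_\iota=(\text{unit})\cdot\prod_{l=1}^{k''}v_l^{\delta_{l\iota}}$ near $p_\infty$, where $\delta_{l\iota}=\gamma_{l\iota}-\sum_{j\in J}c_{j\iota}\gamma_{lj}$; since $z_\iota$ is smooth and positive on the part $\set{v_1=\dots=v_k=0,\ v_{k+1},\dots,v_{k''}>0}$ of $G\setminus\pa G$ near $p_\infty$, necessarily $\delta_{l\iota}=0$ for $l\le k$, so $z_\iota=(\text{unit})\cdot\prod_{l=k+1}^{k''}v_l^{\delta_{l\iota}}$ while $v_l(p_m)\to0^+$ for $k<l\le k''$.

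The main point, and the step I expect to be the real obstacle, is to derive a contradiction from here: if $z(p_m)\to\zeta\in(0,\infty)^{k'-k}$ then $\sum_{l>k}\delta_{l\iota}\log v_l(p_m)$ stays bounded for every $\iota$ while $|\log v_l(p_m)|\to\infty$, so the limiting direction of $\bigl(-\log v_l(p_m)\bigr)_{l>k}$ is a nonzero nonnegative vector killed by the linear map $\eta\mapsto\bigl(\sum_{l>k}\delta_{l\iota}\eta_l\bigr)_\iota$, and one must show this map is injective. This should follow by a short computation with the exponent matrix $A$ of $\beta$ at $p_\infty$, whose rows I write as $r_1,\dots,r_{k''}$: using that $\set{\nu_j}_{j\in J}$ is a basis one checks that $\sum_{l>k}\eta_l\delta_l=0$ forces $\sum_{l>k}\eta_l r_l$ into the span of $r_1,\dots,r_k$, whence $\eta=0$ because $r_1,\dots,r_{k''}$ are independent --- this last being precisely the injectivity of $\bd\beta_\ast\colon\bN G''\to\bN F$, a consequence of the hypothesis that $\bd\beta_\ast$ is an isomorphism. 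The bookkeeping that requires the most care is matching the normal form at $p_\infty$ with the one at the interior point $p$, so that the exponent data $\gamma$ and the index set $J$ are coherent; granting this, $z|_{\Phi_q}$ is proper, hence a finite covering map onto the simply connected space $(0,\infty)^{k'-k}$, and since $\Phi_q$ is connected this covering is trivial, giving the asserted surjectivity (indeed $z$ is a diffeomorphism onto $(0,\infty)^{k'-k}$).
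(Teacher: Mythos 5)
Your argument is correct and shares the paper's core strategy: both proofs come down to showing that a sequence $p_m$ in the fiber with $z(p_m)$ confined to a compact subset of $(0,\infty)^{k'-k}$ cannot accumulate at $\pa G$, using the exponent matrix of $\beta$ at a deeper corner together with the bijectivity of $\bd\beta_\ast$. The topological packaging differs slightly --- the paper takes a boundary point of the (open) image of $z$ on a component of the fiber and locates its preimage in $\pa G$, whereas you establish properness of the local diffeomorphism $z$ and conclude by the covering-map argument --- but this is the same skeleton. Two remarks. First, your linear-algebra completion of the key step is actually sharper than what the paper writes: the paper observes only that the exponent vectors $\mu_i$ of the $z_i$ at the deeper face cannot all vanish and asserts that therefore some $z_i$ tends to $0$ or $\infty$ there, which for indefinite $\mu_i$ does not follow along an arbitrary sequence; your observation that $\eta\mapsto\bigl(\sum_{l>k}\delta_{l\iota}\eta_l\bigr)_\iota$ is injective --- its vanishing forces $\sum_{l>k}\eta_l r_l$ into $\sspan\set{r_1,\ldots,r_k}$ while $r_1,\ldots,r_{k''}$ are independent by injectivity of $\bd\beta_\ast$ on $\bN G''$ --- is exactly the needed statement, and the bookkeeping you flag is unproblematic because $\bd\beta_\ast|_{\bN G}$ is given by the global exponents $\alpha(\cdot,\cdot)$, so the rows $r_1,\ldots,r_k$ at $p_\infty$ coincide with the corresponding rows of $\nu$ at the interior point $p$. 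Second, you should not lean on connectedness (let alone contractibility) of $\Phi_q$: in the paper this is only established \emph{after} the present lemma, via the path-lifting Lemma~\ref{L:genblow_path_lifting} combined with this lemma itself. Fortunately your proof does not need it: a proper local diffeomorphism into the connected space $(0,\infty)^{k'-k}$ has image that is open and closed, hence is surjective; connectivity of $\Phi_q$ is required only for your parenthetical stronger claim that $z$ is a diffeomorphism.
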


\begin{proof}
As noted, $z$ is a globally defined map on the fiber in light of
\eqref{E:beta_pullback} and the fact that $z_i = a_{k-i}$.  Suppose then that
\eqref{E:blow_down_z_map} is not surjective, and consider a point in the
closure of the image under $z$ of a component of $\beta^{-1}(q) \cap G\setminus
\pa G$.  By compactness of $G$, this must be the image of a point $p'$ in $\pa
G$, in the interior of a boundary face $H \subseteq G$ with $H \in \cM_{r}(X)$,
say.  The assumption that $z$ is not surjective means that $z(p')$ lies inside
$(0,\infty)^{k-k'}$, so that $0 < z_i(p') < \infty$ for all $i \leq k-k'$.   

By continuity $p'$ is mapped by $\beta$ to $q$.  Consider the construction of
the $x_i$ and $z_i$ in the proof of Lemma \ref{L:normal_form_genblowdown},
where we now begin with additional boundary defining functions $\overline
x'_i$, $i = k+1,\ldots,r \leq k'$ for $H$.  The equations
\eqref{E:beta_pullback} become
\[
	\beta^\ast(x'_i) = a_i\,\overline x^{\nu_j} 
	 = a'_i\, {\overline x'}^{\gamma_i}\, \overline x^{\nu_i},\; 0 < a'_i,
\]
where not all the $\gamma_{j} \in \Zp ^{r-k}$ can vanish without violating the
isomorphism condition on $\bd\beta_\ast$.  It follows by going through the
construction in the proof that $z_i = e_i {\overline x'}^\mu_i$, where $\mu_i \in
\Q^{r}$ cannot all vanish (though $\mu_{ji}$ may be non-integral and/or
negative since the $b_j$ may now involve the $\overline x'_i$), so at least one
$z_i$ tends to either zero or infinity at $H$, giving a contradiction.
\end{proof}

From the normal form \eqref{E:blow_down_normal_form} we derive the following
path lifting result, which determines in which face $G \in \beta^{-1}(F)$ a
lifted path will hit the boundary of $X$ from its b-tangent at $t = 0.$

\begin{lem} Let $\beta : X \to Y$ be a generalized blow-down, $G \in \cM(X),$ 
$F = \beta_\#(G) \in \cM(Y)$ and $q \in F \setminus \pa F,$ with coordinates
$(x',y')$ centered at $q$. If
\[
	\gamma : [0,\epsilon) \ni t \mapsto (t^\kappa, 0) \in Y
\]
is a path with endpoint at $q$ and initial b-tangent vector $\kappa = \sum
\kappa_i x'_i \pa_{x'_i} \in \bN_+ F$, with all $\kappa_i > 0$ then the lift
$\clos(\beta^{-1}(\gamma((0,\epsilon)))$ of the image of $\gamma$ to $X$ meets $\beta^{-1}(q)
\cap G \setminus \pa G$ if and only if there exists $\lambda = \sum_i \lambda_i
x_i\pa_{x_i} \in \bN_+ G$ such that $\bd f_\ast(\lambda) = \kappa.$
\label{L:genblow_path_lifting}
\end{lem}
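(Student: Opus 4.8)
The plan is to rephrase the statement in terms of the unique lift of $\gamma$ into the interior of $X$, and then to analyze that lift in the local normal form of Lemma~\ref{L:normal_form_genblowdown}. Because $\kappa_i>0$ for all $i$ and $q\in F\setminus\pa F$, the curve $\gamma$ carries $(0,\epsilon)$ into the interior $Y\setminus\pa Y$, so since $\beta$ restricts to a diffeomorphism there it has a unique lift $\tilde\gamma : (0,\epsilon)\to X\setminus\pa X$ with $\beta\circ\tilde\gamma=\gamma$, and $\clos\bigl(\beta^{-1}(\gamma((0,\epsilon)))\bigr)=\clos\bigl(\tilde\gamma((0,\epsilon))\bigr)$. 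By compactness of $X$ the accumulation points of $\tilde\gamma(t)$ as $t\to0$ form a nonempty set, contained in $\beta^{-1}(q)$ by continuity of $\beta$; so the content of the lemma is that at least one such accumulation point lies in $G\setminus\pa G$ precisely when the stated $\lambda$ exists.

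For the forward implication I would take an accumulation point $p\in\beta^{-1}(q)\cap(G\setminus\pa G)$, say $\tilde\gamma(t_m)\to p$ with $t_m\to0$, and work in coordinates $(x,z,y)$ centred at $p$ that put $\beta$ in the form \eqref{E:blow_down_normal_form} (so $G=\{x=0\}$ near $p$ and $\nu_1,\dots,\nu_k$ are independent). Writing $\tilde\gamma(t_m)=(x^{(m)},z^{(m)},y^{(m)})$ and comparing coordinates in $\beta(\tilde\gamma(t_m))=\gamma(t_m)$ gives $(x^{(m)})^{\nu_i}=t_m^{\kappa_i}$ for $i\le k$, $z^{(m)}_{i-k}(x^{(m)})^{\nu_i}=t_m^{\kappa_i}$ for $k<i\le k'$, and $y^{(m)}=0$. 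Taking logarithms of the first $k$ relations and dividing by $\log t_m$ (nonzero for $t_m\in(0,1)$), invertibility of the matrix with rows $\nu_1,\dots,\nu_k$ forces the vector $\lambda$ with entries $\lambda_j=\log x^{(m)}_j/\log t_m$ to be independent of $m$, with $\lambda_j>0$ since $x^{(m)}_j\to x_j(p)=0$ and $\log t_m\to-\infty$. Dividing the remaining relations by $\log t_m$ and letting $m\to\infty$, using $z^{(m)}_{i-k}\to z_{i-k}(p)\in(0,\infty)$ so that $\log z^{(m)}_{i-k}$ stays bounded, I would obtain $\pair{\nu_i,\lambda}=\kappa_i$ for every $i\le k'$. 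Since the matrix $[(\nu_i)_j]$ represents $\bd\beta_\ast : \bN G\to\bN F$ in the frames $\{x_j\pa_{x_j}\}$, $\{x'_i\pa_{x'_i}\}$, this is exactly the assertion that $\lambda=\sum_j\lambda_j x_j\pa_{x_j}\in\bN_+G$ satisfies $\bd\beta_\ast\lambda=\kappa$ (indeed $\lambda$ lies in the relative interior of $\bN_+G$).

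For the converse, given $\lambda\in\bN_+G$ with $\bd\beta_\ast\lambda=\kappa$, I would use Lemma~\ref{L:blow_down_fiber_variables} to pick a point $p_0\in\beta^{-1}(q)\cap(G\setminus\pa G)$ at which the fibre coordinates $z$ all equal $1$, and put $\beta$ in the form \eqref{E:blow_down_normal_form} about $p_0$ in these coordinates. Then the curve $\eta(t)=(t^{\lambda_1},\dots,t^{\lambda_k},1,\dots,1,0,\dots,0)$ satisfies $\eta(t)^{\nu_i}=t^{\pair{\nu_i,\lambda}}=t^{\kappa_i}$ for all $i\le k'$, so $\beta(\eta(t))=(t^{\kappa_1},\dots,t^{\kappa_{k'}},0)=\gamma(t)$; by uniqueness of the interior lift $\eta(t)=\tilde\gamma(t)$ for small $t$, and $\eta(t)\to p_0$ as $t\to0$, exhibiting an accumulation point in $\beta^{-1}(q)\cap(G\setminus\pa G)$.

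The main obstacle I anticipate is in the converse, namely arranging that the constructed lift limits onto the \emph{interior} of $G$ rather than onto a larger boundary face of $X$: this is precisely where Lemma~\ref{L:blow_down_fiber_variables} is needed, its surjectivity statement allowing the fibre coordinates of the base point to be normalized to $1$, whereas in the forward direction the corresponding normalization is free since $(x,z,y)$ are centred at the accumulation point itself.
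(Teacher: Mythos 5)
Your proof is correct and follows essentially the same route as the paper: both directions rest on the normal form of Lemma~\ref{L:normal_form_genblowdown} and on taking logarithms of the resulting monomial identities, and your explicit lift $t\mapsto(t^{\lambda},1,0)$ based at a point of the fibre where the $z$-coordinates equal $1$ (supplied by the surjectivity in Lemma~\ref{L:blow_down_fiber_variables}) is exactly the paper's construction. The one structural difference is in the implication ``lift meets $G\setminus\pa G$ $\Rightarrow$ $\lambda$ exists'': you argue directly, extracting $\lambda_j=\log x^{(m)}_j/\log t_m$ from a sequence converging to the accumulation point and using boundedness of $\log z^{(m)}$ to get the remaining $k'-k$ components of $\bd\beta_\ast\lambda=\kappa$, whereas the paper proves the contrapositive, showing that if no $\lambda$ exists then a neighborhood $D(\epsilon,p)$ of each $p\in\beta^{-1}(q)\cap G\setminus\pa G$ misses the lift; your version is a touch more careful about the equations indexed by $i>k$, which the paper's separation argument passes over quickly. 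One shared caveat: in the converse direction both your argument and the paper's implicitly take all $\lambda_j>0$, since if some $\lambda_j=0$ the curve $(t^{\lambda},1,0)$ limits onto the interior of a strictly larger boundary face rather than onto $G\setminus\pa G$; this is harmless for the subsequent applications (where $\lambda$ is always taken in the relative interior, and your forward direction in fact produces such a $\lambda$), but it is worth being aware that the ``if'' direction as literally stated requires this interiority.
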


\begin{proof} Since $\beta$ is a diffeomorphism on the interiors,
the smooth path $\gamma : (0,\epsilon) \to Y \setminus \pa Y$ lifts to
$X \setminus \pa X.$  If such a $\lambda \in \bN_+ G$ does exist, then 
near a component of $\beta^{-1}(q) \cap G \setminus \pa G$ it follows
directly that $\gamma$ has a lift
\[
	[0,\epsilon) \ni t \mapsto (t^\lambda, 0, 1) = (x,y,z)
\]
extending to $t = 0,$ with endpoint $p = (0,0,1) \in G\setminus \pa G.$ 
(Observe that $p$ lies in the domain of the coordinates $(x,y,z)$ in light of
Lemma \ref{L:blow_down_fiber_variables}.)

If no such $\lambda$ exists, then every point $p$ of 
$\beta^{-1}(q)\cap G \setminus \pa G$ has a neighborhood 
\[
	D(\epsilon,p) 
	 = \set{(x,y,z) \;;\; x_i < \epsilon, i = 1,\ldots,k, \abs{y} < \epsilon},\; 
	 \epsilon > 0
\]
which does not meet the lift of $\gamma$ to $X \setminus \pa X.$  

Indeed, the image of such a neighborhood under $\beta$ contains a point of
$\gamma$ if and only if
\[
	(t^{\kappa_1},\ldots,t^{\kappa_{k'}},0) 
	 = (x^{\nu_1},\ldots,x^{\nu_k},z_1x^{\nu_{k+1}},\ldots,z_{k'-k}x^{\nu_{k'}},y).
\]
Taking the logarithm of the first $k$ conditions gives
\begin{equation}
	\kappa_i\log t = \sum_j\nu_{ji}\log x_j=(\bd \beta_\ast\log x)_i
	\label{E:log_path_equation}
\end{equation}
since $\nu^\transpose$ is the matrix representing $\bd\beta_\ast.$
Now, in $D(\epsilon,p)$, the vector $(\log x)\in(-A,-\infty)^k$ where
$A=-\log\epsilon >0.$ Thus, right side of \eqref{E:log_path_equation} lies in
$-\bd \beta_\ast (\bN_+G)$, but by assumption $\kappa \notin\bd \beta_\ast(\bN_+G)$
so \eqref{E:log_path_equation} can have no solution with $0<t<1.$
\end{proof}

In fact, though we do not use this directly below, Lemmas~\ref{L:normal_form_genblowdown} 
and \ref{L:genblow_path_lifting} show that for all $G$ such that $\beta_\#(G) = F$, 
the map $\beta : G\setminus \pa G \to F \setminus \pa F$ is a fibration, with
fibers diffeomorphic to $(0,\infty)^{k'-k}.$

To see this note that since $\beta : G \to F$ is an interior b-map it follows from
\eqref{E:blow_down_normal_form} that
\begin{equation}
	\beta : G\setminus\pa G \to F\setminus\pa F
	\label{E:beta_G_to_F}
\end{equation}
is a submersion. By compactness of $G$, any limit point of the range of
\eqref{E:beta_G_to_F} must be the image of a point in $G$, so it must be
surjective, or else an interior point of $F$ would only meet the boundary
of $G$ which is inconsistent with the defining properties of an interior
b-map.  By Lemma \ref{L:normal_form_genblowdown} it follows that $z :
\beta^{-1}(q) \cap G\setminus \pa G \to (0,\infty)^{k'-k}$ is a covering
map, hence the fibers of \eqref{E:beta_G_to_F} are a disjoint union of
components diffeomorphic to the latter space, but by
Lemma~\ref{L:genblow_path_lifting} these fibers must be connected, since
otherwise there would be multiple preimages of $\gamma(t)$ for small $t$.

%We now proceed to show that for a generalized blow-down $\beta : X \to Y$, 
%the image $\beta_\natural(\cP_X)$ is a well-defined smooth refinement 
%of $\cP_Y$.  

\begin{prop} If $\beta : X \to Y$ is a generalized blow-down map then the morphism 
of monoidal complexes
\[
	\beta_\natural : \cP_X \to \cP_Y
\]
is a smooth refinement.
\label{P:refinement_from_blowdown}
\end{prop}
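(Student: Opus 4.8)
The plan is to verify directly that $\beta_\natural$ satisfies the three conditions of Definition~\ref{D:refinement_of_monoidal_complex} for a refinement of $\cP_Y$; smoothness is then automatic, since $\cP_X$ is a smooth complex. For injectivity, recall that under the identification of $\cP_X$ and $\cP_Y$ with their images in the b-normal spaces, the homomorphism $\beta_\natural : \sigma_G \to \sigma_{\beta_\#(G)}$ is the restriction to $\bN_+ G \subset \bT_p X$ of $\bd\beta_\ast : \bT_p X \to \bT_{\beta(p)} Y$, which is an isomorphism; hence $\beta_\natural$ is injective on each monoid. Writing $C_G := \beta_\natural(\supp(\sigma_G)) = \bd\beta_\ast(\bN_+ G) \subseteq \bN_+ F$ for $G \in \beta_\#^{-1}(F)$, injectivity of $\bd\beta_\ast$ on $\bN G$ means it restricts there to a linear isomorphism onto $\sspan_\R C_G$ carrying $\bN_+ G$ onto $C_G$ and $\relint\bN_+ G$ onto $\relint C_G$. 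Fixing $F \in \cM_{k'}(Y)$, it then remains to verify for the family $\set{C_G \subseteq \bN_+ F \;;\; G \in \beta_\#^{-1}(F)}$ that (i) $\bigcup_G C_G = \bN_+ F = \supp(\sigma_F)$ and (ii) $\relint C_{G_1} \cap \relint C_{G_2} = \emptyset$ whenever $G_1 \neq G_2$.

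For (i), fix $q$ in the interior of $F$ and $\kappa \in \relint\bN_+ F$, and consider the monomial path $\gamma_\kappa(t) = (t^\kappa, 0)$ ending at $q$ in coordinates centred there. As $\beta$ is a diffeomorphism of interiors it lifts $\gamma_\kappa|_{(0,\epsilon)}$ to a \emph{unique} path in $X \setminus \pa X$, and as $\beta$ is proper this lift has compact closure, so its limit set as $t \to 0$ is a nonempty subset of $\beta^{-1}(q)$. Any point $p$ of this limit set lies in $G \setminus \pa G$ for a unique $G \in \cM(X)$; since $\beta$ restricts to an interior b-map $G \to \beta_\#(G)$ carrying interiors to interiors and $q \in F \setminus \pa F$, necessarily $\beta_\#(G) = F$. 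By Lemma~\ref{L:genblow_path_lifting}, as the closure of the lift meets $\beta^{-1}(q) \cap (G \setminus \pa G)$ there is $\lambda \in \bN_+ G$ with $\bd\beta_\ast(\lambda) = \kappa$, i.e.\ $\kappa \in C_G$. Thus $\relint\bN_+ F \subseteq \bigcup_G C_G$; the right side is a finite union of closed polyhedral cones, hence closed, so $\bN_+ F = \overline{\relint\bN_+ F} \subseteq \bigcup_G C_G$, and the reverse inclusion is clear since each $C_G = \bd\beta_\ast(\bN_+ G) \subseteq \bN_+ F$.

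The crux is (ii); the idea is to combine \emph{uniqueness} of the path lift with the explicit lift built in the proof of Lemma~\ref{L:genblow_path_lifting}. Suppose $\kappa \in \relint C_{G_1} \cap \relint C_{G_2}$ with $G_1, G_2 \in \beta_\#^{-1}(F)$. By the isomorphism property above there are $\lambda_i \in \relint\bN_+ G_i$, with strictly positive coordinates, such that $\bd\beta_\ast(\lambda_i) = \kappa$ for $i = 1, 2$. Choosing $q$ in the interior of $F$ and putting $\beta$ into the normal form of Lemma~\ref{L:normal_form_genblowdown} near a component of $\beta^{-1}(q) \cap (G_i \setminus \pa G_i)$ --- whose fibre variables are globally defined by Lemma~\ref{L:blow_down_fiber_variables} --- the computation in the proof of Lemma~\ref{L:genblow_path_lifting} identifies the unique lift of $\gamma_\kappa$ with the path $t \mapsto (t^{\lambda_i}, 0, 1)$, which, $\lambda_i$ having strictly positive coordinates, converges as $t \to 0$ to a point $p_i \in \beta^{-1}(q) \cap (G_i \setminus \pa G_i)$. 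But the lift of $\gamma_\kappa|_{(0,\epsilon)}$ is a single path in the Hausdorff space $X$, hence has at most one limit as $t \to 0$; so $p_1 = p_2$, and since the sets $G \setminus \pa G$, $G \in \cM(X)$, are pairwise disjoint this forces $G_1 = G_2$. This establishes (ii), so $\beta_\natural$ is a smooth refinement. I expect the only substantive work to be (ii), and specifically the extraction from the proof of Lemma~\ref{L:genblow_path_lifting} of the statement that $\gamma_\kappa$ has a \emph{convergent} lift whose limit lies in the single stratum $\beta^{-1}(q) \cap (G \setminus \pa G)$ picked out by $\kappa \in \relint C_G$; one must also take care to argue within $\relint C_{G_i}$, using injectivity of $\bd\beta_\ast$ on $\bN G_i$, rather than within $\relint\bN_+ F$, since the relevant $\kappa$ need not have all coordinates positive.
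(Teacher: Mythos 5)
Your proof is correct and follows essentially the same route as the paper's: injectivity from bijectivity of the b-differential, the covering condition from Lemma~\ref{L:genblow_path_lifting} combined with properness of $\beta$, and disjointness of relative interiors from uniqueness of the interior path lift. The caveat you raise at the end is in fact vacuous: since $\beta^\ast x'_i$ must vanish on any $G$ with $\beta_\#(G) = F$ for every hypersurface $H'_i \supseteq F$, every $\kappa \in \relint\big(\bd\beta_\ast(\bN_+ G)\big)$ automatically has all coordinates positive, so Lemma~\ref{L:genblow_path_lifting} applies exactly as you use it.
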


\begin{proof} Since $\bd \beta_\ast$ is bijective, $\beta_\natural$ is
  necessarily injective, and it suffices to verify that the cones
\[
\set{\supp\big(\beta_\natural(\sigma_G)\big) =
\beta_\ast(\bN_+ G) \;;\; \beta_\#(G) = F} \subset \supp(\sigma_F) = \bN_+F
\]
have union equal to $\bN_+ F$, and have no common interior vectors.

First observe that the union of these cones is indeed $\bN_+ F$.  If not, the
complement, which is open, would contain an interior point of $\bN_+ F$.  Thus,
proceeding by contradiction, we can suppose that there is a vector $\kappa =
(\kappa_1,\ldots,\kappa_k)\in \bN_+ F$, with positive integer entries, which is
disjoint from all the $\beta_\ast(\bN_+ G)$.  By Lemma
\ref{L:genblow_path_lifting} there is a path $\gamma(t) \in Y$ with initial
b-differential equal to $\kappa$ and endpoint at $q \in F\setminus \pa F$ whose
lift to $X$ does not meet any $G$ with $\beta_\#(G) = F$.  On the other hand,
by the properness of $\beta$ there is a sequence $0 < t_j \smallto 0$ such that
the points $\beta^{-1}(\gamma(t_j))$ converge in $X$.  By continuity, the limit
must be in $\beta^{-1}(q)$.  However, since $q$ lies in the interior of $F$
such a point must lie in the interior of one of the $G$ with $\beta_\#(G) = F$
which is a contradiction.

Next consider two of the cones $\beta_\ast(\bN_+ G_i)$, $i = 1,2$ which contain
an interior vector $\kappa$ of $\bN_+ F$ in both their relative interiors,
i.e.\ this point is the image of an interior vector $\lambda_i$ of each of the
$\bN_+ G_i$.  Lemma \ref{L:genblow_path_lifting} applies to both faces, hence
the curve $\gamma$ with endpoint $q \in F\setminus \pa F$ and initial b-tangent
vector $\kappa$ has a lift with endpoint in the interior of each boundary face.
The assumption that $\beta$ is a diffeomorphism in the interior therefore
ensures that these two boundary faces have interiors which intersect and they
are therefore equal.

We conclude that $\beta_\natural : \cP_X \smallto \cP_Y$ is a refinement,
which must be smooth since each $\sigma_G \in \cP_X$ is smooth.
\end{proof}

\begin{prop} \label{P:genblowdown_identity} A generalized blow-down map
  $\beta : X \to Y$ is a diffeomorphism if and only if
\[
	\beta_\natural : \cP_X \to \cP_Y
\]
is invertible.
\end{prop}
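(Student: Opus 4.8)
The plan is to prove both directions, with the forward direction being essentially trivial and the reverse direction carrying the content. If $\beta : X \to Y$ is a diffeomorphism, then it induces an isomorphism $\cM(X) \cong \cM(Y)$ of the face posets (diffeomorphisms preserve boundary codimension and carry boundary hypersurfaces to boundary hypersurfaces), and it identifies each b-normal space $\bN F$ with $\bN \beta_\#(F)$, carrying lattices to lattices and the monoids $\sigma_F$ to $\sigma_{\beta_\#(F)}$; hence $\beta_\natural : \cP_X \to \cP_Y$ is an isomorphism of monoidal complexes, in particular invertible. This is really just unwinding Definition~\ref{D:basic_monoidal_complex}.

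For the reverse direction, suppose $\beta_\natural : \cP_X \to \cP_Y$ is invertible. By Proposition~\ref{P:refinement_from_blowdown}, $\beta_\natural$ is a smooth refinement; an invertible refinement is the trivial refinement, so in particular $\beta_\#$ is a bijection $\cM(X) \cong \cM(Y)$ (invertibility as a morphism of complexes forces $\beta_\#$ to be an isomorphism of posets, since over each $\sigma_F \in \cP_Y$ there is exactly one preimage and the face relations match). First I would observe that $\beta$ is then a bijection as a map of sets: it is surjective (it is proper, so the image is closed, and it contains the dense interior), and it is injective because of the path-lifting result Lemma~\ref{L:genblow_path_lifting}. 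Indeed, if $p_1 \neq p_2$ both map to $q \in Y$, pick the boundary faces $G_i$ whose interiors contain the $p_i$; since $\beta_\#$ is a bijection with $\sigma_{G_i} \cong \sigma_{\beta_\#(G_i)}$ an isomorphism onto all of $\sigma_F$ (with $F = \beta_\#(G_i)$), we have $\dim G_i = \codim F$ for each, so either $G_1 = G_2$ or $G_1 \neq G_2$ both of maximal dimension among faces over $F$; in the latter case, choosing $\kappa$ an interior vector of $\bN_+ F$ and applying Lemma~\ref{L:genblow_path_lifting} to both $G_1$ and $G_2$ produces a curve $\gamma$ whose lift has endpoints in the interiors of both $G_1$ and $G_2$, forcing these interiors to intersect (since $\beta$ is a diffeomorphism on interiors), contradiction; and if $G_1 = G_2$, the surjectivity of the fiber map $z$ in Lemma~\ref{L:blow_down_fiber_variables} together with the connectedness of the fibers established after that lemma shows $\beta$ is injective on $G\setminus\partial G$, so $p_1 = p_2$.

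Having established that $\beta$ is a continuous proper bijection, hence a homeomorphism, it remains to check that $\beta$ is a diffeomorphism, i.e.\ that $\beta^{-1}$ is smooth and $\beta^*\CI(Y) = \CI(X)$. For this I would pass to the local normal form \eqref{E:blow_down_normal_form} of Lemma~\ref{L:normal_form_genblowdown}: near an interior point $p$ of $G \in \cM_k(X)$ with $q = \beta(p)$ in $F = \beta_\#(G) \in \cM_{k'}(Y)$, we have
\[
\beta(x,z,y) = (x^{\nu_1},\dots,x^{\nu_k},z_1 x^{\nu_{k+1}},\dots,z_{k'-k}x^{\nu_{k'}},y).
\]
The condition $\dim G = \codim F$ obtained from invertibility of $\beta_\natural$ forces $k = k'$, so there are no $z$ variables and the last $n - k'$ equations give $y$ directly; the b-differential $\bd\beta_\ast$, represented by $\nu^\transpose$, being an isomorphism of the monoids $\sigma_G \cong \sigma_F$ (not merely of the lattices or the rational spaces) means $\nu \in \GL(k,\Zp)$ with $\nu^{-1} \in \GL(k,\Zp)$ as well — i.e.\ $\nu$ is a permutation matrix (up to the free reordering of generators). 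Hence in these coordinates $\beta$ is, up to reordering coordinates, the identity, so $\beta^{-1}$ is smooth near $q$ and $\beta$ is a local diffeomorphism at $p$. Since $\beta$ is a homeomorphism, it is a global diffeomorphism.

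The main obstacle I anticipate is the injectivity argument: making sure the path-lifting lemma genuinely rules out two distinct points in the same fiber in every configuration (same face versus different faces, interior versus boundary points), and assembling the fibration/connectedness facts stated in the discussion after Lemma~\ref{L:blow_down_fiber_variables} into a clean proof. Once bijectivity plus the normal form are in hand, the rest — that an invertible monoid-isomorphism forces the exponent matrix to be a permutation, hence $\beta$ is locally the identity — is short.
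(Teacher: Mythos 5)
Your proof is correct and follows essentially the same route as the paper: both reduce to the normal form of Lemma~\ref{L:normal_form_genblowdown}, use invertibility of $\beta_\natural$ to force $k=k'$ and the exponent matrix $\nu$ to be a permutation, and conclude that $\beta$ is locally the identity in suitable coordinates. You additionally spell out the global set-theoretic bijectivity via Lemma~\ref{L:genblow_path_lifting} and the fiber connectedness discussion, a point the paper's two-sentence proof leaves implicit; this is a welcome rather than a divergent addition.
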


\begin{proof} That $\cP_Y \cong \beta_\natural(\cP_X)$ if $\beta$ is a diffeomorphism is
clear, since $\beta^{-1}_\natural$ furnishes an inverse.

Assume then that $\beta_\natural: \cP_X \cong \cP_Y$.  For any $F \in
\cM_{k'}(Y)$, there is therefore a unique $G \in \cM_k(X)$ with $\beta_\#(G) =
F$ and $\beta_\natural : \sigma_G \cong \sigma_F$ (it follows that $k = k'$).
Since the $\nu_j$, $j = 1,\ldots,k$ in Lemma \ref{L:normal_form_genblowdown}
are precisely the coordinates for the generators of $\sigma_G$ in terms of
those of $\sigma_F$, we can arrange that $\nu = \id$, and there are therefore
local coordinates as in \eqref{E:blow_down_normal_form} near each point in
which $\beta = \id$.
\end{proof}

\begin{thm} If $\beta : X \to Y$ is a generalized blow-down map then
\[
	X \cong [Y; \cP_X]
\]
with respect to the simplicial refinement $\beta_\natural : \cP_X \to \cP_Y.$
\label{T:characterization}
\end{thm}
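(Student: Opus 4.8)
The plan is to obtain the isomorphism by lifting $\beta$ itself through the blow-up $[Y;\cP_X]$. By Proposition~\ref{P:refinement_from_blowdown} the morphism $\beta_\natural : \cP_X \to \cP_Y$ is a smooth refinement, so Theorem~\ref{T:genblow} applies and produces the generalized blow-up $\beta_{\cP_X} : [Y;\cP_X] \to Y$ together with a canonical isomorphism $\cP_{[Y;\cP_X]} \xrightarrow{\cong} \cP_X$ under which $(\beta_{\cP_X})_\natural$ becomes the refinement morphism $\beta_\natural$. Since $\beta_\natural$ is precisely this refinement morphism, $\beta$ is compatible with $[Y;\cP_X] \to Y$ in the sense of \eqref{E:compatible_morphism}, with $\phi = \id_{\cP_X}$. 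Theorem~\ref{T:lifting_b_maps} then yields a unique b-map $\beta' : X \to [Y;\cP_X]$ with $\beta_{\cP_X}\circ\beta' = \beta$ and $\beta'_\natural = \id$ under the identification $\cP_X \cong \cP_{[Y;\cP_X]}$.

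It remains to show that $\beta'$ is a diffeomorphism. By Proposition~\ref{P:genblowdown_identity} it suffices to verify that $\beta'$ is a generalized blow-down map in the sense of Definition~\ref{D:gen_blowdown}, since then the invertibility of $\beta'_\natural$ forces $\beta'$ to be a diffeomorphism. There are three things to check. First, $\beta'$ restricts to a diffeomorphism of the interiors: both $\beta$ and $\beta_{\cP_X}$ do (the latter by the local description of Section~\ref{S:local}), and, since a generalized blow-down has bijective b-differential, $\beta_{\cP_X}^{-1}(Y \setminus \pa Y) = [Y;\cP_X]\setminus\pa[Y;\cP_X]$, so on interiors $\beta'$ is the composite of the diffeomorphism $\beta$ with the inverse of the diffeomorphism $\beta_{\cP_X}$. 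Second, $\bd\beta'_\ast$ is an isomorphism at every point: the chain rule gives $\bd\beta_\ast = \bd(\beta_{\cP_X})_\ast\circ\bd\beta'_\ast$, and both $\bd\beta_\ast$ (by hypothesis) and $\bd(\beta_{\cP_X})_\ast$ (again from the local form of the blow-down in Section~\ref{S:local}) are isomorphisms. Third, $\beta'$ is proper: for compact $K\subset[Y;\cP_X]$ the set $\beta_{\cP_X}(K)$ is compact, $\beta^{-1}(\beta_{\cP_X}(K))$ is compact since $\beta$ is proper, and $(\beta')^{-1}(K)$ is a closed subset of this compact set, hence compact.

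The argument is essentially formal once the machinery of Sections~\ref{S:local} and~\ref{S:global} is in place; the only point requiring care is in the first paragraph, namely tracking that the isomorphism $\cP_{[Y;\cP_X]}\cong\cP_X$ supplied by Theorem~\ref{T:genblow} is exactly the one under which $\beta'_\natural$ is the identity. This is precisely the content of the last clause of Theorem~\ref{T:lifting_b_maps}, so no genuine obstacle arises, and the verification that $\beta'$ is a generalized blow-down map is routine.
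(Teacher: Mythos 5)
Your proposal is correct and follows essentially the same route as the paper: lift $\beta$ through $[Y;\cP_X]$ via Theorem~\ref{T:lifting_b_maps} (using that $\beta_\natural$ tautologically factors through the identity on $\cP_X$), observe that the lift $\beta'$ is itself a generalized blow-down with $\beta'_\natural = \id$, and conclude by Proposition~\ref{P:genblowdown_identity}. The paper states the verification that $\beta'$ is a generalized blow-down more tersely, whereas you spell out the interior-diffeomorphism, b-differential, and properness checks explicitly; the substance is identical.
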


\begin{proof}
Let $Y_0 = [Y; \cP_X]$, and let $\beta_0 : Y_0 \to Y$ be the blow-down.  The
morphism $\beta_\natural : \cP_X \to \cP_Y$ is tautologically compatible with
the refinement $\cP_X \to \cP_Y$, since $\beta_\natural$ factors through the
identity morphism $\id : \cP_X \to \cP_X$.

From Theorem \ref{T:lifting_b_maps} then, $\beta$ lifts to a b-map
\[
	\beta' : X \to Y_0
\]
which is easily seen to be a generalized blow-down since $\beta : X \setminus
\pa X \to Y \setminus \pa Y$ factors through $\beta' : X \setminus \pa X \to
Y_0 \setminus \pa Y_0$, which must therefore be a diffeomorphism.  Since the lifted
map on monoidal complexes is just the identity,
\[
	\beta'_\natural = \id : \cP_X \to \cP_X,
\]
$X \cong Y_0$ by Proposition~\ref{P:genblowdown_identity}.
\end{proof}

\section{Binomial subvarieties}\label{S:binvars}
%lasteqno binsub@ 28
We consider subvarieties of a manifold which near the boundary
have the local form 
\begin{equation}
	a_i x^{\alpha_i} = b_i x^{\beta_i},\; y_j = 0,\ a_i,\ b_i>0,
	\label{E:binvar_idea}
\end{equation}
which is to say they are given by the vanishing of some binomial equations with
smooth positive coefficients in the boundary defining variables, and the
vanishing of some interior variables. Such objects occur naturally in the
setting of fiber products and in other contexts, for instance the embedding
under a b-map of one manifold into another.

After verifying that such subvarieties are well-defined and have a standard
form, we show that, although they are not in general smoothly embedded
manifolds with corners, they have enough structure to support the machinery
of monoidal complexes and b-maps. This allow us to develop their resolution
theory in the next section. While it would be possible to define an
intrinsic category of abstract `binomial varieties', since it suffices in
our later treatment of fiber products, attention here is restricted to the
case of a binomial variety explicitly embedded in a manifold.

Observe in \eqref{E:binvar_idea} that by dividing by $b_ix^{\beta_i}$ and
also exponentiating the interior coordinates, the equations take the
unified form $a'_i x^{\gamma_i} = 1$, with $\gamma_i = \alpha_i - \beta_i$,
$a'_i = a_i/b_i>0$ in the first case and $\gamma_i = 0$, $a_j = \exp(y_j)$ in
the second. This may involve the cancellation of factors of $x_i$ which
appear on both sides of the equation and hence the loss of some solutions
contained entirely in the boundary. In view of this we will define binomial
structures on sets which are the closure of their intersection with the
interior as in \eqref{E:loc_bin_str} below.

With this motivation in mind, on any manifold consider the set of functions 
\begin{equation}
\begin{gathered}
\cG(X) = \bigcup_{\gamma: \cM_1(X)\smallto \Z}\cG_\gamma(X),\ \text{where}\\
\cG_\gamma (X)=\set{u\in\CI(X\setminus\pa X)\;;\;u=a\rho^\gamma ,\ 0<a\in\CI(X)};
	\label{E:G_gamma}
\end{gathered}
\end{equation}
here $\rho = (\rho_H)_{H \in \cM_1(X)}$ is a collection of global boundary
defining functions.  Observe that $\gamma : \cM_1(X) \to \Z$ induces a
functional $\bd \gamma : \bN G \to \C$ for any $G \in \cM(X)$ by setting 
\begin{equation}
	\bd \gamma = \sum_{\cM_1(X) \ni H \supseteq G} \gamma(H) \frac{d\rho_H}{\rho_H}.
	\label{E:b_gamma}
\end{equation}

The $\cG_{\gamma}(X)$ are independent of the defining functions used in
\eqref{E:G_gamma} and pull back under any interior b-map $f:Z\longrightarrow
Y$ giving an inclusion $f^*\cG(Y)\subset\cG(X).$ Clearly $\cG(X)$ is an abelian
group under pointwise multiplication, and $\cG_{\gamma_1}(X)\cdot
\cG_{\gamma_2}(X) \subset \cG_{\gamma_1+\gamma_2}(X).$

A function $f \in \cG(X)$ can be extended by continuity to points in $\pa X$ at
which it has finite limits but the logarithmic differential, $df/f,$ extends by
continuity to a smooth global section of $\bT^\ast X$ since near the boundary it
reduces to 
\[
	\frac{df}f=d\log a+\sum\limits_{H}\gamma(H)\frac{d\rho _H}{\rho _H}.
\]

\begin{defn} A {\em local binomial structure} on a closed subset $D \subset X$ near a point
$p\in D$ consists of a coordinate neighborhood $U \ni p$ and functions $f_i \in
\cG_{\gamma_i}(U)$, $i=1,\ldots,d$ which have independent logarithmic
differentials $d f_i/f_i \in \bT^\ast_q U$ at each $q \in D \cap U$, and which
define $D$ locally in the sense that
\begin{equation}
	D \cap U = \clos_U \set{q \in U \setminus \pa U\;;\;f_i(q)=1,\ i = 1,\ldots,d}.
	\label{E:loc_bin_str}
\end{equation}
The \emph{codimension} of $D$ at $p$ is $d.$ 
\end{defn}

An exponent vector $\gamma \in \cM_1(X) \to \Z$ is said to be {\em
non-negative} (resp. {\em non-positive}) if $\gamma(H) \geq 0$ (resp. $\leq
0$) for all $H \in \cM_1(X)$ and $\gamma$ is {\em indefinite} if it is nonzero,
and neither non-negative nor non-positive, so an indefinite $\gamma$ must have
at least one positive and at least one negative coefficient.  More locally,
$\gamma$ is non-negative {\em with respect to $F \in \cM(X)$} if $\gamma(H)
\geq 0$ for all $H \in \cM_1(X)$ such that $F \subseteq H$ and non-positivity
and indefiniteness with respect to $F$ are defined similarly.

For a local binomial structure $U$ on $D \subset X$, the boundary faces of
$X \cap U$ which are met by $D$ can be seen by examining the indefiniteness of
the exponent vectors.

\begin{lem}
If $U,$ $f_{i}\in\cG_{\gamma_i}(U)$ is a local binomial structure on $D$ near
$p$ and $U$ only meets boundary hypersurfaces which pass through $p$ then every
exponent vector $\gamma_i$ is either zero or indefinite.  Similarly if $D$
meets the interior of $F\in\cM(U)$ then each $\gamma_i$ is either zero or
indefinite with respect to $F.$ 
\label{L:faces_met_by_D}
\end{lem}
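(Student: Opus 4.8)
The statement to prove is: if $U,$ $f_i \in \cG_{\gamma_i}(U)$ is a local binomial structure on $D$ near $p$ and $U$ meets only boundary hypersurfaces passing through $p,$ then each $\gamma_i$ is zero or indefinite; and more locally, if $D$ meets the interior of $F \in \cM(U),$ then each $\gamma_i$ is zero or indefinite with respect to $F.$

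Let me think about what the content actually is. We have $U$ a coordinate neighborhood of $p$, say with coordinates $(x,y)$ where $x = (x_1,\dots,x_k)$ are boundary defining functions (for the hypersurfaces through $p$) and $y$ are interior coordinates. Each $f_i = a_i \rho^{\gamma_i}$ where $a_i > 0$ is smooth, $\rho = (x_1,\dots,x_k)$ (only these hypersurfaces since $U$ meets only those), and $\gamma_i : \{H_1,\dots,H_k\} \to \Z$. The set $D$ is the closure in $U$ of $\{q \in U \setminus \partial U : f_i(q) = 1, i=1,\dots,d\}$.

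Suppose for contradiction that some $\gamma_i$, say $\gamma_1$, is nonzero and non-negative (the non-positive case is symmetric, replacing $f_1$ by $1/f_1$ which lies in $\cG_{-\gamma_1}$). So $\gamma_1(H_j) \geq 0$ for all $j$, and $\gamma_1(H_{j_0}) > 0$ for some $j_0$. Then near $p$, $f_1 = a_1 \prod_j x_j^{\gamma_1(H_j)}$, and since all exponents are $\geq 0$ and at least one is positive, $f_1$ extends continuously to $\partial U$ with $f_1 \to 0$ on the hypersurface $\{x_{j_0} = 0\}$ (and more generally $f_1(q) \to 0$ as $q$ approaches any point where $x_{j_0} = 0$). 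The key point: $f_1$ is bounded and in fact $|f_1| < 1$ on a whole neighborhood of $p$ intersected with $\partial U$, and by continuity $|f_1| < 1$ on a neighborhood of $p$ in all of $U$ — wait, that's not quite right since $f_1$ could exceed $1$ in the interior away from the boundary. Let me reconsider.

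The cleaner approach: since $\gamma_1$ is non-negative and nonzero, $f_1 = a_1 x^{\gamma_1}$ vanishes on the boundary face $F_0 = \{x_j = 0 : \gamma_1(H_j) > 0\} \cap U$ (a nonempty boundary face since $U$ contains $p$ and hence the deepest corner). Now $D \cap U \setminus \partial U \subset \{f_1 = 1\}$, which is a closed subset of $U \setminus \partial U$ bounded away from $F_0$: indeed on a small enough neighborhood $U' \subset U$ of $p$, continuity of $f_1$ up to $\partial U$ (it has a finite limit $0$ there) forces $f_1 < 1$ on $U' \setminus \partial U$... no — I need $f_1 < 1$ near the *boundary*, not near $p$. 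Let me fix this. Shrink $U$ so that $|f_1| < 1$ on $\{x_{j_0} < \epsilon\} \cap (U \setminus \partial U)$ for suitable $\epsilon$; this is possible since $f_1 \to 0$ uniformly as $x_{j_0} \to 0$ (as $a_1$ is bounded on the compact closure of a smaller neighborhood and the other factors $x_j^{\gamma_1(H_j)} \leq $ const). Then $\{f_1 = 1\} \cap U \setminus \partial U \subset \{x_{j_0} \geq \epsilon\}$, so its closure in $U$ also lies in $\{x_{j_0} \geq \epsilon\}$, hence $D \cap U$ is disjoint from the hypersurface $\{x_{j_0} = 0\}$. But $p \in \{x_{j_0} = 0\}$ (since $U$ meets only hypersurfaces through $p$, and $H_{j_0}$ is one of them so $p \in H_{j_0}$), and $p \in D$ — contradiction. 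Therefore each $\gamma_i$ is zero or indefinite.

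For the "local" second statement: if $D$ meets the interior of $F \in \cM(U)$, apply essentially the same argument but relative to $F$. Pick a point $p' \in D \cap \mathring F$ and a smaller coordinate neighborhood $U'' \ni p'$ which meets only hypersurfaces containing $F$ (these are the $H_j$ with $F \subseteq H_j$); the functions $f_i|_{U''}$ still give a local binomial structure on $D$ near $p'$ (one must check the logarithmic differentials stay independent, which is immediate by restriction, and that $f_i|_{U''} \in \cG_{\gamma_i'}(U'')$ where $\gamma_i'$ is the restriction of $\gamma_i$ to hypersurfaces through $p'$, i.e. those containing $F$). Applying the first part to this structure at $p'$ shows each $\gamma_i'$ — which records exactly the values $\gamma_i(H)$ for $F \subseteq H$ — is zero or indefinite, which is precisely the statement that $\gamma_i$ is zero or indefinite with respect to $F$.

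The main obstacle is getting the uniformity right in the first paragraph: one must be careful that "$f_1 < 1$ near the boundary hypersurface $H_{j_0}$" really follows from non-negativity of $\gamma_1$ — this needs the boundedness of $a_1$ on a compact neighborhood together with the fact that the *other* coordinate factors $x_j^{\gamma_1(H_j)}$ with $\gamma_1(H_j) \geq 0$ are also bounded on such a neighborhood, so that $f_1 = O(x_{j_0}^{\gamma_1(H_{j_0})}) \to 0$. Once this estimate is in hand, the topological contradiction ($D$ contains $p \in H_{j_0}$ but $D$ stays away from $H_{j_0}$) is immediate. The reduction of the relative statement to the absolute one is routine bookkeeping about which hypersurfaces a smaller neighborhood meets.
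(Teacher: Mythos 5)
Your proof is correct and is essentially the paper's argument: the paper likewise observes that a nonzero, sign-definite $\gamma_i$ forces $a_i x^{\gamma_i}$ to tend to $0$ (or $\infty$) at the boundary, so it cannot equal $1$ near $p$, contradicting $p\in D$ since $D$ is the closure of the interior solution set; the relative statement is handled by the same argument at a point of $D\cap \mathring F$. Your version merely makes the uniformity estimate near the hypersurface explicit, which the paper leaves implicit.
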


\begin{proof} 
By assumption all the local boundary defining functions vanish at $p$ so if
$\gamma_i$ is non-zero and has all entries of a fixed sign then
$a_ix^{\gamma_i}$ cannot be equal to $1$ near $p.$ The same argument applies to
other boundary points of $D$ where fewer of the boundary defining functions
vanish.
\end{proof}

Next we establish a local normal form for local binomial structures.

\begin{lem} \label{L:bin_str_normal_form}
If $U,$ $f_i \in \cG_{\delta_i}(U)$ is a local binomial structure
on $D$ then near any boundary point $q\in D$ in $U$ there are local
coordinates $(x,y)$ in a (possibly smaller) neighborhood $U'$ of $q$ in terms of which
\begin{equation}
D\cap U'=\set{x^{\gamma_i}=1,\  i=1,\ldots,d',\ y_j=0,\ j=d'+1,\ldots,d}
\label{E:bin_str_normal_form}
\end{equation}
where the $\gamma_i$ are linearly independent and indefinite vectors with respect
to the maximal codimension boundary face through $q.$ 
\end{lem}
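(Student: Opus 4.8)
The plan is to start from a local binomial structure $U, f_i \in \cG_{\delta_i}(U)$ at the boundary point $q \in D$ and perform a sequence of normalizing changes of coordinates. First I would shrink $U$ so that it only meets boundary hypersurfaces passing through $q$; after this reduction Lemma~\ref{L:faces_met_by_D} applies and each $\delta_i$ is either zero or indefinite with respect to the maximal codimension face $F$ through $q$. Separate the index set accordingly: let those $i$ with $\delta_i = 0$ be (after relabeling) $i = d'+1,\ldots,d$. For such an index, $f_i = a_i$ with $0 < a_i \in \CI(U)$ and the defining equation $f_i = 1$ is $a_i = 1$; since $df_i/f_i = da_i/a_i$ is nonvanishing at $q$, the function $y_j := a_i - 1$ (for an appropriate $j > d'$) has nonvanishing differential there and may be used as one of the tangential coordinates, so the corresponding equation becomes $y_j = 0$.

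For the remaining indices $i = 1,\ldots,d'$, the exponent vectors $\gamma_i := \delta_i$ are indefinite with respect to $F$. Writing $(x,y)$ for coordinates centered at $q$ with $x = (x_1,\ldots,x_k)$ the boundary defining functions for the hypersurfaces through $q$, we have $f_i = a_i x^{\gamma_i}$, $0 < a_i \in \CI(U)$. The key point, exactly as in the proof of Lemma~\ref{L:normal_form_genblowdown}, is that the logarithmic differentials
\[
	\frac{df_i}{f_i} = \frac{da_i}{a_i} + \sum_{l=1}^k \gamma_i(l)\,\frac{dx_l}{x_l},\quad i = 1,\ldots,d'
\]
are independent at $q$ by hypothesis, and are independent of the $dy_j$ we have already fixed. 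Independence at $q$ of these $d'$ covectors, together with independence from the span of the $dx_l/x_l$, forces the integer vectors $\gamma_1,\ldots,\gamma_{d'}$ themselves to be linearly independent (if $\sum c_i \gamma_i = 0$ then $\sum c_i\, df_i/f_i = \sum c_i\, da_i/a_i$ would be a smooth covector with no $dx_l/x_l$ component, and evaluating against the frame shows the $c_i$ must vanish). This gives the linear independence and indefiniteness asserted in \eqref{E:bin_str_normal_form}.

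It remains to absorb the positive smooth factors $a_i$, $i \leq d'$, into the coordinates so that the equations become exactly $x^{\gamma_i} = 1$. Here the mechanism is again that of Lemma~\ref{L:normal_form_genblowdown}: changing $x_l$ by a positive smooth factor $b_l$, i.e. $x_l \rightsquigarrow b_l x_l$, multiplies each $a_i$ by the monomial $b^{\gamma_i}$, so it suffices to find positive smooth $b_l$ with $b^{\gamma_i} = a_i^{-1}$ for $i = 1,\ldots,d'$; taking logarithms this is the linear system $\sum_l \gamma_i(l)\log b_l = -\log a_i$, which is solvable smoothly near $q$ because the matrix $[\gamma_i(l)]$ has rank $d'$ (complete the independent rows $\gamma_1,\ldots,\gamma_{d'}$ to an invertible $k\times k$ matrix over $\Q$, solve, and exponentiate). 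One must check that this change of the $x_l$ does not disturb the equations $y_j = 0$: the $y_j$ were built from the $a_i$ with $\delta_i = 0$, which are unaffected by rescaling boundary defining functions, so after perhaps a further harmless adjustment the $y_j$ remain valid tangential coordinates. The differentials $df_i/f_i = dx^{\gamma_i}/x^{\gamma_i} = \sum_l \gamma_i(l)\,dx_l/x_l$ are still independent among themselves and from the $dy_j$, and restricting to a possibly smaller neighborhood $U'$ where all this holds yields \eqref{E:bin_str_normal_form}.

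The main obstacle I anticipate is the bookkeeping needed to make the two families of coordinate changes — the $a_i$-to-$y_j$ substitutions for $\delta_i = 0$ and the $b_l$-rescalings for the indefinite $\delta_i$ — genuinely independent and simultaneously realizable, i.e. verifying that after performing both one still has a legitimate coordinate system in which both sets of equations take the stated form. This is entirely parallel to the argument in Lemma~\ref{L:normal_form_genblowdown} (where the $z_i$ tangential coordinates and the rescaled $x_j$ are handled together) and amounts to a careful application of the implicit function theorem plus linear algebra over $\Q$, but it is the step where a naive argument could overlook an interaction between the two constructions.
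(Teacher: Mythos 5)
Your overall strategy (split the defining functions into a ``smooth'' group giving $y_j=0$ and a ``monomial'' group giving $x^{\gamma_i}=1$, then absorb the positive coefficients by rescaling the boundary defining functions) matches the paper's, and the final rescaling step via $b^{\gamma_i}=a_i^{-1}$ is correct. But there is a genuine gap in how you produce the decomposition: you split the indices according to whether $\delta_i=0$ or $\delta_i$ is indefinite, and then assert that the indefinite exponent vectors are automatically linearly independent because the $df_i/f_i$ are. That implication is false. If $\sum_i c_i\gamma_i=0$ with not all $c_i=0$, then $\sum_i c_i\,df_i/f_i=\sum_i c_i\,d\log a_i$ is a \emph{nonzero} smooth covector; independence of the $df_i/f_i$ only guarantees the combination is nonzero, not that it cannot lie in the smooth subspace $\bN^\perp_q F$. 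Concretely, take $U\subset\R^2_+\times\R$ with coordinates $(x_1,x_2,y)$ and $f_1=x_1/x_2$, $f_2=e^{y}x_1/x_2$: both exponent vectors equal $(1,-1)$ (indefinite, nonzero), the logarithmic differentials $dx_1/x_1-dx_2/x_2$ and $dy+dx_1/x_1-dx_2/x_2$ are independent, yet your argument would claim $\gamma_1=\gamma_2=(1,-1)$ are linearly independent. The normal form here requires replacing $f_2$ by $f_2/f_1=e^{y}$, which your index-splitting never does.

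The paper's proof avoids this by working not with the dichotomy ``zero versus indefinite'' but with the linear algebra of the restricted exponents $\delta'_i$: it chooses a maximal linearly independent subset (these become the $\gamma_i$, $i\le d'$, with $d-d'=\dim\bigl(A_q(D)\cap\bN^\perp_qF\bigr)$) and replaces the remaining functions by products $f'_j=\prod_i f_i^{t_i}$ corresponding to the nontrivial relations $\sum_i t_i\delta'_i=0$; these $f'_j$ have exponent zero and are smooth and positive, so their logarithms supply the tangential coordinates. To repair your argument you need to insert exactly this step: after restricting the exponents to the hypersurfaces through $q$, extract a maximal independent subset and multiply the remaining $f_i$ into smooth functions before identifying the $y_j$. (Indefiniteness of the resulting $\gamma_i$ then follows from Lemma~\ref{L:faces_met_by_D} since they are nonzero, as you correctly observe.) The rest of your proof, including the solvability of $\sum_l\gamma_i(l)\log b_l=-\log a_i$ by completing the $\gamma_i$ to an invertible rational matrix, goes through once the decomposition is fixed.
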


\begin{proof} 
By definition of local binomial structure, $D\cap U$ is the closure of its
intersection with the interior, so there are interior points of $D$ near $q.$
Let $F$ be the boundary face of maximal codimension containing $q;$ so $q$ lies
in the interior of $F.$ The b-cotangent space $\bT^\ast_{q} U$, of which the
$d\log f_i$ are sections, has a `smooth subspace' $\bN^\perp_q F$ consisting of the
differentials of smooth functions; it is the image in $\bT^\ast_{q} U$ of the
natural map $T^\ast_{q} U \to \bT^\ast_{q} U$, and is the annihilator
of the b-normal space $\bN_{q} F.$

The assumed independence of the logarithmic differentials $df_i/f_i$ in $U$ 
ensures that they span a linear space of dimension $d$ at $q,$
\begin{equation}
	A_q(D) := \sspan\set{df_i/f_i} \subset \bT^\ast_q U.
	\label{E:conormal_to_binvar}
\end{equation}
Within this space consider the intersection
\[
	A^\mathrm{sm}_q(D) = A_q(D) \cap \bN^\perp_q F
\]
with the smooth subspace and set $d-d' =
\dim(A^\mathrm{sm}_q(D)).$ In terms of local coordinates $(x,y)$ at $q,$ 
the functions $f_i=a_i\,x^{\delta'_i}$ where the $\delta'_i$ are
the restrictions of the $\delta_i$ to the boundary hypersurfaces through
$q,$ and $\bN^\perp_q F$ is the span of the $dy_k$.  
Thus $A^\mathrm{sm}_q(D)$ is spanned by those linear combinations of
logarithmic differentials
\[
	\sum\limits_{i}t_idf_i/f_i=\sum\limits_{i,j}t_i\delta'_{ij}dx_j/x_j+\sum\limits_{i}t_id\log a_i
\]
for which $\sum_i t_i \delta'_i \equiv 0$.

Thus, after some renumbering, an independent set of the $\delta'_i$ can be
chosen and renamed $\gamma_i,$ $i = 1,\ldots,d'.$ The remaining $f_j$, $j =
d'+1,\ldots,d$ can be replaced by products $f'_j = \prod_i f_i^{t_i}$
corresponding to nontrivial independent relations $\sum_i t_i \delta'_i = 0.$
Then the set $\set{df_i/f_i, df'_j/f'_j}$ is independent and spans $A_q(D)$ and
$D$ is given locally by the equations $f_i = f'_j = 1$, but the $f'_j =
a'_j\,x^0$ are smooth and positive.  

The functions $\log a'_j$, $j = d'+1,\ldots,d$ can then be introduced in place
of some of the tangential variables $y_j$ and their differentials span
$A^\mathrm{sm}_q(D)$.  Since the $\gamma_i$ are now linearly independent, after
changing the boundary variables from $x_k$ to $g_k\,x_k$ with $g_k>0$ and
renumbering, $a_i\,x^{\gamma_i}$ is reduced to the desired form $x^{\gamma_i}$
giving \eqref{E:bin_str_normal_form}.
\end{proof}

\begin{defn} 
A connected, closed subset $D \subset X$ of a manifold with corners is an
\emph{interior binomial subvariety} if it has a covering by local binomial
structures.  
\label{D:int_bin_subvar} 
\end{defn}
The codimension of $D$ is well-defined as the local codimension by
connectivity, and $D \cap \pns{X\setminus \pa X}$ is a smooth manifold of
dimension $\dim(D)= \dim(X) - \codim(D)$. 

It follows from the proof of Lemma \ref{L:bin_str_normal_form} that the
`b-conormal spaces' $A_{q}(D)\subset \bT^\ast_{q}X$ in
\eqref{E:conormal_to_binvar} are well-defined at each point $q \in D$ and
independent of the local binomial structure used -- for points in the
interior $A_q(D)$ is just the ordinary conormal space to the smooth manifold $D
\cap X \setminus \pa X$, and its extension by continuity to $D \cap \pa X$ is
unique.  So we may proceed as for a smooth submanifold of a manifold and set 
\begin{equation}
\begin{gathered}
	\bT_{q}D=(A_{q}(D)\subset \bT^\ast_{q}X)^\perp\subset\bT_{q}X,\\
	\bN_{q}D_G=\bT_{q}D\cap\bN_{q}G, \quad p \in D \cap G,\ G \in \cM(X).
	\label{E:binvar_bundles}
\end{gathered}
\end{equation}
%where $G$ is the unique boundary face of $X$ with $p'\in G\setminus\pa G$ and
%the notation $D_G$ in definition of $\bN_{p'}D_G$ simply refers to the a
%boundary face of $X$ containing $p.$ 
The b-tangent bundle $\bT D \to D$ is actually independent of the `binomial
embedding' of $D$ in $X$ once this is understood correctly, but here we persist
with the extrinsic discussion.  Observe that each $\bN_q D_G$ is just the
nullspace of the maps $\bd \gamma_i$ as in \eqref{E:b_gamma} for the $\gamma_i$
occuring in any local binomial structure for $D$ near $p$.  In particular it is
invariant with respect to replacing the $\gamma_i$ by linear combinations with
the same span.

From the local normal form \eqref{E:bin_str_normal_form} it follows that the
intersection of $D$ with a boundary face through $p$ is again a binomial
subvariety, as we now show.  By way of motivation, notice that the behavior of
the b-tangent spaces on passage to a boundary face of $X$: for $p\in
G\in\cM(X),$ $\bT_pG=\bT_pX/\bN_pG.$

\begin{lem}\label{L:binvar_boundary} 
Suppose $D \subset X$ is an interior binomial variety with codimension $d,$
then for any $G\in \cM(X),$ such that $D \cap (G \setminus \pa G) \neq
\emptyset$, a component $G'$ of the closure $D_G := \clos\pns{D \cap G
\setminus \pa G}$ is an interior binomial subvariety of $G$ with codimension $d
- \dim(\bN_p D_G)$, for any $p \in G'$ and
\begin{equation}
	\bT_pD_G=\bT_pD/\bN_pD_G.
\label{E:binvar_boundary_tangent}
\end{equation}
\end{lem}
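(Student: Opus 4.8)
The plan is to work entirely in local coordinates using the normal form of Lemma~\ref{L:bin_str_normal_form}, and to identify which of the defining equations survive on restriction to $G$. Fix $p \in G' \subseteq D_G$ and choose a local binomial structure $U'$ around $p$ of the form \eqref{E:bin_str_normal_form}, so that $D \cap U' = \set{x^{\gamma_i} = 1,\ i = 1,\ldots,d',\ y_j = 0,\ j = d'+1,\ldots,d}$, where the $\gamma_i$ are linearly independent and indefinite with respect to the maximal codimension face $F$ through $p$. We may arrange coordinates so that $G = \set{x_1 = \cdots = x_m = 0}$ locally (with $x_{m+1},\ldots,x_k$ not vanishing identically on $G$). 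Restricting to $G$ means setting $x_1 = \cdots = x_m = 0$; the monomial $x^{\gamma_i}$ then either (a) involves only the variables $x_{m+1},\ldots,x_k$ (i.e. $\gamma_i(H) = 0$ for every hypersurface $H \supseteq G$), in which case the equation $x^{\gamma_i} = 1$ descends to a genuine binomial equation on $G$, or (b) involves at least one of $x_1,\ldots,x_m$, and since the closure $D_G = \clos(D \cap G \setminus \pa G)$ only retains points approached from the interior, case (b) equations impose no new constraint on $D_G$ beyond forcing the approach — more precisely, by Lemma~\ref{L:faces_met_by_D} the restriction $\gamma_i'$ to the hypersurfaces through $G$ must itself be zero or indefinite, and only the zero case (type (a)) yields a surviving equation while the indefinite case is automatically compatible with points of $G'$. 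So $D_G \cap G \cap U'$ is cut out by the type-(a) equations together with $y_j = 0$.

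Next I would count. The surviving exponent vectors $\gamma_i$ (type (a)) are exactly those lying in the annihilator of $\bN_p G$ among the $\gamma_i$; by definition \eqref{E:binvar_bundles}, $\bN_p D_G$ is the nullspace inside $\bN_p G$ of all the $\bd\gamma_i$, equivalently the span of the surviving $\gamma_i$ has dimension $\dim(\bN_p G) - \dim(\bN_p D_G)$. Hence the number of type-(a) binomial equations is $\dim(\bN_p G) - \dim(\bN_p D_G)$, and adding the $d - d'$ equations $y_j = 0$ (all of which survive, as they involve only interior variables) gives that the codimension of $D_G$ in $G$ equals $d - \dim(\bN_p D_G)$, using that the full codimension of $D$ in $X$ is $d$ and that the $\gamma_i$ spanning the $\bN_p G$-direction contribute exactly $\dim(\bN_p G)$ of those. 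I would then check that the logarithmic differentials of the surviving equations remain independent as sections of $\bT^\ast G$, which follows from the original independence in $\bT^\ast X$ together with $\bT_p G = \bT_p X / \bN_p G$: the type-(b) $d\gamma_i/(\cdot)$ are precisely those that become dependent (or vanish) modulo $\bN_p G$, and a dimension count shows nothing else is lost. This verifies that $G'$ carries a local binomial structure near each of its points, hence is an interior binomial subvariety of $G$ (connectedness of the component $G'$ handles the global part of Definition~\ref{D:int_bin_subvar}).

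Finally, for the tangent bundle identity \eqref{E:binvar_boundary_tangent}: by \eqref{E:binvar_bundles}, $\bT_p D_G = A_p(D_G)^\perp \subset \bT_p G$, where $A_p(D_G)$ is the span of the logarithmic differentials of the surviving equations, viewed in $\bT^\ast_p G = (\bN_p G)^\perp \subset \bT^\ast_p X$. On the other hand $\bT_p D / \bN_p D_G$ makes sense since $\bN_p D_G = \bT_p D \cap \bN_p G \subseteq \bT_p D$, and under the quotient map $\bT_p X \to \bT_p X / \bN_p G = \bT_p G$ the image of $\bT_p D = A_p(D)^\perp$ is exactly the annihilator, inside $\bT_p G$, of the image of $A_p(D)$ in $\bT^\ast_p G$ — but that image is spanned precisely by the restrictions of the $d\gamma_i/(\cdot)$ to $\bT_p G$, which are the surviving ones, i.e. it is $A_p(D_G)$. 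Thus the image of $\bT_p D$ in $\bT_p G$ is $A_p(D_G)^\perp = \bT_p D_G$, and the kernel of $\bT_p D \to \bT_p G$ is $\bT_p D \cap \bN_p G = \bN_p D_G$, giving \eqref{E:binvar_boundary_tangent}.

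I expect the main obstacle to be the bookkeeping in the second paragraph: carefully justifying that in the passage to the closure $\clos(D \cap G \setminus \pa G)$ the type-(b) equations genuinely drop out (rather than, say, fragmenting $D_G$ into pieces or imposing inequality-type constraints), and that the surviving exponent vectors restrict to linearly independent and indefinite vectors with respect to the maximal face through $p$ in $G$ — this is where Lemma~\ref{L:faces_met_by_D} and the normal form \eqref{E:bin_str_normal_form} must be invoked with some care, and where one must be attentive to the possibility that $p$ does not lie in the interior of $G$ itself.
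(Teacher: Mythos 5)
Your overall strategy coincides with the paper's: pass to the normal form of Lemma~\ref{L:bin_str_normal_form}, split the boundary coordinates according to whether their hypersurfaces contain $G$, and retain only the equations that descend to $G$. But the two steps you defer as ``bookkeeping'' are exactly where the content of the lemma sits, and as written neither goes through. First, your type (a)/(b) dichotomy is applied to the individual exponent vectors $\gamma_i$, whereas the surviving equations correspond to the subspace of $\sspan\set{\gamma_1,\ldots,\gamma_{d'}}$ annihilating $\bN_p G$: two $\gamma_i$ may each be indefinite with respect to $G$ while a linear combination vanishes on $\bN_p G$ and hence yields a surviving binomial equation that your classification discards. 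The paper's proof first selects a maximal subset of the $\gamma_i$ whose restrictions $\gamma''_i$ to the hypersurfaces through $G$ are independent and replaces the remaining ones by independent combinations $\delta_j$ with $\delta''_j=0$; only after this substitution is the split clean. This also undoes your dimension count: the number of surviving binomial equations is $d'$ minus the rank of the restrictions to $\bN_p G$, i.e.\ $d'-\pns{\dim\bN_p G-\dim\bN_p D_G}$, not $\dim\bN_p G-\dim\bN_p D_G$ as you assert, and the arithmetic in your second paragraph does not actually sum to the codimension you claim; you should redo this count carefully and reconcile it with the displayed formula and the definition \eqref{E:binvar_bundles}.

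Second, and more seriously, the assertion that the indefinite equations are ``automatically compatible with points of $G'$'' is precisely what must be proved: one needs that \emph{every} solution of the surviving system near $p$ is a limit of points of $D\cap\pns{G\setminus\pa G}$, i.e.\ that the discarded equations can always be solved with the $G$-defining coordinates tending to $0$, and indefiniteness of the $\gamma''_i$ alone does not give this. The paper's argument writes the discarded equations as the linear system $\gamma''_i\cdot\log x''=c_i$ in the vector $\log x''$, uses the hypothesis that $D$ meets $G\setminus\pa G$ near $p$ to produce, for some bounded right-hand side, solutions with all entries of $\log x''$ below any prescribed $N$, and then uses the linear independence of the $\gamma''_i$ to translate such solutions so as to realize \emph{any} right-hand side. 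Without this step the converse inclusion in \eqref{E:binvar_boundary_form} --- hence the identification of $G'$ as the solution set of the surviving equations, its binomial structure, and \eqref{E:binvar_boundary_tangent} --- is not established. Your final paragraph on \eqref{E:binvar_boundary_tangent} is sound linear algebra once the local defining equations of $G'$ have been correctly identified.
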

\noindent It may well happen that $D\cap(G\setminus\pa G)=\emptyset$ but $D
\cap G \neq \emptyset$ in which case we do not regard $D\cap G$ as an {\em
interior} binomial subvariety of $G$ since it lies only in the boundary. 

\begin{proof} 
Let $p$ be a limit point of $G' \cap G\setminus \pa G$, and consider the local
form for $D$ near $p$ given on a neighborhood $U'$ by
\eqref{E:bin_str_normal_form}; we can assume without loss of generality that
$U'$ does not meet any other component of $D \cap G.$ Thus the $x$'s are local
defining functions for the boundary face $F \subseteq G$ of maximal codimension
through $p.$  Divide them as $x = (x',x'')$  where the $x''$ define $G$
locally. %By Lemma~\ref{L:bin_str_normal_form}, we may assume that
%$D$ is given in these coordinates by \eqref{E:bin_str_normal_form}.

Following the proof of Lemma~\ref{L:bin_str_normal_form} above, a maximal
subset of the $\gamma_i$ may be chosen so that their restrictions to the
boundary faces containing $G$ are independent, i.e.\ the projections $\gamma_i
= (\gamma'_i,\gamma''_i) \smallmapsto \gamma''_i$ onto the subspace
corresponding to the $x''$ are independent.  The remaining $\gamma_j$ may be
replaced by independent linear combinations $\delta_j=\sum_i t_i\gamma_i$ which
are non-zero only with respect to the boundary hypersurfaces which do not
contain $G,$ i.e.\ their projections $\delta_j = (\delta'_j,\delta''_j)
\smallmapsto \delta''_j$ vanish.  The defining conditions
\eqref{E:bin_str_normal_form} may therefore be rewritten, after renumbering,
\begin{equation}
\begin{gathered}
	(x')^{\gamma'_i}(x'')^{\gamma''_i} = 1,\; i = 1,\ldots,k,\
	(x')^{\delta'_i} = 1,\; i = k+1,\ldots,d',\\ y_j = 0,\; j = d'+1,\ldots,d,
	\label{E:bin_str_split_normal_form}
\end{gathered}
\end{equation}
where the $\gamma''_i$ and $\delta'_i$ are separately independent, and the
$\gamma''_i dx''_i/x''_i$ span $\bN_p D_G$.

Then
\begin{equation}
	G' \cap U' = \set{(x')^{\gamma'_i} = 1, i = k+1,\ldots,d',\; y_j = 0,\ j = d'+1,\ldots,d}
	\label{E:binvar_boundary_form}
\end{equation}
is a local binomial structure on $G'.$ Certainly the right side
of \eqref{E:binvar_boundary_form} is included in the left. To see the converse, observe
that, by the assumption that $D$ meets the interior of $G$ near $p,$ the
system \eqref{E:bin_str_split_normal_form} must have a sequence of solutions with $x''\smallto0$
and all entries of $x'$ positive but small. 

Writing out first set of equations as the linear system $\gamma''_i \cdot \log
x'' = c_i$ for the vector $\log x'',$ with entries the logarithms of the $x'',$
we see that this system must, for each $N\in\R,$ have a solution with all
entries less than $N$ with the $c_i$ bounded. Since the $\gamma''_i$ are
linearly independent, it follows that the same is true for \emph{any} $c_i.$
Thus in fact any solution of the equations in \eqref{E:binvar_boundary_form}
corresponds to a point in $G'.$ 
%The codimension of $G'$ is well-defined locally and therefore globally by
%connectivity; it must equal to $d - k$ where $k = \dim(\bN_p D_G)$ for any $p
%\in G'$.
\end{proof}

In light of this result, we define the {\em boundary faces} of $D$ to be the
components $G'$ of the nonempty $D_G = \clos(D \cap G \setminus \pa G)$.  The
linear space $\bN_p G' := \bN_pD_G = \nullspace\set{\bd \gamma_i} \subset \bN_p G$
as in \eqref{E:binvar_bundles} can then be identified as the `b-normal space to
$G'$ as a boundary face of $D,$' consistent with the smooth case.  The
codimension of a boundary face $G' \subset D_G$ with respect to $D$ is
given by $\dim(\bN G')$ as expected, and we let $\cM_k(D)$ be the boundary
faces of codimension $k$ in this sense, equipping $\cM(D) = \bigcup_k \cM_k(D)$
with the order of reverse inclusion.

The intersection of the rational subspaces $\bN G',$ $G' \in \cM(D)$ with 
the monoids $\sigma_G$, $G \in \cM(X)$ gives the `basic monoidal complex' of
$D$.

\begin{prop} If $D \subset X$ is an interior binomial subvariety the monoids 
\begin{equation}
	\sigma_{G'} = \bN G' \cap \sigma_G ,\ G' \in \cM(D)
\label{E:binvar_monoids}
\end{equation}
where $G'$ is a component of $\clos(D \cap G \setminus \pa G)$
form a (not necessarily smooth) complex $\cP_D$ over $(\cM(D),
\leq)$ and there is a natural, injective morphism of complexes
\begin{equation}
	i_\natural : \cP_D \to \cP_X
	\label{E:bin_suvar_mon_complex}
\end{equation}
over $i_\# : \cM(D) \to \cM(X),$ where $i_\#(G') = G$ such that $G'
\subset D_G$ as above.
\label{P:bin_subvar_mon_complex}
\end{prop}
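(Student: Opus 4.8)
The plan is to verify three things: that each $\sigma_{G'}$ as defined in \eqref{E:binvar_monoids} is a genuine (toric) monoid; that the face relations among the $\sigma_{G'}$ make $\cP_D$ into a complete and reduced monoidal complex over $(\cM(D),\leq)$; and that the inclusions $\sigma_{G'}\hookrightarrow\sigma_G$ assemble into the morphism $i_\natural$ over $i_\#$.

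First I would check the monoid condition. For $G'\in\cM_k(D)$ a component of $\clos(D\cap G\setminus\pa G)$ with $i_\#(G')=G$, the space $\bN_pG'=\bN_pD_G\subset\bN_pG$ is a rational subspace, being cut out by the integral functionals $\bd\gamma_i$ arising from any local binomial structure (this is exactly the content following \eqref{E:binvar_bundles}, together with Lemma~\ref{L:binvar_boundary} which shows $\bN G'$ is well-defined and globally trivialized over $G'$). Since $\sigma_G=\Zp\pair{x_1\pa_{x_1},\ldots,x_k\pa_{x_k}}$ is a smooth (hence toric) monoid and $\bN G'$ is an integral subspace of $N_{\sigma_G}^\R=\bN G$, the intersection $\sigma_{G'}=\bN G'\cap\sigma_G$ is the intersection of a finitely generated integral lattice with a proper convex rational polyhedral cone, and is therefore a toric monoid by Definition~\ref{D:monoid}. (It is precisely a full submonoid of $\sigma_G$ in the sense of \eqref{E:full_submonoid}, and need not be smooth or even simplicial — this is the source of the parenthetical caveat in the statement.)

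Next I would establish the complex structure. For $G_1'\subseteq G_2'$ in $\cM(D)$ (i.e.\ $G_2'\leq G_1'$ in the reverse order), one has $G_1\supseteq\cdots$ — more precisely, $G_2'$ being contained in the closure of $D\cap G_1$ forces $i_\#(G_1')\subseteq i_\#(G_2')$ in $X$, so $\sigma_{i_\#(G_1')}\leq\sigma_{i_\#(G_2')}$ is a face inclusion of smooth monoids in $\cP_X$; intersecting with $\bN G_2'\subseteq\bN G_1'$ (the latter inclusion from \eqref{E:binvar_boundary_tangent}) shows $\sigma_{G_1'}$ is identified with a face of $\sigma_{G_2'}$, namely the intersection of $\sigma_{G_2'}$ with a face of its support cone cut out by the relevant $\bd\gamma_j$. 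This gives the face maps $i_{G_2'G_1'}:\sigma_{G_2'}\hookrightarrow\sigma_{G_1'}$ and makes $\cP_D$ a functor from $(\cM(D),\leq)$ to monoids with all morphisms face inclusions. Completeness and reducedness then follow from the local normal form \eqref{E:bin_str_split_normal_form}–\eqref{E:binvar_boundary_form}: locally near a point $q\in G'$ lying in the interior of a maximal boundary face, the faces of $\sigma_{G'}$ correspond bijectively to the subsets of the boundary hypersurfaces of $X$ through $q$ that meet $D_G$ for the various $G$, and each such subset is realized by exactly one boundary face of $D$ through $q$ — this is the binomial-variety analogue of the argument that $\cP_X$ is complete and reduced, and Lemma~\ref{L:binvar_boundary} is what makes the bookkeeping work. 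Finally, the injective morphism \eqref{E:bin_suvar_mon_complex} is assembled from the inclusions $\sigma_{G'}=\bN G'\cap\sigma_G\hookrightarrow\sigma_G$, which are injective by construction and commute with the face maps of $\cP_D$ and $\cP_X$ because both are induced by the nested inclusions of b-normal spaces; this is precisely Definition~\ref{D:morphism_of_monoidal_complex} with $\phi_\#=i_\#$.

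The main obstacle I anticipate is the verification that $\cP_D$ is \emph{reduced} — i.e.\ that two distinct boundary faces of $D$ never give rise to the same face of the same $\sigma_{G'}$. This requires knowing that distinct components of $D_G$ through a point, or distinct faces $G'$ with the same $\bN G'$, are genuinely separated, which in turn relies on the fact (established in and around Lemmas~\ref{L:binvar_boundary} and \ref{L:bin_str_normal_form}) that near each boundary point $D$ has a single well-defined local normal form and that $D$ is the closure of its interior, so no "phantom" boundary components appear. Once the local picture of Lemma~\ref{L:binvar_boundary} is in hand, completeness is essentially immediate from \eqref{E:binvar_boundary_form} and the rest is routine diagram-chasing with face maps.
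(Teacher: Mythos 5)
Your proposal is correct and follows essentially the same route as the paper: the face relations come from $\bN_p G' = \bT_p D \cap \bN_p G$ together with the nesting of the b-normal spaces, the face property from the fact that $\sigma_{G'}$ is the preimage in $\sigma_{F'}$ of the face $\sigma_G \leq \sigma_F$ of the ambient smooth monoids, and the injective morphism is assembled from the inclusions $\bN G' \cap \sigma_G \hookrightarrow \sigma_G$, with completeness and reducedness handled at the same (sketchy) level of detail as in the paper. One slip to correct: for $G_1' \subseteq G_2'$ it is $\sigma_{G_2'}$ that is identified with a face of $\sigma_{G_1'}$ (as your face map $i_{G_2'G_1'} : \sigma_{G_2'} \hookrightarrow \sigma_{G_1'}$ correctly records, contradicting the preceding sentence), and that face is cut out by intersecting with $\bN G_2 \subseteq \bN G_1$, i.e.\ by the extra boundary defining coordinates, not by the $\bd\gamma_j$, which are common to both monoids.
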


\begin{proof}
The monoids $\sigma_{G'}$ are clearly well-defined and toric; it suffices to
verify that they form a complex.  If $F' \subseteq G' \in \cM(D)$ and $p \in
F'$, then $F \subseteq G \in \cM(X)$.  Since $\bN_p G' = \bT_p D \cap \bN_p G$
and similarly for $\bN_p F'$, and since $\bN_p G \subseteq \bN_p F$, it follows
that $\bN_p G' \subseteq \bN_p F',$ giving an inclusion $\sigma_{G'} \subset
\sigma_{F'}$, which must be an isomorphism onto a face since it is the
intersection of $\sigma_{F}$ with a subspace.  Thus 
\[
	\cP_D = \set{\sigma_{G} \;;\; G \in \cM(D)}
\]
is a (complete, reduced) monoidal complex over $\cM(D)$, and the inclusions
$\sigma_{G} \subset \sigma_{i_\#(G)}$ produce a necessarily injective morphism
$i : \cP_D \to \cP_X$.
\end{proof}

A very special instance of an interior binomial subvariety is a `product-' or
{\em p-submanifold} (see \cite{melrosedifferential} for background).  This is a
smooth submanifold $D \subset X$ which meets all boundary faces of $X$
transversally, and which is covered by coordinate neighborhoods
$\big(U,(x,y)\big)$ in $X$ such that
\[
	D\cap U = \set{y_j = 0\;;\; j = 1,\ldots,\codim(Y)}.
\]
For a p-submanifold, it is evident that $\bN_p G' \equiv \bN_p G$ whenever $G'
\in \cM(D)$ is a component of $D_G$, $G \in \cM(X)$, and so the morphism
$i_\natural : \cP_D \to \cP_X$ consists of monoid isomorphisms $\sigma_{G'}
\stackrel{\cong}{\to} \sigma_G$.  In other words $i_\natural$ is a {\em local}
isomorphism, though it need not be a global one, since the $D_G$ may consist of
multiple components, and therefore  $i_\# : \cM(D) \to \cM(X)$ need not be
injective.

In fact this global issue of the failure of $i_\# : \cM(D) \to \cM(X)$ to be
injective arises as a technical obstruction to the resolution of a general
interior binomial variety $D \subset X$ by generalized blow-up of $X$ in the
next section.  Indeed, if there are multiple components $G'_i \subset D_G$,
there may be no way to refine $\sigma_G \in \cP_X$ in a way which appropriately
resolves the images $i_{\natural}(\sigma_{G'_i})$.  Fortunately one can always
pass to a `collar neighborhood' $X' \supset D$ in $X$ for which this
obstruction does not arise.  The following lemma guarantees the existence of
such a neighborhood; the proof follows directly from the local normal form
\eqref{E:bin_str_normal_form}. 
%for $D$ by taking a union of coordinate neighborhoods which meet only one
%component $G'$ at a time.

\begin{lem}\label{L:binvar_neighborhood} 
If $D \subset X$ is an interior binomial subvariety then there exists an open
submanifold $X' \subset X$ containing $D$ such that $i_\# : \cM(D) \to \cM(X')$
is injective so for each $G \in \cM(X')$ there is at most one connected
component of $D_G.$ 
\end{lem}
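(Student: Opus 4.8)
The plan is to take $X'$ to be a sufficiently small open neighbourhood of $D$, with the normal form of Lemma~\ref{L:bin_str_normal_form} supplying all of the local structure and a point-set argument handling the global assembly. By that lemma every point of $D$ has a coordinate chart $U\cong\R^k_+\times\R^{n-k}$, coordinates $(x,y)$, in which $D\cap U$ is cut out by equations $x^{\gamma_i}=1$ and $y_j=0$ as in \eqref{E:bin_str_normal_form}. Shrinking $U$ to a product of coordinate intervals, I would note that in the logarithmic variables $(\log x_1,\dots,\log x_k,y)$ these equations are affine, so that $D\cap U$ --- and equally the intersection of $D$ with the relative interior of any coordinate boundary face of $U$ --- is the intersection of an affine subspace with a box, hence convex, hence connected or empty. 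Taking closures, every boundary face of $D\cap U$ is connected and contained in a single boundary face of $U$; equivalently $i_\#\colon\cM(D\cap U)\to\cM(U)$ is injective. Finally I would take $U$ inside $X\setminus\pa X$ whenever its centre is an interior point of $X$.

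I would then cover $D$ by a locally finite family $\set{U_\alpha}$ of such charts with compact closures, each centred at a point $p_\alpha\in D$, pass to a shrinking $\set{V_\alpha}$ with $\overline{V_\alpha}\subseteq U_\alpha$ still covering $D$, and set $X'=\bigcup_\alpha V_\alpha$. Since $X'$ is open, its boundary faces are the connected components of $G_0\cap X'$ for $G_0\in\cM(X)$; such a component $G$ has the same codimension as $G_0$, one has $D\cap G\setminus\pa G=G\cap\pns{D\cap G_0\setminus\pa G_0}$, and the face $i_\#(G')\in\cM(X')$ of a boundary face $G'\in\cM(D)$ lying over $G_0$ is just the component of $G_0\cap X'$ containing $G'$. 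Hence $i_\#\colon\cM(D)\to\cM(X')$ is injective precisely when, for every $G_0$, each component of $G_0\cap X'$ contains at most one connected component of $D_{G_0}=\clos\pns{D\cap G_0\setminus\pa G_0}$, and this is the statement to be established.

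The one point requiring care is that distinct components $G'_1\neq G'_2$ of $D_{G_0}$ are disjoint closed subsets of $G_0$ --- closed because $D_{G_0}$ is closed in $X$ and, by the convexity above, locally connected --- which cannot accumulate at a common point $q\in G_0$, since $G'_i$ accumulating at $q$ would force $q\in\clos(G'_i)=G'_i$, against $G'_1\cap G'_2=\emptyset$. Using normality of $G_0$ and local finiteness I would choose, for the locally finite family of components of $D_{G_0}$, pairwise disjoint open sets $W_j\subset G_0$ with $G'_j\subset W_j$ and $\clos(W_j)$ inside a prescribed small neighbourhood of $G'_j$. Then, invoking the no-common-accumulation property, the fact that $G_0\cap V_\alpha\neq\emptyset$ forces $p_\alpha\in G_0\cap D$ once the charts are small, and local finiteness (so that the choice can be made for all $G_0$ simultaneously), one takes the cover so fine that each connected set $G_0\cap V_\alpha$ either lies in a single $W_j$ --- namely the one containing the unique component of $D_{G_0}$ it meets, if any --- or is disjoint from $\bigcup_j\clos(W_j)$. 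Since the $W_j$ are pairwise disjoint, two overlapping such pieces must be of the same one of these types lying in the same $W_j$; hence any connected component of $G_0\cap X'=\bigcup_\alpha(G_0\cap V_\alpha)$ that meets $D_{G_0}$ is contained in a single $W_j$ and so meets only $G'_j$. This yields the required injectivity, and consequently for each $G\in\cM(X')$ at most one component of $D_G$. The only genuine obstacle is this assembly --- ruling out that the union of many small charts reconnects components of $D_{G_0}$ that are separated inside $G_0$ --- which is the one step going beyond the normal form; the remaining point-set verifications are routine.
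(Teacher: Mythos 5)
The paper offers no argument for this lemma beyond the remark that it ``follows directly from the local normal form,'' so your write-up is in the spirit of what is intended, and much of it is sound: the convexity-in-logarithmic-coordinates observation, the reduction to showing that each component of $G_0\cap X'$ meets at most one component of $D_{G_0}$, and the chaining argument by which the dichotomy (each connected piece $G_0\cap V_\alpha$ lies in a single $W_j$ or misses $\bigcup_j\clos(W_j)$) yields the conclusion are all correct.

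The gap is in arranging that dichotomy. You treat a chart centre $p_\alpha$ with $G_0\cap V_\alpha\neq\emptyset$ as if it either lies in some component $G'_j$ of $D_{G_0}$ (hence in the open set $W_j$, so a small $V_\alpha$ suffices) or has positive distance from $\bigcup_j\clos(W_j)$. But such a centre need only lie in $D\cap G_0$, and the paper notes explicitly (after Lemma~\ref{L:binvar_boundary}) that $D\cap G_0$ can be strictly larger than $D_{G_0}=\clos\pns{D\cap G_0\setminus\pa G_0}$: the extra set $E=(D\cap G_0)\setminus D_{G_0}$ lies in $\pa G_0$ and can accumulate on the $G'_j$ at their corner points. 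A centre $p_\alpha\in E$ sitting on $\pa W_j$ defeats the dichotomy no matter how small $V_\alpha$ is taken, and re-choosing the $W_j$ so that their boundaries miss $D\cap G_0$ is possible only if each $G'_j$ is relatively clopen in $D\cap G_0$ --- equivalently, only if no connected subset of $E$ links two distinct components $G'_1$, $G'_2$. This is in fact the real content of the lemma: any such connected subset is contained in $D$, hence in $G_0\cap X'$ for \emph{every} open $X'\supset D$, so if it existed no collar neighbourhood whatsoever could work. You therefore still owe an argument, again from the normal form \eqref{E:bin_str_normal_form}, that distinct components of $D_{G_0}$ lie in distinct quasi-components of $D\cap G_0$ (equivalently, that the $W_j$ can be chosen pairwise disjoint with $\pa W_j\cap D=\emptyset$); once that is supplied, your separation and chaining steps go through.
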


Finally, observe that other concepts can be extended from the `smooth' case of
manifolds to binomial subvarieties.

\begin{defn} 
If $D \subset X$ is a binomial subvariety, and $Y$ a manifold with corners then
a map $f : Y \to D$ is a b-map if it is a b-map in the smooth sense, i.e.\ as a
map $f : Y \to X$ with $f(Y)\subset D.$
\end{defn}

For such a b-map, the range of the b-differential $\bd f_\ast$ will lie in $\bT
D$ and if $G' \in \cM(D)$ is the highest codimension face such that $f(F)
\subset G'$ it follows that
\begin{equation}
	\bd f_\ast : \bN_p F \to \bN_{f(p)} G'.
	\label{E:f_normal_onto_binomial}
\end{equation}
Just as in Definition~\ref{D:basic_monoidal_complex} there is an induced map
\begin{equation}
	f_\natural : \cP_Y \to \cP_D
	\label{E:f_natural_onto_binomial}
\end{equation}
of monoidal complexes.

%lasteqno binres@  4
\section{Resolution of binomial subvarieties} \label{S:binvarres}

In this section we show that a carefully chosen smooth refinement of the monoidal
complex of the ambient manifold $X$ which also refines the monoidal complex
of an interior binomial subvariety $D \subset X$ leads to a blow-up under
which $D$ lifts to p-submanifold, in particular this resolves $D.$ The
resolution of $D$ so obtained depends essentially only on the choice of the
smooth refinement of the monoidal complex $\cP_D.$

The notion of a resolution is analogous to that of a generalized blow-up
defined earlier.

\begin{defn}\label{D:binvar_blowdown} If $D \subset X$ is an interior
  binomial subvariety then a manifold $Y$ with a b-map $f:Y\longrightarrow
  X$ with $f(Y)\subset D$ is a {\em resolution of} $D$ if $\bd f_\ast :
  \bT_p Y \to \bT_{f(p)}D$ is a bijection for all $p\in Y$ and $f :
  Y\setminus \pa Y \to D\setminus \pa D$ is a diffeomorphism.
\end{defn}

\begin{prop}
If $\beta : X_1 \to X$ is a generalized blow-down map between manifolds and
$D\subset X$ is an interior binomial subvariety then the lift (or proper
transform)
\begin{equation}
	\beta^{\#}(D)=\clos_{X_1}\big(\beta^{-1}(D\setminus\pa X)\big)
	\label{E:lift_of_D}
\end{equation}
is an interior binomial subvariety of $X_1.$ 
\label{P:binvar_lift}
\end{prop}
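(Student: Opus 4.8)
The plan is to prove this locally, reducing to a computation in a single coordinate chart where both the blow-down $\beta$ and the binomial structure on $D$ are in normal form, and then patch. Since the statement concerns the lift $\beta^\#(D) = \clos_{X_1}(\beta^{-1}(D\setminus\pa X))$, and since $\beta$ restricts to a diffeomorphism of the interiors, the set $\beta^\#(D)$ agrees with $\beta^{-1}(D)$ over the interior and is by construction the closure of its interior part; so the issue is entirely about what happens near $\pa X_1$. Because by Theorem~\ref{T:characterization} every generalized blow-down is $[Y;\cR]$ for a smooth refinement $\cR$, and because such a blow-up is built (Theorem~\ref{T:genblow}) from the local charts $U_\sigma \cong \R^n_+$ of Proposition~\ref{P:localblow}, it suffices to work in one such chart: thus I may assume $\beta : U_\sigma = \R^n_+ \to \R^n_+$ has the monomial form $t \mapsto t^\nu = x$ with $\nu \in \Mat(n\times n,\Zp)\cap\GL(n,\Q)$ (times a trivial $\R^{n-k}$ factor, carried along as in Corollary~\ref{C:diffeos_lift_wfactors}), and that near a relevant point $D$ is in the normal form of Lemma~\ref{L:bin_str_normal_form}, i.e.\ $D = \clos\set{x^{\gamma_i} = 1,\ i=1,\ldots,d',\ y_j = 0}$ with the $\gamma_i$ linearly independent (as functionals on $\bN F$).

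The key computation is then the following. Pulling back the defining data of $D$ by $\beta$: the interior equations $y_j = 0$ pull back to $y_j = 0$ unchanged, while $x^{\gamma_i} = 1$ pulls back to $(t^\nu)^{\gamma_i} = t^{\nu\gamma_i} = 1$; that is, the new exponent vectors are $\delta_i := \nu\gamma_i = \nu^\transpose{}^\transpose\gamma_i$, which, since $\nu$ is invertible over $\Q$, remain linearly independent as functionals on $\bN\set{0}$ of the source. These $\delta_i$ are integral vectors with possibly mixed signs; after clearing denominators (replacing each $\gamma_i$ by a suitable integer multiple, which does not change the variety) we may take $\delta_i \in \Z^n$. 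Over the interior $t \in (0,\infty)^n$ the equations $t^{\delta_i} = 1$, $y = 0$ define a smooth submanifold, and I claim its closure in $\R^n_+\times\R^{n-k}$ is exactly cut out, near each boundary point, by the subset of $\{t^{\delta_i} = 1\}$ whose exponents are indefinite with respect to the maximal boundary face through that point — precisely as in the proof of Lemma~\ref{L:binvar_boundary}: the independence of the relevant $\delta_i$ forces the linear system for $\log t$ to be solvable with the boundary coordinates taken arbitrarily small, so the boundary locus is correctly described and no spurious boundary components appear. This exhibits a local binomial structure on $\beta^\#(D)$ in the chart, with the understanding that one first discards, as in the discussion around \eqref{E:binvar_idea}, any factors common to both sides of a monomial equation, and restricts attention to the closure of the interior locus. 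Hence $\beta^\#(D) \cap U_\sigma$ is an interior binomial subvariety of $U_\sigma$.

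Finally I would check that these local binomial structures are consistent across the overlaps, which is immediate: the transition maps $\chi_{12}$ of Proposition~\ref{P:localblow} are diffeomorphisms preserving boundary hypersurfaces, they pull back functions in $\cG_\gamma$ to functions in $\cG_{\chi^\ast\gamma}$ (the groups $\cG_\gamma$ and their logarithmic differentials being diffeomorphism-natural, as noted after \eqref{E:b_gamma}), and they preserve the property of being the closure of the interior locus. Since $D$ is connected and $\beta$ is proper with connected fibers over interior points, $\beta^\#(D)$ is connected, and one obtains a global covering of $\beta^\#(D)$ by local binomial structures, so it is an interior binomial subvariety of $X_1$ by Definition~\ref{D:int_bin_subvar}. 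The main obstacle, I expect, is the boundary-closure bookkeeping in the key computation: verifying that after the substitution $x = t^\nu$ the closure in the blown-up chart is genuinely cut out by the indefinite subset of the transformed equations, with no extra components appearing in the new corners created by the blow-up, and with the cancellation of common monomial factors handled cleanly — this is the point where the linear-algebra argument of Lemma~\ref{L:binvar_boundary}, applied now to the vectors $\nu\gamma_i$ rather than $\gamma_i$, must be invoked carefully.
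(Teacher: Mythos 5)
Your proposal is correct and rests on the same core mechanism as the paper's proof: pull back the binomial defining equations of $D$ under $\beta$ and observe that the logarithmic differentials remain independent because $\bd\beta_\ast$ is an isomorphism at every point. The difference is one of packaging. The paper works intrinsically: a local binomial structure near $\beta(p)$ is given by functions $f_i\in\cG(X)$, the classes $\cG_\gamma$ pull back under any interior b-map, so $\beta^\ast f_i\in\cG(X_1)$, and independence of the $d(\beta^\ast f_i)/\beta^\ast f_i$ is immediate from bijectivity of the b-differential --- three sentences, no coordinates, no appeal to Theorem~\ref{T:characterization} or to the chart structure of Proposition~\ref{P:localblow}. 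Your coordinate reduction via the monomial normal form $t\mapsto t^\nu$ is valid (and not circular, since Section~\ref{S:characterization} precedes this proposition), but it is unnecessary overhead. More importantly, the ``main obstacle'' you identify --- verifying that the closure in the blown-up chart is cut out by the indefinite subset of the transformed equations, with no spurious components in the new corners --- is not actually an obstacle at all: Definition of a local binomial structure, via \eqref{E:loc_bin_str}, only requires that the set \emph{equal the closure of the interior solution locus}, and $\beta^\#(D)$ satisfies this tautologically since $\beta$ is a diffeomorphism of interiors and $\beta^\#(D)$ is defined as such a closure. The analysis of which boundary faces are actually met (Lemma~\ref{L:faces_met_by_D} and the argument of Lemma~\ref{L:binvar_boundary}) is needed elsewhere --- e.g.\ in Proposition~\ref{P:binomial_lift_psub} to show the lift is a p-submanifold under a compatible refinement --- but not here.
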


\begin{proof} 
Certainly $\beta^{\#}(D)\subset X_1$ is a closed subset. It is a smooth
embedded submanifold in the interior, so it suffices to show that it has a
local binomial structure at each boundary point. If $p\in\pa X_1\cap
\beta^{\#}(D)$ then by definition, $\beta(p)\in\pa X\cap D$ and $D$ has a local
binomial structure given by $f_i\in\cG(X).$ The pull-backs $f^*f_i\in\cG(X_1)$
define $\beta^{\#}(D)$ locally and their logarithmic differentials are
independent since the b-differential of $\beta$ is an isomorphism at each point.
\end{proof}

Let $\cR\longrightarrow \cP_X$ be a smooth refinement.  If $D\subset X$ is an
interior binomial subvariety then $\cR$ is {\em compatible with $\cP_D$} if
there is a subcomplex $\cR_D \subset \cR$ such that $\cR_D\longrightarrow
\cP_X$ factors through $\cP_D$ giving a commutative diagram
\begin{equation}
\begin{tikzpicture}[->,>=to,auto]
\matrix (m) [matrix of math nodes, column sep=1cm, row sep=1cm,  text depth=0.25ex]
{ \cR_D & \cR \\ \cP_D & \cP_X \\};
\path (m-1-1) edge node {$\subset$} (m-1-2); %top
\path (m-1-2) edge  (m-2-2); %right
\path (m-2-1) edge node {$i_\natural$} (m-2-2); %bot
\path (m-1-1) edge (m-2-1); %left
\end{tikzpicture}
\label{E:compatible_refinement}
\end{equation}
where the vertical arrows are smooth refinements and the top is the
inclusion of a subcomplex.

\begin{prop} 
If $\cR\longrightarrow \cP_X$ is a smooth refinement which
is compatible with an interior binomial subvariety $D$ in the sense of
\eqref{E:compatible_refinement} then $D$ lifts to a p-submanifold $\wt D =
\beta^\#(D)$ of the generalized blow-up
\[
	\beta : [X; \cR] \to X.
\]
In particular $\beta : \wt D\longrightarrow D$ is a resolution of $D$ and
$\beta_\natural : \cP_{\wt D} \to \cP_D$ factors through an isomorphism
$\cP_{\wt D} \cong \cR_D$ of monoidal complexes.
\label{P:binomial_lift_psub}
\end{prop}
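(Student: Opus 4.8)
The plan is to work locally in coordinate charts on $[X;\cR]$ adapted to the refinement, verify that $\wt D = \beta^\#(D)$ is cut out by interior coordinate functions in each such chart, and then patch the local statements together using the identification of boundary faces with monoids.

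First I would reduce to a local model. By Proposition~\ref{P:binvar_lift}, $\wt D \subset [X;\cR]$ is already an interior binomial subvariety, so it suffices to show it has the p-submanifold local form $\set{y_j = 0}$ near each boundary point $\wt p \in \wt D$. Let $p = \beta(\wt p)$ and let $F \in \cM(X)$ be the maximal codimension face through $p$; using the global structure of the blow-up near a face, $\beta^{-1}$ of a neighborhood of $p$ is $[\R^k_+;\cR(F)]\times F$, so we may assume $X = \R^k_+ \times \R^{n-k}$ with $F = \set{0}\times\R^{n-k}$, and $[X;\cR] = [\R^k_+;\cR(\sigma_F)]\times\R^{n-k}$. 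Take a maximal monoid $\tau = \Zp\pair{v_1,\ldots,v_k} \in \cR(\sigma_F)$ with $\wt p$ in the corresponding chart $U_\tau$, with coordinates $(t,y)$ and blow-down $\beta : t \mapsto t^\nu = x$ where $\nu$ has rows $v_i$.

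Now I would use compatibility. Near $p$, Lemma~\ref{L:bin_str_normal_form} gives $D = \set{x^{\gamma_i} = 1,\ i \le d';\ y_j = 0,\ j > d'}$ with the $\gamma_i$ independent and indefinite; these $\gamma_i$ span $\bN^\ast F / (\text{smooth part})$ restricted to the appropriate subspace, and $\sigma_{F'} = \bN F' \cap \sigma_F$ is exactly the intersection of $\sigma_F$ with the subspace $M = \bigcap_i \ker(\bd\gamma_i)$. Compatibility means $\cR_D \subset \cR$ refines $\cP_D$, which forces the chart $U_\tau$ to be chosen so that the generators of $\tau$ are partitioned: some $v_i$ lie in $M$ (these generate a face $\tau_D = \tau \cap M$ corresponding to a monoid of $\cR_D(\sigma_{F'})$) and the remaining $v_i$ pair nontrivially against the $\bd\gamma_i$. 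In coordinates, $\beta^\ast(x^{\gamma_i}) = t^{\nu\gamma_i}$, and the exponent vector $\nu\gamma_i$ is supported precisely on the coordinates $t_\ell$ with $v_\ell \notin M$. Because the refinement is \emph{smooth} and $\tau$ is smooth, the matrix pairing those "transverse" generators against the $\gamma_i$ — which is exactly the expression of the $\gamma_i$ in the basis dual to the $v_\ell$ — is unimodular on the relevant block, so after the invertible integral change of the transverse $t_\ell$ coordinates the equations $t^{\nu\gamma_i} = 1$ (with their smooth positive prefactors) become $t_{\ell_i}^{\pm 1}(\text{unit}) = 1$, i.e.\ each pins down one transverse coordinate $t_{\ell_i}$ smoothly as a function of the others. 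Together with $y_j = 0$ for $j > d'$, this exhibits $\wt D \cap U_\tau$ as $\set{y'_m = 0}$ for a set of smooth interior coordinates $y'_m$ on $U_\tau$ adapted to $\wt D$, meeting every boundary face of $U_\tau$ transversally; hence $\wt D$ is a p-submanifold.

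The remaining assertions follow formally. Since a p-submanifold is smoothly embedded with $\bN_{\wt p}\wt D_{G'} \equiv \bN_{\wt p}G'$ for each face $G'$ of $[X;\cR]$ met by $\wt D$, the morphism $(\beta|_{\wt D})_\natural : \cP_{\wt D} \to \cP_D$ consists of monoid isomorphisms onto its image, and the image is exactly $\cR_D$ by construction (the faces of $\wt D$ correspond to the monoids $\tau \cap M$, which are the monoids of $\cR_D$); so $\cP_{\wt D} \cong \cR_D$ and $\beta_\natural : \cP_{\wt D}\to\cP_D$ factors through this isomorphism followed by $\cR_D \to \cP_D$. That $\beta : \wt D \to D$ is a resolution is then immediate: $\beta$ restricts to a diffeomorphism of interiors since it does so on all of $[X;\cR] \to X$, and $\bd\beta_\ast : \bT_{\wt p}[X;\cR] \to \bT_p X$ is an isomorphism carrying $\bT_{\wt p}\wt D$ onto $\bT_p D$ (both being the annihilators of the spans of the $d\gamma_i/\gamma_i$, pulled back), so its restriction $\bT_{\wt p}\wt D \to \bT_p D$ is an isomorphism.

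I expect the main obstacle to be the unimodularity step — showing that smoothness of the refinement makes the transverse pairing matrix invertible over $\Z$, so that solving $t^{\nu\gamma_i} = 1$ does not introduce radicals or fractional powers and genuinely yields smooth defining functions for a p-submanifold rather than merely another binomial subvariety. This is where the hypothesis "$\cR$ smooth" (as opposed to merely simplicial) and the precise meaning of "$\cR_D$ refines $\cP_D$ as a subcomplex" are both essential, and it needs to be argued carefully by writing $\gamma_i$ in the dual basis to $\set{v_\ell}$ and using that $\set{v_\ell}$ is a $\Z$-basis of the lattice.
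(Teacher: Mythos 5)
Your overall scaffolding---reduce to a chart $U_\tau$, pull the defining equations $x^{\gamma_i}=1$ back to $t^{\nu\gamma_i}=1$, and close by matching boundary faces of $\wt D$ with monoids of $\cR_D$---agrees with the paper's proof, and your final paragraph on $\cP_{\wt D}\cong\cR_D$ and the resolution property is essentially the argument given there. But your central step diverges from the paper's and has a genuine gap. The paper's key deduction from compatibility is that $\tau\cap\sigma_{G'}$ is a \emph{face} of every $\tau\in\cR(G)$, where $\sigma_{G'}=\sigma_G\cap\bigcap_i\ker(\bd\gamma_i)$; this is equivalent to each functional $\gamma_i\,dx/x$ being single-signed on $\tau$, hence each lifted exponent $\beta_i=\nu\gamma_i$ is non-negative or non-positive, never indefinite. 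Lemma~\ref{L:faces_met_by_D} then forces $\wt D$ to meet only those boundary faces of $U_\tau$ on which the $\beta_i$ vanish, and that is exactly what licenses converting $t^{\beta_i}=1$ into interior equations: near any boundary point of $\wt D$ the coordinates occurring in $\beta_i$ are bounded away from zero. Your argument never extracts this sign-definiteness. The ``partition'' of the generators of $\tau$ into those lying in $M=\bigcap_i\ker(\bd\gamma_i)$ and those not is automatic for any $\tau$ and uses no compatibility at all; what compatibility actually excludes is one generator pairing positively and another negatively against the same $\gamma_i$, which is precisely the situation (e.g.\ $\tau=\Zp^2$, $\gamma=(1,-1)$, the trivial refinement) in which $t^{\nu\gamma_i}=1$ remains a genuine binomial equation through the corner and cannot be ``solved'' for a transverse coordinate there.

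Moreover, the step you single out as the main obstacle---unimodularity of the transverse pairing matrix---is both unnecessary and false. Take $D=\set{x_1^2=x_2^3}\subset\R^2_+$, so $\gamma=(2,-3)$ and $\sigma_{G'}=\Zp(3,2)$, and refine by the star subdivision at $(3,2)$. In the chart for $\tau=\Zp\pair{(1,0),(3,2)}$ one finds $\beta^\ast(x^\gamma)=t_1^2$; the pairing of the unique transverse generator against $\gamma$ is $2$, not $\pm 1$, and the generators of this smooth monoid are not a $\Z$-basis of $N_{\sigma_G}$, so smoothness of the refinement cannot deliver unimodularity. Nevertheless $\set{t_1^2=1}$ is a p-submanifold, because $t_1$ is non-vanishing on $\wt D$ near its boundary (the sign-definiteness point again) and $\log(t_1^2)$ is therefore a smooth interior coordinate; note also that the transverse pairing matrix need not even be square in general. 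The proof should therefore be organized around showing the $\beta_i$ are never indefinite and invoking Lemma~\ref{L:faces_met_by_D}, rather than around inverting the transverse block over $\Z$.
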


\begin{proof}
Consider an arbitrary $p' \in \tD = \beta^\#(D)$, and let $p = \beta(p') \in
D$.  There is a unique face $G' \in \cM(D)$ such that $p$ lies in the interior
of $G'$, and we set $G = i_\#(G') \in \cM(X)$.  Since $i_\natural : \cP_D \smallto \cP_X$ in \eqref{E:bin_suvar_mon_complex} and
$\beta_\natural : \cR \to \cP_X$ are both injective, we identify monoids $\sigma_{G'}$ and
$\tau \in \cR\pns{G}$ with their respective images in $\sigma_G$.

The point $p' \in \wt D \subset [X; \cR]$ lies in some coordinate chart
$U_{\tau} \subset \R^k_+\times (0,\infty)^{n-k}$ with coordinates $(t,y) =
(t_1,\ldots,t_k,y_1,\ldots,y_{n-k})$, for some $\tau \in \cR\pns{G}$, and we
can assume that the coordinates $y$ are pulled back identically from those on
$X$ in which $D$ has the local normal form \eqref{E:bin_str_normal_form}.  Thus
the equations $\set{y_j = 0\;;\; j =d'+1,\ldots,d}$ are the same in $U_{\tau}$.
We consider the lift of the other equations, $x^{\gamma_i} = 1$ under $\beta$.
On $U_{\tau}$, $\beta$ has the form $\beta : t \to t^\mu = x,$ and we obtain
\[
	\beta^\ast\pns{x^{\gamma_i}} 
	 = \pns{t^\mu}^{\gamma_i} 
	 = t^{\beta_i} = 1,
\]
where $\beta_i = \mu \gamma_i$.  Thus near $p'$, $\wt D \subset [X; \cR]$ has
the local binomial structure
\[
	\wt D = \set{t^{\beta_i} = 1, y_j = 0}.
\]
We will show that the $\beta_i$ are each non-negative or non-positive.

Indeed, the compatibility assumption implies that, for all $\tau \in
\cR\pns{G}$, either $\tau \in \cR_D(G')$ and hence $\tau \subset \sigma_{G'}$,
or $\tau \in \cR \setminus \cR_D$; in either case the intersection $\tau \cap
\sigma_{G'}$ must be a face of $\tau$.  Since $\sigma_{G'} = \sigma_G \cap
\nullspace\set{\bd \gamma_i}$, it follows that
\[
	\pair{\gamma_i dx/x, \tau} \geq 0 \ \text{or}\ \pair{\gamma_i dx/x,\tau} \leq 0, \quad \text{for each}\  \gamma_i.
\]
In other words, each vector $\gamma_i dx/x$ is either non-positive or non-negative
with respect to $\tau \in \cR(G)$.  Let us assume non-negative; the other case
is similar.

From this it follows that no $\beta_i$ is indefinite, since for any
$a \in \R^k$, $a_i > 0$, 
\[
	\pair{\beta_i, a} = \pair{\beta_i\, dt/t, a\, t\pa_t} = \pair{\mu\, \gamma_i\, dx/x, a\, t\pa_t} = \pair{\gamma_i dx/x, \mu^\transpose(a\,t \pa_t)} \geq 0
\]
as $\mu^\transpose(a t\pa_t) \in \tau$.  In light of
Lemma~\ref{L:faces_met_by_D}, $\wt D$ only meets boundary faces with respect to
which the $\beta_i$ are zero, hence $\wt D$ has a covering by binomial
structures such that
\[
	\wt D = \set{y_j = 0\;;\;  j=1,\ldots,d}
\]
and is therefore a p-submanifold.

Since $\wt D$ is a p-submanifold, for any $\wt G \in \cM(\wt D)$, $\sigma_{\wt
G} \cong \tau$ for some $\tau \in \cR \cong \cP_{[X; \cR]}$.  Finally, it
follows from the fact that $\wt D$ is the lift of $D$ that $\sigma_{\wt G} =
\tau$ is actually in $\cR_D$; hence $\cP_{\wt D} \to \cR_D$ is a local
isomorphism onto its image and $\beta : \wt D \to D$ is a resolution since it
is a diffeomorphism on interiors.
\end{proof}
\noindent The resolution of binomial ideals in polynomial and power series
rings by toric methods are well-known, see \cite{teissier51monomial} for a good
overview.  The previous proposition can be seen as an extension of this theory
to the interior binomial subvarieties we have been discussing.

Next we show that we can obtain a unique resolution of $D$ realizing any smooth
refinement $\cR_D \to \cP_D$; in particular the resolution so obtained is essentially
independent of the ambient manifold $X$.

\begin{thm}[Resolution of binomial varieties]
If $D \subset X$ is an interior binomial subvariety, then for every smooth
refinement $\cR_D \to \cP_D$ there exists a resolution
\[
	\beta : [D; \cR_D] \to D
\]
which realizes the refinement in the sense that
\[
	\beta_\natural : \cP_{[D; \cR_D]} \cong \cR_D \to \cP_D,
\]
and $[D; \cR_D]$ is unique up to diffeomorphism.

If $f : Y \to D$ is a b-map from a smooth manifold, and if
$f_\natural : \cP_Y \to \cP_D$ factors through $\cR_D$, then $f$ factors
through a unique b-map $\wt f : Y \to [D; \cR_D]$.
\label{T:binvar_resolution}
\end{thm}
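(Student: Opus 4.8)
The plan is to realise $[D;\cR_D]$ as the proper transform of $D$ under a carefully chosen generalized blow-up of a neighbourhood of $D$ in $X$, and then to read off the resolution property, the universal factoring property, and uniqueness from Propositions~\ref{P:binomial_lift_psub} and \ref{P:genblowdown_identity} and Theorem~\ref{T:lifting_b_maps}. The entire difficulty is concentrated in manufacturing the correct smooth refinement of the ambient complex; once that is in hand everything downstream is bookkeeping. First I would pass, via Lemma~\ref{L:binvar_neighborhood}, to an open submanifold $X' \subset X$ containing $D$ with $i_\#:\cM(D)\to\cM(X')$ injective. The point of this reduction is that the injective morphism $i_\natural:\cP_D\to\cP_{X'}$ of Proposition~\ref{P:bin_subvar_mon_complex} then sends at most one monoid of $\cP_D$ into each monoid of the (smooth) basic complex $\cP_{X'}$, and $i_\natural(\sigma_{G'})=\sigma_G\cap\bN G'$ is an intersection with an integral subspace by \eqref{E:binvar_monoids} --- precisely the hypotheses of the planar refinement of monoidal complexes, Proposition~\ref{P:planar_refinement_complex}. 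Applying it produces a refinement $\cS(\cP_{X'},\cP_D)\to\cP_{X'}$ containing $\cP_D$ as an honest \emph{subcomplex}. This is the crucial move: a priori $\cP_D$ is not a subcomplex of $\cP_X$, since its monoids sit inside those of $\cP_X$ as submonoids rather than as faces, and the planar refinement --- legitimate here only because we work over $\R_+$ --- repairs this.

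Next I would invoke Lemma~\ref{L:extension_of_refinement} with the subcomplex $\cP_D\subset\cS(\cP_{X'},\cP_D)$ and the given smooth refinement $\cR_D\to\cP_D$, obtaining a smooth refinement $\cR'$ of $\cS(\cP_{X'},\cP_D)$ that contains $\cR_D$ as a subcomplex. Since refinements compose, $\cR'\to\cP_{X'}$ is a smooth refinement, and by construction its restriction to $\cR_D$ factors through $i_\natural$, so $\cR'$ is compatible with $D$ in the sense of \eqref{E:compatible_refinement}, with $\cR'_D=\cR_D$. Proposition~\ref{P:binomial_lift_psub} then gives that the lift $\wt D=\beta^\#(D)$ of $D$ under $\beta:[X';\cR']\to X'$ is a p-submanifold, that $\beta:\wt D\to D$ is a resolution, and that $\beta_\natural$ factors through an isomorphism $\cP_{\wt D}\cong\cR_D$. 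I would define $[D;\cR_D]:=\wt D$ with this restricted blow-down map, which establishes the first assertion except for uniqueness.

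For the universal property, let $f:Y\to D$ be an interior b-map from a smooth manifold (interior, so that $f_\natural$ is defined) with $f_\natural:\cP_Y\to\cP_D$ factoring through $\cR_D$. Viewing $f$ as a map into $X'$, the morphism $f_\natural:\cP_Y\to\cP_{X'}$ factors through $\cR_D\subset\cR'$, hence through $\cR'\to\cP_{X'}$, so by Theorem~\ref{T:lifting_b_maps} there is a unique lift $f':Y\to[X';\cR']$ with $\beta\circ f'=f$. Because $f$ is interior and $f(Y)\subset D$, one gets $f'(Y\setminus\pa Y)\subset\beta^{-1}(D\setminus\pa X')\subset\wt D$, and taking closures --- using density of $Y\setminus\pa Y$ and closedness of $\wt D$ --- forces $f'(Y)\subset\wt D=[D;\cR_D]$. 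This yields the desired $\wt f:Y\to[D;\cR_D]$ with $\beta\circ\wt f=f$; it is unique since composing with the inclusion $[D;\cR_D]\hookrightarrow[X';\cR']$ recovers the unique lift $f'$.

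Finally, uniqueness of $[D;\cR_D]$ up to diffeomorphism (hence its independence of the ambient $X$ and of the auxiliary choices) follows in the standard way from the universal property: given any other resolution $\beta'':Z\to D$ realising $\cR_D$, the universal property produces $\psi:Z\to[D;\cR_D]$ with $\beta\circ\psi=\beta''$; this $\psi$ is a diffeomorphism on interiors, has bijective b-differential (since $\bd\beta_\ast\circ\bd\psi_\ast=\bd\beta''_\ast$ with both outer maps bijective), and has invertible associated morphism of complexes $\cP_Z\cong\cR_D\cong\cP_{[D;\cR_D]}$, so Proposition~\ref{P:genblowdown_identity} upgrades it to a diffeomorphism over $D$. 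The step I expect to be the main obstacle is the construction of $\cR'$ --- specifically, verifying that the planar-refinement hypotheses really hold after shrinking to $X'$, and that the planar refinement and Lemma~\ref{L:extension_of_refinement} can be composed into a single smooth refinement of $\cP_{X'}$ that still restricts to exactly $\cR_D$ on the subcomplex coming from $D$; the resolution and factoring statements are then immediate consequences of the results already in place.
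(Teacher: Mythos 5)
Your proposal is correct and follows essentially the same route as the paper's proof: pass to the collar neighbourhood of Lemma~\ref{L:binvar_neighborhood}, use the planar refinement of Proposition~\ref{P:planar_refinement_complex} to realise $\cP_D$ as a subcomplex, extend $\cR_D$ to a smooth ambient refinement via Lemma~\ref{L:extension_of_refinement}, and then apply Proposition~\ref{P:binomial_lift_psub} and Theorem~\ref{T:lifting_b_maps}. The only cosmetic difference is that the paper establishes well-definedness by directly lifting the blow-down maps between the proper transforms arising from two different extensions (and invoking Proposition~\ref{P:genblowdown_identity}), whereas you derive uniqueness from the universal property; both rest on the same two results.
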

\noindent
The notation is meant to suggest that this is in some sense the generalized
blow-up in the category of (differentiable) `binomial varieties' where the
objects are treated intrinsically.  Indeed, we believe that such a category
exists and that generalized blow-up extends to include arbitrary refinements,
not necessarily smooth.  
%This would give essentially a realization of Kato's
%theory of toric singularities \cite{kato1994toric} in a category of
%differentiable spaces with corners.
\begin{proof}
Assume that $i_\# : \cM(D) \to \cM(X)$ is injective, passing if necessary to a
collar neighborhood $X' \subset D$ as in Lemma \ref{L:binvar_neighborhood}. By
Proposition \ref{P:binomial_lift_psub} it suffices to show that, given $\cR_D
\to \cP_D$, there exists a refinement $\cR_X \to \cP_X$ extending $\cR_D$ (that
is, containing $\cR_D$ as a subcomplex), and that the resulting resolution
$\beta^\#(D) \subset [X; \cR_X]$ is well-defined, independent of the choice of
such extension.

For the first step the planar refinement of
Proposition~\ref{P:planar_refinement_complex} $\cS(\cP_X, \cP_D) \to \cP_X$
is a (not necessarily smooth) refinement containing $\cP_D$ as a subcomplex.
Then by Lemma \ref{L:extension_of_refinement} the refinement $\cR_D \to \cP_D$
can be extended to a smooth refinement $\cR_X \to \cS(\cP_X, \cP_D)$, and the
composition $\cR_X \smallto \cS(\cP_X, \cP_D) \smallto \cP_X$ is therefore an
extension of $\cR_D$.

Next suppose $\cR_1$ and $\cR_2$ are smooth refinements of $\cP_X$ extending
$\cR_D$, and set
\[
	\wt D_i := \beta_i^\#(D) \subset [X; \cR_i]\quad i = 1,2.
\]
The blow-down $\beta_1 : \wt D_1 \to D$, considered as a map to $X$, lifts by
Theorem \ref{T:lifting_b_maps} to a b-map to $[X; \cR_2]$ whose range lies in
$\wt D_2$, and vice versa.  Thus we obtain b-maps
\[
	\wt D_1 \leftrightarrow \wt D_2
\]
which are generalized blow-down maps and in fact diffeomorphisms by Proposition
\ref{P:genblowdown_identity} since $\cP_{\wt D_1} \cong \cR_D \cong \cP_{\wt
D_2}$.  

Thus the lift $\wt D \subset [X; \cR_X]$ is independent of the extension
$\cR_X$ of $\cR_D$ up to diffeomorphism, and we define $[D; \cR_D] = \wt D$ to
be any such lift.

Recall that a b-map $f : Y \to D$ induces a monoidal complex morphism
$f_\natural : \cP_Y \to \cP_X$ which factors through $\cP_D$ as in
\eqref{E:f_natural_onto_binomial}.  If in addition $f_\natural$ factors through
$\cR_D \to \cP_D$, then it follows that $f_\natural : \cP_Y \to \cP_X$ factors
through any extension $\cR_X$, and $f$ admits a unique lift $f' : Y \to [X;
\cR_X]$ by Theorem~\ref{T:lifting_b_maps}.  The image of $Y \setminus \pa Y$
under $f'$ lies in $\wt D \setminus \pa [X; \cR_X]$, and therefore by
continuity $f'(Y) \subset \wt D$ so $f' : Y \to \wt D$ is a b-map.  Since
the resolutions coming from different extensions are diffeomorphic, it follows
from the naturality of the lifted b-maps that 
\[
	f' : Y \to [D; \cR_D]
\]
is well-defined, independent of the choice of extension.
\end{proof}

Observe that if $\cP_D$ is already smooth, then the above suggests that $D$ is
in some sense already a smooth manifold, though it may not be nicely embedded
in $X$.  Indeed, it follows from the proof of Theorem~\ref{T:binvar_resolution} that 
there exists a refinement $\cR_X \to \cP_X$ which is trivial on $\cP_D$, and this
gives a `minimal' resolution of $D$ which is universal in this case.

\begin{thm}
If $D \subset X$ is an interior binomial variety and $\cP_D$ is smooth, then
there exists a {\em universal} resolution $[D; \cP_D] \to D$, with the property
that any b-map $f : Y \to D$ factors uniquely through $[D; \cP_D]$.  In
particular, any other resolution of $D$ is a generalized blow-up of this
universal resolution.
\label{T:universal_resolution}
\end{thm}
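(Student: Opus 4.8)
The plan is to apply Theorem~\ref{T:binvar_resolution} with the trivial smooth refinement $\cR_D = \cP_D \to \cP_D$. Since $\cP_D$ is smooth by hypothesis, this is a legitimate smooth refinement (indeed the identity), so the theorem produces a resolution $[D; \cP_D] \to D$ realizing it, meaning $\beta_\natural : \cP_{[D;\cP_D]} \cong \cP_D$ is an isomorphism of monoidal complexes. The first thing I would emphasize is \emph{why} this deserves to be called `minimal': the construction in the proof of Theorem~\ref{T:binvar_resolution} extends $\cR_D = \cP_D$ to a refinement $\cR_X \to \cP_X$ via the planar refinement $\cS(\cP_X,\cP_D)$ of Proposition~\ref{P:planar_refinement_complex} and then Lemma~\ref{L:extension_of_refinement}; since $\cP_D$ is already smooth the subcomplex $\cR_D \to \cP_D$ is \emph{trivial}, so the only blowing up in the ambient $X$ happens away from $D$ in directions needed to make $D$ a p-submanifold, and the lifted binomial variety $\wt D = \beta^\#(D)$ carries the unrefined complex $\cP_D$.

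For the universal factoring property, let $f : Y \to D$ be any b-map from a manifold with corners. Its induced morphism $f_\natural : \cP_Y \to \cP_D$ (see \eqref{E:f_natural_onto_binomial}) trivially factors through the identity refinement $\cR_D = \cP_D \to \cP_D$, so the second clause of Theorem~\ref{T:binvar_resolution} immediately yields a unique b-map $\wt f : Y \to [D;\cP_D]$ with $\beta \circ \wt f = f$. Uniqueness is part of that theorem, but I would also note it follows directly from the fact that $\beta : [D;\cP_D]\setminus\pa \to D\setminus\pa$ is a diffeomorphism, so $\wt f$ is determined on the interior of $Y$ (assuming $Y$ connected; otherwise work component by component) and hence everywhere by continuity and smoothness.

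The last assertion — that any other resolution of $D$ is a generalized blow-up of $[D;\cP_D]$ — follows by applying the factoring property to the resolution map itself. If $g : Z \to D$ is another resolution, then in particular $g$ is a b-map, so it factors as $g = \beta \circ \wt g$ for a unique b-map $\wt g : Z \to [D;\cP_D]$. I claim $\wt g$ is a generalized blow-down map in the sense of Definition~\ref{D:gen_blowdown}: it is proper (since $Z$ is compact, or more generally because $g$ is and $\beta$ is a diffeomorphism on interiors so $\wt g$ is proper onto its image, which is all of $[D;\cP_D]$ by surjectivity of $g$ and density of interiors), it restricts to a diffeomorphism of interiors (both $g$ and $\beta$ do), and its b-differential $\bd \wt g_\ast$ is an isomorphism at every point because $\bd g_\ast : \bT Z \to \bT D$ and $\bd\beta_\ast : \bT[D;\cP_D]\to \bT D$ are both isomorphisms (the former by the definition of a resolution, Definition~\ref{D:binvar_blowdown}, the latter likewise) and $\bd g_\ast = \bd\beta_\ast \circ \bd\wt g_\ast$. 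Then Theorem~\ref{T:characterization} gives $Z \cong [[D;\cP_D]; \cP_Z]$, exhibiting $Z$ as a generalized blow-up of $[D;\cP_D]$.

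The main obstacle is purely expository: one must be careful that Theorem~\ref{T:binvar_resolution} and Theorem~\ref{T:characterization} are stated for maps between \emph{manifolds}, whereas $[D;\cP_D]$ is constructed as a p-submanifold $\wt D \subset [X;\cR_X]$ and is therefore genuinely a smooth manifold with corners — so these theorems apply verbatim once that identification is made. The one genuine point requiring a sentence of care is the properness of $\wt g$ in the non-compact case, which should be derived from properness of $g$ together with the observation that $\beta$ is proper (it is the restriction of a blow-down map of compact manifolds, or of a submanifold thereof).
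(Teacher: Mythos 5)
Your proposal is correct and follows essentially the same route as the paper: invoke Theorem~\ref{T:binvar_resolution} with the trivial refinement $\id : \cP_D \to \cP_D$, note that every $f_\natural : \cP_Y \to \cP_D$ factors through it to get the universal factoring property, and observe that the lift of any other resolution is a generalized blow-down map so that Theorem~\ref{T:characterization} applies. Your extra remarks on minimality and on properness of the lifted map are sensible elaborations of steps the paper's proof leaves implicit, but they do not constitute a different argument.
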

\begin{proof}
From Theorem~\ref{T:binvar_resolution}, there exists a unique resolution $[D;
\cP_D]$ coming from the trivial refinement $\id : \cP_D \to \cP_D$.  Since any
b-map $f : Y \to D$ induces a morphism $f_\natural : \cP_Y \to \cP_D$ which
necessarily factors through this trivial refinement, $f$ necessarily factors
through $[D; \cP_D]$.  If $f$ itself is a resolution, it follows that the lift
$f' : Y \to [D; \cP_D]$ is also diffeomorphic on the interiors, with
bijective b-differential; in other words, it is a generalized blow-down map
onto the manifold $[D; \cP_D]$, and hence a blow-up of this space by Theorem
\ref{T:characterization}.
\end{proof}

In fact, though we shall not use this below, it is possible to show that in
case $\cP_D$ is smooth, the spaces $D$ and $[D; \cP_D]$ are actually
homeomorphic.  Indeed, it is straightforward to show that the map $[D; \cP_D]$
is a diffeomorphism on the interiors of boundary faces, hence globally
bijective, then since it is continuous and proper it has a continuous inverse.
Thus one can regard Theorem~\ref{T:universal_resolution} as giving a natural
smooth structure on $D$ itself, though this is {\em not} generally equal to the
restriction of the smooth structure on $X$, as illustrated by the example $D =
\set{x_1^2 = x_2^3} \subset \R^2_+$, whose universal resolution is the usual
one: $[D; \cP_D] = \R_+ \ni t \mapsto (t^3,t^2) \subset \R^2_+$.

\section{Fiber products}\label{S:fiber}

We finally bring the theory of the last two sections to bear on the question of
fiber products of manifolds with corners.  Recall that in the category of
manifolds without boundary, smooth fiber products do not generally exist.  A
sufficient condition in this context is {\em transversality}; namely two smooth
maps $f_i : X_i \to Y$, $i = 1,2$ are transversal if $(f_1)_\ast(T_{p_1} X_1) +
(f_2)_\ast (T_{p_2} X_2) = T_q Y$ for all pairs $(p_1,p_2) \in X_1\times X_2$
such that $f_1(p_1) = f_2(p_2) = q$, and then 
\begin{equation}
	X_1 \times_Y X_2 = \set{(p_1,p_2) \in X_1\times X_2 \;;\; f_1(p_1) = f_2(p_2)}  \subset X_1 \times X_2
	\label{E:set_fiber_product}
\end{equation}
is a smooth manifold.  

We show that the analogous condition of b-transversality in the category of
manifolds with corners implies that $X_1 \times_Y X_2 \subset X_1 \times X_2$
is a union of binomial subvarieties, each of which is interior to some product
of faces.  The theory we have developed then gives sufficient conditions for a
fiber product (with the required universal properties) to exist in the category
of manifolds with corners, and gives a coherent system of resolution by
generalized blow-up even when these conditions are not satisfied.

Note that even when a fiber product does exist in the category of manifolds
with corners, it is not generally equal as a set to \eqref{E:set_fiber_product}.
Indeed, one step in the construction of the fiber product is to separate the
binomial subvarieties of \eqref{E:set_fiber_product} by taking their
{\em disjoint} union, which is already a kind of resolution.  Furthermore, as
noted in the last section, even when a binomial subvariety has a smooth
monoidal complex, it need not be smoothly embedded in its ambient manifold
$X_1\times X_2$.

\begin{defn}
We say two b-maps $f_i : X_i \to Y$, $i = 1,2$ are {\em b-transversal} if for
all points $p_i \in X_i$ such that $f_1(p_1) = f_2(p_2) = q \in Y$, 
\[
	\bd (f_1)_\ast\pns{\bT_{p_1} X_1} + \bd (f_2)_\ast\pns{\bT_{p_2} X_2} = \bT_q Y.
\]
\label{D:b-transversal}
\end{defn}

\begin{prop}[Iterated transversality]
If $f_i : X_i \to Y$, $i = 1,2$ are b-transversal, then for every pair of $F_i \in \cM(X_i)$
such that $(f_1)_\#(F_1) = (f_2)_\#(F_2) = F \in \cM(Y)$, the induced b-maps
\[
	(f_i)_{|F_i} : F_i \to F \quad \text{are b-transversal.}
\]
\label{P:iterated_b_transversality}
\end{prop}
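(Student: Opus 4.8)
The plan is to reduce the statement to a purely linear-algebraic fact about the b-differentials by restricting to boundary faces, and then to exploit the compatibility of $\bd f_\ast$ with the inclusions of b-tangent and b-normal spaces over faces. First I would fix points $p_i \in F_i \setminus \pa F_i$ with $(f_1)_{|F_1}(p_1) = (f_2)_{|F_2}(p_2) = q \in F \setminus \pa F$; by continuity of the b-differentials it suffices to establish b-transversality of the restricted maps at such interior points, since b-transversality is an open condition and these points are dense. For a b-map $f : X \to Y$ with $f_\#(F') = G'$, recall from Section~\ref{S:bnotation} that $\bd f_\ast$ restricts to $\bd f_\ast : \bN F' \to \bN G'$ and descends to $\bT_p F' = \bT_p X / \bN_p F'$ with target $\bT_q G' = \bT_q Y / \bN_q G'$; in other words there is a commutative diagram of short exact sequences relating $(0 \to \bN_p F' \to \bT_p X \to \bT_p F' \to 0)$ to $(0 \to \bN_q G' \to \bT_q Y \to \bT_q G' \to 0)$ via $\bd f_\ast$.

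The key step is then the following diagram chase. Write $V = \bT_q Y$, $W = \bN_q F \subset V$, with quotient $V/W = \bT_q F$; let $A_i = \bd(f_i)_\ast(\bT_{p_i} X_i) \subset V$. By b-transversality of $f_1, f_2$ we have $A_1 + A_2 = V$. What must be shown is that the images of $A_1$ and $A_2$ in $V/W$ already sum to all of $V/W$; but this is immediate from surjectivity of $V \to V/W$ applied to $A_1 + A_2 = V$. The one thing to verify is that the image of $A_i$ in $V/W$ is exactly $\bd\big((f_i)_{|F_i}\big)_\ast(\bT_{p_i} F_i)$, i.e.\ that $\bd(f_i)_\ast$ carries $\bT_{p_i} X_i$ onto a subspace whose image in $\bT_q F$ is the range of the restricted b-differential. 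This follows from the commutative diagram above: the composite $\bT_{p_i} X_i \to \bT_q Y \to \bT_q F$ factors as $\bT_{p_i} X_i \to \bT_{p_i} F_i \to \bT_q F$, the second map being precisely $\bd\big((f_i)_{|F_i}\big)_\ast$, and the first map $\bT_{p_i} X_i \to \bT_{p_i} F_i$ being surjective (it is the quotient by $\bN_{p_i} F_i$). Hence $\mathrm{image\ of\ }A_i\mathrm{\ in\ }\bT_q F = \bd\big((f_i)_{|F_i}\big)_\ast(\bT_{p_i} F_i)$, and the sum of the two is $\bT_q F$.

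The main subtlety — and the step I would be most careful about — is the hypothesis $(f_1)_\#(F_1) = (f_2)_\#(F_2) = F$. This ensures that $f_i(F_i) \subset F$ so that the restricted maps $(f_i)_{|F_i} : F_i \to F$ genuinely land in $F$ and are interior b-maps with $\bd\big((f_i)_{|F_i}\big)_\ast$ landing in $\bT F$; without it one of the $f_i$ might map $F_i$ into a smaller face and the commutative-diagram identification of the quotient would need adjusting. I would also remark that $(f_i)_{|F_i}$ is indeed a b-map (this is standard: the restriction of a b-map to a boundary face is a b-map, following from the local coordinate form) and interior onto $F$ when $(f_i)_\#(F_i) = F$, so that the b-differential statement makes sense. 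Everything else is the routine diagram chase above, valid at interior points of the faces and extended to all of $F_i$ by continuity of the b-differentials, completing the proof.
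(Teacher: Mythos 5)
Your argument is essentially the paper's: both proofs reduce the statement to the linear-algebraic fact that $\bd (f_i)_\ast$ carries $\bN_{p_i}F_i$ into $\bN_q F$ (because $(f_i)_\#(F_i)=F$), so that passing to the quotients $\bT_{p_i}F_i = \bT_{p_i}X_i/\bN_{p_i}F_i$ and $\bT_q F = \bT_q Y/\bN_q F$ converts surjectivity of $\bd (f_1)_\ast + \bd (f_2)_\ast$ onto $\bT_q Y$ into surjectivity of the restricted sum onto $\bT_q F$. The paper phrases this with a metric splitting and block matrices; your exact-sequence formulation is equivalent and, if anything, cleaner.

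One step is incorrect, though it is also unnecessary: the opening reduction to interior points $p_i \in F_i\setminus\pa F_i$ ``by density and openness.'' Surjectivity of a family of linear maps is an \emph{open} condition, not a closed one, so knowing b-transversality of the restricted maps on a dense set of pairs does not propagate to the remaining pairs --- the rank can drop in the limit. (It is also not clear that the interior pairs are dense in the set of all pairs $(p_1,p_2)$ with $f_1(p_1)=f_2(p_2)$.) Fortunately your diagram chase is valid at an arbitrary such pair: the quotient description $\bT_p F = \bT_p X/\bN_p F$ and the inclusion $\bd (f_i)_\ast(\bN_{p_i}F_i)\subset \bN_q F$ hold at every point of $F_i$, the b-normal bundles and b-differentials being defined on all of $F_i$ by continuity from the interior. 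So you should simply delete the reduction and run the same argument at every point --- which is what the paper does, taking $p_i$ in the interior of a possibly smaller face $G_i\subseteq F_i$ and decomposing $\bT_{p_i}X_i$ accordingly.
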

\begin{proof}
Choose $p_i \in F_i$ such that $f_1(p_1) = f_2(p_2) = q$.  Suppose $G_i
\subseteq F_i$ are the maximal codimension boundary faces containing $p_i$ (in
particular $p_i \in G_i \setminus \pa G_i$), and set $\bN_{p_i}(G_i; F_i) =
\bN_{p_i} G_i / \bN_{p_i} F_i$.  These are the b-normal bundles to the $G_i$ in
the manifolds $F_i$.  The b-tangent spaces of the $G_i$ are the quotients
$\bT_{p_i} G_i = \bT_{p_i} X_i/\bN_{p_i} G_i$ with respect to the natural
inclusions $\bN_{p_i} G_i \hookrightarrow \bT_{p_i} X_i$, and since the $p_i$
lie in the interior of the $G_i$, $\bT_{p_i} G_i = T_{p_i} G_i$.

Using a metric to replace the quotients by orthogonal decompositions,
\[
	\bT_{p_i} X_i = T_{p_i} G_i \oplus \bN_{p_i} (G_i; F_i) \oplus \bN_{p_i} F_i.
\]
The last two factors constitute $\bN_{p_i} G_i$, while the first two constitute
$\bT_{p_i} F_i$.  The b-differentials $\bd (f_i)_\ast$ have the form
\[
	\bd (f_i)_\ast = \begin{pmatrix} \ast & 0 & 0 \\\ast & \ast & 0 \\ \ast & 0 & \ast  \end{pmatrix} : 
	\begin{array}{l} T_{p_i} G_i \\\oplus\bN_{p_i}(G_i; F_i)\\\oplus \bN_{p_i} F_i \end{array}
	\to \begin{array}{l} T_q G \\\oplus \bN_q (G; F) \\\oplus \bN_q F\end{array}
\]
with respect to the quotients, where $G = (f_i)_\#(G_i)$, $i = 1,2$.  In
particular, the only vectors with image in $\bT_q F = T_q G \oplus \bN_q (G;
F)$ must lie in the first two factors, namely $T_{p_i} G_i\oplus \bN_{p_i}(G_i;
F_i) \equiv \bT_{p_i} F_i$, thus if 
\[
	\bd (f_1)_\ast + \bd (f_2)_\ast : \bT_{p_1} X_1 \times \bT_{p_2} X_2 \to T_q Y 
\]
is surjective, then 
\[
	\bd \pns{(f_1)_{|F_1}}_\ast + \bd \pns{(f_2)_{|F_2}}_\ast : \bT_{p_1} F_1 \times \bT_{p_2} F_2 \to \bT_q F 
\]
must be surjective.
\end{proof}

\begin{prop}
If $f_i : X_i \to Y$, $i = 1,2$ are b-transversal maps then the set-theoretic
fiber product \eqref{E:set_fiber_product} is a union of interior binomial subvarieties
\[
D(F_1,F_2)= \clos\pns{F_1\times_Y F_2 \setminus \pa \pns{ F_1 \times F_2}} \subset F_1\times F_2 \quad F_i \in \cM(X_i).
\]
\label{P:fiber_prod_is_binvar}
\end{prop}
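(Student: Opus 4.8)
The plan is to stratify the set-theoretic fiber product by the product faces whose relative interiors its points meet, use Proposition~\ref{P:iterated_b_transversality} to pass to those faces, and then exhibit an explicit local binomial structure near an arbitrary point of each stratum. For the decomposition: every $(p_1,p_2)\in X_1\times_Y X_2$ lies in the relative interior $(F_1\setminus\pa F_1)\times(F_2\setminus\pa F_2)$ of a unique product $F_1\times F_2$, where $F_i\in\cM(X_i)$ is the smallest face containing $p_i$. Since $f_i|_{F_i}\colon F_i\to (f_i)_\#(F_i)$ is an interior b-map, it maps the relative interior of $F_i$ into that of $(f_i)_\#(F_i)$, so $q:=f_1(p_1)=f_2(p_2)$ lies in the relative interiors of both $(f_1)_\#(F_1)$ and $(f_2)_\#(F_2)$, forcing these faces to coincide, say $=F$. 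Hence $X_1\times_Y X_2$ is the finite union, over pairs with $(f_1)_\#(F_1)=(f_2)_\#(F_2)$, of $(F_1\times_Y F_2)\cap\big((F_1\setminus\pa F_1)\times(F_2\setminus\pa F_2)\big)$, and passing to closures — using that $X_1\times_Y X_2$ is closed, being a preimage of the diagonal — gives $X_1\times_Y X_2=\bigcup D(F_1,F_2)$, with $D(F_1,F_2)=\emptyset$ unless $(f_1)_\#(F_1)=(f_2)_\#(F_2)$. By Proposition~\ref{P:iterated_b_transversality} the restrictions $f_i|_{F_i}\colon F_i\to F$ are again b-transversal, so it remains to cover each $D(F_1,F_2)$ by local binomial structures inside $F_1\times F_2$; if connectedness fails one passes to connected components, which is what the phrase ``union of interior binomial subvarieties'' should be read to permit.

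\textbf{The defining equations.} Fix $(q_1,q_2)\in D(F_1,F_2)$ with common image $q\in F$ and choose coordinates $(x',y')=(x'_1,\ldots,x'_k,y'_1,\ldots,y'_{n-k})$ on $F$ centered at $q$, with the $x'_j$ boundary defining and $n=\dim F$. Locally near $q_i$ one has $f_i^\ast x'_j=a_{i,j}\,\rho_i^{\alpha_{i,j}}$ with $a_{i,j}>0$ smooth and $f_i^\ast y'_l=b_{i,l}$ smooth, where $\rho_i$ are boundary defining functions for $F_i$. On a coordinate neighborhood $U$ of $(q_1,q_2)$ in $F_1\times F_2$ I would set
\[
g_j=\frac{f_1^\ast x'_j}{f_2^\ast x'_j}\in\cG_{\gamma_j}(U),\quad \gamma_j=(\alpha_{1,j},-\alpha_{2,j}),\qquad g_{k+l}=\frac{\exp(f_1^\ast y'_l)}{\exp(f_2^\ast y'_l)}\in\cG_0(U),
\]
exponentiating the interior equations so that every $g_m$ is a quotient of positive functions. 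On $U\setminus\pa U$ the condition $g_1=\cdots=g_n=1$ is equivalent to $f_1^\ast x'_j=f_2^\ast x'_j$ and $f_1^\ast y'_l=f_2^\ast y'_l$ for all $j,l$, i.e.\ to $(p_1,p_2)\in F_1\times_Y F_2$; hence $D(F_1,F_2)\cap U=\clos_U\set{q\in U\setminus\pa U\;;\;g_m(q)=1,\ m=1,\ldots,n}$, which is exactly the form \eqref{E:loc_bin_str}.

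\textbf{Independence $=$ b-transversality.} The substantive point is that $dg_1/g_1,\ldots,dg_n/g_n$ are linearly independent at each point of $D(F_1,F_2)\cap U$, and this is precisely where b-transversality enters. Writing $\omega_j=dx'_j/x'_j$ for $j\le k$ and $\omega_{k+l}=dy'_l$, which form a basis of $\bT^\ast_q F$, one computes $dg_m/g_m=f_1^\ast\omega_m-f_2^\ast\omega_m$ as a section of $\bT^\ast(F_1\times F_2)=\bT^\ast F_1\oplus\bT^\ast F_2$. So at a point $(p'_1,p'_2)$ with $f_i(p'_i)=q''$ the span of the $dg_m/g_m$ is the image of the map $\bT^\ast_{q''}F\to\bT^\ast_{p'_1}F_1\oplus\bT^\ast_{p'_2}F_2$, $\omega\mapsto\big((\bd f_1)^\ast\omega,-(\bd f_2)^\ast\omega\big)$, which is the transpose of $(v_1,v_2)\mapsto\bd(f_1)_\ast v_1-\bd(f_2)_\ast v_2$. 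The latter is surjective by b-transversality of $f_1|_{F_1},f_2|_{F_2}$ at $(p'_1,p'_2)$, so its transpose is injective and the $dg_m/g_m$ are independent. Thus $(U;g_1,\ldots,g_n)$ is a local binomial structure of codimension $n=\dim F$, and since $\dim F$ is constant on each component of $D(F_1,F_2)$, each such component is an interior binomial subvariety of $F_1\times F_2$; together with the decomposition this finishes the proof.

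I expect the only genuine obstacle to be bookkeeping: correctly matching the set-theoretic strata to the faces $F_i$ (and noting that only pairs with $(f_1)_\#(F_1)=(f_2)_\#(F_2)$ contribute), and keeping the transposition $\bd(f_1)_\ast-\bd(f_2)_\ast\leftrightarrow(\bd f_1)^\ast\oplus-(\bd f_2)^\ast$ straight. The real content is the single observation that independence of the logarithmic differentials of the binomial defining equations is literally the transposed form of b-transversality, so that everything else — the set decomposition, the exponentiation of interior coordinates, and the reduction to boundary faces via Proposition~\ref{P:iterated_b_transversality} — is formal.
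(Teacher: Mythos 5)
Your proposal is correct and follows essentially the same route as the paper's proof: decompose the set-theoretic fiber product into the sets $D(F_1,F_2)$, write the local defining equations as the quotients $f_1^\ast x'_j/f_2^\ast x'_j$ together with exponentials of differences of the pulled-back interior coordinates, and deduce independence of the logarithmic differentials from the injectivity of $\bd f_1^\ast - \bd f_2^\ast$, which is the dual form of b-transversality. The only difference is presentational: you invoke Proposition~\ref{P:iterated_b_transversality} explicitly to justify working with the restricted maps $F_i \to F$, a step the paper compresses into the phrase ``we may restrict attention to the case $F_i = X_i$.''
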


\begin{proof} Every point in $X_1 \times_Y X_2$ lies in the interior of some $F_1\times F_2$,
hence in $D(F_1,F_2).$  It suffices to verify that the $D(F_1,F_2) \subset
F_1 \times F_2$ are either empty or interior binomial subvarieties. So we
may restrict attention to the case $F_i = X_i$, and assume that $D(X_1,X_2)$ is nonempty.

Suppose $X_i \ni p_i \mapsto q \in Y$, and choose coordinates $(x',y')$
centered at $p_1$, $(x'',y'')$ centered at $p_2$, and $(\overline x,\overline
y) = (\overline x_1,\ldots,\overline x_k,\overline y_{k+1},\ldots,\overline
y_n)$ centered at $q$.  The maps $f_i$ have the local form
\[
\begin{aligned}
	f_1 &: (x',y') \mapsto \big(a_1(x',y') (x')^{\nu_1}, b_1(x',y')\big) = (\overline x,\overline y), \ \text{and}\\
	f_2 &: (x'',y'') \mapsto \big(a_2(x'',y'') (x'')^{\nu_2}, b_2(x'',y'')\big) = (\overline x,\overline y).
\end{aligned}
\]
Near $(p_1,p_2) \in X_1 \times X_2$, $(x,y) = (x',x'', y', y'')$ are local
coordinates in terms of which
\[
D(X_1,X_2) \subset \set{a_1(x',y')\,\pns{x'}^{\nu_1} =
a_2(x'',y'')\,\pns{x''}^{\nu_2},\; b_1(x',y') = b_2(x'',y'')}
\]
which can be written in the form
\[
	D(X_1,X_2) \subset \set{c_i\,x^{\gamma_i} = 1\;;\; i =1,\ldots,n}, \quad 0 < c_i\in \CI(X_1\times X_2)
\]
where 
\[
	(c_i, \gamma_i) = \begin{cases} \big((a_1)_i/(a_2)_i, (\nu_1\oplus 0)_i - (0 \oplus \nu_2)_i\big) & i =1,\ldots,k, \\
					\Big(\exp\big((b_1)_i - (b_2)_i\big), 0\Big) & i =k+1,\ldots,n. \end{cases}
\]

Thus it only remains to check the independence of the logarithmic differentials of the $c_i
x^{\gamma_i}.$ Consider the b-map $f_1\times f_2 : X_1 \times X_2 \to Y\times
Y.$ There is an exact sequence
\[
0 \smallto \bT_q Y \stackrel{\bd \Delta_\ast}{\smallto} \bT_{(q,q)}\pns{Y\times Y} \smallto \bT_q Y  \smallto 0
\]
where $\Delta : Y \to Y\times Y$ is the diagonal inclusion and the
subsequent map is the difference from $\bT_{(q,q)}\pns{Y\times Y}=\bT_q Y\times\bT_q Y.$
The condition of
b-transversality condition means that $\bd\pns{f_1\times f_2}^\ast$ is
injective as a map
\[
	\bd\pns{f_1\times f_2}^\ast : \bT^*_q Y \to \bT_{(p_1,p_2)}^\ast(X_1 \times X_2).
\]
This can be identified with the map
\[
	\bd f_1^\ast - \bd f_2^\ast : \bT^\ast_q Y \to \bT^\ast_{(p_1,p_2)}(X_1\times X_2)
\]
which is similarly injective.  Taking the coordinate basis $\set{d\overline
x_i/\overline x_i, d \overline y_j}$ for $\bT^\ast_q Y$, we obtain that
\[
	\set{(\bd f_1^\ast - \bd f_2^\ast) d\overline x_i/\overline x_i, (\bd f_1^\ast - \bd f_2^\ast) d \overline y_j}
\]
is independent.  Observe however that
\[
\begin{aligned}
	(\bd f_1^\ast - \bd f_2^\ast) d\overline x_i/\overline x_i &= d\log \pns{(a_1 {x'}^{\nu_1})_i} - d\log\pns{(a_2{x''}^{\nu_2})_i} 
	\\&= d\log \pns{c_i x^{\gamma_i}},\quad  i =1,\ldots,k
\end{aligned}
\]
and
\[
\begin{aligned}
	(\bd f_1^\ast - \bd f_2^\ast) d\overline y_i &= d(b_1)_i - d (b_2)_i \\&= d\log\Big(\exp\big((b_1)_i - (b_2)_i\big)\Big) 
	\\&= d\log \pns{c_i x^{\gamma_i}}, \quad  i =k+1,\ldots,n.
\end{aligned}
\]
We conclude that $D(X_1,X_2)$ has a covering by local binomial structures and
is therefore an interior binomial subvariety.
\end{proof}

While $X_1\times_Y X_2$ may therefore be a complicated and quite singular
space, any smooth maps factoring through it must actually factor through one of
the subvarieties $D(F_1,F_2)$.

\begin{prop} If $g_i : Z \to X_i$ are b-maps from a connected smooth manifold such that
$f_1 \circ g_1 = f_2 \circ g_2$ then the maps $g_i : Z \to X_i$
factor through a unique b-map $h : Z \to D(F_1,F_2)$ for some $D(F_1,F_2)$:
\[
\begin{tikzpicture}[->,>=to,auto]
\matrix (m) [matrix of math nodes, column sep=1cm, row sep=1cm, text depth=0.25ex]
{ Z & & \\  &D(F_1,F_2) & X_2 \\ & X_1 & Y. \\};
\path (m-2-2) edge node {$\pi_2$} (m-2-3);
\path (m-2-3) edge node  {$f_2$} (m-3-3);
\path (m-2-2) edge node  {$\pi_1$} (m-3-2);
\path (m-3-2) edge node  {$f_1$} (m-3-3);
\path (m-1-1) edge [bend left] node  {$g_2$} (m-2-3);
\path (m-1-1) edge [bend right] node  {$g_1$} (m-3-2);
\path (m-1-1) edge [dashed] node  {$h$} (m-2-2);
\end{tikzpicture}
\]

\label{P:fiber_components_mapping}
\end{prop}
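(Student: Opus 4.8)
The plan is to produce $h$ as the tautological set map $z\mapsto\big(g_1(z),g_2(z)\big)$, to pin down the pair of faces $(F_1,F_2)$ using the connectedness of $Z$, and then to check that this map is a b-map landing in $D(F_1,F_2)$. First I would use the fact, recalled in Section~\ref{S:bnotation} and sharpened by connectedness, that a b-map $g_i:Z\to X_i$ from a connected manifold with corners is an interior b-map onto a \emph{unique smallest} boundary face $F_i\in\cM(X_i)$ with $g_i(Z)\subseteq F_i$, so that $g_i(\mathring Z)\subseteq\mathring F_i$; this can be seen directly from the local normal form of a b-map together with the density and connectedness of $\mathring Z$. The pair $g_i$ then defines a b-map $h=(g_1,g_2):Z\to F_1\times F_2$ (a map into a product is a b-map precisely when each component is), and by construction $\pi_i\circ h=g_i$, where $\pi_i:D(F_1,F_2)\hookrightarrow F_1\times F_2\to F_i\hookrightarrow X_i$ is the restriction of the coordinate projection.

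The second step is to show $h$ lands in $D(F_1,F_2)$. For $z\in\mathring Z$ one has $f_1(g_1(z))=f_2(g_2(z))$, so $h(z)\in F_1\times_Y F_2$, while $h(\mathring Z)\subseteq\mathring F_1\times\mathring F_2=(F_1\times F_2)\setminus\pa(F_1\times F_2)$ since $g_i(\mathring Z)\subseteq\mathring F_i$. Hence $h(\mathring Z)\subseteq F_1\times_Y F_2\setminus\pa(F_1\times F_2)$, and by continuity of $h$ and density of $\mathring Z$,
\[
h(Z)=h\big(\clos\mathring Z\big)\subseteq\clos\big(F_1\times_Y F_2\setminus\pa(F_1\times F_2)\big)=D(F_1,F_2);
\]
as $h(Z)$ is connected it in fact lies in a single connected component, i.e.\ in one of the interior binomial subvarieties of Proposition~\ref{P:fiber_prod_is_binvar}. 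By the definition of a b-map into a binomial subvariety (namely a b-map into the ambient manifold with image in the subvariety), $h:Z\to D(F_1,F_2)$ is a b-map; the diagram commutes because $f_1\circ\pi_1=f_2\circ\pi_2$ holds on the dense set $F_1\times_Y F_2\setminus\pa(F_1\times F_2)$ by definition of the fibre product and extends by continuity. Uniqueness is immediate: any $h'$ with $\pi_i\circ h'=g_i$ equals $h$ since a point of $F_1\times F_2$ is determined by its two components.

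The genuinely non-formal point is the very first one: the existence of a well-defined minimal face $F_i$ for each $g_i$, equivalently the fact that $h$ lands in a \emph{single} $D(F_1,F_2)$ rather than spreading over several of them. This is exactly where the connectedness hypothesis on $Z$ is used and is the step I expect to require the most care; everything after it is bookkeeping with interiors, closures, and the definitions of b-maps and of $D(F_1,F_2)$.
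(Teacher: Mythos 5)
Your proposal is correct and follows essentially the same route as the paper's proof: identify the minimal faces $F_i$ so that $g_i:Z\to F_i$ are interior b-maps, observe that $g_1\times g_2$ carries $\mathring Z$ into the interior of $F_1\times F_2$ and, by $f_1\circ g_1=f_2\circ g_2$, into the set-theoretic fiber product, then conclude by continuity and the closure defining $D(F_1,F_2)$. Your extra remarks on connectedness pinning down a single component and on the trivial uniqueness are consistent with, and slightly more explicit than, the paper's treatment.
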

\begin{proof}
Let $F_1$, $F_2$ be the minimal (largest codimension) faces such that $g_i : Z
\to F_i$ are {\em interior} b-maps.  Then $g_1\times g_2 : Z \to F_1\times F_2
\subset X_1 \times X_2$ is an interior b-map, and as such
\[
	g_1 \times g_2 : Z \setminus \pa Z \to F_1\times F_2 \setminus \pa\pns{F_1\times F_2}.
\]

On the other hand, as a map of sets, $g_1\times g_2$ factors through (the
set-theoretic) $F_1\times_Y F_2$ by the assumption that $f_1\circ g_1 = f_2
\circ g_2$.  Finally, it follows by continuity and taking the closure of the
intersection with the interior of $F_1 \times F_2$ that
\[
	h= g_1\times g_2 : Z \to D(F_1,F_2) \subset X_1\times X_2
\]
which is by definition a b-map into an interior binomial subvariety.
\end{proof}

Note that the monoidal complex $\cP_{D(F_1,F_2)}$ consists of monoids of the
form $\sigma_{G_1} \times_{\sigma_G} \sigma_{G_2}$, where $\sigma_{G_i} \in
\cP_{F_i}$, and $G = (f_1)_\#(G_1) \cap (f_2)_\#(G_2).$  Indeed, if $F$ is a
boundary face of $D(F_1,F_2)$, given by a component of $D(F_1,F_2) \cap
G_1\times G_2$ for $G_1\times G_2 \in \cM(X_1\times X_2)$, then 
\[
	\sigma_F = \sigma_{G_1\times G_2} \cap \bN F = \sigma_{G_1} \times_{\sigma_G} \sigma_{G_2}.
\]
It is generally {\em not} true that $\cP_{D(F_1,F_2)}$ is equal to
$\cP_{F_1}\times_{\cP_Y} \cP_{F_2}$ as might at first be expected,
since $D(F_1,F_2) \cap G_1\times G_2$ may be empty or may have multiple
components, while $\sigma_{G_1} \times_{\sigma_G} \sigma_{G_2}$ is nontrivial
and appears exactly once in $\cP_{F_1} \times_{\cP_Y} \cP_{F_2}$.  However,
this is true in some cases, as in example \ref{X:fib_prod_two_blowdowns} below.

\begin{thm}[Existence of smooth fiber products]
If $f_i : X_i \to Y$ are b-transverse maps and if
in addition $\sigma_{F_1}\times_{\sigma_G} \sigma_{F_2}$ is a smooth monoid
whenever $G = (f_1)_\#(F_1) \cap (f_2)_\#(F_2) \in \cM(Y)$ then
\[
	\wt {X_1\times_Y X_2}= \bigsqcup_{F_1,F_2} [D(F_1,F_2); \cP_{D(F_1,F_2)}]
\]
is a union of smooth manifold with corners and is the universal fiber
product.
\label{T:smooth_fiber_products}
\end{thm}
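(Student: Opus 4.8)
The plan is to assemble the statement from the structural results already in place: the decomposition of the set-theoretic fiber product into interior binomial subvarieties (Proposition~\ref{P:fiber_prod_is_binvar}), the identification of the monoidal complexes of those subvarieties, and the existence of a universal resolution of a binomial subvariety whose complex is smooth (Theorem~\ref{T:universal_resolution}); the genuinely hard analytic input is the last of these, which is already available. First I would apply Proposition~\ref{P:fiber_prod_is_binvar} to write $X_1\times_Y X_2$ set-theoretically as the union over pairs $F_i\in\cM(X_i)$ of the interior binomial subvarieties $D(F_1,F_2)\subset F_1\times F_2$, each lying in the interior of $F_1\times F_2$, so that distinct pairs contribute subvarieties with disjoint interiors. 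By Proposition~\ref{P:bin_subvar_mon_complex} each $D(F_1,F_2)$ carries a basic monoidal complex $\cP_{D(F_1,F_2)}$, whose monoids are, as recorded after the statement of the theorem, exactly the nontrivial fiber products $\sigma_{G_1}\times_{\sigma_G}\sigma_{G_2}$ with $G_i\in\cM(X_i)$ a face of $F_i$ and $G=(f_1)_\#(G_1)\cap(f_2)_\#(G_2)\in\cM(Y)$ (one copy for each component of $D(F_1,F_2)\cap(G_1\times G_2)$). Applying the smoothness hypothesis to the pairs $(G_1,G_2)$ — the iterated transversality of Proposition~\ref{P:iterated_b_transversality} ensuring that passing to these faces stays within the same framework — shows that every such monoid is smooth, so each $\cP_{D(F_1,F_2)}$ is a smooth complex.

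Next I would invoke Theorem~\ref{T:universal_resolution}: since $\cP_{D(F_1,F_2)}$ is smooth there is a universal resolution $\beta\colon[D(F_1,F_2);\cP_{D(F_1,F_2)}]\to D(F_1,F_2)$, a smooth manifold with corners (realized as a p-submanifold of a generalized blow-up of $F_1\times F_2$), unique up to diffeomorphism, through which every b-map into $D(F_1,F_2)$ factors uniquely. The disjoint union $\wt{X_1\times_Y X_2}=\bigsqcup_{F_1,F_2}[D(F_1,F_2);\cP_{D(F_1,F_2)}]$ is then a smooth manifold with corners. I would define $h_i$ piecewise as the composite of the blow-down $\beta$, the inclusion $D(F_1,F_2)\hookrightarrow X_1\times X_2$, and the projection $\pi_i\colon X_1\times X_2\to X_i$; this is a b-map, being a composite of b-maps with the middle one a b-map into a binomial subvariety in the sense of Section~\ref{S:binvars}. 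Since the points of $D(F_1,F_2)$ satisfy $f_1\circ\pi_1=f_2\circ\pi_2$ (on the interior by definition of the fibered set, and on the closure by continuity), this gives $f_1\circ h_1=f_2\circ h_2$.

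Finally I would check the universal property. Given b-maps $g_i\colon Z\to X_i$ with $f_1\circ g_1=f_2\circ g_2$, reduce to $Z$ connected. Proposition~\ref{P:fiber_components_mapping} produces a unique b-map $h\colon Z\to D(F_1,F_2)$ factoring $(g_1,g_2)$, into the subvariety determined by the minimal faces $F_i$ for which $g_i$ is interior into $F_i$; Theorem~\ref{T:universal_resolution} then factors $h$ uniquely through the resolution, giving $g\colon Z\to[D(F_1,F_2);\cP_{D(F_1,F_2)}]\hookrightarrow\wt{X_1\times_Y X_2}$ with $h_i\circ g=g_i$. For uniqueness, any b-map $g'$ with $h_i\circ g'=g_i$ has connected image in a single piece $[D(F_1',F_2');\cP_{D(F_1',F_2')}]$; pushing forward to $X_1\times X_2$ and restricting to $Z\setminus\pa Z$ shows $g_i$ is interior into $F_i'$ and that $F_i'$ is minimal with this property, hence $(F_1',F_2')=(F_1,F_2)$, and then uniqueness in Theorem~\ref{T:universal_resolution} forces $g'=g$; disconnected $Z$ is handled component by component. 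The step I expect to demand the most care is this last one together with the smoothness verification above: one must correctly identify the monoids of $\cP_{D(F_1,F_2)}$ over \emph{all} faces $G_i\le F_i$, not merely the $F_i$ themselves, in order to apply the smoothness hypothesis, and one must show in the uniqueness argument that the factoring map is \emph{forced} into a single piece of the disjoint union rather than merely admitted by one.
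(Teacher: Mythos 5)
Your proposal is correct and follows essentially the same route as the paper: decompose via Proposition~\ref{P:fiber_prod_is_binvar}, identify the monoids of $\cP_{D(F_1,F_2)}$ as the fiber products $\sigma_{G_1}\times_{\sigma_G}\sigma_{G_2}$, apply Theorem~\ref{T:universal_resolution} to each piece, and obtain the universal property from Proposition~\ref{P:fiber_components_mapping}. The paper's own proof is considerably terser; the extra care you take (checking the smoothness hypothesis over all faces $G_i\leq F_i$, which is covered since the hypothesis quantifies over all pairs, and forcing the factoring map into a single piece) is a legitimate filling-in of details the paper leaves implicit.
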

\noindent
Observe that $\wt {X_1\times_Y X_2}$ is not equal to $X_1\times_Y X_2$.  In
categorical language, the forgetful functor from manifolds with
corners to sets does not preserve fiber products.  However there is a unique
map $\wt{X_1\times_Y X_2} \to X_1\times_Y X_2.$
\begin{proof}
It follows from Theorem~\ref{T:universal_resolution} that $\wt{X_1\times_Y
X_2}$ is a smooth manifold with corners, which has the requisite universal
property by Proposition~\ref{P:fiber_components_mapping}.  Indeed, any b-maps
$g_i : Z \to X_i$ from a manifold $Z$ which commute with the $f_i$ factor
through a unique map $h : Z \to D(F_1,F_2)$ for some $D(F_1,F_2)$, and this has
a unique lift to the universal resolution $[D(F_1,F_2); \cP_{D(F_1,F_2)}]$.
Finally, the uniqueness of $\wt {X_1 \times_Y X_2}$ up to diffeomorphism
follows from the universality.
\end{proof}

In general, even for b-transverse maps, a smooth fiber product does not
exist.  Nevertheless, the space
\[
	\bigsqcup_{F_1,F_2} D(F_1,F_2)
\]
serves as a fiber product in the category of interior binomial
subvarieties with its attendant resolution theory, which we summarize below.

\begin{thm}
Let $f_i : X_i \to Y$, $i = 1,2$ be b-transverse maps and set
$D = \bigsqcup_{F_i \in \cM(X_i), i = 1,2} D(F_1,F_2)$
\begin{enumerate}[{\normalfont (a)}]
\item \label{I:fibprod1}
For every smooth refinement $\cR_D \to \cP_D$ there is a smooth manifold with
corners $[D; \cR_D]$ with maps $h_i : [D; \cR_D] \to X_i$ forming a commutative
square with $f_i : X_i \to Y$:
\[
\begin{tikzpicture}[->,>=to,auto]
\matrix (m) [matrix of math nodes, column sep=1cm, row sep=1cm, text depth=0.25ex]
{{[D;\cR_D]} & X_2 \\  X_1 & Y. \\};
\path (m-1-1) edge [dashed] node {$h_2$} (m-1-2);
\path (m-1-1) edge [dashed] node {$h_1$} (m-2-1);
\path (m-2-1) edge node {$f_1$} (m-2-2);
\path (m-1-2) edge node {$f_2$} (m-2-2);
\end{tikzpicture}
\]
\item \label{I:fibprod2}
Any two such resolutions $[D; \cR_i]$ $i = 1,2$ have a mutual smooth
resolution, which is to say a third manifold $[D; \cR_0]$ with maps $[D; \cR_0]
\to [D; \cR_i]$, $i = 1,2$ forming commutative diagrams with the maps $[D;
\cR_i] \to X_j$.
\[
\begin{tikzpicture}[->,>=to,auto]
\matrix (m) [matrix of math nodes, column sep=1cm, row sep=1cm, text depth=0.25ex]
{ {[D; \cR_0]} & {[D; \cR_2]}  & \\ {[D; \cR_1]} & & X_2 \\ & X_1 & Y \\};
\path (m-1-1) edge [dashed] (m-1-2); %top
\path (m-1-1) edge [dashed] (m-2-1); %top
\path (m-2-1) edge node [pos=0.3] {$h_{12}$} (m-2-3); %top
\path (m-2-1) edge node {$h_{11}$} (m-3-2); %top
\path (m-1-2) edge node [pos=0.3] {$h_{21}$} (m-3-2); %top
\path (m-1-2) edge node {$h_{22}$} (m-2-3); %top
\path (m-2-3) edge node {$f_2$}(m-3-3); %top
\path (m-3-2) edge node {$f_1$} (m-3-3); %top
\end{tikzpicture}
\]
\item \label{I:fibprod3}
If $Z$ is a manifold with maps $g_i : Z \to X_i$ such that $f_1 \circ g_1 = f_2
\circ g_2$, and if the morphism $\cP_Z \to \cP_D$ factors through $\cR_D \to
\cP_D$, then there is a unique map $g : Z \to [D; \cR_D]$ such that $h_i \circ g = g_i$.
\[
\begin{tikzpicture}[->,>=to,auto]
\matrix (m) [matrix of math nodes, column sep=1cm, row sep=1cm, text depth=0.25ex]
{ Z & & \\  & {[D;\cR_D]} & X_2 \\ & X_1 & Y \\};
\path (m-1-1) edge [dashed] node {$g$} (m-2-2);
\path (m-2-2) edge node {$h_2$} (m-2-3);
\path (m-2-2) edge node {$h_1$} (m-3-2);
\path (m-3-2) edge node {$f_1$} (m-3-3);
\path (m-2-3) edge node {$f_2$} (m-3-3);
\path (m-1-1) edge [bend left,pos=0.6] node  {$g_2$} (m-2-3);
\path (m-1-1) edge [bend right] node  {$g_1$} (m-3-2);
\end{tikzpicture}
\]
\item \label{I:fibprod4}
Given a manifold $Z$ with maps $g_i : Z \to X_i$ such that $f_1 \circ g_1 = f_2 \circ g_2$, 
for any resolution $[D; \cR_D] \to D$, there exists a generalized blow-up $\beta : [Z; \cR] \to Z$ 
and a unique map $g : [Z; \cR] \to [D; \cR_D]$ such that $h_i \circ g = g_i \circ \beta$:
\[
\begin{tikzpicture}[->,>=to,auto]
\matrix (m) [matrix of math nodes, column sep=1cm, row sep=1cm, text depth=0.25ex]
{ [Z; \cR] & & \\ Z & {[D;\cR_D]} & X_2 \\ & X_1 & Y \\};
\path (m-1-1) edge [dashed] node {$g$} (m-2-2);
\path (m-1-1) edge [dashed] node {$\beta$} (m-2-1);
\path (m-2-2) edge node {$h_2$} (m-2-3);
\path (m-2-2) edge node {$h_1$} (m-3-2);
\path (m-3-2) edge node {$f_1$} (m-3-3);
\path (m-2-3) edge node {$f_2$} (m-3-3);
\path (m-2-1) edge [bend left] node  {$g_2$} (m-2-3);
\path (m-2-1) edge [bend right] node  {$g_1$} (m-3-2);
\end{tikzpicture}
\]
\end{enumerate}
\label{T:singular_fiber_product}
\end{thm}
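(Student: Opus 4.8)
The plan is to assemble all four parts from the results already established, so that almost all of the content is cited rather than reproved; the real task is careful bookkeeping of the induced morphisms of monoidal complexes. Throughout I would reduce to the connected case. By Proposition~\ref{P:fiber_prod_is_binvar}, $D = \bigsqcup_{F_1,F_2} D(F_1,F_2)$ is a disjoint union of \emph{connected} interior binomial subvarieties $D(F_1,F_2)\subset F_1\times F_2 \subset X_1\times X_2$, so $\cP_D = \bigsqcup_{F_1,F_2}\cP_{D(F_1,F_2)}$, any smooth refinement $\cR_D\to\cP_D$ restricts to a smooth refinement $\cR_{D(F_1,F_2)}\to\cP_{D(F_1,F_2)}$ of each piece, and $[D;\cR_D] = \bigsqcup_{F_1,F_2}[D(F_1,F_2);\cR_{D(F_1,F_2)}]$ by Theorem~\ref{T:binvar_resolution}, applied componentwise, together with the maps $h_i := \pi_i\circ\beta$, where $\pi_i : D(F_1,F_2)\to X_i$ are the coordinate projections (corestricted to $X_i$) and $\beta : [D;\cR_D]\to D$ is the resolution. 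Part (a) is then immediate: each $h_i$ is a b-map as a composite of b-maps, and $f_1\circ h_1 = f_2\circ h_2$ because $f_1\circ\pi_1 = f_2\circ\pi_2$ holds on each $D(F_1,F_2)$ by the definition of the set-theoretic fiber product. The asserted commutative square is exactly this identity.

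For part (c) I would first invoke Proposition~\ref{P:fiber_components_mapping} (componentwise in $Z$): the maps $g_i : Z\to X_i$ with $f_1\circ g_1 = f_2\circ g_2$ factor through a unique b-map $h : Z\to D(F_1,F_2)\subset D$, and the morphism $\cP_Z\to\cP_D$ appearing in the hypothesis is precisely the morphism $h_\natural$ of \eqref{E:f_natural_onto_binomial}. Since $h_\natural$ factors through $\cR_D\to\cP_D$ by assumption, the second half of Theorem~\ref{T:binvar_resolution} yields a unique b-map $\wt h : Z\to [D;\cR_D]$ with $\beta\circ\wt h = h$; setting $g = \wt h$ gives $h_i\circ g = \pi_i\circ\beta\circ\wt h = \pi_i\circ h = g_i$, and uniqueness of $g$ follows from the uniqueness in Proposition~\ref{P:fiber_components_mapping} and in Theorem~\ref{T:binvar_resolution}. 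Part (b) then follows from part (c): given smooth refinements $\cR_1,\cR_2\to\cP_D$, put $\cR_0 = \ns(\cR_1\times_{\cP_D}\cR_2)$, which is a smooth refinement of $\cP_D$ refining each $\cR_i$ by Proposition~\ref{P:refinements_pull_back} and Theorem~\ref{T:smooth_refinement} (this is Corollary~\ref{C:mutual_smooth_refinement} applied to $\cP_D$); applying part (a) gives $[D;\cR_0]$ with maps $h_j^{(0)} : [D;\cR_0]\to X_j$ commuting with $f_j$, and since $\cP_{[D;\cR_0]}\cong\cR_0$ refines $\cR_i$, the morphism $\cP_{[D;\cR_0]}\to\cP_D$ factors through $\cR_i\to\cP_D$, so part (c) (with the smooth manifold $Z = [D;\cR_0]$) produces the required unique map $[D;\cR_0]\to[D;\cR_i]$, and all the triangles over $X_j$ and squares over $f_j$ commute by construction.

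Part (d) combines ``blowing up the domain'' with part (c). As in part (c), the $g_i$ factor through a unique b-map $h : Z\to D(F_1,F_2)\subset D$. The pullback $\cP_Z\times_{\cP_D}\cR_D\to\cP_Z$ is a refinement by Proposition~\ref{P:refinements_pull_back}, applied to $h_\natural : \cP_Z\to\cP_D$ and the refinement $\cR_D\to\cP_D$, so $\cR := \ns(\cP_Z\times_{\cP_D}\cR_D)$ is a smooth refinement of $\cP_Z$ and $\beta : [Z;\cR]\to Z$ is a generalized blow-up with $\cP_{[Z;\cR]}\cong\cR$ by Theorem~\ref{T:genblow}; moreover the composite $\cR\to\cP_Z\to\cP_D$ (the second arrow being $h_\natural$) equals $\cR\to\cR_D\to\cP_D$ by commutativity of the fiber-product square, which survives $\ns$, hence factors through $\cR_D$. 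Then $h\circ\beta : [Z;\cR]\to D(F_1,F_2)$ is a b-map with $(h\circ\beta)_\natural = h_\natural\circ\beta_\natural$ factoring through $\cR_D$, so Theorem~\ref{T:binvar_resolution} gives a unique lift $g : [Z;\cR]\to[D;\cR_D]$ and $h_i\circ g = \pi_i\circ\beta_D\circ g = \pi_i\circ h\circ\beta = g_i\circ\beta$. I expect the only delicate point in all of this to be the one already flagged: checking that the morphism $\cP_Z\to\cP_D$ occurring in the hypotheses of parts (b), (c), (d) is literally the one induced by the map $h$ produced by Proposition~\ref{P:fiber_components_mapping}, and that $\ns$ preserves the commutativity of the relevant fiber-product squares; neither is hard, but both must be made explicit, after which every remaining ingredient is a direct citation of Theorem~\ref{T:binvar_resolution}, Proposition~\ref{P:fiber_components_mapping}, Proposition~\ref{P:refinements_pull_back}, Theorem~\ref{T:genblow} and Theorem~\ref{T:smooth_refinement}.
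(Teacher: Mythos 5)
Your proof is correct and follows essentially the same route as the paper's (very terse) proof: part (a) from the resolution theory of Section~\ref{S:binvarres} applied componentwise, part (b) from a mutual smooth refinement of $\cR_1\times_{\cP_D}\cR_2$, part (c) from Proposition~\ref{P:fiber_components_mapping} together with the lifting statement in Theorem~\ref{T:binvar_resolution}, and part (d) from a smooth refinement of $\cP_Z\times_{\cP_D}\cR_D$ as in Theorem~\ref{T:blowing_up_domain}. The extra bookkeeping you supply (identifying the hypothesis morphism $\cP_Z\to\cP_D$ with $h_\natural$, and checking that $\ns$ preserves the relevant factorizations) is exactly the detail the paper leaves implicit.
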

\noindent
For a manifold $Z$ with maps $g_i : Z \to X_i$, it is not generally true that
there exists a resolution $[D; \cR_D] \to D$ through which $Z$ factors, without
blowing up the domain.

\begin{proof}
The smooth manifold $[D; \cR_D]$ is well-defined by the results in Section
\ref{S:binvarres}, giving \eqref{I:fibprod1}.  \eqref{I:fibprod2} follows from
the existence of mutual refinements, letting $\cR_0$ be a smooth refinement of
$\cR_1 \times_{\cP_D} \cR_2.$  \eqref{I:fibprod3} follows from
Theorem \ref{T:lifting_b_maps}, and \eqref{I:fibprod4} follows from Theorem
\ref{T:blowing_up_domain}, letting $\cR$ be a smooth refinement of $\cP_Z
\times_{\cP_D} \cR_D$.
\end{proof}

Some of the situations we have already considered are interesting examples of fiber products.
\begin{ex}[A blow-up and a binomial subvariety]
Let $Y$ be a smooth manifold which is included in $X$ as an interior binomial
variety, and let $X_1 = [X; \cR] \to X$ be a generalized blow-down.  Since $i :
Y \hookrightarrow X$ is injective, the fiber product $X_1 \times_X Y$ can be
identified with $\beta^{-1}(Y) \subset X_1$.  The interior subvariety
$D(Y,X_1)$ is just the lift/proper transform $\beta^\#(Y)$, which we showed to
be an interior binomial subvariety in Section \ref{S:binvarres}.  Observe,
however that $\beta^{-1}(Y)$ generally contains other subvarieties as well,
namely $F \cap \beta^{-1}(G)$, where $F \in \cM(X_1)$ and $G \in \cM(D)$.  See
Figure~\ref{F:fibprod}.
\label{X:fib_prod_blowup_and_binom}
\end{ex}

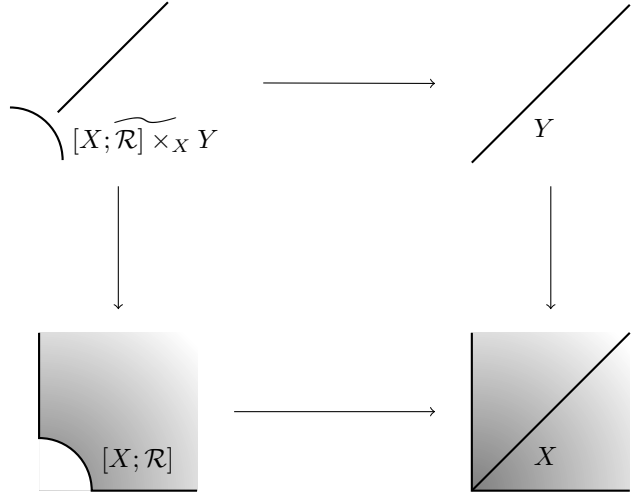
\begin{figure}[tb]
%\begin{centered}
\begin{tikzpicture}
\matrix (m) [matrix of nodes, column sep=3cm, row sep=2cm]
{ 
 \begin{tikzpicture}[scale=0.7]
 \draw[thick] (0,1) to[bend left=45] coordinate[midway] (c) (1,0);
 \draw[thick] (c)+(0.2,0.2) -- (3,3);
 \path (1,1) node[below right] {$\wt{[X;\cR] \times_X Y}$};
 \end{tikzpicture}
&

 \begin{tikzpicture}[scale=0.7]
 \draw[thick] (0,0) -- (3,3);
 \path (1,1) node[below right] {$Y$};
 \end{tikzpicture}
\\
 \begin{tikzpicture}[scale=0.7]
 \begin{scope} \clip (0,0) rectangle (3,3);
   	\shade[shading=radial] (-4,-4) rectangle (4,4);
 \end{scope}
 \fill[color=white] (0,1) to[bend left=45] (1,0) -- (0,0) --cycle;
 \draw[thick] (0,3) -- (0,1) to[bend left=45] (1,0) -- (3,0);
 \path (1,1) node[below right] {$[X; \cR]$};
 \end{tikzpicture}
& 
 \begin{tikzpicture}[scale=0.7]
 \begin{scope} \clip (0,0) rectangle (3,3);
   	\shade[shading=radial] (-4,-4) rectangle (4,4);
 \end{scope}
 \draw[thick] (0,3) -- (0,0) -- (3,0);
 \draw[thick] (0,0) -- (3,3);
 \path (1,1) node[below right] {$X$};
 \end{tikzpicture}
\\};
\path (m-1-1) edge[->,>=to,shorten >=10pt,shorten <=10pt] (m-1-2);
\path (m-1-1) edge[->,>=to,shorten >=5pt,shorten <=5pt] (m-2-1);
\path (m-1-2) edge[->,>=to,shorten >=5pt,shorten <=5pt] (m-2-2);
\path (m-2-1) edge[->,>=to,shorten >=10pt,shorten <=10pt] (m-2-2);
\end{tikzpicture}

%\end{centered}
\caption{The fiber product of a generalized blow-down and the inclusion of 
a manifold as an interior binomial subvariety.}
\label{F:fibprod}
\end{figure}

\begin{ex}[Two blow-ups]
We leave it as an exercise for the reader to show that the fiber product of
two blow-down maps $\beta_i : [X; \cR_i] \to X$, $i = 1,2$ is a subvariety with
the monoidal complex $\cR_1 \times_{\cP_X} \cR_2$.  By Proposition
\ref{P:fiber_products_complexes} this is a (not necessarily smooth) refinement
of $\cP_X$, which can be identified with the intersection complex $\cR_1 \cap
\cR_2 = \set{\sigma_{F_1} \cap \sigma_{F_2} \subset \sigma_F \;;\; \sigma_{F_i}
\in \cR_i(F)}$.  While not smooth in general, one can construct nontrivial
examples where $\cR_1 \times_{\cP_X} \cR_2$ is smooth, which therefore give
nontrivial instances of Theorem \ref{T:smooth_fiber_products}.
\label{X:fib_prod_two_blowdowns}
\end{ex}

\begin{ex}[Joyce's fiber products]
In \cite{joyce2009manifolds}, Joyce proposes a category of manifolds with
corners in which the morphisms are what might be called `simple b-maps' $f : X
\to Y$ which have the property that whenever
\[
	f^\ast \cI_H = \prod_{G \in \cM_1(X)} \cI_G^{\alpha(H,G)}, \quad H \in \cM_1(Y)
\]
the $\alpha(\cdot,\cdot)$ are either zero or one, and furthermore for every $H$
there is at most one $G$ such that $\alpha(H,G) \neq 0$.  As a consequence
the morphisms
\[
	f_\natural : \sigma_F \to \sigma_{f_\#(F)}
\]
are always injective, with images which have orthogonal generators $\set{v_i}$
all of whose components are zero or one.

Joyce defines a transversality condition which is essentially equivalent to
b-transversality (though he does not use the b-differential, his is an iterated
transversality condition on boundary faces analogous to Proposition
\ref{P:iterated_b_transversality}), and shows that for two transversal maps in
his sense, the set-theoretic fiber product is a manifold with corners.  

This also follows from Theorem \ref{T:smooth_fiber_products}.  Indeed, for
b-transversal simple b-maps, the monoids $\sigma_{F_1} \times_{\sigma_G}
\sigma_{F_2}$ can be identified with the intersections $\sigma_{F_1} \cap
\sigma_{F_2} \subset \sigma_G$ by injectivity, and with the properties above,
these intersections are always smooth.
\label{X:fib_prod_joyce}
\end{ex}

\bibliographystyle{amsalpha}
\bibliography{references}

\end{document}